\newcommand{\ud}{\mathrm{d}} 
\newcommand{\Id}{\mathbb{I}}
\newcommand{\Pa}{\mathcal{P}}
\newcommand{\R}{\mathbb{R}}
\newcommand{\Rext}{\R\cup\{+\infty\}}
\newcommand{\abs}[1]{\left\vert#1\right\vert}
\newcommand{\set}[1]{\left\{#1\right\}}
\newcommand{\norm}[1]{\left\Vert#1\right\Vert}
\newcommand{\norms}[1]{\Vert#1\Vert}
\newcommand{\Eproof}{\hfill $\square$}
\newcommand{\prox}{\mathrm{prox}}
\newcommand{\argmin}{\mathrm{arg}\!\min}
\newcommand{\dom}[1]{\mathrm{dom}(#1)}
\newcommand{\zero}[1]{\boldsymbol{0}}
\newcommand{\xb}{x}
\newcommand{\xopt}{x^{\star}}
\newcommand{\yb}{y}
\newcommand{\eb}{e}
\newcommand{\zb}{z}
\newcommand{\wb}{w}
\newcommand{\ub}{u}             
\newcommand{\vb}{v}
\newcommand{\sbb}{s}
\newcommand{\ab}{a}
\newcommand{\bb}{b}
\newcommand{\cb}{c}
\newcommand{\db}{d}
\newcommand{\pb}{p}
\newcommand{\Ab}{A}
\newcommand{\Bb}{B}
\newcommand{\Gb}{G}
\newcommand{\Hb}{H}
\newcommand{\Kb}{K}
\newcommand{\Qb}{Q}
\newcommand{\Wb}{W}
\newcommand{\Ac}{\mathcal{A}}
\newcommand{\Xc}{\mathcal{X}}
\newcommand{\Sc}{\mathcal{S}}
\newcommand{\Fc}{\mathcal{F}}
\newcommand{\Uc}{\mathcal{U}}
\newcommand{\Nc}{\mathcal{N}}
\newcommand{\pntdir}{n_{\mathrm{pnt}}}
\newcommand{\ntdir}{n_{\mathrm{nt}}}
\newcommand{\iprod}[1]{\left\langle #1\right\rangle}
\newcommand{\iprods}[1]{\langle #1\rangle}
\renewcommand{\vec}[1]{\mathrm{vec}(#1)}
\newcommand{\ri}[1]{\mathrm{ri}\left(#1\right)}
\newcommand{\gsc}{\textit{generalized self-concordant} }
\newcommand{\SFunc}[2]{\mathcal{C}^{#1}\left(#2\right)}
\newcommand{\needcheck}[1]{#1}
\newcommand{\beforesec}{\vspace{-3.25ex}}
\newcommand{\aftersec}{\vspace{-2.25ex}}
\newcommand{\beforesubsec}{\vspace{-3ex}}
\newcommand{\aftersubsec}{\vspace{-2ex}}
\newcommand{\beforepar}{\vspace{-2.25ex}}
\begin{document}

\title{Generalized Self-Concordant Functions: A Recipe for Newton-Type Methods}

\titlerunning{Generalized Self-Concordant Functions: A Recipe for Newton-Type Methods}        

\author{Tianxiao Sun \and Quoc Tran-Dinh$^{\ast}$}

\authorrunning{T. Sun and Q. Tran-Dinh} 

\institute{
$^{\ast}$Corresponding author (\url{quoctd@email.unc.edu})\vspace{1ex}\newline
Tianxiao Sun \and Quoc Tran-Dinh \at
		Department of Statistics and Operations Research, University of North Carolina at Chapel Hill (UNC)\\
		 318 Hanes Hall, CB\# 3260, UNC Chapel Hill, NC 27599-3260\\ 
		Email: \texttt{\{tianxias, quoctd\}@email.unc.edu}
}

\date{Received: date / Accepted: date}

\maketitle

\vspace{-2ex}
\begin{abstract}
We study the smooth structure of convex functions by generalizing a powerful concept so-called \textit{self-concordance} introduced by Nesterov and Nemirovskii in the early 1990s to a broader class of convex functions which we call \textit{generalized self-concordant functions}.
This notion allows us to develop a unified framework for designing Newton-type methods to solve convex optimization problems.
The proposed theory provides a mathematical tool to analyze both local and global convergence  of Newton-type methods without imposing unverifiable assumptions as long as the underlying functionals fall into our class of generalized self-concordant functions.
First, we introduce the class of generalized self-concordant functions which covers the class of standard self-concordant functions as a special case. 
Next, we establish several properties and key estimates of this function class which can be used to design numerical methods.
Then, we apply this theory to develop  several Newton-type  methods for solving a class of smooth convex optimization problems involving generalized self-concordant functions.
We provide an explicit step-size for  a damped-step Newton-type scheme which can guarantee a global convergence  without performing any globalization strategy. 
We also prove a local quadratic convergence of this method and its full-step variant without requiring the Lipschitz continuity of the objective Hessian mapping. 
Then, we extend our result to develop proximal Newton-type methods for a class of composite convex minimization problems involving generalized self-concordant functions.
We also achieve both global and local convergence without additional assumptions.
Finally, we verify our theoretical results via several numerical examples, and compare them with existing methods.
\end{abstract}

\keywords{Generalized self-concordance \and Newton-type method \and proximal Newton method \and quadratic convergence  \and local and global convergence \and convex optimization}
\subclass{90C25   \and 90-08}


\beforesec
\section{Introduction}\label{sec:intro}
\aftersec
The Newton method is a classical numerical scheme for solving systems of nonlinear equations and smooth optimization \cite{Nocedal2006,Ortega2000}. 
However, there are at least two reasons that prevent the use of such methods from solving large-scale problems.
Firstly, while these methods often have a fast local convergence rate which can be up to a quadratic rate, their global convergence has not been well-understood \cite{Nesterov2006}.
In practice, one can use a damped-step scheme utilizing the Lipschitz constant of the objective derivatives to compute a suitable step-size as often seen in gradient-type methods, or incorporate the algorithm with a globalization strategy such as line-search, trust-region, or filter to guarantee a descent property \cite{Nocedal2006}.
Both strategies allow us to prove a global convergence of the underlying Newton-type method in some sense.
Unfortunately, in practice, there exist several problems whose objective function does not have global Lipschitz gradient or Hessian such as logarithmic or reciprocal functions. 
This class of problems does not provide us some uniform bounds to obtain a  constant step-size in optimization algorithms. 
On the other hand, using a globalization strategy for determining step-sizes often requires centralized computation such as function evaluations which prevent us from using distributed computation and stochastic descent methods.
Secondly, Newton algorithms are second-order methods which often require a high per-iteration complexity due to the operations on the Hessian mapping of the objective function or its approximations.
In addition, these methods require the underlying functionals to be smooth up to a given  smoothness levels which does not often hold in many practical models.

\beforepar
\paragraph{\textbf{Motivation:}}
In recent years, there has been a great interest in Newton-type methods for solving convex optimization problems and monotone equations due to the development of new techniques and mathematical tools in optimization, machine learning, and randomized algorithms \cite{Becker2012a,byrd2016stochastic,Deuflhard2006,erdogdu2015convergence,Lee2014,Nesterov2006b,Nesterov2008b,pilanci2015newton,polyak2009regularized,Roosta-Khorasani2016,roosta2016sub,Tran-Dinh2013b}.
Several combinations of Newton-type methods and other techniques such as proximal operators \cite{Bonnans1994a}, cubic regularization \cite{Nesterov2006b}, gradient regularization \cite{polyak2009regularized}, randomized algorithms such as sketching \cite{pilanci2015newton}, subsampling \cite{erdogdu2015convergence}, and fast eigen-decomposition \cite{halko2009finding} have opened  up a new research direction and  attracted a great attention in solving nonsmooth and large-scale problems.
Hitherto, research in this direction remains focusing on specific classes of problems where standard assumptions such as nonsingularity and Hessian Lipschitz  continuity are preserved. 
However, such assumptions do not hold for many other examples as shown in \cite{Tran-Dinh2013a}. 
Moreover, if they are satisfied, then we often get a lower bound of possible step-sizes for our algorithm which may lead to a poor performance, especially in large-scale problems.

In the seminal work \cite{Nesterov1994}, Nesterov and Nemirovskii showed that the class of log-barriers does not satisfy the standard assumptions of the Newton method if the solution of the underlying problem is closed to the boundary of the barrier function domain.
They introduced a powerful concept called ``self-concordance'' to overcome this drawback and developed new Newton schemes to achieve global and local convergence without requiring any additional assumption, or a globalization strategy.
While the self-concordance notion was initially invented to study interior-point methods, it is less well-known in other communities.
Recent works \cite{Bach2009,cohen2017matrix,monteiro2015hybrid,Tran-Dinh2013a,TranDinh2016c,zhang2015disco} have popularized this concept to solve other problems arising from machine learning, statistics, image processing, scientific computing, and variational inequalities.

\beforepar
\paragraph{\textbf{Our goals:}}
In this paper, motivated by \cite{Bach2009,TranDinh2014d,zhang2015disco}, we aim at generalizing the self-concordance concept in \cite{Nesterov1994} to a broader class of smooth and convex functions.
To illustrate our idea, we consider a univariate smooth and convex function $\varphi : \R\to\R$. 
If $\varphi$ satisfies the inequality $\vert\varphi'''(t)\vert \leq M_{\varphi}\varphi''(t)^{3/2}$ for all $t$ in the domain of $\varphi$ and for a given constant $M_{\varphi}\geq 0$, then we say that $\varphi$ is self-concordant (in Nesterov and Nemirovskii's sense \cite{Nesterov1994}).
We instead generalize this inequality to 
\begin{equation}\label{eq:one_var_gsc_def}
\vert\varphi'''(t)\vert \leq M_{\varphi}\varphi''(t)^{\frac{\nu}{2}},
\end{equation}
 for all $t$ in the domain of $\varphi$, and for given constants $\nu > 0$ and $M_{\varphi}\geq 0$.

We emphasize that generalizing from univariate to multivariate functions in the standard self-concordant case (i.e., $\nu = 3$) \cite{Nesterov1994} preserves several important properties including the multilinear symmetry \cite[Lemma 4.1.2]{Nesterov2004}, while, unfortunately, they do not hold for the case $\nu \neq 3$. 
Therefore, we modify the definition in \cite{Nesterov1994} to overcome this drawback.
Note that a similar idea has been also studied in \cite{Bach2009,TranDinh2014d} for a class of logistic-type functions. 
Nevertheless, the definition using in these papers is limited, and still creates certain difficulty for developing further theory in general cases.

Our second goal is to develop a unified mechanism to analyze convergence (including global and local convergence) of the following Newton-type scheme:
\begin{equation}\label{eq:newton_scheme}
\xb^{k+1} := \xb^k - s_kF'(\xb^k)^{-1}F(\xb^k),
\end{equation}
where $F$ can  be represented as the right-hand side of a smooth monotone equation $F(x) = 0$, or the optimality condition of a convex optimization or a convex-concave saddle-point problem, $F'$ is the Jacobian map of $F$, and $s_k\in (0, 1]$ is a given step-size.
Despite the Newton scheme \eqref{eq:newton_scheme} is invariant  to a change of variables \cite{Deuflhard2006}, its convergence property relies on the growth of the Hessian mapping along the Newton iterative process. 
In classical settings, the Lipschitz continuity and the non-degeneracy of the Hessian mapping in a neighborhood of a given solution are key assumptions to achieve local quadratic convergence rate \cite{Deuflhard2006}.
These assumptions have been considered to be standard, but they are often very difficult to check in practice, especially the second requirement.
A natural idea is to classify the functionals of the underlying problem into a known class of functions to choose a suitable method for minimizing it. 
While first-order methods for convex optimization essentially rely on the Lipschitz gradient continuity, Newton schemes usually use the Lipschitz continuity of the Hessian mapping and its non-degeneracy to obtain a well-defined Newton direction as we have mentioned.
For self-concordant functions, the second condition automatically holds, but the first assumption fails to satisfy. 
However, both full-step and damped-step Newton methods still work in this case by appropriately choosing a suitable metric. 
This situation has been observed and standard assumptions have been modified in different directions to still guarantee convergence of Newton-type methods, see \cite{Deuflhard2006} for an intensive study of generic Newton-type methods, and \cite{Nesterov1994,Nesterov2004} for the self-concordant function class.

\beforepar
\paragraph{\textbf{Our approach:}}
We attempt to develop some background theory for a broad class of smooth and convex functions under the structure \eqref{eq:one_var_gsc_def}. 
By adopting the local norm defined via the Hessian mapping of such a convex function from \cite{Nesterov1994}, we can prove some lower and upper bound estimates for the local norm distance between two points in the domain as well as for the growth of the Hessian mapping. 
Together with this background theory, we also identify a class of functions using in generalized linear models \cite{mccullagh1989generalized,nelder1972generalized} as well as in empirical risk minimization \cite{vapnik1998statistical} that falls into our generalized self-concordance class for many well-known loss-type functions as listed in Table \ref{tbl:examples}.

Applying our generalized self-concordant theory, we  develop a class of Newton-type methods to solve the following composite convex minimization problem:
\begin{equation}\label{eq:composite_cvx0}
F^{\star}:=\min_{\xb\in\R^p}\Big\{ F(\xb):=f(\xb)+g(\xb) \Big\},
\end{equation}
where $f$ is a generalized self-concordant function in our context, and $g$ is a proper, closed, and convex function that can be referred to as a regularization term.
We consider two cases. The first case is a non-composite convex problem in which $g$ is vanished (i.e., $g = 0$). In the second case, we assume that $g$ is equipped with a ``tractably'' proximal operator (see \eqref{eq:def_gprox} for the definition).

\beforepar
\paragraph{\textbf{Our contribution:}}
To this end, our main contribution can be summarized as follows.
\begin{itemize}
\item[$(\mathrm{a})$] We generalize the self-concordant notion in \cite{Nesterov2004} to a more broader class of smooth convex functions which we call generalized self-concordance.
We identify several loss-type functions that can be cast into our generalized self-concordant class. 
We also prove several fundamental properties and show that the sum and linear transformation of \gsc functions are \gsc for a given range of $\nu$ or under suitable assumptions.

\vspace{0.5ex}
\item[$(\mathrm{b})$]
We develop lower and upper bounds on the Hessian mapping, the gradient mapping, and the function values for generalized self-concordant functions.
These estimates are key to analyze several numerical optimization methods including Newton-type methods.

\vspace{0.5ex}
\item[$(\mathrm{c})$]
We propose a class of Newton methods including full-step and damped-step schemes to minimize a generalized self-concordant function. 
We explicitly show how to choose a suitable step-size to guarantee a descent direction in the damped-step scheme, and prove a local quadratic convergence for both the damped-step and the full-step schemes using a suitable metric.

\vspace{0.5ex}
\item[$(\mathrm{d})$]
We also extend our Newton schemes to handle the composite setting \eqref{eq:composite_cvx0}. 
We develop both full-step and damped-step proximal Newton methods to solve this problem and provide a rigorous theoretical convergence guarantee in both local and global sense.

\vspace{0.5ex}
\item[$(\mathrm{e})$] We also study a quasi-Newton variant of our Newton scheme to minimize a generalized self-concordant function. Under a modification of the well-known Dennis-Mor\'{e} condition \cite{Dennis1974} or a BFGS update, we show that our quasi-Newton method locally converges at a superlinear rate to the solution of the underlying problem.
\end{itemize}

Let us emphasize the following aspects of our contribution.
Firstly, we observe that the self-concordance notion is a powerful concept and has widely been  used in interior-point methods as well as in other optimization schemes \cite{He2016,Lu2016a,Tran-Dinh2013a,zhang2015disco}, generalizing it to a broader class of smooth convex functions can substantially cover a number of new applications or can develop new methods for solving old problems including logistic and multimonomial logistic regression, optimization involving exponential objectives, and distance-weighted discrimination problems in support vector machine (see Table \ref{tbl:examples} below).
Secondly, verifying theoretical assumptions for convergence guarantees of a Newton method is not trivial, our theory allows one to classify the underlying functions into different subclasses by using different parameters $\nu$ and $M_{\varphi}$ in order to choose suitable algorithms to solve the corresponding optimization problem.
Thirdly, the theory developed in this paper can potentially apply to other optimization methods such as gradient-type, sketching and sub-sampling Newton, and Frank-Wolfe's algorithms as done in the literature \cite{odor2016frank,pilanci2015newton,Roosta-Khorasani2016,roosta2016sub,Tran-Dinh2013a}. 
Finally, our generalization also shows that it is possible to impose additional structure such as self-concordant barrier to develop path-following scheme or interior-point-type methods for solving a subclass of composite convex minimization problems of the form \eqref{eq:composite_cvx0}.
We believe that our theory is not limited to convex optimization, but can be extended to solve convex-concave saddle-point problems, and monotone equations/inclusions involving generalized self-concordant functions \cite{TranDinh2016c}.

\beforepar
\paragraph{\textbf{Summary of generalized self-concordant properties:}}
For our reference convenience, we provide a short summary on the main properties of generalized self-concordant (gsc) functions in Table \ref{tbl:summary}.
\begin{table}[hpt!]
\vspace{-5ex}
\begin{center}
\caption{A summary of generalized self-concordant properties}\label{tbl:summary}
\rowcolors{2}{white}{black!15!white}
\vspace{-2ex}
\begin{tabular}{ p{2.9cm} | p{5.8cm} | p{5cm} }\toprule
\multicolumn{1}{c|}{\textbf{Result}} & \multicolumn{1}{c|}{\textbf{Property}} & \multicolumn{1}{c}{\textbf{Range of $\nu$}}  \\ \midrule
Definitions~\ref{de:gsc_def0} and \ref{de:gsc_def} & definitions of gsc functions & $\nu > 0$ \\ \midrule
Proposition~\ref{pro:sum_rule} & sum of gsc functions & $\nu \geq 2$ \\ \midrule
Proposition~\ref{pro:affine_transform} & affine transformation of gsc functions with $\Ac(x) = Ax + b$ &   $\nu \in (0, 3]$ for general $A$\newline $\nu > 3$ for over-completed $A$ \\ \midrule
Proposition~\ref{pro:Hessian_nondegenerate}(a) & non-degenerate property  &  $\nu \geq 2$ \\ \midrule
Proposition~\ref{pro:Hessian_nondegenerate}(b) & unboundedness  &   $\nu > 0$ \\ \midrule
Proposition~\ref{pro:scvx_lips_gsc}(a) & gsc and strong convexity  &  $\nu \in (0, 3]$ \\ \midrule
Proposition~\ref{pro:scvx_lips_gsc}(b) & gsc and Lipschitz gradient continuity &  $\nu \geq 2$ \\ \midrule
Proposition~\ref{pro:conjugate} & if $f^{\ast}$ is the conjugate of a gsc function\newline  $f$, then $\nu + \nu_{\ast} = 6$ &  $\nu_{\ast} \in (0, 6)$ if $p = 1$ (univariate)\newline $\nu_{\ast} \in [3, 6)$ if $p > 1$ (multivariate)\\ \midrule
Propositions \ref{pro:xy_bounds}, \ref{pro:hessian_bounds}, \ref{pro:gradient_bound1}, and \ref{pro:fx_bound1} & local norm, Hessian, gradient, and function value bounds & $\nu \geq 2$ \\ 
\bottomrule
\end{tabular}
\vspace{-5ex}
\end{center}
\end{table}
Although several results hold for a different range of $\nu$, the complete theory only holds for $\nu \in [2, 3]$.
However, this is sufficient to cover two important cases: $\nu = 2$ in \cite{Bach2009,Bach2013a} and $\nu = 3$ in \cite{Nesterov1994}.

\beforepar
\paragraph{\textbf{Related work:}}
Since the self-concordance concept was introduced in 1990s \cite{Nesterov1994}, its first extension is perhaps proposed by \cite{Bach2009} for a class of logistic regression.
In \cite{TranDinh2014d}, the authors extended \cite{Bach2009} to study proximal Newton method for logistic, multinomial logistic, and exponential loss functions.
By augmenting a strongly convex regularizer, Zhang and Lin in \cite{zhang2015disco} showed that the regularized logistic loss function is indeed standard self-concordant.
In \cite{Bach2013a} Bach continued exploiting his result in \cite{Bach2009} to show that the averaging stochastic gradient method can achieve the same best-known convergence rate as in strongly convex case without adding a regularizer.
In \cite{Tran-Dinh2013a}, the authors exploited standard self-concordance theory in \cite{Nesterov1994} to develop several classes of optimization algorithms including proximal Newton, proximal quasi-Newton, and proximal gradient methods to solve composite convex minimization problems. 
In \cite{Lu2016a}, Lu extended \cite{Tran-Dinh2013a} to study randomized block coordinate descent methods. 
In a recent paper \cite{gao2016quasi}, Gao and Goldfarb investigated quasi-Newton methods for self-concordant problems.
As another example, \cite{peng2009self} proposed an alternative to the standard self-concordance, called self-regularity. 
The authors applied this theory to develop a new paradigm for interior-point methods.
The theory developed in this paper, on the one hand, is a generalization of the well-known self-concordance notion developed in \cite{Nesterov1994};
on the other hand, it also covers the work in \cite{Bach2009,Tran-Dinh2013b,zhang2015disco} as specific examples.
Several concrete applications and extensions of self-concordance notion can also be found in the literature including \cite{He2016,Kyrillidis2014,odor2016frank,peng2009self}.
Recently, \cite{cohen2017matrix} exploited smooth structures of exponential functions to design interior-point methods for solving two fundamental problems in scientific computing called matrix scaling and balancing.

\beforepar
\paragraph{\textbf{Paper organization:}} 
The rest of this paper is organized as follows.
Section \ref{sec:gsc_background} develops the foundation theory for our generalized self-concordant functions including definitions, examples, basic properties, Fenchel's conjugate, smoothing technique, and key bounds.
Section~\ref{sec:gsc_min} is devoted to studying full-step and damped-step Newton schemes to minimize a generalized self-concordant function including their global and local convergence guarantees.
Section~\ref{sec:gsc_composite_min} considers to the composite setting \eqref{eq:composite_cvx0} and studies proximal Newton-type methods, and investigates their convergence guarantees.
Section~\ref{sec:quasi_newton} deals with a quasi-Newton scheme for solving the noncomposite problem of \eqref{eq:composite_cvx0}.
Numerical examples are provided in Section~\ref{sec:num_experiments} to illustrate advantages of our theory.
Finally, for clarity of presentation, several technical results and proofs are moved to the appendix.

\vspace{-0.25ex}
\beforesec
\section{Theory of generalized self-concordant functions}\label{sec:gsc_background}
\aftersec
\vspace{-0.25ex}
We generalize the class of self-concordant functions introduced by Nesterov and Nemirovskii in \cite{Nesterov2004} to a broader class of smooth and convex functions. 
We identify several examples of such functions.
Then, we develop several properties of this function class by utilizing our new definitions.

\beforepar
\paragraph{\textbf{Notation:}}
Given a proper, closed, and convex function $f:\R^p\to\Rext$, we denote by $\dom{f} := \set{\xb\in\R^p \mid f(\xb) <+\infty}$ the domain of $f$, and by $\partial{f}(\xb) := \big\{\wb\in\R^p  \mid f(\yb) \geq f(\xb) + \iprods{\wb, \yb - \xb},~\forall\yb\in\dom{f} \big\}$ the subdifferential of $f$ at $\xb\in\dom{f}$.
We use $\mathcal{C}^3(\dom{f})$ to denote the class of three times continuously differentiable functions on its open domain $\dom{f}$.
We denote by $\nabla{f}$ its gradient map, by $\nabla^2{f}$ its Hessian map, and by $\nabla^3{f}$  its third-order derivative.
For a twice continuously differentiable convex function $f$, $\nabla^2{f}$ is symmetric positive semidefinite, and can be written as $\nabla^2{f}(\cdot) \succeq 0$.
If it is positive definite, then we write $\nabla^2{f}(\cdot) \succ 0$. 

Let $\R_+$ and $\R_{++}$ denote the sets of nonnegative and positive real numbers, respectively.
We use $\Sc^p_{+}$ and $\Sc^p_{++}$ to denote the sets of symmetric positive semidefinite and symmetric positive definite matrices of the size $p\times p$, respectively.
Given a $p\times p$ matrix $\Hb\succ 0$, we define a weighted norm with respect to $\Hb$ as $\norm{\ub}_{\Hb} := \iprod{\Hb\ub,\ub}^{1/2}$ for $\ub\in\R^p$. 
The corresponding dual norm is $\norm{\vb}_{\Hb}^{\ast} := \iprod{\Hb^{-1}\vb,\vb}^{1/2}$. 
If $\Hb = \Id$, the identity matrix, then $\norm{\ub}_{\Hb}=\norm{\ub}_{\Hb}^{*} =\norm{\ub}_2$, where $\norm{\cdot}_2$ is the standard Euclidean norm. 
Note that $\Vert\cdot\Vert_2^{\ast} = \Vert\cdot\Vert_2$.

We say that $f$ is strongly convex with the strong convexity parameter $\mu_f\geq 0$ if $f(\cdot) - \frac{\mu_f}{2}\norm{\cdot}^2$ is convex.
We also say that $f$ has Lipschitz gradient if $\nabla{f}$ is Lipschitz continuous with the Lipschitz constant $L_f \in [0, +\infty)$, i.e., $\Vert\nabla{f}(x) - \nabla{f}(y)\Vert^{\ast} \leq L_f\norm{x - y}$ for all $x, y\in\dom{f}$.

For $f\in\mathcal{C}^3(\dom{f})$, if $\nabla^2 f(\xb)\succ 0$ at a given $\xb\in\dom{f}$, then we define a local norm  $\norm{\ub}_{\xb} := \iprods{\nabla^2{f}(\xb)\ub, \ub}^{1/2}$ as a weighted norm of $\ub$ with respect to $\nabla^2{f}(\xb)$. 
The corresponding dual norm $\norm{\vb}^{*}_{\xb}$, is defined as $\norm{\vb}^{*}_{\xb}:=\max\set{ \iprods{\vb, \ub} \mid \norm{\ub}_{\xb}\leq 1}=\iprod{ \nabla^2 f(\xb)^{-1}\vb,\vb}^{1/2}$ for $\vb\in\R^p$.

\beforesubsec
\subsection{\bf Univariate generalized self-concordant functions}\label{def_norm}
\aftersubsec
Let $\varphi : \R\to\R$ be a three times continuously differentiable function on the open domain $\dom{\varphi}$. 
Then, we write $\varphi\in\SFunc{3}{\dom{\varphi}}$.
In this case, $\varphi$ is convex if and only if $\varphi''(t) \geq 0$ for all $t\in\dom{\varphi}$.
We introduce the following definition.

\begin{definition}\label{de:gsc_def0}
\textit{
Let $\varphi : \R\to\R$ be a $\SFunc{3}{\dom{\varphi}}$ and univariate function with open domain $\dom{\varphi}$, and $\nu > 0$ and $M_{\varphi} \geq 0$ be two constants. 
We say that $\varphi$ is $(M_{\varphi},\nu)$-generalized self-concordant if
\begin{equation}\label{eq:gsc_def0}
\vert\varphi{'''}(t)\vert \leq M_{\varphi}\varphi{''}(t)^{\frac{\nu}{2}},~~~\forall t\in \dom{\varphi}.
\end{equation}
}
\end{definition}
The inequality \eqref{eq:gsc_def0} also indicates that $\varphi''(t) \geq 0$ for all $t\in\dom{f}$. Hence, $\varphi$ is convex.
Clearly, if $\varphi(t) = \frac{a}{2}t^2 + bt$ for any constants $a\geq 0$ and $b\in\R$, we have $\varphi''(t) = a$ and $\varphi'''(t) = 0$. 
The inequality \eqref{eq:gsc_def0} is automatically satisfied for any $\nu > 0$ and $M_{\varphi} \geq 0$. The smallest value of $M_{\varphi}$ is zero. 
Hence, any convex quadratic function is $(0, \nu)$-\gsc for any $\nu > 0$.
While \eqref{eq:gsc_def0} holds for any other constant $\hat{M}_{\varphi} \geq M_{\varphi}$, we often require that $M_{\varphi}$ is the smallest constant satisfying \eqref{eq:gsc_def0}.

\begin{example}
Let us now provide some common examples satisfying Definition \ref{de:gsc_def0}.
\begin{itemize}
\item[(a)]\textit{Standard self-concordant functions:} 
If we choose $\nu = 3$, then \eqref{eq:gsc_def0} becomes $\vert\varphi'''(t)\vert \leq M_{\varphi}\varphi''(t)^{3/2}$ which is the standard self-concordant functions in $\R$ introduced in \cite{Nesterov1994}.

\vspace{0.5ex}
\item[(b)]\textit{Logistic functions:} 
In \cite{Bach2009}, Bach modified the standard self-concordant inequality in \cite{Nesterov1994} to obtain $\abs{\varphi'''(t)} \leq M_{\varphi}\varphi''(t)$, and  showed that the well-known logistic loss $\varphi(t) := \log(1 + e^{-t})$ satisfies this definition.
In \cite{TranDinh2014d} the authors also exploited this definition, and developed a class of first-order and second-order methods to solve composite convex minimization problems.
Hence, $\varphi(t) := \log(1 + e^{-t})$ is a generalized self-concordant function with $M_{\varphi} = 1$ and $\nu = 2$.

\vspace{0.5ex}
\item[(c)]\textit{Exponential functions:} 
The exponential function $\varphi(t) := e^{-t}$ also belongs to \eqref{eq:gsc_def0} with $M_{\varphi} = 1$ and $\nu = 2$.
This function is often used, e.g., in Ada-boost \cite{lafferty2002boosting}, or in matrix scaling \cite{cohen2017matrix}.

\vspace{0.5ex}
\item[(d)]\textit{Distance-weighted discrimination} (DWD): 
We consider a more general function $\varphi(t) := \frac{1}{t^q}$ on $\dom{\varphi} = \R_{++}$ and $q \geq 1$ studied in \cite{marron2007distance} for  DWD using in support vector machine.
As shown in Table \ref{tbl:examples}, this function satisfies Definition \ref{de:gsc_def0} with $M_{\varphi} = \frac{q+2}{\sqrt[(q+2)]{q(q+1)}}$ and  $\nu = \tfrac{2(q+3)}{q+2} \in (2, 3)$.

\vspace{0.5ex}
\item[(e)]\textit{Entropy function:} 
We consider the well-known entropy function $\varphi(t) := t\ln(t)$ for $t > 0$. We can easily show that $\vert\varphi'''(t)\vert = \tfrac{1}{t^2} = \varphi''(t)^2$. 
Hence, it is generalized self-concordant with $\nu = 4$ and $M_{\varphi} = 1$ in the sense of Definition \ref{de:gsc_def0}.

\vspace{0.5ex}
\item[(f)]\textit{Arcsine distribution:} 
We consider the function $\varphi(t) := \frac{1}{\sqrt{1 - t^2}}$ for $t \in (-1, 1)$. 
This function is convex and smooth. 
Moreover, we verify that it satisfies Definition \ref{de:gsc_def0} with $\nu = \frac{14}{5} \in (2, 3)$ and $M_{\varphi} = \frac{3\sqrt{495-105\sqrt{21}}}{(7-\sqrt{21})^{7/5}} < 3.25$.
We can generalize this function to $\varphi(t) := \left[(t-a)(b-t)\right]^{-q}$ for $t\in (a, b)$, where $a < b$ and $q > 0$.
Then, we can show that $\nu = \frac{2(q+3)}{q+2} \in (2, 3)$.

\vspace{0.5ex}
\item[(g)]\textit{Robust Regression:} 
Consider a monomial function $\varphi(t):=t^q$ for $q\in (1,2)$ studied in \cite{yang2016rsg} for robust regression using in statistics. 
Then, $M_{\varphi} = \frac{2-q}{\sqrt[(2-q)]{q(q-1)}}$ and $\nu = \frac{2(3-q)}{2-q}\in (4,+\infty)$.
\end{itemize}
\end{example}
As concrete examples, the following table, Table \ref{tbl:examples}, provides a non-exhaustive list of generalized self-concordant functions used in the literature.

\begin{table}[H]
\rowcolors{2}{white}{black!15!white}
\begin{center}
\vspace{-3ex}
\begin{scriptsize}
\caption{Examples of univariate generalized self-concordant functions ($\mathcal{F}^{1,1}_L$ means that $\nabla{\varphi}$ is Lipschitz continuous).}\label{tbl:examples}
\begin{tabular}{ l | l | c | c | l | l | l | l }\toprule
\multicolumn{1}{c|}{Function name} & \multicolumn{1}{c|}{Form of $\varphi(t)$} & $\nu$ & $M_f$ &  $\dom{\varphi}$ & \multicolumn{1}{c|}{Application} &  $\mathcal{F}_L^{1,1}$ & Reference \\ \midrule
Log-barrier & $-\ln(t)$ & $3$ & 2 & $\R_{++}$ & Poisson & no &  \cite{Boyd2004,Nesterov2004,Nesterov1994}\\ \midrule
Entropy-barrier & $t\ln(t) - \ln(t)$ & $3$ & 2 & $\R_{++}$ & Interior-point & no &  \cite{Nesterov2004}\\ \midrule
Logistic & $\ln(1 + e^{-t})$ & $2$ & 1 & $\R$ & Classification & yes &  \cite{Hosmer2005}\\ \midrule
Exponential & $e^{-t}$ & $2$ & 1 & $\R$ & AdaBoost, etc & no &   \cite{cohen2017matrix,lafferty2002boosting}\\ \midrule
Negative power & $t^{-q},~~(q > 0)$ & $\frac{2(q+3)}{q+2}$ & $\frac{q+2}{\sqrt[(q+2)]{q(q+1)}}$ & $\R_{++}$ & DWD & no &   \cite{marron2007distance}\\ \midrule
Arcsine distribution& $\frac{1}{\sqrt{1-t^2}}$ & $\frac{14}{5}$ & $< 3.25$ & $(-1, 1)$ & Random walks & no &   \cite{Goel2006}\\ \midrule
Positive power & $t^q,~~(q\in (1,2))$ & $\frac{2(3-q)}{2-q}$ & $\frac{2-q}{\sqrt[(2-q)]{q(q-1)}}$ & $\R_{+}$ & Regression & no & \cite{yang2016rsg} \\ \midrule
Entropy & $t\ln(t)$ & $4$ & 1 & $\R_{+}$ & KL divergence & no &   \cite{Boyd2004}\\  
\bottomrule
\end{tabular}
\end{scriptsize}
\end{center}
\vspace{-5ex}
\end{table}
\begin{remark}\label{re:link_to_zhang2015}
All examples given in Table~\ref{tbl:examples} fall into the case $\nu \geq 2$.
However, we note that Definition~\ref{de:gsc_def0} also covers \cite[Lemma 1]{zhang2015disco} as a special case when $\nu \in (0, 2)$.
Unfortunately, as we will see in what follows, it is unclear how to generalize several properties of generalized self-concordance from univariate  to multivariable functions for $\nu\in (0, 2)$, except for strongly convex functions. 
\end{remark}
Table~\ref{tbl:examples} only provides common generalized self-concordant functions using in practice.
However, it is possible to combine these functions to obtain mixture functions that preserve the generalized self-concordant inequality given in Definition~\ref{de:gsc_def0}. 
For instance, the barrier entropy $t\ln(t) - \ln(t)$ is a standard self-concordant function, and it is the sum of the entropy $t\ln(t)$ and the negative logarithmic function $-\log(t)$ which are generalized self-concordant with $\nu = 4$ and $\nu = 3$, respectively. 

\beforesubsec
\subsection{\bf Multivariate generalized self-concordant functions}
\aftersubsec
Let $f : \R^p\to\R$ be a $\mathcal{C}^3(\dom{f})$ smooth and convex function with open domain $\dom{f}$.
Given $\nabla^2f$ the Hessian of $f$, $\xb\in \dom{f}$, and $\ub,\vb\in\R^p$, we consider the function $\psi(t) := \iprods{\nabla^2{f}(\xb + t\vb)\ub, \ub}$.
Then, it is obvious to show that 
\begin{equation*} 
\psi'(t) := \iprods{\nabla^3f(\xb + t\vb)[\vb]\ub, \ub}.
\end{equation*}
for $t \in\R$ such that $\xb + t\vb \in \dom{f}$, where $\nabla^3f$ is the third-order derivative of $f$. 
It is clear that $\psi(0) = \iprods{\nabla^2{f}(\xb)\ub, \ub} = \norm{\ub}_{\xb}^2$.
By using the local norm, we generalize Definition \ref{de:gsc_def0} to multivariate functions $f : \R^p\to\R$  as follows.

\begin{definition}\label{de:gsc_def}
A $\mathcal{C}^3$-convex function $f : \R^p\to\R$ is said to be an $(M_f, \nu)$-generalized self-concordant function of the order $\nu > 0$ and the constant $M_f \geq 0$ if, for any $\xb\in\dom{f}$ and $\ub,\vb\in\R^p$, it holds
\begin{equation}\label{eq:gsc_def}
\abs{\iprods{\nabla^3f(\xb)[\vb]\ub, \ub}} \leq M_f\norm{\ub}_{\xb}^2\norm{\vb}_{\xb}^{\nu-2}\norm{\vb}_2^{3-\nu}.
\end{equation}
Here, we use a convention that $\frac{0}{0} = 0$ for the case $\nu < 2$ or $\nu > 3$.
We denote this class of functions by $\widetilde{\Fc}_{M_f,\nu}(\dom{f})$ (shortly, $\widetilde{\Fc}_{M_f,\nu}$ when $\dom{f}$ is explicitly defined).
\end{definition}
Let us consider the following two  extreme cases:
\begin{enumerate}
\item If $\nu = 2$,  \eqref{eq:gsc_def} leads to $\abs{\iprods{\nabla^3f(\xb)[\vb]\ub, \ub}} \leq M_f\norm{\ub}_{\xb}^2\norm{\vb}_2$ which collapses to the definition introduced in \cite{Bach2009} by letting $\ub = \vb$.
\vspace{1ex}
\item If $\nu = 3$ and $\ub =  \vb$, \eqref{eq:gsc_def} reduces to $\abs{\iprods{\nabla^3f(\xb)[\ub]\ub, \ub}} \leq M_f\norm{\ub}_{\xb}^3$, Definition~\ref{de:gsc_def} becomes the standard self-concordant definition introduced in \cite{Nesterov2004,Nesterov1994}.
\end{enumerate}
We emphasize that Definition~\ref{de:gsc_def} is not symmetric, but can avoid the use of multilinear mappings as required in \cite{Bach2009,Nesterov1994}.
However, by \cite[Proposition 9.1.1]{Nesterov1994} or \cite[Lemma 4.1.2]{Nesterov2004}, Definition~\ref{de:gsc_def} with $\nu=3$ is equivalent to \cite[Definition 4.1.1]{Nesterov2004} for standard self-concordant functions.

\beforesubsec
\subsection{\bf Basic properties of generalized self-concordant functions}\label{subsec:basic_properties}
\aftersubsec
We first show that if $f_1$ and $f_2$ are two \gsc functions, then $\beta_1 f_1 + \beta_2 f_2$ is also a \gsc for any $\beta_1, \beta_2 > 0$ according to Definition~\ref{de:gsc_def}.

\begin{proposition}[Sum of \gsc functions]\label{pro:sum_rule}
Let $f_i$ be $(M_{f_i},\nu)$-\gsc functions satisfying \eqref{eq:gsc_def}, where $M_{f_i} \geq  0$ and $\nu \geq 2$ for $i=1,\cdots, m$. 
Then, for $\beta_i > 0$, $i=1,2,\cdots,m$, the function $f(\xb) := \sum_{i=1}^m\beta_i f_i(\xb)$ is well-defined on $\dom{f } = \bigcap_{i=1}^m\dom{f_i}$, and is $(M_f, \nu)$-\gsc with the same order $\nu \geq 2$ and the constant 
\begin{equation*}
M_f := \max\set{\beta_i^{1-\frac{\nu}{2}}M_{f_i} \mid 1 \leq i \leq m} \geq 0.
\end{equation*}
\end{proposition}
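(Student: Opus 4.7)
The plan is to apply Definition~\ref{de:gsc_def} to each summand $f_i$ and then aggregate the individual bounds by a simple elementary inequality that forces the restriction $\nu \geq 2$.

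First, I would note that $\dom{f} = \bigcap_{i=1}^m \dom{f_i}$ is open (intersection of finitely many open sets), that $f \in \mathcal{C}^3(\dom{f})$ with $\nabla^k f = \sum_i \beta_i \nabla^k f_i$ for $k = 2,3$, and that $\nabla^2 f(\xb) \succeq 0$ as a nonnegative combination of positive semidefinite matrices, so $f$ is convex on $\dom{f}$. Then for fixed $\xb \in \dom{f}$ and $\ub, \vb \in \R^p$, apply \eqref{eq:gsc_def} to each $f_i$ with its own local norm $\norms{\cdot}_{\xb,i} := \iprods{\nabla^2 f_i(\xb)\cdot,\cdot}^{1/2}$, then use the triangle inequality after multiplying by $\beta_i$:
\begin{equation*}
\abs{\iprods{\nabla^3 f(\xb)[\vb]\ub,\ub}} \;\leq\; \sum_{i=1}^m \beta_i M_{f_i}\,\norms{\ub}_{\xb,i}^2\,\norms{\vb}_{\xb,i}^{\nu-2}\,\norms{\vb}_2^{3-\nu}.
\end{equation*}

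The next step is the key algebraic step. I would introduce the nonnegative quantities $a_i := \beta_i \norms{\ub}_{\xb,i}^2$ and $b_i := \beta_i \norms{\vb}_{\xb,i}^2$, so that by linearity of $\nabla^2 f$ one has $\norms{\ub}_{\xb}^2 = \sum_i a_i$ and $\norms{\vb}_{\xb}^2 = \sum_i b_i$. Rewriting the summand,
\begin{equation*}
\beta_i M_{f_i}\,\norms{\ub}_{\xb,i}^2\,\norms{\vb}_{\xb,i}^{\nu-2} \;=\; \bigl(\beta_i^{1-\nu/2} M_{f_i}\bigr)\, a_i\, b_i^{(\nu-2)/2},
\end{equation*}
so that factoring out $M_f := \max_i \beta_i^{1-\nu/2} M_{f_i}$ reduces the task to proving
\begin{equation*}
\sum_{i=1}^m a_i\, b_i^{(\nu-2)/2} \;\leq\; \Bigl(\sum_{i=1}^m a_i\Bigr)\Bigl(\sum_{i=1}^m b_i\Bigr)^{(\nu-2)/2}.
\end{equation*}

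This last inequality is where the hypothesis $\nu \geq 2$ enters crucially: the exponent $(\nu-2)/2$ is then nonnegative, so the map $t \mapsto t^{(\nu-2)/2}$ is monotone nondecreasing on $[0,\infty)$, and since $b_i \leq \sum_j b_j$ we immediately get $b_i^{(\nu-2)/2} \leq (\sum_j b_j)^{(\nu-2)/2}$; multiplying by $a_i \geq 0$ and summing yields the claim. Combining everything, multiplying by the common factor $\norms{\vb}_2^{3-\nu}$, produces exactly the bound
\begin{equation*}
\abs{\iprods{\nabla^3 f(\xb)[\vb]\ub,\ub}} \;\leq\; M_f\,\norms{\ub}_{\xb}^2\,\norms{\vb}_{\xb}^{\nu-2}\,\norms{\vb}_2^{3-\nu},
\end{equation*}
which verifies \eqref{eq:gsc_def} for $f$ with order $\nu$ and constant $M_f$.

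The only subtlety to watch is that if some $\nabla^2 f_i(\xb)\ub = 0$ or $\nabla^2 f_i(\xb)\vb = 0$ for particular $i$, then the corresponding local norms vanish and the convention $0/0 = 0$ in Definition~\ref{de:gsc_def} applies termwise; the inequality above is still valid because all terms are nonnegative. The main conceptual obstacle is recognizing that the nonsymmetric mixed term $\norms{\ub}_{\xb,i}^2 \norms{\vb}_{\xb,i}^{\nu-2}$ forces the exponent reshuffling $\beta_i \mapsto \beta_i^{1-\nu/2}$ in the final constant, and that the aggregation step fails when $\nu < 2$ because $t \mapsto t^{(\nu-2)/2}$ is then decreasing — this is precisely the reason the proposition restricts to $\nu \geq 2$.
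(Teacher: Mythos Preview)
Your proof is correct and follows essentially the same route as the paper's own argument: apply Definition~\ref{de:gsc_def} termwise, rescale to expose the factor $\beta_i^{1-\nu/2}M_{f_i}$, and aggregate using the monotonicity of $t\mapsto t^{(\nu-2)/2}$ on $[0,\infty)$ when $\nu\geq 2$. The only cosmetic difference is that the paper reduces to $m=2$ and works with the normalized ratios $\xi=\frac{\beta_1 w_1}{\beta_1 w_1+\beta_2 w_2}$, $\eta=\frac{\beta_1 s_1}{\beta_1 s_1+\beta_2 s_2}$ (bounding $\eta^{(\nu-2)/2}\leq 1$ and then maximizing a linear function of $\xi$), whereas you handle all $m$ terms at once via $b_i^{(\nu-2)/2}\leq(\sum_j b_j)^{(\nu-2)/2}$ and factor out $M_f$ directly; these are the same inequality in different coordinates, and your version avoids the induction step.
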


\begin{proof}
It is sufficient to prove for $m=2$. For $m > 2$, it follows from $m=2$ by induction.
By \cite[Theorem 3.1.5]{Nesterov2004}, $f$ is a closed and convex function. 
In addition, $\dom{f} = \dom{f_1}\cap\dom{f_2}$.
Let us fix some $\xb\in\dom{f}$ and $\ub,\vb\in\R^p$. Then, by Definition~\ref{de:gsc_def}, we have
\begin{equation*}
\abs{\iprods{\nabla^3f_i(\xb)[\vb]\ub, \ub}} \leq M_{f_i}\iprods{\nabla^2 f_i(\xb)\ub,\ub}\iprods{\nabla^2 f_i(\xb)\vb,\vb}^{\frac{\nu-2}{2}}\norm{\vb}_2^{3 - \nu}, ~~~i=1,2.
\end{equation*}
Denote $w_i := \iprods{\nabla^2{f}_i(\xb)\ub,\ub} \geq 0$ and $s_i :=  \iprods{\nabla^2{f}_i(\xb)\vb,\vb} \geq 0$ for $i=1,2$. 
We can derive
\begin{eqnarray}\label{eq:key_in_sum}
\frac{\abs{\iprods{\nabla^3f(\xb)[\vb]\ub, \ub}}}{\iprods{\nabla^2 f(\xb)\ub,\ub}\iprods{\nabla^2 f(\xb)\vb,\vb}^{\frac{\nu-2}{2}}} & \leq & \frac{\beta_1\abs{\iprods{\nabla^3f_1(\xb)[\vb]\ub, \ub}}+\beta_2\abs{\iprods{\nabla^3f_2(\xb)[\vb]\ub, \ub}}}{\iprods{\nabla^2 f(\xb)\ub,\ub}\iprods{\nabla^2 f(\xb)\vb,\vb}^{\frac{\nu-2}{2}}} \notag\\
& \leq & \left[\frac{M_{f_1}\beta_1w_1s_1^{\frac{\nu-2}{2}} + M_{f_2}\beta_2w_2s_2^{\frac{\nu-2}{2}}}{(\beta_1w_1+\beta_2w_2)(\beta_1s_1+\beta_2s_2)^{\frac{\nu-2}{2}}}\right]_{[T]}\norm{\vb}_2^{3-\nu}.
\end{eqnarray}
Let $\xi := \frac{\beta_1w_1}{\beta_1w_1+\beta_2w_2} \in [0, 1]$ and $\eta := \frac{\beta_1s_1}{\beta_1s_1+\beta_2s_2} \in [0, 1]$. 
Then, $\tfrac{\beta_2w_2}{\beta_1w_1+\beta_2w_2} = 1 - \xi \geq 0$ and $\tfrac{\beta_2s_2}{\beta_1s_1 + \beta_2s_2} = 1 - \eta \geq 0$.
Hence, the term $[T]$ in the square brackets of  \eqref{eq:key_in_sum} becomes
\begin{equation*} 
h(\xi,\eta):=\beta_1^{1-\frac{\nu}{2}}M_{f_1}\xi\eta^{\frac{\nu-2}{2}}+\beta_2^{1-\frac{\nu}{2}}M_{f_2}(1-\xi)(1-\eta)^{\frac{\nu-2}{2}},~~~ \xi,\eta\in [0,1].
\end{equation*}
Since $\nu \geq 2$ and $\xi, \eta\in [0, 1]$, we can upper bound $h(\xi,\eta)$ as 
\begin{equation*} 
h(\xi,\eta) \leq \beta_1^{1-\frac{\nu}{2}}M_{f_1}\xi   + \beta_2^{1-\frac{\nu}{2}}M_{f_2}(1-\xi), ~~~\forall\xi\in[0, 1].
\end{equation*}
The right-hand side function is linear in $\xi$ on $[0, 1]$. It achieves the maximum at its boundary. 
Hence, we have 
\begin{equation*}
\max_{\xi\in[0,1],\eta\in[0,1]}h(\xi,\eta) \leq \max\set{\beta_1^{1-\frac{\nu}{2}}M_{f_1}, \beta_2^{1-\frac{\nu}{2}}M_{f_2}}.
\end{equation*}
Using  this estimate into \eqref{eq:key_in_sum}, we can show that $f(\cdot) := \beta_1f_1(\cdot) + \beta_2f_2(\cdot)$ is $(M_f,\nu)$-generalized self-concordant with $M_f := \max\set{\beta_1^{1-\frac{\nu}{2}}M_{f_1}, \beta_2^{1-\frac{\nu}{2}}M_{f_2}}$.
\Eproof
\end{proof}

Using Proposition~\ref{pro:sum_rule}, we can also see that if $f$ is $(M_f,\nu)$-generalized self-concordant, and $\beta > 0$, then $g(\xb) := \beta f(\xb)$ is also $(M_g,\nu)$-generalized self-concordant with the constant $M_g := \beta^{1-\frac{\nu}{2}}M_f$.
The convex quadratic function $q(\xb) := \frac{1}{2}\iprods{\Qb\xb,\xb} + \cb^{\top}\xb$ with $\Qb\in\Sc^p_{+}$ is $(0, \nu)$-generalized self-concordant for any $\nu > 0$. 
Hence, by Proposition~\ref{pro:sum_rule}, if $f$ is $(M_f,\nu)$-generalized self-concordant, then $f(\xb) +  \frac{1}{2}\iprods{\Qb\xb,\xb} + \cb^{\top}\xb$ is also $(M_f, \nu)$-generalized self-concordant.

Next, we consider an affine transformation of a generalized self-concordant function.

\begin{proposition}[Affine transformation]\label{pro:affine_transform}
Let $\Ac(\xb) := \Ab\xb + \bb$ be an affine transformation from $\R^p$ to $\R^q$, and $f$ be an $(M_f,\nu)$-generalized self-concordant function with $\nu > 0$. 
Then, the following statements hold:
\begin{itemize}
\item[$\mathrm{(a)}$] If $\nu \in (0, 3]$, then $g(\xb) := f(\Ac(\xb))$ is $(M_{g},\nu)$-generalized self-concordant  with $M_g := M_f\norm{\Ab}^{3-\nu}$.

\vspace{1ex}
\item[$\mathrm{(b)}$] If $\nu > 3$ and $\lambda_{\min}(\Ab^{\top}\Ab) > 0$, then $g(\xb) := f(\Ac(\xb))$ is $(M_{g},\nu)$-generalized self-concordant with $M_g := M_f\lambda_{\min}(\Ab^{\top}\Ab)^{\frac{3-\nu}{2}}$, where $\lambda_{\min}(\Ab^{\top}\Ab)$ is the smallest eigenvalue of $\Ab^{\top}\Ab$.
\end{itemize}
\end{proposition}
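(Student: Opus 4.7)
The plan is to reduce the inequality for $g$ to the one for $f$ by applying the chain rule to the affine change of variables $\yb = \Ac(\xb) = \Ab\xb + \bb$. The first step is to compute the derivatives of $g(\xb) := f(\Ac(\xb))$: the gradient $\nabla g(\xb) = \Ab^{\top}\nabla f(\yb)$, the Hessian $\nabla^2 g(\xb) = \Ab^{\top}\nabla^2 f(\yb)\Ab$, and the third derivative acting on $\vb$, which gives $\iprods{\nabla^3 g(\xb)[\vb]\ub,\ub} = \iprods{\nabla^3 f(\yb)[\Ab\vb]\Ab\ub,\Ab\ub}$. This chain rule identity is where the whole proof lives; once it is in hand everything else is an algebraic manipulation.

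The second step is to apply Definition~\ref{de:gsc_def} to $f$ at the point $\yb = \Ac(\xb)$ with directions $\Ab\ub$ and $\Ab\vb$, obtaining
\begin{equation*}
\abs{\iprods{\nabla^3 f(\yb)[\Ab\vb]\Ab\ub,\Ab\ub}} \,\leq\, M_f\,\norm{\Ab\ub}_{\yb}^{2}\,\norm{\Ab\vb}_{\yb}^{\nu-2}\,\norm{\Ab\vb}_2^{3-\nu}.
\end{equation*}
The key observation is that the local norm transforms naturally: by the Hessian formula above, $\norm{\Ab\ub}_{\yb}^2 = \iprods{\Ab^{\top}\nabla^2 f(\yb)\Ab\ub,\ub} = \iprods{\nabla^2 g(\xb)\ub,\ub} = \norm{\ub}_{\xb}^2$, and similarly $\norm{\Ab\vb}_{\yb}^{\nu-2} = \norm{\vb}_{\xb}^{\nu-2}$. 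Therefore the only residual piece that does not convert automatically is the Euclidean term $\norm{\Ab\vb}_2^{3-\nu}$, which must be bounded in terms of $\norm{\vb}_2^{3-\nu}$.

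This residual term is the main (and only) source of the case split and is the mildly delicate point. In case (a), the exponent $3-\nu \geq 0$, so I can use the upper bound $\norm{\Ab\vb}_2 \leq \norm{\Ab}\,\norm{\vb}_2$ and raise both sides to the power $3-\nu$ to get $\norm{\Ab\vb}_2^{3-\nu} \leq \norm{\Ab}^{3-\nu}\norm{\vb}_2^{3-\nu}$, producing $M_g = M_f\norm{\Ab}^{3-\nu}$. In case (b), $3-\nu < 0$, so the monotonicity reverses and I need a \emph{lower} bound on $\norm{\Ab\vb}_2$; this is exactly where the assumption $\lambda_{\min}(\Ab^{\top}\Ab) > 0$ is used, since $\norm{\Ab\vb}_2^2 = \iprods{\Ab^{\top}\Ab\vb,\vb} \geq \lambda_{\min}(\Ab^{\top}\Ab)\,\norm{\vb}_2^2$ yields $\norm{\Ab\vb}_2^{3-\nu} \leq \lambda_{\min}(\Ab^{\top}\Ab)^{\frac{3-\nu}{2}}\norm{\vb}_2^{3-\nu}$, giving the constant $M_g = M_f\lambda_{\min}(\Ab^{\top}\Ab)^{\frac{3-\nu}{2}}$.

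One small item to address for completeness is the boundary convention $0/0 = 0$ used in Definition~\ref{de:gsc_def} for $\nu \neq 3$, which matters only when $\Ab\vb = 0$ or $\vb = 0$; in either case both sides of the inequality vanish, so the bound is trivial. Combining the chain rule identity with the two bounds on $\norm{\Ab\vb}_2^{3-\nu}$ and the local-norm invariance then yields the claimed inequality for $g$ with the stated constants in each case. I do not foresee any serious obstacle; the only item requiring care is handling the sign of the exponent $3-\nu$, which is precisely what motivates splitting into (a) and (b).
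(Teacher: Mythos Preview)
Your proposal is correct and follows essentially the same route as the paper's proof: chain rule for the second and third derivatives of $g$, the identity $\norm{\Ab\ub}_{\yb}=\norm{\ub}_{\xb}$ from $\nabla^2 g(\xb)=\Ab^{\top}\nabla^2 f(\yb)\Ab$, and then the case split on the sign of $3-\nu$ to bound the single residual factor $\norm{\Ab\vb}_2^{3-\nu}$. Your explicit handling of the $0/0$ convention is a small addition not spelled out in the paper, but the argument is otherwise identical.
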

 
\begin{proof}
Since $g(\xb) = f(\Ac(\xb)) = f(\Ab\xb + \bb)$, it is easy to show that $\nabla^2g(\xb) = \Ab^{\top}\nabla^2f(\Ac(\xb))\Ab$ and $\nabla^3g(\xb)[\vb] = \Ab^{\top}(\nabla^3f(\Ac(\xb)[\Ab\vb])\Ab$.
Let us denote by $\tilde{\xb} := \Ab\xb + \bb$, $\tilde{\ub} := \Ab\ub$, and $\tilde{\vb} := \Ab\vb$. 
Then, using Definition \ref{de:gsc_def}, we have
\begin{equation}\label{eq:affine_transf}
\begin{array}{ll}
\vert\iprods{\nabla^3g(\xb)[\vb]\ub, \ub}\vert &= \vert\iprods{\Ab^{\top}(\nabla^3f(\tilde{\xb})[\tilde{\vb}])\Ab\ub,\ub} \vert = \vert\iprods{\nabla^3f(\tilde{\xb})[\tilde{\vb}]\tilde{\ub},\tilde{\ub}} \vert \vspace{0.5ex}\\
&\overset{\tiny\eqref{eq:gsc_def}}{\leq} M_f\iprods{ \nabla^2 f(\tilde{\xb})\tilde{\ub},\tilde{\ub}}\iprods{\nabla^2f(\tilde{\xb})\tilde{\vb},\tilde{\vb}}^{\tfrac{\nu}{2}-1}\norm{\tilde{\vb}}_2^{3-\nu} \vspace{0.5ex}\\
&= M_f\iprods{\Ab^{\top}\nabla^2f(\Ac(\xb))\Ab\ub, \ub}\iprods{\Ab^{\top}\nabla^2f(\Ac(\xb))\Ab\vb, \vb}^{\tfrac{\nu}{2}-1}\Vert\Ab\vb\Vert_2^{3-\nu} \vspace{0.5ex}\\
&= M_f\iprods{\nabla^2g(\xb)\ub, \ub}\iprods{\nabla^2g(\xb)\vb, \vb}^{\frac{\nu}{2}-1}\Vert\Ab\vb\Vert_2^{3-\nu}.
\end{array}
\end{equation}
(a)~If $\nu \in (0, 3]$, then we have $\Vert\Ab\vb\Vert_2^{3-\nu} \leq \Vert\Ab\Vert^{3-\nu}\Vert\vb\Vert_2^{3-\nu}$.
Hence, the last inequality \eqref{eq:affine_transf} implies
\begin{equation*}
\vert\iprods{\nabla^3g(\xb)[\vb]\ub, \ub}\vert \leq M_f \Vert\Ab\Vert^{3-\nu}\iprods{\nabla^2g(\xb)\ub, \ub}\iprods{\nabla^2g(\xb)\vb, \vb}^{\frac{\nu}{2}-1}\Vert\vb\Vert_2^{3-\nu}, 
\end{equation*}
which shows that $g$ is $(M_g, \nu)$-generalized self-concordant with $M_g :=  M_f \Vert\Ab\Vert^{3-\nu}$.

\vspace{0.75ex}
\noindent (b)~Note that $\Vert\Ab\vb\Vert_2^2 = \vb^{\top}\Ab^{\top}\Ab\vb \geq \lambda_{\min}(\Ab^{\top}\Ab)\norm{\vb}_2^2 \geq 0$, where $\lambda_{\min}(\Ab^{\top}\Ab)$ is the smallest eigenvalue of $\Ab^{\top}\Ab$. 
If $\lambda_{\min}(\Ab^{\top}\Ab) > 0$ and $\nu > 3$, then we have $\Vert \Ab\vb\Vert_2^{3-\nu} \leq  \lambda_{\min}(\Ab^{\top}\Ab)^{\frac{3-\nu}{2}}\norm{\vb}_2^{3-\nu}$.
Combining this estimate and \eqref{eq:affine_transf}, we can show that $g$ is $(M_g, \nu)$-generalized self-concordant with $M_g := M_f\lambda_{\min}(\Ab^{\top}\Ab)^{\frac{3-\nu}{2}}$.
\Eproof
\end{proof}

\begin{remark}\label{re:limitation}
Proposition~\ref{pro:affine_transform} shows that generalized self-concordance is preserved via an affine transformations if $\nu\in (0, 3]$. 
If $\nu > 3$, then it requires $\Ab$ to be over-completed, i.e., $\lambda_{\min}(\Ab^{\top}\Ab) > 0$.
Hence, the theory developed in the sequel remains applicable for $\nu > 3$ if $\Ab$ is over-completed. 
\end{remark}

The following result is an extension of standard self-concordant functions $(\nu = 3)$ whose proof is very similar to \cite[Theorems 4.1.3, 4.1.4]{Nesterov2004} by replacing the parameters $M_f = 2$ and $\nu = 3$ with the general parameters $M_f \geq 0$ and $\nu > 0$ (or $\nu \geq 2$), respectively. We omit the detailed proof.

\begin{proposition}\label{pro:Hessian_nondegenerate}
Let $f$ be an $(M_f,\nu)$-generalized self-concordant function with $\nu > 0$. Then: 
\begin{itemize}
\item[$\mathrm{(a)}$] 
If $\nu \geq 2$ and $\dom{f}$ contains no straight line, then $\nabla^2{f}(\xb) \succ 0$ for any $\xb \in \dom{f}$.

\vspace{1ex}
\item[$\mathrm{(b)}$] 
If there exists $\bar{\xb}\in\mathrm{bd}(\dom{f})$, the boundary of $\dom{f}$,  then, for any $\bar{\xb}\in\mathrm{bd}(\dom{f})$, and any sequence $\set{\xb_k}\subset\dom{f}$ such that $\lim_{k\to\infty}\xb_k = \bar{\xb}$, we have $\lim_{k\to\infty}f(\xb_k) = +\infty$.
\end{itemize} 
\end{proposition}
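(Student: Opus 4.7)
The plan is to adapt Nesterov's proofs of Theorems 4.1.3 and 4.1.4 in \cite{Nesterov2004}, replacing the standard exponent $3/2$ and constant $M=2$ by the general $\nu/2$ and $M_f$. Since part (a) will rely on part (b), I would establish (b) first.

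\emph{For part (b),} I argue by contradiction: suppose $\set{\xb_k}\subset\dom{f}$, $\xb_k\to\bar{\xb}\in\mathrm{bd}(\dom{f})$, and $f(\xb_k)\leq M$ for some constant $M$. Integrating the gsc inequality \eqref{eq:gsc_def} along segments emanating from $\xb_k$ yields uniform control on the Hessian growth, hence a Dikin-type inclusion $\set{\yb : \norm{\yb-\xb_k}_{\xb_k}\leq r_0}\subseteq\dom{f}$ for some fixed $r_0>0$, together with an associated local lower bound of the form
$$f(\yb)\geq f(\xb_k)+\iprods{\nabla f(\xb_k),\yb-\xb_k}+\omega(\norm{\yb-\xb_k}_{\xb_k})$$
for a suitable increasing function $\omega\geq 0$ depending on $\nu$ and $M_f$ (this is the one-parameter generalization of Nesterov's Theorems 4.1.5--4.1.8). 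Since $\bar{\xb}\in\mathrm{bd}(\dom{f})$ and $\xb_k\to\bar{\xb}$, the Euclidean radius of these inclusions must tend to zero, forcing $\nabla^2 f(\xb_k)$ to blow up along some direction; combining this blow-up with the boundedness $f(\xb_k)\leq M$ and the lower bound above evaluated at a test point on the ray toward $\bar{\xb}$ produces the desired contradiction.

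\emph{For part (a),} fix $\bar{\xb}\in\dom{f}$ and suppose for contradiction that $\nabla^2 f(\bar{\xb})\ub=\0$ for some $\ub\neq\0$. Let $I\subseteq\R$ be the maximal open interval around $0$ with $\bar{\xb}+t\ub\in\dom{f}$, and define
$$\varphi(t):=\iprods{\nabla^2 f(\bar{\xb}+t\ub)\ub,\ub}\geq 0, \quad t\in I.$$
Specializing \eqref{eq:gsc_def} with $\vb=\ub$ yields the differential inequality $|\varphi'(t)|\leq C\varphi(t)^{\nu/2}$ with $C:=M_f\norm{\ub}_2^{3-\nu}$ and $\varphi(0)=0$. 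For $\nu=2$, Gronwall's inequality forces $\varphi\equiv 0$ on $I$. For $\nu>2$, suppose $\varphi>0$ on some connected component $J$ of $\set{t\in I : \varphi(t)>0}$; on $J$ the auxiliary function $\psi:=\varphi^{1-\nu/2}$ satisfies $|\psi'|\leq(\nu/2-1)C$ and is therefore Lipschitz on $J$, yet at any endpoint of $J$ at which $\varphi\to 0$ one has $\psi=\varphi^{-(\nu/2-1)}\to+\infty$---a contradiction. Hence $\varphi\equiv 0$ on $I$, so $g(t):=f(\bar{\xb}+t\ub)$ has $g''\equiv 0$ on $I$ and is affine there. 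If $I$ had a finite endpoint $t^\ast$, then $\bar{\xb}+t^\ast\ub\in\mathrm{bd}(\dom{f})$ while $g(t_k)$ would tend to a finite limit along any $t_k\to t^\ast$, contradicting part (b). Hence $I=\R$ and the line $\bar{\xb}+\R\ub\subseteq\dom{f}$, contradicting the no-line hypothesis.

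\emph{Main obstacle.} In (a), the case $\nu>2$ is delicate because $\varphi^{\nu/2}$ fails to be Lipschitz near zero, so uniqueness of the trivial solution of the differential inequality $|\varphi'|\leq C\varphi^{\nu/2}$ from the initial condition $\varphi(0)=0$ does not follow from standard ODE theory; the reciprocal-power dichotomy via $\psi=\varphi^{1-\nu/2}$ is the key device. In (b), generalizing Nesterov's Dikin-ball inclusion and the associated function-value lower bound to arbitrary $\nu>0$ requires carefully extending the one-dimensional estimates while tracking the extra factor $\norm{\vb}_2^{3-\nu}$ in \eqref{eq:gsc_def}; the interplay between the local norm $\norm{\cdot}_{\xb}$ and the Euclidean norm introduced by this factor is the main source of extra bookkeeping.
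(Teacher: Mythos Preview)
Your plan coincides with the paper's: both simply defer to Nesterov's Theorems~4.1.3--4.1.4 with the obvious parameter substitutions, and your argument for (a)---the differential inequality $|\varphi'|\le C\,\varphi^{\nu/2}$ with $\varphi(0)=0$, Gr\"onwall for $\nu=2$, and the reciprocal-power device $\psi=\varphi^{1-\nu/2}$ for $\nu>2$---is exactly the right mechanism.

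Where you diverge is in the ordering and in how much work you allot to (b). In Nesterov's definition (and implicitly here; cf.\ the proof of Proposition~\ref{pro:sum_rule} and the standing hypothesis at the start of \S\ref{subsec:key_bounds}), self-concordant-type functions are \emph{closed}. Under that convention (b) is immediate: $\bar{\xb}\in\mathrm{bd}(\dom{f})$ with $\dom{f}$ open gives $f(\bar{\xb})=+\infty$, and lower semicontinuity yields $\liminf_k f(\xb_k)\ge f(\bar{\xb})=+\infty$. The Dikin-inclusion and function-value bounds you propose for (b) are developed in \S\ref{subsec:key_bounds} \emph{under the standing assumption} $\nabla^2 f\succ 0$, i.e., under (a) itself, so invoking them to establish (b) before (a) is circular. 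Conversely, with closedness in hand you do not need (b) to finish (a): once $\varphi\equiv 0$ on the maximal interval $I$, the map $t\mapsto f(\bar{\xb}+t\ub)$ is affine, and a finite endpoint $t^\ast$ of $I$ would, by lower semicontinuity, place $\bar{\xb}+t^\ast\ub\in\dom{f}$, contradicting the maximality of $I$ in the open set $\dom{f}$. Closedness is not a cosmetic convenience here: $f(t)=t$ on $(0,1)$, extended by $+\infty$ elsewhere, is trivially $(0,\nu)$-generalized self-concordant for every $\nu>0$ yet violates (b).
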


Note that Proposition~\ref{pro:Hessian_nondegenerate}(a) only holds for $\nu \geq 2$.
If we consider $g(\xb) := f(\Ac(\xb))$ for a given affine operator $\Ac(x) = Ax + b$, then the non-degenerateness of $\nabla^2g$ is only guaranteed if $A$ is full-rank. 
Otherwise, it is non-degenerated in a given subspace of $A$.

\vspace{-3ex}
\subsection{\bf Generalized self-concordant functions with special structures}\label{subsec:special_funcs}
\vspace{-2ex}
We first show that if a generalized self-concordant function is strongly convex or has a Lipschitz gradient, then it can be cast into the special case $\nu = 2$ or $\nu = 3$.

\begin{proposition}\label{pro:scvx_lips_gsc}
Let $f\in\widetilde{\Fc}_{M_f,\nu}$ be an $(M_f,\nu)$-generalized self-concordant with $\nu > 0$. Then:
\begin{itemize}
\item[$\mathrm{(a)}$] If $\nu\in (0, 3]$  and $f$ is also strongly convex on $\dom{f}$ with the strong convexity parameter $\mu_f > 0$ in $\ell_2$-norm, then 
$f$ is also $(\hat{M}_f, \hat{\nu})$-generalized self-concordant with $\hat{\nu} = 3$ and $\hat{M}_f := \frac{M_f}{(\sqrt{\mu_f})^{3-\nu}}$.

\vspace{1ex}
\item[$\mathrm{(b)}$]  If  $\nu \geq 2$ and $\nabla{f}$ is Lipschitz continuous with the Lipschitz constant $L_f\in [0,+\infty)$ in $\ell_2$-norm, then $f$ is also $(\hat{M}_f, \hat{\nu})$-generalized self-concordant with $\hat{\nu} = 2$ and $\hat{M}_f := M_fL_f^{\frac{\nu}{2}-1}$.
\end{itemize}
\end{proposition}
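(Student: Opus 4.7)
The plan is to convert between the local norm $\|\cdot\|_x$ and the Euclidean norm $\|\cdot\|_2$ using the hypotheses (strong convexity or Lipschitz gradient), and then re-balance the exponents in the generalized self-concordant inequality. The starting point for both parts is the inequality
\begin{equation*}
\abs{\iprods{\nabla^3f(\xb)[\vb]\ub, \ub}} \leq M_f\norm{\ub}_{\xb}^2\norm{\vb}_{\xb}^{\nu-2}\norm{\vb}_2^{3-\nu},
\end{equation*}
from Definition~\ref{de:gsc_def}. The proof then amounts to trading a power of $\|\vb\|_2$ for a power of $\|\vb\|_{\xb}$ (in part (a)) or vice versa (in part (b)), which is allowed precisely by the sign of the relevant exponent.

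For part (a), I would first observe that strong convexity in $\ell_2$-norm with modulus $\mu_f > 0$ is equivalent to $\nabla^2 f(\xb) \succeq \mu_f\Id$ on $\dom{f}$, so that $\|\vb\|_{\xb}^2 \geq \mu_f\|\vb\|_2^2$, i.e.\ $\|\vb\|_2 \leq \mu_f^{-1/2}\|\vb\|_{\xb}$. Since $\nu \in (0,3]$ gives $3-\nu \geq 0$, I can raise this inequality to the power $3-\nu$ and plug it into the factor $\|\vb\|_2^{3-\nu}$ of the bound above. Collecting exponents then yields $\|\vb\|_{\xb}^{\nu-2}\cdot \|\vb\|_{\xb}^{3-\nu} = \|\vb\|_{\xb}$, which matches the required form for $\hat{\nu}=3$, with the extracted scalar $(\sqrt{\mu_f})^{-(3-\nu)}$ combining with $M_f$ into the claimed $\hat{M}_f = M_f/(\sqrt{\mu_f})^{3-\nu}$.

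For part (b), I would use the standard equivalence that $\nabla f$ being $L_f$-Lipschitz in $\ell_2$-norm gives $\nabla^2 f(\xb) \preceq L_f\Id$, hence $\|\vb\|_{\xb} \leq \sqrt{L_f}\,\|\vb\|_2$. Since $\nu \geq 2$ gives $\nu-2 \geq 0$, I can raise this inequality to the power $\nu-2$ and substitute into $\|\vb\|_{\xb}^{\nu-2}$ in the generalized self-concordant bound. Collecting exponents again, $\|\vb\|_2^{\nu-2}\cdot \|\vb\|_2^{3-\nu} = \|\vb\|_2$, which is precisely the form required for $\hat{\nu}=2$, and the scalar $L_f^{(\nu-2)/2}$ merges with $M_f$ to give $\hat{M}_f = M_f L_f^{\nu/2-1}$.

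I do not anticipate any real obstacle: the sign conventions on $\nu-2$ and $3-\nu$ match exactly the range restrictions stated in the proposition, which is why the exchange of norms is monotonic in the right direction. The only subtlety worth flagging is that these hypotheses are nontrivial exactly when $\nu \neq 3$ in (a) and $\nu \neq 2$ in (b); in the boundary cases one recovers the original inequality with $\hat{M}_f = M_f$, providing a useful sanity check.
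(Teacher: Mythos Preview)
Your proposal is correct and follows essentially the same argument as the paper: in both parts you rewrite $\norm{\vb}_{\xb}^{\nu-2}\norm{\vb}_2^{3-\nu}$ as a ratio raised to a nonnegative power times the target norm, then bound that ratio via $\nabla^2 f(\xb)\succeq \mu_f\Id$ (for (a)) or $\nabla^2 f(\xb)\preceq L_f\Id$ (for (b)). The paper's proof is line-for-line the same computation.
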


\begin{proof}
(a)~If $f$ is strongly convex with the strong convexity parameter $\mu_f > 0$ in $\ell_2$-norm, then we have $\iprods{\nabla^2{f}(\xb)\vb, \vb} \geq \mu_f\Vert\vb\Vert_2^2$ for any $\vb\in\R^p$.
Hence, $\frac{\norm{\vb}_{2}}{\Vert\vb\Vert_{\xb}} \leq \frac{1}{\sqrt{\mu_f}}$. 
In this case, \eqref{eq:gsc_def} leads to
\begin{equation*} 
\abs{\iprods{\nabla^3f(\xb)[\vb]\ub, \ub}} \leq M_f\norm{\ub}_{\xb}^2\left(\frac{\norm{\vb}_2}{\Vert\vb\Vert_{\xb}}\right)^{3-\nu}\norm{\vb}_{\xb} \leq \frac{M_f}{(\sqrt{\mu_f})^{3-\nu}}\Vert\ub\Vert_{\xb}^2\Vert\vb\Vert_{\xb}.
\end{equation*}
Hence, $f$ is $(\hat{M}_f, \hat{\nu})$ - generalized self-concordant with $\hat{\nu} = 3$ and $\hat{M}_f := \frac{M_f}{(\sqrt{\mu_f})^{3-\nu}}$.

(b)~Since $\nabla{f}$ is Lipschitz continuous with the Lipschitz constant $L_f\in [0,+\infty)$ in $\ell_2$-norm, we have $\Vert\vb\Vert_{\xb}^2 = \iprods{\nabla^2{f}(\xb)\vb, \vb} \leq L_f\Vert\vb\Vert_2^2$ for all $\vb\in\R^p$ which leads to $\frac{\Vert\vb\Vert_{\xb}}{\Vert\vb\Vert_2} \leq \sqrt{L_f}$ for all $\vb\in\R^p$.
On the other hand, $f\in\widetilde{\Fc}_{M_f,\nu}$ with $\nu \geq 2$, we can show that
\begin{equation*}
\abs{\iprods{\nabla^3f(\xb)[\vb]\ub, \ub}} \leq M_f\norm{\ub}_{\xb}^2\left(\frac{\norm{\vb}_{\xb}}{\Vert\vb\Vert_2}\right)^{\nu-2}\norm{\vb}_2 \leq  M_fL_f^{\frac{\nu-2}{2}}\Vert\ub\Vert_{\xb}^2\Vert\vb\Vert_2.
\end{equation*}
Hence, $f$ is also $(\hat{M}_f, \hat{\nu})$-generalized self-concordant with $\hat{\nu} = 2$ and $\hat{M}_f := M_fL_f^{\frac{\nu-2}{2}}$.
\Eproof
\end{proof}

Proposition~\ref{pro:scvx_lips_gsc} provides two important properties. 
If the gradient map $\nabla{f}$ of a generalized self-concordant function $f$ is Lipschitz continuous, we can always classify it into the special case $\nu = 2$.
Therefore, we can exploit both structures: generalized self-concordance and Lipschitz gradient to develop better algorithms.
This idea is also applied to generalized self-concordant and strongly convex functions.

Given $n$ smooth convex univariate functions $\varphi_i : \R\to\R$ satisfying \eqref{eq:gsc_def0} for $i=1,\cdots, n$ with the same order $\nu > 0$, we consider the function $f : \R^p\to \R$ defined by the following form:
\begin{equation}\label{eq:spc_form}
f(\xb) := \frac{1}{n}\sum_{i=1}^n\varphi_i(\ab_i^{\top}\xb + b_i),
\end{equation}
where $\ab_i\in\R^p$ and $b_i\in\R$ are given vectors and numbers, respectively for $i=1,\cdots, n$.
This convex function is called a finite sum and widely used in machine learning and statistics. 
The decomposable structure in \eqref{eq:spc_form} often appears in generalized linear models  \cite{bollapragada2016exact,byrd2016stochastic}, and empirical risk minimization  \cite{zhang2015disco}, where $\varphi_i$ is referred to as a loss function as can be found, e.g., in Table \ref{tbl:examples}.

Next, we show that if $\varphi_i$ is \gsc with $\nu\in [2,3]$, then $f$ is also \textit{generalized self-concordant}. 
This result is a direct consequence of Proposition~\ref{pro:sum_rule}  and Proposition~\ref{pro:affine_transform}.

\begin{corollary}\label{co:generalized_linear_func1}
If $\varphi_i$ in \eqref{eq:spc_form} satisfies  \eqref{eq:gsc_def0} for $i=1,\cdots, n$ with the same order  $\nu \in [2, 3]$ and $M_{\varphi_i} \geq 0$, then $f$ defined by \eqref{eq:spc_form} is also $(M_f,\nu)$-\gsc in the sense of Definition \ref{de:gsc_def} with the same order $\nu$ and the constant $M_f := n^{\frac{\nu}{2}-1}\max\set{M_{\varphi_i}\norm{\ab_i}_2^{3-\nu} \mid 1 \leq i \leq n}$.
\end{corollary}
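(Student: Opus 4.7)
The plan is to combine the affine transformation rule (Proposition~\ref{pro:affine_transform}) with the sum rule (Proposition~\ref{pro:sum_rule}), after first reconciling the univariate Definition~\ref{de:gsc_def0} with the multivariate Definition~\ref{de:gsc_def}.

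First, I would observe that for a univariate function $\varphi$, Definitions~\ref{de:gsc_def0} and \ref{de:gsc_def} are equivalent: if we plug $p=1$ into \eqref{eq:gsc_def} with scalars $u, v$, we get $|\varphi'''(t)|\,u^2|v| \leq M_\varphi\,\varphi''(t)\,u^2\,\varphi''(t)^{(\nu-2)/2}|v|^{\nu-2}|v|^{3-\nu}$, which simplifies exactly to \eqref{eq:one_var_gsc_def}. So each $\varphi_i$ in the hypothesis can be viewed as multivariate-gsc in the sense of Definition~\ref{de:gsc_def}.

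Next, I would write $f = \tfrac{1}{n}\sum_{i=1}^n g_i$ with $g_i(\xb) := \varphi_i(\Ac_i(\xb))$, where $\Ac_i(\xb) := \ab_i^\top\xb + b_i$ is the affine map from $\R^p$ to $\R$ determined by the $1\times p$ matrix $\Ab_i = \ab_i^\top$, whose operator norm is $\norm{\ab_i}_2$. Since $\nu \in [2,3] \subseteq (0,3]$, Proposition~\ref{pro:affine_transform}(a) applies and yields that $g_i$ is $(M_{g_i},\nu)$-generalized self-concordant with
\[
M_{g_i} = M_{\varphi_i}\,\norm{\ab_i}_2^{3-\nu}.
\]

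Finally, I would apply Proposition~\ref{pro:sum_rule} with coefficients $\beta_i = 1/n$ and summands $g_i$. Since $\nu \geq 2$, this gives that $f = \sum_{i=1}^n \beta_i g_i$ is $(M_f,\nu)$-generalized self-concordant with
\[
M_f = \max_{1\leq i\leq n}\beta_i^{1-\frac{\nu}{2}}M_{g_i} = \max_{1\leq i\leq n}\left(\tfrac{1}{n}\right)^{1-\frac{\nu}{2}}M_{\varphi_i}\norm{\ab_i}_2^{3-\nu} = n^{\frac{\nu}{2}-1}\max_{1\leq i\leq n}M_{\varphi_i}\norm{\ab_i}_2^{3-\nu},
\]
which matches the stated constant. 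There is no real obstacle here; the only subtle point is checking that the univariate definition is a genuine special case of the multivariate one so that Proposition~\ref{pro:affine_transform} is applicable, and tracking the exponents on $\beta_i = 1/n$ carefully. The restriction $\nu \in [2,3]$ in the corollary is precisely the intersection of the ranges required by the two propositions used.
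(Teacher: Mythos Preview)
Your proposal is correct and follows exactly the approach the paper itself indicates: the paper states the corollary as ``a direct consequence of Proposition~\ref{pro:sum_rule} and Proposition~\ref{pro:affine_transform}'' without further detail, and you have supplied precisely that derivation, including the careful tracking of the constant $n^{\frac{\nu}{2}-1}$ from $\beta_i = 1/n$ in the sum rule and $\norm{\ab_i}_2^{3-\nu}$ from the affine rule.
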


Finally, we show that if we regularize $f$ in  \eqref{eq:spc_form} by a strongly convex quadratic term, then the resulting function becomes self-concordant.
The proof can follow the same path as \cite[Lemma 2]{zhang2015disco}. 

\begin{proposition}\label{pro:generalized_linear_func_with_regularizer}
Let $f(\xb) := \frac{1}{n}\sum_{i=1}^n\varphi_i(\ab_i^{\top}\xb + b_i) + \psi(\xb)$, where $\psi(\xb) := \frac{1}{2}\iprods{\Qb\xb,\xb} + \cb^{\top}\xb$ is strongly convex quadratic function with $\Qb\in\Sc^p_{++}$.
If $\varphi_i$ satisfies  \eqref{eq:gsc_def0} for $i=1,\cdots, n$ with the same order $\nu \in (0, 3]$ and a constant $M_{\varphi_i} > 0$, then $f$ is $(\hat{M}_f, 3)$-\gsc in the sense of Definition~\ref{de:gsc_def} with  
$\hat{M}_f := \lambda_{\min}(\Qb)^{\frac{\nu - 3}{2}}\max\set{ M_{\varphi_i} \Vert\ab_i\Vert_2^{3 - \nu} \mid 1\leq i\leq n}$. 
\end{proposition}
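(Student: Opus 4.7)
The plan is a direct pointwise estimate of the third-derivative trilinear form, leveraging the strong convexity that $\Qb\succ 0$ injects into $\nabla^2 f$. Writing $t_i := \ab_i^{\top}\xb+b_i$, one has $\nabla^2 f(\xb) = \tfrac{1}{n}\sum_{i=1}^n\varphi_i''(t_i)\ab_i\ab_i^{\top}+\Qb\succeq \lambda_{\min}(\Qb)\,I$, which implies $\|\vb\|_2 \le \|\vb\|_{\xb}/\sqrt{\lambda_{\min}(\Qb)}$ for every $\vb\in\R^p$. This single inequality is the tool that converts the Euclidean prefactors in the univariate gsc inequality into local-norm prefactors carrying $\lambda_{\min}(\Qb)^{(\nu-3)/2}$, matching the constant claimed in the statement.

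Computing $\nabla^3 f(\xb)[\vb] = \tfrac{1}{n}\sum_i \varphi_i'''(t_i)(\ab_i^{\top}\vb)\ab_i\ab_i^{\top}$ and invoking $|\varphi_i'''(t_i)|\le M_{\varphi_i}\varphi_i''(t_i)^{\nu/2}$ from Definition~\ref{de:gsc_def0}, I would factor each summand as
\begin{equation*}
\varphi_i''(t_i)^{\nu/2}(\ab_i^{\top}\ub)^2|\ab_i^{\top}\vb| = \bigl[\varphi_i''(t_i)(\ab_i^{\top}\ub)^2\bigr]^{\nu/2}\cdot(\ab_i^{\top}\ub)^{2-\nu}\cdot|\ab_i^{\top}\vb|,
\end{equation*}
bound the last two factors by $\|\ab_i\|_2^{3-\nu}\|\ub\|_2^{2-\nu}\|\vb\|_2$ via Cauchy--Schwarz, pull out $\max_i M_{\varphi_i}\|\ab_i\|_2^{3-\nu}$, and apply Jensen's inequality to the concave map $x\mapsto x^{\nu/2}$ (valid since $\nu\le 2$) to get $\tfrac{1}{n}\sum_i[\varphi_i''(t_i)(\ab_i^{\top}\ub)^2]^{\nu/2}\le \bigl(\tfrac{1}{n}\sum_i\varphi_i''(t_i)(\ab_i^{\top}\ub)^2\bigr)^{\nu/2}\le \|\ub\|_{\xb}^{\nu}$. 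Trading the remaining $\|\ub\|_2^{2-\nu}$ for $\|\ub\|_{\xb}^{2-\nu}/\lambda_{\min}(\Qb)^{(2-\nu)/2}$ and $\|\vb\|_2$ for $\|\vb\|_{\xb}/\sqrt{\lambda_{\min}(\Qb)}$, and collecting exponents $\nu+(2-\nu)=2$ on $\|\ub\|_{\xb}$, $1$ on $\|\vb\|_{\xb}$, and $-(3-\nu)/2$ on $\lambda_{\min}(\Qb)$, reproduces precisely the $3$-gsc inequality with the claimed $\hat M_f = \lambda_{\min}(\Qb)^{(\nu-3)/2}\max_i M_{\varphi_i}\|\ab_i\|_2^{3-\nu}$.

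The main obstacle is the range $\nu\in(2,3]$, where the above factorization degrades on two points: the exponent $2-\nu$ becomes negative so $(\ab_i^{\top}\ub)^{2-\nu}$ is ill-defined on $\{\ab_i^{\top}\ub=0\}$, and $x\mapsto x^{\nu/2}$ becomes convex, reversing the direction of Jensen. For this regime I would fall back on a chained application of the already-established results: Proposition~\ref{pro:affine_transform}(a) to assert that each $\varphi_i(\ab_i^{\top}\cdot+b_i)$ is $(M_{\varphi_i}\|\ab_i\|_2^{3-\nu},\nu)$-gsc, Proposition~\ref{pro:sum_rule} with weights $\beta_i=1/n$ (applicable because $\nu\ge 2$) to aggregate into a $(M_0,\nu)$-gsc function, and finally Proposition~\ref{pro:scvx_lips_gsc}(a) with $\mu_f\ge \lambda_{\min}(\Qb)$ to upgrade the result to $(\hat M_f,3)$-gsc. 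This chain delivers the claimed structural form of $\hat M_f$ but introduces an extra prefactor $(1/n)^{1-\nu/2}=n^{\nu/2-1}$ coming from the weighted sum rule; absorbing this factor into the statement for $\nu>2$ appears to require a sharper splitting of the mixed power $\varphi_i''(t_i)^{\nu/2}$ that pairs the $\tfrac{1}{n}$-averaging with strong convexity more tightly, and this is the principal technical subtlety I anticipate.
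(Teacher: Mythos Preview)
Your direct argument for $\nu\in(0,2]$ is correct and is exactly the route the paper has in mind: the paper does not write out a proof but points to \cite[Lemma~2]{zhang2015disco}, whose argument is precisely the Jensen/Cauchy--Schwarz computation you describe (there specialized to $\nu=2$). So for that range your proposal and the paper's intended proof coincide.

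Your suspicion about $\nu\in(2,3]$ is not a removable technicality --- the stated constant is actually missing a factor $n^{\frac{\nu}{2}-1}$. A quick check at $\nu=3$: take $n=2$, $p=2$, $\ab_1=e_1$, $\ab_2=e_2$, $\varphi_1=\varphi_2=\varphi$ standard self-concordant with constant $M_\varphi$, and $Q=\epsilon I$. With $\ub=\vb$ and $A:=\varphi''(x_1)u_1^2$, $B:=\varphi''(x_2)u_2^2$, the claimed inequality with $\hat M_f=M_\varphi$ (since $\lambda_{\min}(Q)^{0}=1$) reads
\[
\tfrac12\bigl(A^{3/2}+B^{3/2}\bigr)\ \le\ \bigl(\tfrac12(A+B)+\epsilon\|u\|_2^2\bigr)^{3/2},
\]
which fails as $\epsilon\to 0$ whenever $A\neq B$ (convexity of $t\mapsto t^{3/2}$ gives the reverse inequality). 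With the extra $\sqrt{n}=\sqrt{2}$ factor it becomes $A^{3/2}+B^{3/2}\le (A+B)^{3/2}$, which is exactly $\|\cdot\|_{3/2}\le\|\cdot\|_1$ and is tight. So the ``sharper splitting'' you are looking for does not exist; your chained route via Proposition~\ref{pro:affine_transform}(a), Proposition~\ref{pro:sum_rule}, and Proposition~\ref{pro:scvx_lips_gsc}(a) already gives the correct constant $n^{\frac{\nu}{2}-1}\lambda_{\min}(Q)^{\frac{\nu-3}{2}}\max_i M_{\varphi_i}\|\ab_i\|_2^{3-\nu}$ for $\nu\in[2,3]$, consistent with Corollary~\ref{co:generalized_linear_func1}.
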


\vspace{-0.5ex}
\beforesubsec
\subsection{\bf Fenchel's conjugate of \gsc functions}\label{subsec:special_funcs}
\aftersubsec
\vspace{-0.25ex}
Primal-dual theory is fundamental in convex optimization. 
Hence, it is important to study the Fenchel conjugate of \gsc functions.

Let $f : \R^p\to\R$ be an $(M_f, \nu)$-\gsc function.
We consider Fenchel's conjugate $f^{\ast}$ of $f$ as
\begin{equation}\label{eq:conjugate}
f^{\ast}(x) = \sup_u\set{ \iprods{x, u} - f(u) \mid u\in\dom{f} }.
\end{equation}
Since $f$ is proper, closed, and convex, $f^{\ast}$ is well-defined and also proper, closed, and convex.
Moreover, since $f$ is smooth and convex, by Fermat's rule, if $u^{\ast}(x)$ satisfies $\nabla{f}(u^{\ast}(x)) = x$, then $f^{\ast}$ is well-defined at $x$.
This shows that $\dom{f^{\ast}} = \set{ x\in\R^p \mid \nabla{f}(u^{\ast}(x)) = x~\text{is solvable}}$.

\begin{example}\label{ex:exam2}
Let us look at some univariate functions.
By using \eqref{eq:conjugate}, we can directly show that:
\begin{enumerate}
\item If $\varphi(s) = \log(1 + e^{s})$, then $\varphi^{\ast}(t) = t\log(t) + (1-t)\log(1-t)$.
\vspace{0.75ex}
\item If $\varphi(s) = s\log(s)$, then $\varphi^{\ast}(t) = e^{t-1}$.
\vspace{0.75ex}
\item If $\varphi(s) = e^s$, then $\varphi^{\ast}(t) = t\log(t) - t$.
\end{enumerate}
\end{example}
Intuitively, these examples show that if $\varphi$ is generalized self-concordant, then its conjugate $\varphi^{\ast}$ is also generalized self-concordant.
For more examples, we refer to \cite[Chapter 13]{Bauschke2011}.
Let us generalize this result in the following proposition whose proof is given in Appendix \ref{apdx:pro:conjugate}.

\begin{proposition}\label{pro:conjugate}
If $f$ is $(M_f, \nu)$-generalized self-concordant in $\dom{f}\subseteq\R^p$ such that $\nabla^2{f}(x)\succ 0$ for $x\in\dom{f}$, then the conjugate function $f^{\ast}$ of $f$ given by \eqref{eq:conjugate} is well-defined, and  $(M_{f^{\ast}}, \nu_{\ast})$-generalized self-concordant on 
\begin{equation*}
\dom{f^{\ast}} := \set{x\in\R^p \mid f(u) -  \iprods{x, u} ~\textrm{is bounded from below on}~\dom{f}}, 
\end{equation*}
where $M_{f^{\ast}} = M_f$ and $\nu_{\ast} = 6 - \nu$ provided that $\nu \in [3, 6)$ if $p > 1$ and $\nu \in (0, 6)$ if $p = 1$.

Moreover, we have $\nabla{f^{\ast}}(x) = u^{\ast}(x)$ and $\nabla^2{f^{\ast}}(x) = \nabla^2{f}(u^{\ast}(x))^{-1}$, where $u^{\ast}(x)$ is a unique solution of the maximization problem $\max_{u}\set{\iprods{x, u} - f(u) \mid u\in\dom{f}}$ in \eqref{eq:conjugate} for any $x\in \dom{f^{\ast}}$.
\end{proposition}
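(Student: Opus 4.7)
The plan is to derive the Fenchel conjugate's derivatives via Legendre duality and then reduce the generalized self-concordance inequality for $f^{\ast}$ at $x$ to that of $f$ at $u^{\ast}(x)$ through the substitution $\tilde{v}:=\nabla^2 f(u^{\ast}(x))^{-1}v$. Since $\nabla^2 f(u)\succ 0$ on $\dom{f}$, the function $f$ is strictly convex, so for any $x\in\intx{\dom{f^{\ast}}}$ the supremum in \eqref{eq:conjugate} is attained at the unique stationary point $u^{\ast}(x)$ characterized by $\nabla f(u^{\ast}(x))=x$. The implicit function theorem then shows $u^{\ast}\in\mathcal{C}^{2}$ near $x$ with $\nabla u^{\ast}(x) = \nabla^2 f(u^{\ast}(x))^{-1}$, and the standard Fenchel identities yield $\nabla f^{\ast}(x)=u^{\ast}(x)$ and $\nabla^2 f^{\ast}(x)=\nabla^2 f(u^{\ast}(x))^{-1}$.

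Next I differentiate this Hessian identity in a direction $v\in\R^p$ by the matrix-inverse rule $\tfrac{d}{dt}A(t)^{-1} = -A(t)^{-1}A'(t)A(t)^{-1}$ together with $\nabla u^{\ast}(x)v = \tilde{v}$. This produces
\begin{equation*}
\nabla^3 f^{\ast}(x)[v] = -\nabla^2 f(u^{\ast}(x))^{-1}\,\nabla^3 f(u^{\ast}(x))[\tilde{v}]\,\nabla^2 f(u^{\ast}(x))^{-1}.
\end{equation*}
Setting $\tilde{u}:=\nabla^2 f(u^{\ast}(x))^{-1}u'$ for arbitrary $u'\in\R^p$ and contracting both sides with $u'$ yields the key identity
\begin{equation*}
\iprods{\nabla^3 f^{\ast}(x)[v]u',u'} = -\iprods{\nabla^3 f(u^{\ast}(x))[\tilde{v}]\tilde{u},\tilde{u}}.
\end{equation*}

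The third step applies Definition~\ref{de:gsc_def} to $f$ at $u^{\ast}(x)$ with $\tilde{u}$ and $\tilde{v}$. The local-norm factors pull back cleanly since $\iprods{\nabla^2 f(u^{\ast}(x))\tilde{u},\tilde{u}} = \iprods{u',\tilde{u}} = \iprods{\nabla^2 f^{\ast}(x)u',u'}$ and analogously with $v$ in place of $u'$; hence the $f$-local norms of $\tilde{u}$ and $\tilde{v}$ at $u^{\ast}(x)$ coincide with the $f^{\ast}$-local norms of $u'$ and $v$ at $x$. The only factor in the gsc bound for $f$ that is not immediately in the desired form is the Euclidean piece $\norm{\tilde{v}}_2^{3-\nu}$, which I would control via Cauchy--Schwarz: $\iprods{\nabla^2 f^{\ast}(x)v,v} = \iprods{v,\tilde{v}} \leq \norm{v}_2\norm{\tilde{v}}_2$, i.e.\ $\norm{\tilde{v}}_2 \geq \iprods{\nabla^2 f^{\ast}(x)v,v}/\norm{v}_2$. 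Raising to the power $3-\nu\leq 0$ and collecting the exponents via $\nu-2+2(3-\nu)=4-\nu=\nu_{\ast}-2$ reproduces precisely the gsc inequality for $f^{\ast}$ with constants $M_{f^{\ast}}=M_f$ and $\nu_{\ast}=6-\nu$.

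The main obstacle, and the reason the admissible range of $\nu$ depends on the dimension, is exactly the direction of this Cauchy--Schwarz step: raising $\norm{\tilde{v}}_2 \geq \iprods{\nabla^2 f^{\ast}(x)v,v}/\norm{v}_2$ to the exponent $3-\nu$ preserves the inequality only when $3-\nu\leq 0$, which forces $\nu\geq 3$ in the multivariate case $p>1$. In the scalar case $p=1$ the Cauchy--Schwarz step becomes an equality, so no sign restriction is needed and the argument is valid for every $\nu\in(0,6)$; the upper bound $\nu<6$ is required only to keep $\nu_{\ast}=6-\nu$ positive. Combined with the well-posedness of $u^{\ast}(x)$, which follows from strict convexity of $f$ and classical conjugate calculus, this completes the scheme.
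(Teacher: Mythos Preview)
Your proposal is correct and follows essentially the same approach as the paper: both establish the third-derivative identity $\iprods{\nabla^3 f^{\ast}(x)[v]u',u'} = -\iprods{\nabla^3 f(u^{\ast})[\tilde v]\tilde u,\tilde u}$ with $\tilde u,\tilde v$ the Hessian-preimages, pull back the local norms exactly, and then control the lone Euclidean factor $\norm{\tilde v}_2^{3-\nu}$ via the Cauchy--Schwarz bound $\iprods{\nabla^2 f^{\ast}(x)v,v}\le \norm{v}_2\norm{\tilde v}_2$, which is precisely where the restriction $\nu\ge 3$ (equality when $p=1$) enters. The only cosmetic difference is that the paper parametrizes by $u\mapsto x(u)=\nabla f(u)$ and differentiates through the chain rule, whereas you parametrize by $x$ and differentiate $\nabla^2 f^{\ast}(x)=\nabla^2 f(u^{\ast}(x))^{-1}$ directly via the matrix-inverse rule; the resulting identity and the exponent bookkeeping $(\nu-2)+2(3-\nu)=4-\nu=\nu_{\ast}-2$ are identical.
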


Proposition~\ref{pro:conjugate} allows us to apply our generalized self-concordance theory in this paper to the dual problem of a convex problem involving generalized self-concordant functions, especially, when the objective function of the primal problem is generalized self-concordant with $\nu \in (3, 4]$.
The Fenchel conjugates are certainly useful when we develop  optimization algorithms to solve constrained convex optimization involving generalized self-concordant functions, see, e.g., \cite{TranDinh2012e,TranDinh2012c}.

\beforesubsec
\subsection{\bf Generalized self-concordant approximation of nonsmooth convex functions}\label{subsec:smooth_thing}
\aftersubsec
Several well-known convex functions are nonsmooth. However, they can be approximated (up to an arbitrary accuracy) by a generalized self-concordant function via smoothing.
Smoothing techniques clearly allow us to enrich the applicability of our theory to nonsmooth convex problems.

Given a proper, closed, possibly nonsmooth, and convex function $f : \R^p\to\Rext$.
One can smooth $f$ using the following Nesterov's smoothing technique \cite{Nesterov2005c}
\begin{equation}\label{eq:smooth_fx} 
f_{\gamma}(x) := \sup_{u\in\dom{f^{\ast}}}\set{ \iprods{x, u} - f^{\ast}(u) - \gamma\omega(u)},
\end{equation}
where $f^{\ast}$ is the Fenchel conjugate of $f$, $\omega : \dom{\omega} \subseteq\R^p\to\R$ is a smooth convex function such that $\dom{f^{\ast}}\subseteq\dom{\omega}$, and $\gamma > 0$ is called a smoothness parameter.
In particular, if $f$ is Lipschitz continuous, then $\dom{f^{\ast}}$ is bounded \cite{Bauschke2011}.
Hence, the $\sup$ operator in \eqref{eq:smooth_fx} reduces to the $\max$ operator.

Our goal is to choose an appropriate smoothing function $\omega$ such that the smoothed function $f_{\gamma}$ is well-defined and  generalized self-concordant for any fixed smoothness parameter $\gamma > 0$.

\begin{example}\label{ex:smoothing1}
Let us provide a few examples with well-known nonsmooth convex functions:
\begin{itemize}
\item[(a)]~Consider the $\ell_1$-norm function $f(x) := \norm{x}_1$ in $\R^p$.
Then, it can be rewritten as
\begin{equation*}
\norm{x}_1 = \max_{u}\set{ \iprods{x, u} \mid \norm{u}_{\infty} \leq 1} =  \max_{u, v}\big\{ \iprods{x, u - v} \mid \sum_{i=1}^p(u_i + v_i) = 1, ~u,v\in\R^p_{+}\big\}.
\end{equation*}
We can smooth this function by $f_{\gamma}$ by choosing $\omega(u, v) := \ln(2p) + \sum_{i=1}^p(u_i\ln(u_i) + v_i\ln(v_i))$.
In this case, we obtain $f_{\gamma}(x) = \gamma\ln\left(\sum_{i=1}^p\left(e^{x_i/\gamma} + e^{-x_i/\gamma}\right)\right) - \gamma\ln(2p)$.
This function is clearly generalized self-concordant with $\nu = 2$, see  \cite[Lemma 4]{TranDinh2014d}.

However, if we choose $\omega(u) := p - \sum_{i=1}^p\sqrt{1 - u_i^2}$, then we get  $f_{\gamma}(x) = \sum_{i=1}^p\sqrt{x_i^2 + \gamma^2} - \gamma p$.
In this case, $f_{\gamma}$ is also generalized self-concordant with $\nu = \frac{8}{3}$ and $M_{f_{\gamma}} = 3\gamma^{-\frac{2}{3}}$.


\vspace{0.75ex}
\item[(b)] The hinge loss function $\varphi(t) := \max\set{0, 1 - t}$ can be written as $\varphi(t) = \frac{1}{2}\abs{1 - t} + \frac{1}{2}(1 - t)$.
Hence, we can smooth this function by  $\varphi_{\gamma}(t) := \gamma\ln\left(\frac{e^{\frac{(1-t)}{\gamma}} + e^{-\frac{(1-t)}{\gamma}}}{2}\right) + \frac{1}{2}(1-t)$  with a smoothness parameter $\gamma > 0$. 
Clearly, $\varphi_{\gamma}$ is generalized self-concordant with $\nu = 2$.
\end{itemize}
\end{example}

In many practical problems, the conjugate $f^{\ast}$ of $f$ can be written as the sum $f^{\ast} = \varphi + \delta_{\Uc}$, where $\varphi$ is a generalized self-concordant function, and $\delta_{\Uc}$ is the indicator function of a given nonempty, closed, and convex set $\Uc$.
In this case, $f_{\gamma}$ in \eqref{eq:smooth_fx} becomes
\begin{equation}\label{eq:hstar}
f_{\gamma}(x) := \sup_{u}\set{ \iprods{x, u} - \varphi(u) - \gamma\omega(u) \mid u\in\Uc}.
\end{equation}
If $\omega$ is a generalized self-concordant such that $\nu_{\varphi} = \nu_{\omega}$, and $ \Uc = \overline{\dom{\omega}\cap\dom{\varphi}}$, then $f_{\gamma}$ is generalized self-concordant with $\nu_{f_{\gamma}} = 6 - \nu_{\varphi}$ as shown in Proposition \ref{pro:conjugate}.

\beforesubsec
\subsection{\bf Key bounds on Hessian map, gradient map, and function values}\label{subsec:key_bounds}
\aftersubsec
Now, we develop some key bounds on the local norms, Hessian map, gradient map,  and function values of generalized self-concordant functions.
In this subsection, we assume that the Hessian map $\nabla^2{f}$ of $f$ is nondegenerate at any point in its domain.

For this purpose, given $\nu \geq 2$, we define the following quantity for any $\xb, \yb\in\dom{f}$:
\begin{equation}\label{eq:dxy_def}
d_{\nu}(\xb,\yb) := \begin{cases}
M_f\norm{\yb - \xb}_2 &\text{if $\nu = 2$} \vspace{1ex}\\
\left(\frac{\nu}{2}-1\right)M_f\norm{\yb-\xb}_2^{3-\nu}\Vert \yb - \xb\Vert_{\xb}^{\nu-2} &\text{if $\nu > 2$}.
\end{cases}
\end{equation}
Here, if $\nu > 3$, then we require $\xb \neq \yb$. Otherwise, we set $d_{\nu}(\xb, \yb) := 0$ if $\xb = \yb$.
In addition, we also define the  function $\bar{\bar{\omega}}_{\nu} : \R \to \R_{+}$ as
\begin{equation}\label{eq:d2omega_func}
\bar{\bar{\omega}}_{\nu}(\tau) := \begin{cases}
\frac{1}{\left(1-\tau\right)^{\frac{2}{\nu-2}}} &\text{if $\nu > 2$} \vspace{1ex}\\
e^{\tau} &\text{if $\nu = 2$},
\end{cases}
\end{equation}
with $\dom{\bar{\bar{\omega}}_{\nu}} = (-\infty, 1)$ if $\nu > 2$, and $\dom{\bar{\bar{\omega}}_{\nu}} = \R$ if $\nu = 2$.
We also adopt the Dikin ellipsoidal notion from \cite{Nesterov2004} as $W^0(\xb;r):=\set{\yb\in\mathbb{R}^p \mid d_{\nu}(\xb,\yb) < r}$. 

The next proposition provides a bound on the local norm defined by a \gsc function $f$. 
This bound is given for the local distances $\Vert\yb - \xb\Vert_{\xb}$ and $\Vert\yb-\xb\Vert_{\yb}$ between two points $\xb$ and $\yb$ in $\dom{f}$.

\begin{proposition}[Bound of local norms]\label{pro:xy_bounds}
If $\nu > 2$, then, for any $\xb\in\dom{f}$, we have $W^0(\xb; 1)\subseteq\dom{f}$.
For any $\xb,\yb\in\dom{f}$, let $d_{\nu}(\xb,\yb)$ be defined by \eqref{eq:dxy_def}, and $\bar{\bar{\omega}}_{\nu}(\cdot)$ be defined by \eqref{eq:d2omega_func}. 
Then, we have
\begin{equation}\label{eq:key_ineq1}
\bar{\bar{\omega}}_{\nu}\left( -d_{\nu}(\xb,\yb)\right)^{\frac{1}{2}}\norm{\yb-\xb}_{\xb} \leq \norm{\yb - \xb}_{\yb} \leq  \bar{\bar{\omega}}_{\nu}\left( d_{\nu}(\xb,\yb)\right)^{\frac{1}{2}}\norm{\yb-\xb}_{\xb}.
\end{equation}
If $\nu > 2$, then the right-hand side inequality of \eqref{eq:key_ineq1} holds if  $d_{\nu}(\xb, \yb) < 1$.
\end{proposition}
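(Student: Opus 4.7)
The plan is to reduce the proof to a univariate differential inequality along the line segment joining $\xb$ and $\yb$. Concretely, define $\zb(t) := \xb + t(\yb-\xb)$ and
\[
\psi(t) := \iprods{\nabla^2{f}(\zb(t))(\yb-\xb),\,\yb-\xb} = \norm{\yb-\xb}_{\zb(t)}^2,
\]
valid on the set of $t$ for which $\zb(t) \in \dom{f}$. Differentiating gives $\psi'(t) = \iprods{\nabla^3{f}(\zb(t))[\yb-\xb](\yb-\xb),\,\yb-\xb}$, and applying Definition~\ref{de:gsc_def} with $\ub = \vb = \yb-\xb$ at the point $\zb(t)$ yields the driving inequality
\[
|\psi'(t)| \leq M_f\, \psi(t)^{\nu/2}\, \norm{\yb-\xb}_2^{3-\nu}.
\]

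Next I integrate. For $\nu = 2$ this reads $|(\ln\psi)'(t)| \leq M_f \norm{\yb-\xb}_2$, so integration on $[0,1]$ gives $|\ln\psi(1) - \ln\psi(0)| \leq M_f \norm{\yb-\xb}_2 = d_{\nu}(\xb,\yb)$, which is equivalent to \eqref{eq:key_ineq1} via $\bar{\bar{\omega}}_{2}(\tau) = e^{\tau}$. For $\nu > 2$ I rewrite the bound as $|\tfrac{d}{dt}\psi(t)^{-(\nu-2)/2}| \leq \tfrac{\nu-2}{2} M_f \norm{\yb-\xb}_2^{3-\nu}$ and integrate from $0$ to $1$. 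Using $\psi(0) = \norm{\yb-\xb}_{\xb}^2$, $\psi(1) = \norm{\yb-\xb}_{\yb}^2$, and the definition \eqref{eq:dxy_def} of $d_{\nu}$, this produces $|R - 1| \leq d_{\nu}(\xb,\yb)$ with $R := (\norm{\yb-\xb}_{\xb}/\norm{\yb-\xb}_{\yb})^{\nu-2}$. Rearranging into $1 - d_{\nu} \leq R \leq 1 + d_{\nu}$ and inverting powers (the upper-side inversion requires $d_{\nu} < 1$ so that $1 - d_{\nu} > 0$) yields exactly \eqref{eq:key_ineq1} since $\bar{\bar{\omega}}_{\nu}(\pm d_{\nu}) = (1 \mp d_{\nu})^{-2/(\nu-2)}$.

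For the containment $W^0(\xb;1) \subseteq \dom{f}$ when $\nu > 2$, I set $T := \sup\set{t \in [0,1] \mid \zb(s) \in \dom{f},~\forall s \in [0,t]}$. A direct computation from \eqref{eq:dxy_def} shows $d_{\nu}(\xb, \zb(t)) = t\, d_{\nu}(\xb,\yb)$, so on $[0, T)$ the derivation above (applied with $\yb$ replaced by $\zb(t)$) gives the explicit bound $\psi(t) \leq \norm{\yb-\xb}_{\xb}^2\,\bar{\bar{\omega}}_{\nu}(t\,d_{\nu}(\xb,\yb))$. Since $t\,d_{\nu}(\xb,\yb) \leq d_{\nu}(\xb,\yb) < 1$ on $[0, T]$, $\psi$ remains uniformly bounded. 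The second-order Taylor identity $f(\zb(t)) = f(\xb) + t\iprods{\nabla{f}(\xb),\yb-\xb} + \int_0^t (t-s)\psi(s)\,ds$ then keeps $f(\zb(t))$ bounded on $[0,T)$. If $T < 1$, the point $\zb(T)$ must lie on $\mathrm{bd}(\dom{f})$, but Proposition~\ref{pro:Hessian_nondegenerate}(b) would then force $f(\zb(t)) \to +\infty$ as $t \to T^{-}$, a contradiction. Hence $T = 1$ and $\yb \in \dom{f}$.

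The main technical obstacle will be this last bootstrap/continuation step for $W^0(\xb;1) \subseteq \dom{f}$: the bound on $\psi$ is only legitimate while $\zb(t)$ stays in $\dom{f}$, so some care is required to transfer boundedness of $\psi$ into boundedness of $f(\zb(t))$ via Taylor's formula and then invoke Proposition~\ref{pro:Hessian_nondegenerate}(b) to exclude $\mathrm{bd}(\dom{f})$. The remaining computations are routine integrations and algebraic rearrangements once the governing ODE and the correspondence between $d_{\nu}$ and $\bar{\bar{\omega}}_{\nu}$ are in place.
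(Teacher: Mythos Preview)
Your proof is correct and follows essentially the same route as the paper: both define the univariate function $\psi(t)=\norm{\yb-\xb}_{\xb+t(\yb-\xb)}^2$, derive $|\psi'|\le M_f\psi^{\nu/2}\norm{\yb-\xb}_2^{3-\nu}$ from Definition~\ref{de:gsc_def}, and integrate the resulting inequality (the paper equivalently works with $\phi=\psi^{1-\nu/2}$ for $\nu>2$ and $\phi=\ln\psi$ for $\nu=2$). Your continuation argument for $W^0(\xb;1)\subseteq\dom{f}$ via the Taylor identity and Proposition~\ref{pro:Hessian_nondegenerate}(b) is actually more explicit than the paper's; the paper simply asserts $\dom{\phi}=\{t:\phi(t)>0\}$ and reads off the interval where the Lipschitz lower bound on $\phi$ stays positive, which implicitly relies on the same boundary blow-up reasoning you spelled out.
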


\begin{proof}
We first consider the case $\nu > 2$. 
Let $\ub\in\R^p$ and $\ub\neq 0$. Consider the following univariate function
\begin{equation*}
\phi(t) :=  \iprod{\nabla^2f(\xb+t\ub)\ub, \ub}^{1 - \tfrac{\nu}{2}}=\norm{\ub}_{\xb+t\ub}^{2-\nu}.
\end{equation*}
It is easy to compute the derivative of this function, and obtain
\begin{equation*}
\phi'(t) = \left(\frac{2-\nu}{2}\right)\frac{\iprods{\nabla^3f(\xb+t\ub)[\ub]\ub,\ub}}{\iprod{\nabla^2f(\xb+t\ub)\ub, \ub}^{\frac{\nu}{2}}} = \left(\frac{2-\nu}{2}\right)\frac{\iprods{\nabla^3f(\xb+t\ub)[\ub]\ub,\ub}}{\norm{\ub}_{\xb+t\ub}^{\nu}}.
\end{equation*}
Using Definition~\ref{de:gsc_def} with $\ub = \vb$ and $\xb+t\ub$ instead of $\xb$, we have  $\abs{\phi'(t)}\leq \frac{\nu-2}{2}M_f\norm{\ub}_2^{3-\nu}$.
This implies that $\phi(t) \geq \phi(0) - \frac{\nu-2}{2}M_f\norm{\ub}_2^{3-\nu}\abs{t}$.
On the other hand, we can see that $\dom{\phi} = \set{ t \in\R \mid \phi(t) > 0}$. 
Hence, we have $\dom{\phi}$ contains $\left(-\frac{2\phi(0)}{(\nu-2)M_f\norm{\ub}_2^{3-\nu}}, \frac{2\phi(0)}{(\nu-2)M_f\norm{\ub}_2^{3-\nu}}\right)$.
Using this fact and the definition of $\phi$, we can show that $\dom{f}$ contains $\set{ y := x + tu \mid \abs{t} < \frac{2\norm{\ub}_{\xb}^{2-\nu}}{(\nu-2)M_f\norm{\ub}_2^{3-\nu}} }$.
However, since $\abs{t} = \frac{\norm{y-x}_x^{\nu-2}}{\norm{u}_x^{\nu-2}}\frac{\norm{y-x}_2^{3-\nu}}{\norm{u}_2^{3-\nu}}$, the condition $\abs{t} < \frac{2\norm{\ub}_{\xb}^{2-\nu}}{(\nu-2)M_f\norm{\ub}_2^{3-\nu}}$ is equivalent to $d_{\nu}(\xb,\yb) < 1$.
This shows that $W^0(\xb; 1)\subseteq\dom{f}$.

Since $\big\vert\int_0^1\phi'(t)\mathrm{d}t\big\vert\leq \int_0^1\abs{\phi'(t)}\mathrm{d}t$, integrating $\phi'(t)$ over the interval $[0,1]$ we get
\begin{equation*}
\Big\vert\norm{\ub}_{\xb+\ub}^{2-\nu}-\norm{\ub}_{\xb}^{2-\nu} \Big\vert \leq \frac{\nu-2}{2}M_f\norm{\ub}_2^{3-\nu}.
\end{equation*}
Using $\ub=\yb-\xb$ in the last inequality, we get $\vert\norm{\yb-\xb}_{\yb}^{2-\nu}-\norm{\yb-\xb}_{\xb}^{2-\nu}\vert\leq  \frac{\nu-2}{2}M_f\norm{\yb-\xb}_2^{3-\nu}$ which is equivalent to
\begin{equation*}
\begin{array}{ll}
&\norm{\yb-\xb}_{\yb}^{\nu - 2} \leq \norm{\yb-\xb}_{\xb}^{\nu-2}\left(1 - \frac{\nu-2}{2}M_f\norm{\yb-\xb}_{\xb}^{\nu-2}\norm{\xb-\yb}_2^{3-\nu}\right)^{-1} =  \norm{\yb-\xb}_{\xb}^{\nu-2}\left(1 - d_{\nu}(\xb,\yb)\right)^{-1} \vspace{1ex}\\
&\norm{\yb-\xb}_{\yb}^{\nu-2}  \geq \norm{\yb-\xb}_{\xb}^{\nu-2}\left(1 +  \frac{\nu-2}{2}M_f\norm{\yb-\xb}_{\xb}^{\nu-2}\norm{\xb-\yb}_2^{3-\nu}\right)^{-1} =  \norm{\yb-\xb}_{\xb}^{\nu-2}\left(1 + d_{\nu}(\xb,\yb)\right)^{-1},
\end{array}
\end{equation*}
given that $d_{\nu}(\xb,\yb)<1$. 
Taking the power of $\tfrac{1}{\nu-2} > 0$ in both sides, we get \eqref{eq:key_ineq1} for the case $\nu > 2$.

Now, we consider the case $\nu = 2$.
Let $0\neq\ub\in\R^p$. We consider the following function
\begin{equation*}
\phi(t) := \ln\left(\iprod{\nabla^2f(\xb+t\ub)\ub,\ub}\right) = \ln\big(\Vert\ub\Vert_{\xb+t\ub}^2\big).
\end{equation*}
Clearly, it is easy to show that $\phi'(t) = \frac{ \iprods{\nabla^3f(\xb+t\ub)[\ub]\ub,\ub}}{\iprod{\nabla^2f(\xb+t\ub)\ub, \ub}}=\frac{ \iprods{\nabla^3f(\xb+t\ub)[\ub]\ub,\ub}}{\norm{\ub}_{\xb+t\ub}^2}$.
Using again Definition~\ref{de:gsc_def} with $\ub = \vb$ and $\xb+t\ub$ instead of $\xb$, we obtain $\abs{\phi'(t)}\leq M_f\norm{\ub}_2$.\\
Since $\big\vert\int_0^1\phi'(t)\mathrm{d}t \big\vert \leq \int_0^1\abs{\phi'(t)}\mathrm{d}t$, integrating $\phi'(t)$ over the interval $[0,1]$ we get
\begin{equation*}
\abs{\ln\left(\norm{\ub}_{\xb+\ub}^2\right) - \ln\left(\norm{\ub}_{\xb}^2\right)}\leq M_f\norm{\ub}_2.
\end{equation*}
Substituting $\ub=\yb-\xb$ into this inequality, we get $\big\vert \ln\norm{\yb-\xb}_{\yb} - \ln\norm{\yb-\xb}_{\xb} \big\vert \leq \tfrac{M_f}{2}\norm{\yb-\xb}_2$.
Hence, $\ln\norm{\yb-\xb}_{\xb} -\tfrac{M_f}{2}\norm{\yb-\xb}_2\leq \ln\norm{\yb-\xb}_{\yb} \leq \ln\norm{\yb-\xb}_{\xb} + \tfrac{M_f}{2}\norm{\yb-\xb}_2$. 
This inequality leads to \eqref{eq:key_ineq1} for the case $\nu = 2$.
\Eproof
\end{proof}

Next, we develop new bounds for the Hessian map of $f$ in the following proposition.

\begin{proposition}[Bounds of Hessian map]\label{pro:hessian_bounds}
For any $\xb,\yb\in\dom{f}$, let $d_{\nu}(\xb,\yb)$ be defined by \eqref{eq:dxy_def}, and $\bar{\bar{\omega}}_{\nu}(\cdot)$ be defined by \eqref{eq:d2omega_func}. Then, we have 
\begin{eqnarray}
&\left[1 - d_{\nu}(\xb,\yb)\right]^{\frac{2}{\nu-2}}\nabla^2f(\xb) \preceq \nabla^2f(\yb)  \preceq \left[1 - d_{\nu}(\xb, \yb)\right]^{\frac{-2}{\nu-2}}\nabla^2f(\xb)~~~&\text{if $\nu > 2$},\vspace{1ex}\label{eq:hessian_bound1}\\[6pt]
&e^{-d_{\nu}(\xb,\yb)}\nabla^2f(\xb) \preceq \nabla^2f(\yb)  \preceq  e^{d_{\nu}(\xb,\yb)}\nabla^2f(\xb)~~~&\text{if $\nu = 2$},\label{eq:hessian_bound1b}
\end{eqnarray}
where  \eqref{eq:hessian_bound1} holds if  $d_{\nu}(\xb, \yb) < 1$ for the case $\nu > 2$.
\end{proposition}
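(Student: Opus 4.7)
The plan is to reduce the claimed Loewner order bound to a scalar estimate along the segment joining $\xb$ and $\yb$. Fix an arbitrary $\ub\in\R^p$ with $\ub\neq 0$, set $\vb := \yb-\xb$, and define the scalar function $\psi(t) := \iprods{\nabla^2 f(\xb+t\vb)\ub,\ub} = \norm{\ub}_{\xb+t\vb}^2$ on $[0,1]$. Differentiating gives $\psi'(t) = \iprods{\nabla^3 f(\xb+t\vb)[\vb]\ub,\ub}$, so Definition~\ref{de:gsc_def} applied at the point $\xb+t\vb$ yields the key differential inequality
\begin{equation*}
\left|\frac{d}{dt}\ln\psi(t)\right| \;=\; \frac{|\psi'(t)|}{\psi(t)} \;\leq\; M_f\,\norm{\vb}_{\xb+t\vb}^{\nu-2}\,\norm{\vb}_2^{3-\nu}.
\end{equation*}
Since $\ub$ is arbitrary, any estimate on $\psi(1)/\psi(0)$ transfers directly to a Loewner bound between $\nabla^2 f(\yb)$ and $\nabla^2 f(\xb)$.

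In the case $\nu = 2$ the exponent $\nu-2$ vanishes, so the right-hand side collapses to the constant $M_f\norm{\vb}_2 = d_\nu(\xb,\yb)$. Integrating over $[0,1]$ gives $|\ln\psi(1) - \ln\psi(0)| \leq d_\nu(\xb,\yb)$, and exponentiating and quantifying over $\ub$ produces \eqref{eq:hessian_bound1b}.

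For $\nu > 2$, the right-hand side still depends on $t$ through $\norm{\vb}_{\xb+t\vb}$, so I would invoke Proposition~\ref{pro:xy_bounds} on the pair $(\xb,\xb+t\vb)$. A direct computation using \eqref{eq:dxy_def} shows $d_\nu(\xb,\xb+t\vb) = t\cdot d_\nu(\xb,\yb)$, and thus Proposition~\ref{pro:xy_bounds} gives $\norm{\vb}_{\xb+t\vb}^{\nu-2} \leq (1-t\,d_\nu(\xb,\yb))^{-1}\norm{\vb}_{\xb}^{\nu-2}$ for every $t\in[0,1]$, provided $d_\nu(\xb,\yb) < 1$. Substituting and observing that $M_f\norm{\vb}_{\xb}^{\nu-2}\norm{\vb}_2^{3-\nu} = \tfrac{2}{\nu-2}d_\nu(\xb,\yb)$, we obtain
\begin{equation*}
\left|\frac{d}{dt}\ln\psi(t)\right| \;\leq\; \frac{2}{\nu-2}\cdot\frac{d_\nu(\xb,\yb)}{1 - t\,d_\nu(\xb,\yb)}.
\end{equation*}
Integrating both sides from $0$ to $1$ yields $|\ln\psi(1) - \ln\psi(0)| \leq -\tfrac{2}{\nu-2}\ln(1-d_\nu(\xb,\yb))$, and after exponentiating we get $(1-d_\nu(\xb,\yb))^{2/(\nu-2)}\psi(0) \leq \psi(1) \leq (1-d_\nu(\xb,\yb))^{-2/(\nu-2)}\psi(0)$. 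Since $\ub$ was arbitrary, this is precisely \eqref{eq:hessian_bound1}.

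The main obstacle is the intermediate local-norm bound at $\xb+t\vb$. Proposition~\ref{pro:xy_bounds} guarantees both the requisite upper bound on $\norm{\vb}_{\xb+t\vb}$ and the inclusion $W^0(\xb;1)\subseteq\dom f$ that keeps the whole segment inside the domain, so the argument is self-contained once the identity $d_\nu(\xb,\xb+t\vb) = t\,d_\nu(\xb,\yb)$ is verified. Beyond that, everything reduces to the elementary integration $\int_0^1 \frac{d\tau}{1-t\,d_\nu(\xb,\yb)}\,dt = -\ln(1-d_\nu(\xb,\yb))/d_\nu(\xb,\yb)$, which is routine.
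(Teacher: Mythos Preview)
Your argument is correct and follows essentially the same route as the paper's own proof: define $\psi(t)=\iprods{\nabla^2 f(\xb+t\vb)\ub,\ub}$, use Definition~\ref{de:gsc_def} to bound $|\tfrac{d}{dt}\ln\psi(t)|$, handle $\nu=2$ by direct integration, and for $\nu>2$ invoke Proposition~\ref{pro:xy_bounds} together with the homogeneity $d_\nu(\xb,\xb+t\vb)=t\,d_\nu(\xb,\yb)$ to reduce the integrand to $\tfrac{2}{\nu-2}\cdot\tfrac{d_\nu(\xb,\yb)}{1-t\,d_\nu(\xb,\yb)}$ before integrating. One minor remark: you do not need the Dikin inclusion $W^0(\xb;1)\subseteq\dom f$ to keep the segment in the domain, since $\dom f$ is already convex; what you actually need from Proposition~\ref{pro:xy_bounds} is only the local-norm upper bound, valid because $d_\nu(\xb,\xb+t\vb)=t\,d_\nu(\xb,\yb)<1$.
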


\begin{proof}
Let $\nu > 2$ and $0\neq\ub\in\R^n$. 
Consider the following univariate function on $[0, 1]$:
\begin{equation*}
\psi(t) := \iprod{\nabla^2f(\xb+t(\yb-\xb))\ub,\ub},~~t\in [0,1].
\end{equation*}
If we denote by $\yb_t := \xb+t(\yb-\xb)$, then $\yb_t - \xb = t(\yb - \xb)$, $\psi(t)=\norm{\ub}_{\yb_t}^2$, and $\psi'(t) = \iprods{\nabla^3f(\yb_t)[\yb-\xb]\ub,\ub}$.
By Definition~\ref{de:gsc_def}, we have
\begin{eqnarray*}
\abs{\psi'(t)} & \leq  M_f\norm{\ub}_{\yb_t}^2\norm{\yb-\xb}_{\yb_t}^{\nu-2}\norm{\yb-\xb}_2^{3-\nu} = M_f \psi(t)\left[\tfrac{\norm{\yb_t-\xb}_{\yb_t}}{t}\right]^{\nu-2}\norm{\yb-\xb}_2^{3-\nu},
\end{eqnarray*}
which implies
\begin{equation}\label{prop2_eqkey}
\abs{\frac{d\ln\psi(t)}{dt}}\leq M_f \left[\tfrac{\norm{\yb_t-\xb}_{\yb_t}}{t}\right]^{\nu-2}\norm{\yb-\xb}_2^{3-\nu}.
\end{equation}
Assume that $d_{\nu}(\xb, \yb) < 1$. 
Then, by the definition of $\yb_t$ and $d_{\nu}(\cdot)$, we have $d_{\nu}(\xb, \yb_t) = td_{\nu}(\xb, \yb)$ and $\norm{\yb_t - \xb}_{\xb} = t\norm{\yb - \xb}_{\xb}$.
Using Proposition~\ref{pro:xy_bounds}, we can derive
\begin{equation*}
\begin{array}{ll}
\frac{1}{t}\norm{\yb_t-\xb}_{\yb_t} & \leq \frac{1}{t}\left[1 - \left(\frac{\nu}{2}-1\right)\norm{\yb_t - \xb}_2^{3-\nu}\norm{\yb_t - \xb}_{\xb}^{\nu-2}\right]^{-\frac{1}{\nu - 2}}\norm{\yb_t - \xb}_{\xb}\vspace{1ex}\\
& = \frac{1}{t}\left[1 - d_{\nu}(\xb, \yb_t)\right]^{-\frac{1}{\nu-2}}\norm{\yb_t - \xb}_{\xb}\vspace{1ex}\\
& =  \left[1 - d_{\nu}(\xb, \yb)t\right]^{-\frac{1}{\nu-2}}\norm{\yb - \xb}_{\xb}.
\end{array}
\end{equation*}
Hence, we can further derive
\begin{equation*} 
\left[\frac{1}{t}\norm{\yb_t-\xb}_{\yb_t}\right]^{\nu-2} \leq \frac{\norm{\yb-\xb}_{\xb}^{\nu-2}}{1 - d_{\nu}(\xb,\yb)t}
\end{equation*}
Integrating $\frac{d\ln\psi(t)}{dt}$ with respect to $t$ on $[0,1]$ and using the last inequality and  \eqref{prop2_eqkey}, we get
\begin{equation*} 
\abs{\int_0^1\frac{d\ln\psi(t)}{dt}dt}\leq \int_0^1\abs{\frac{d\ln\psi(t)}{dt}}dt\leq \norm{\yb-\xb}_{\xb}^{\nu-2}\norm{\yb-\xb}_2^{3-\nu}\int_0^1\frac{dt}{1 - d_{\nu}(\xb,\yb)t}.
\end{equation*}
Clearly, we can compute this integral explicitly as
\begin{equation*}
\abs{\ln\left[\frac{\norm{\ub}^2_{\yb}}{\norm{\ub}^2_{\xb}}\right]}  = \abs{\ln\left[\frac{\psi(1)}{\psi(0)}\right]} \leq \frac{-2d_{\nu}(\xb,\yb)}{(\nu-2)d_{\nu}(\xb,\yb)}\ln\left[ 1 - d_{\nu}(\xb,\yb)\right] = \ln\left[\left(1 - d_{\nu}(\xb,\yb)\right)^{\frac{-2}{\nu-2}}\right].
\end{equation*}
Rearranging this inequality, we obtain
\begin{equation*}
\left[ 1 - d_{\nu}(\xb,\yb)\right]^{\frac{2}{\nu-2}} \leq \frac{\norm{\ub}^2_{\yb}}{\norm{\ub}^2_{\xb}} \equiv \frac{\iprods{\nabla^2f(\yb)\ub,\ub}}{\iprods{\nabla^2f(\xb)\ub,\ub}} \leq \left[1 - d_{\nu}(\xb,\yb)\right]^{\frac{-2}{\nu-2}}.
\end{equation*}
Since this inequality holds for any $0\neq\ub\in\R^p$, it implies \eqref{eq:hessian_bound1}. 
If $\ub=0$, then \eqref{eq:hessian_bound1} obviously holds.

Now, we consider the case $\nu = 2$.
It follows from \eqref{prop2_eqkey} that 
\begin{equation*}
\abs{\ln\left[\frac{\norm{\ub}^2_{\yb}}{\norm{\ub}^2_{\xb}}\right]} = \abs{\int_0^1\frac{d\ln\psi(t)}{dt}dt} \leq \int_0^1\abs{\frac{d\ln\psi(t)}{dt}}dt \leq M_f\int_0^1\norm{\yb-\xb}_2dt = M_f\norm{\yb - \xb}_2.
\end{equation*}
Since this inequality holds for any $\ub\in\R^p$, it implies  \eqref{eq:hessian_bound1b}.
\Eproof
\end{proof}

The following corollary provides a bound on the mean of the Hessian map $G(\xb, \yb) := \int_0^1\nabla^2{f}(\xb + \tau(\yb - \xb))d\tau$ whose proof is moved to Appendix~\ref{apdx:co:hessian_bound2}.

\begin{corollary}\label{co:hessian_bound2}
For any $\xb,\yb\in\dom{f}$, let $d_{\nu}(\xb,\yb)$ be defined by \eqref{eq:dxy_def}.
Then, we have 
\begin{equation}\label{eq:hessian_bound2}
\underline{\kappa}_{\nu}(d_{\nu}(\xb,\yb))\nabla^2f(\xb) \preceq \int_0^1\nabla^2{f}(\xb + \tau(\yb - \xb))d\tau \preceq  \overline{\kappa}_{\nu}(d_{\nu}(\xb,\yb))\nabla^2f(\xb), 
\end{equation}
where
\needcheck{
\begin{equation*}
\begin{array}{ll}
&\underline{\kappa}_{\nu}(t) := \begin{cases}
\frac{1-e^{-t} }{t} &\text{if $\nu = 2$}\vspace{0.75ex}\\
\frac{1-(1-t)^2}{2t} &\text{if $\nu = 4$}\vspace{0.75ex}\\
\frac{(\nu - 2)}{\nu}\left[\frac{1  -  (1 -  t)^{\frac{\nu}{\nu -  2}}}{t}\right] &\text{if $\nu > 2$~ and ~$\nu \neq 4$},
\end{cases}\\
\text{and}~~~~& \\
&\overline{\kappa}_{\nu}(t) := \begin{cases}
\frac{e^{t}-1}{t} &\text{if $\nu = 2$} \vspace{0.75ex}\\
\frac{-\ln(1 - t)}{t} &\text{if $\nu = 4$} \vspace{0.75ex}\\
\left(\frac{\nu - 2}{\nu - 4}\right)\left[\frac{1 - (1 - t)^{\frac{\nu-4}{\nu-2}}}{t}\right] &\text{if $\nu > 2$~and ~$\nu \neq 4$}.
\end{cases}
\end{array}
\end{equation*}
}
Here, if $\nu > 2$, then we require $d_{\nu}(\xb, \yb)$ to satisfy $d_{\nu}(\xb, \yb) < 1$ for $\xb,\yb\in\dom{f}$ in \eqref{eq:hessian_bound2}.
\end{corollary}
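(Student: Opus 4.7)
The plan is to reduce the integral bound to the pointwise Hessian bounds already established in Proposition~\ref{pro:hessian_bounds}, then compute the resulting one-variable integrals explicitly.

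First, I would introduce the path $y_\tau := x + \tau(y-x)$ for $\tau \in [0,1]$ and observe the scaling identity
\begin{equation*}
d_\nu(x, y_\tau) \;=\; \tau\, d_\nu(x,y),
\end{equation*}
which follows directly from the definition \eqref{eq:dxy_def}: indeed $\|y_\tau-x\|_2 = \tau\|y-x\|_2$ and $\|y_\tau-x\|_x = \tau\|y-x\|_x$, and the exponents $(3-\nu)+(\nu-2) = 1$ combine to give a single factor of $\tau$. The same identity in the case $\nu=2$ is just $d_2(x,y_\tau) = M_f\tau\|y-x\|_2 = \tau d_2(x,y)$. Under the assumption $d_\nu(x,y) < 1$ when $\nu > 2$, this guarantees $d_\nu(x,y_\tau) < 1$ for all $\tau \in [0,1]$, so $y_\tau \in \dom{f}$ by Proposition~\ref{pro:xy_bounds} and Proposition~\ref{pro:hessian_bounds} applies at every $y_\tau$.

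Next, I would apply Proposition~\ref{pro:hessian_bounds} with $y$ replaced by $y_\tau$. For $\nu > 2$ this gives
\begin{equation*}
(1 - \tau d)^{\frac{2}{\nu-2}}\nabla^2 f(x) \;\preceq\; \nabla^2 f(y_\tau) \;\preceq\; (1 - \tau d)^{-\frac{2}{\nu-2}}\nabla^2 f(x),
\end{equation*}
where I abbreviate $d := d_\nu(x,y)$; for $\nu = 2$ the analogous bound has $e^{\pm\tau d}$ in place of $(1-\tau d)^{\mp 2/(\nu-2)}$. Integrating these operator inequalities termwise over $\tau \in [0,1]$ is legitimate because $\nabla^2 f(x)$ is a fixed symmetric positive definite matrix, so the inequalities reduce to scalar inequalities after pairing with any vector $u$.

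The remaining work is therefore just evaluating the scalar integrals $\int_0^1 (1 - \tau d)^{\pm 2/(\nu-2)} d\tau$ and $\int_0^1 e^{\pm\tau d} d\tau$, yielding the claimed constants $\underline{\kappa}_\nu(d)$ and $\overline{\kappa}_\nu(d)$. The three subcases correspond exactly to where the antiderivative changes form: the case $\nu = 2$ gives exponentials; the case $\nu = 4$ makes the upper-bound exponent $-2/(\nu-2) = -1$ so the antiderivative becomes a logarithm, while the lower-bound exponent $+1$ gives a simple polynomial; all other $\nu > 2$ produce the generic power-law antiderivative with exponent $\frac{\nu-4}{\nu-2}$ or $\frac{\nu}{\nu-2}$. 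I do not anticipate a major obstacle here: the main care is to keep track of the case $\nu=4$ (where the upper-bound integral is genuinely singular in the formula and must be handled separately as a logarithm), and to verify the scaling identity $d_\nu(x,y_\tau) = \tau d_\nu(x,y)$ uniformly in all regimes of $\nu \geq 2$.
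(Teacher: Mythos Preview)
Your proposal is correct and follows essentially the same approach as the paper's proof: introduce $y_\tau = x + \tau(y-x)$, use the scaling identity $d_\nu(x,y_\tau) = \tau d_\nu(x,y)$, apply the pointwise Hessian bounds of Proposition~\ref{pro:hessian_bounds} along the segment, and then integrate the resulting scalar functions over $\tau\in[0,1]$, splitting into the cases $\nu=2$, $\nu=4$, and the generic $\nu>2$ for the antiderivative. Your write-up is in fact slightly more explicit than the paper's (you justify the scaling identity and the legitimacy of integrating the operator inequalities), but the argument is the same.
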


We prove a bound on the gradient inner product  of a generalized self-concordant function $f$.

\begin{proposition}[Bounds of  gradient map]\label{pro:gradient_bound1}
For any $\xb, \yb\in\dom{f}$, we have
\begin{equation}\label{eq:gradient_bound1}
\bar{\omega}_{\nu}\left(-d_{\nu}(\xb, \yb)\right)\norm{\yb-\xb}_{\xb}^2 \leq \iprod{\nabla f(\yb) -  \nabla f(\xb),\yb -  \xb} \leq \bar{\omega}_{\nu}\left(d_{\nu}(\xb, \yb)\right)\norm{\yb-\xb}_{\xb}^2,
\end{equation}
where, if $\nu > 2$, then the right-hand side inequality of \eqref{eq:gradient_bound1} holds if $d_{\nu}(\xb, \yb) < 1$, and
\needcheck{\begin{equation}\label{eq:domega_def}
\bar{\omega}_{\nu}(\tau) := \begin{cases}
\frac{e^{\tau}-1}{\tau} &\text{if $\nu = 2$} \vspace{0.75ex}\\
\frac{\ln\left(1 - \tau\right)}{-\tau} &\text{if $\nu = 4$} \vspace{0.75ex}\\
 \left(\tfrac{\nu-2}{\nu-4}\right)\frac{1 - \left(1 - \tau\right)^{\frac{\nu-4}{\nu-2}}}{\tau} &\text{otherwise}.
\end{cases}
\end{equation}}
Here, $\bar{\omega}_{\nu}(\tau) \geq 0$ for all $\tau \in \dom{\bar{\omega}_{\nu}}$.
\end{proposition}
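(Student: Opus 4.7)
The plan is to reduce the gradient inner product to a one-dimensional integral of the Hessian quadratic form along the segment $[\xb,\yb]$, bound the integrand pointwise using Proposition \ref{pro:xy_bounds}, and then integrate.

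First, I would introduce $\xb_\tau := \xb + \tau(\yb - \xb)$ for $\tau\in[0,1]$ and the scalar function $h(\tau) := \iprods{\nabla^2 f(\xb_\tau)(\yb-\xb),\yb-\xb} = \norm{\yb-\xb}_{\xb_\tau}^2$. By the fundamental theorem of calculus applied to $\tau\mapsto \iprods{\nabla f(\xb_\tau),\yb-\xb}$,
\begin{equation*}
\iprod{\nabla f(\yb) - \nabla f(\xb),\yb-\xb} \;=\; \int_0^1 h(\tau)\,d\tau.
\end{equation*}

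Next, I would apply Proposition \ref{pro:xy_bounds} to the pair of points $(\xb,\xb_\tau)$ rather than $(\xb,\yb)$. Since $\xb_\tau - \xb = \tau(\yb-\xb)$, one has $\norm{\xb_\tau-\xb}_{\xb} = \tau\norm{\yb-\xb}_{\xb}$, $\norm{\xb_\tau-\xb}_{\xb_\tau} = \tau\norm{\yb-\xb}_{\xb_\tau}$, and, directly from \eqref{eq:dxy_def}, $d_\nu(\xb,\xb_\tau) = \tau\,d_\nu(\xb,\yb)$. Dividing the resulting inequalities by $\tau>0$ and squaring gives the pointwise estimate
\begin{equation*}
\bar{\bar{\omega}}_\nu\!\bigl(-\tau d_\nu(\xb,\yb)\bigr)\,\norm{\yb-\xb}_{\xb}^2 \;\le\; h(\tau) \;\le\; \bar{\bar{\omega}}_\nu\!\bigl(\tau d_\nu(\xb,\yb)\bigr)\,\norm{\yb-\xb}_{\xb}^2,
\end{equation*}
where for $\nu>2$ the upper bound uses $d_\nu(\xb,\yb)<1$ so that $1-\tau d_\nu(\xb,\yb)>0$ on $[0,1]$.

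Integrating over $\tau\in[0,1]$ reduces the claim to the identity $\int_0^1 \bar{\bar{\omega}}_\nu(\pm \tau d)\,d\tau = \bar{\omega}_\nu(\pm d)$, with $d:=d_\nu(\xb,\yb)$. For $\nu>2$, $\nu\neq 4$, substituting $u=1\mp\tau d$ gives
\begin{equation*}
\int_0^1 (1\mp \tau d)^{-\frac{2}{\nu-2}}\,d\tau \;=\; \frac{\nu-2}{\nu-4}\cdot\frac{1-(1\mp d)^{\frac{\nu-4}{\nu-2}}}{\pm d},
\end{equation*}
which matches the third branch of \eqref{eq:domega_def} evaluated at $\pm d$. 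The case $\nu=4$ corresponds to exponent $-1$ and produces the logarithm in the second branch, while the case $\nu=2$ is handled by integrating $e^{\pm\tau d}$ to obtain $(e^{\pm d}-1)/(\pm d)$, i.e.\ the first branch. Non-negativity $\bar{\omega}_\nu(\tau)\ge 0$ on $\dom{\bar{\omega}_\nu}$ follows either from the representation as an integral of a non-negative function or by checking signs of numerator and denominator on each side of $\tau=0$.

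The main obstacle is the routine but sign-sensitive bookkeeping in the final integration step, especially keeping track of the exponent $(\nu-4)/(\nu-2)$ and the fact that raising to a negative power reverses inequalities; the conceptual content is entirely in the pointwise bound derived via Proposition \ref{pro:xy_bounds}, which already sharpens what the Hessian comparison in Proposition \ref{pro:hessian_bounds} would yield.
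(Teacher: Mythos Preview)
Your proof is correct and follows essentially the same route as the paper: express the gradient inner product as $\int_0^1 \norm{\yb-\xb}_{\xb_\tau}^2\,d\tau$, apply Proposition~\ref{pro:xy_bounds} to the pair $(\xb,\xb_\tau)$ together with the scaling $d_\nu(\xb,\xb_\tau)=\tau d_\nu(\xb,\yb)$, and integrate $\bar{\bar{\omega}}_\nu(\pm\tau d)$ explicitly. The only cosmetic difference is that the paper writes the integrand as $\tfrac{1}{t^2}\norm{\yb_t-\xb}_{\yb_t}^2$ before cancelling the $t^2$, whereas you cancel first; your added detail on evaluating the integrals and on the non-negativity of $\bar{\omega}_\nu$ is fine.
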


\begin{proof}
Let $\yb_t := \xb + t(\yb - \xb)$. 
By the mean-value theorem, we have 
\begin{equation}\label{eq:estimate1}
\iprod{\nabla f(\yb)-\nabla f(\xb),\yb-\xb} =  \int_0^1\iprod{\nabla^2 f(\yb_t)(\yb-\xb),\yb-\xb}\ud t  = \int_0^1\frac{1}{t^2}\norm{\yb_t-\xb}_{\yb_t}^2\ud t.
\end{equation}
We consider the function $\bar{\bar{\omega}}_{\nu}$ defined by \eqref{eq:d2omega_func}.
It follows from Proposition \ref{pro:xy_bounds} that
\begin{equation*}
\bar{\bar{\omega}}_{\nu}\left(-d_{\nu}(\xb,\yb_t)\right)\norm{\yb_t-\xb}_{\xb}^2 \leq \norm{\yb_t -\xb}_{\yb_t}^2 \leq \bar{\bar{\omega}}_{\nu}\left(d_{\nu}(\xb,\yb_t)\right)\norm{\yb_t-\xb}_{\xb}^2.
\end{equation*}
Now, we note that $d_{\nu}(\xb,\yb_t) = td_{\nu}(\xb,\yb)$ and $\norm{\yb_t-\xb}_{\xb} = t\norm{\yb -\xb}_{\xb}$, the last estimate leads to
\begin{equation*}
\bar{\bar{\omega}}_{\nu}\left(-td_{\nu}(\xb,\yb)\right)\norm{\yb - \xb}_{\xb}^2 \leq \frac{1}{t^2}\norm{\yb_t -\xb}_{\yb_t}^2 \leq \bar{\bar{\omega}}_{\nu}\left(td_{\nu}(\xb,\yb)\right)\norm{\yb-\xb}_{\xb}^2.
\end{equation*}
Substituting this estimate into \eqref{eq:estimate1}, we obtain
\begin{equation*} 
\norm{\yb - \xb}_{\xb}^2\int_0^1\bar{\bar{\omega}}_{\nu}\left(-td_{\nu}(\xb,\yb)\right)dt \leq  \iprod{\nabla f(\yb)-\nabla f(\xb),\yb-\xb} \leq \norm{\yb - \xb}_{\xb}^2\int_0^1\bar{\bar{\omega}}_{\nu}\left(td_{\nu}(\xb,\yb)\right)dt.
\end{equation*}
Using the function $\bar{\bar{\omega}}_{\nu}(\tau)$ from \eqref{eq:d2omega_func} to compute the left-hand side and the right-hand side integrals, we obtain \eqref{eq:gradient_bound1}.
\Eproof
\end{proof}

Finally, we prove a bound  on the function values of an $(M_f, \nu)$-\gsc function $f$ in the following proposition.

\begin{proposition}[Bounds of  function values]\label{pro:fx_bound1}
For any $\xb, \yb\in\dom{f}$, we have
\begin{equation}\label{eq:f_bound1}
\omega_{\nu}\left(-d_{\nu}(\xb, \yb)\right)\norm{\yb-\xb}_{\xb}^2 \leq f(\yb) - f(\xb) - \iprod{\nabla f(\xb),\yb - \xb} \leq \omega_{\nu}\left(d_{\nu}(\xb, \yb)\right)\norm{\yb-\xb}_{\xb}^2,
\end{equation}
where, if $\nu > 2$, then the right-hand side inequality of \eqref{eq:f_bound1} holds if $d_{\nu}(\xb, \yb) < 1$. 
Here, $d_{\nu}(\xb,\yb)$ is defined by \eqref{eq:dxy_def} and $\omega_{\nu}$ is defined by
\needcheck{\begin{equation}\label{eq:omega_def}
\omega_{\nu}(\tau) := \begin{cases}
\frac{e^{\tau}-\tau - 1}{\tau^2} &\text{if $\nu = 2$} \vspace{0.75ex}\\
\frac{-\tau-\ln(1-\tau)}{\tau^2} &\text{if $\nu = 3$} \vspace{0.75ex}\\
\frac{(1-\tau)\ln\left(1 - \tau\right) + \tau}{\tau^2} &\text{if $\nu = 4$} \vspace{0.75ex}\\
 \left(\tfrac{\nu-2}{4-\nu}\right)\frac{1}{\tau}\left[\frac{\nu-2}{2(3-\nu)\tau}\left((1 - \tau)^{\frac{2(3-\nu)}{2-\nu}} - 1\right) - 1\right] &\text{otherwise}.
\end{cases}
\end{equation}}
Note that $\omega_{\nu}(\tau) \geq 0$ for all $\tau \in \dom{\omega_{\nu}}$.
\end{proposition}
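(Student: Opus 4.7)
The plan is to mimic the strategy already used in the proof of Proposition~\ref{pro:gradient_bound1}, but integrated one more time. Let $\yb_t := \xb + t(\yb-\xb)$ for $t\in[0,1]$. Starting from the identity
\begin{equation*}
f(\yb) - f(\xb) - \iprod{\nabla f(\xb), \yb - \xb} = \int_0^1 \iprod{\nabla f(\yb_t) - \nabla f(\xb), \yb - \xb}\, dt,
\end{equation*}
I rewrite the integrand as $\frac{1}{t}\iprod{\nabla f(\yb_t) - \nabla f(\xb), \yb_t - \xb}$ for $t\in(0,1]$, so that Proposition~\ref{pro:gradient_bound1} can be applied pointwise at the pair $(\xb, \yb_t)$.

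Applying that proposition to $(\xb, \yb_t)$ and using the scaling identities $d_{\nu}(\xb, \yb_t) = t\, d_{\nu}(\xb, \yb)$ and $\norm{\yb_t - \xb}_{\xb} = t\norm{\yb-\xb}_{\xb}$, the pointwise bound becomes
\begin{equation*}
t\,\bar{\omega}_{\nu}\!\left(-t\, d_{\nu}(\xb,\yb)\right)\norm{\yb-\xb}_{\xb}^{2}\ \leq\ \iprod{\nabla f(\yb_t) - \nabla f(\xb), \yb - \xb}\ \leq\ t\,\bar{\omega}_{\nu}\!\left(t\, d_{\nu}(\xb,\yb)\right)\norm{\yb-\xb}_{\xb}^{2}.
\end{equation*}
Integrating this over $t\in[0,1]$ and pulling out $\norm{\yb-\xb}_{\xb}^{2}$, the desired inequality \eqref{eq:f_bound1} will follow once I verify the integral identities
\begin{equation*}
\omega_{\nu}(\pm d_{\nu}(\xb,\yb)) \;=\; \int_0^1 t\,\bar{\omega}_{\nu}\!\left(\pm t\, d_{\nu}(\xb,\yb)\right)\,dt.
\end{equation*}
For the upper bound, the domain restriction ``$d_{\nu}(\xb,\yb)<1$ when $\nu>2$'' ensures that $t\,d_{\nu}(\xb,\yb)$ stays inside $\dom{\bar{\omega}_{\nu}}$ for all $t\in[0,1]$, so this application of Proposition~\ref{pro:gradient_bound1} is legitimate.

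The main obstacle is purely computational: a clean case split according to the piecewise definition of $\bar{\omega}_{\nu}$ from \eqref{eq:domega_def}, and then evaluation of $\int_0^1 t\,\bar{\omega}_{\nu}(\pm t\tau)\, dt$ in closed form. Substituting $\tau := d_{\nu}(\xb,\yb)$ and changing variables via $s = t\tau$, the exceptional branches $\nu=2$, $\nu=3$, $\nu=4$ correspond respectively to integrating $s^{-2}(e^{s}-s-1)$, $s^{-2}(-s-\ln(1-s))$, and $s^{-2}((1-s)\ln(1-s)+s)$ on suitable intervals; these yield exactly the first three expressions in \eqref{eq:omega_def}. For the generic branch $\nu\notin\{2,3,4\}$, the integrand reduces to a power $(1-s)^{(\nu-4)/(\nu-2)}$ plus lower-order terms, whose antiderivative produces the last expression in \eqref{eq:omega_def}. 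Nonnegativity of $\omega_{\nu}$ on its domain follows from the nonnegativity of $\bar{\omega}_{\nu}$ stated in Proposition~\ref{pro:gradient_bound1}, since the integrand $t\,\bar{\omega}_{\nu}(\pm t\tau)$ is itself nonnegative. No new analytic ingredient is required beyond Proposition~\ref{pro:gradient_bound1}; the entire argument is an integration-by-parameter reduction combined with a careful bookkeeping of the four branches.
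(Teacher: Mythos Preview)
Your proposal is correct and follows essentially the same argument as the paper: write $f(\yb)-f(\xb)-\iprod{\nabla f(\xb),\yb-\xb}=\int_0^1\tfrac{1}{t}\iprod{\nabla f(\yb_t)-\nabla f(\xb),\yb_t-\xb}\,dt$, apply Proposition~\ref{pro:gradient_bound1} pointwise at $(\xb,\yb_t)$, use the scaling $d_{\nu}(\xb,\yb_t)=t\,d_{\nu}(\xb,\yb)$ and $\norm{\yb_t-\xb}_{\xb}=t\norm{\yb-\xb}_{\xb}$, and then integrate the resulting bound in $t$. The paper leaves the case-by-case evaluation of $\int_0^1 t\,\bar{\omega}_{\nu}(\pm t\tau)\,dt$ implicit, whereas you spell it out, and you also supply the nonnegativity argument explicitly; otherwise the proofs coincide.
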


\begin{proof}
For any $\xb,\yb\in\dom{f}$, let $\yb_t := \xb + t(\yb - \xb)$. Then, $\yb_t - \xb = t(\yb - \xb)$.
By the mean-value theorem, we have
\begin{equation*} 
f(\yb)-f(\xb)-\iprod{\nabla f(\xb),\yb-\xb} =  \int_0^1 \tfrac{1}{t}\iprods{\nabla f(\yb_t)-\nabla f(\xb),\yb_t-\xb}dt.
\end{equation*}
Now, by Proposition \ref{pro:gradient_bound1}, we have 
\begin{equation*}
\bar{\omega}_{\nu}\left( - d_{\nu}(\xb,\yb_t)\right)\norm{\yb_t - \xb}_{\xb}^2 \leq \iprods{\nabla f(\yb_t)-\nabla f(\xb),\yb_t-\xb} \leq \bar{\omega}_{\nu}\left( d_{\nu}(\xb,\yb_t)\right)\norm{\yb_t - \xb}_{\xb}^2.
\end{equation*}
Clearly, by the definition \eqref{eq:dxy_def}, we have $d_{\nu}(\xb, \yb_t) = td_{\nu}(\xb,\yb)$ and $\norm{\yb_t-\xb}_{\xb} = t\norm{\yb-\xb}_{\xb}$.
Combining these relations, and the above two inequalities, we can show that 
\begin{equation*}
\norm{\yb-\xb}_{\xb}^2\int_0^1 t\bar{\omega}_{\nu}\left(-td_{\nu}(\xb,\yb)\right)dt \leq f(\yb)-f(\xb)-\iprod{\nabla f(\xb),\yb-\xb} \leq  \norm{\yb-\xb}_{\xb}^2\int_0^1 t\bar{\omega}_{\nu}\left(td_{\nu}(\xb,\yb)\right)dt.
\end{equation*}
By integrating the left-hand side and the right-hand side of this estimate using the definition \eqref{eq:domega_def} of $\bar{\omega}_{\nu}(\tau)$, we obtain \eqref{eq:f_bound1}.
\Eproof
\end{proof}

\vspace{-0.5ex}
\beforesec
\section{Generalized self-concordant minimization}\label{sec:gsc_min}
\aftersec
We apply the theory developed in the previous sections to design new Newton-type methods to minimize a generalized self-concordant function. 
More precisely, we consider the following non-composite convex problem:
\begin{equation}\label{eq:gsc_min}
f^{\star} := \min_{\xb\in\R^p} f(\xb),
\end{equation}
where $f : \R^p\to\R$ is an $(M_f,\nu)$-\gsc function in the sense of Definition \ref{de:gsc_def} with $\nu \in [2, 3]$ and $M_f \geq 0$.
Since $f$ is smooth and convex, the optimality condition $\nabla{f}(\xopt_f) = 0$ is necessary and sufficient for $\xopt_f$ to be an optimal solution of \eqref{eq:gsc_min}.

The following theorem shows the existence and uniqueness of the solution $\xopt_f$ of \eqref{eq:gsc_min}.
It can be considered as a special case of Theorem~\ref{th:existence_and_unique} below with $g\equiv 0$. 

\begin{theorem}\label{th:existence_and_unique_gsc_min}
Suppose that $f\in\widetilde{\Fc}_{M_f,\nu}(\dom{f})$ for given parameters $M_f > 0$ and $\nu \in [2, 3]$. 
Denote by $\sigma_{\min}(x) := \lambda_{\min}(\nabla^2 f(\xb))$ and $\lambda(\xb) := \Vert\nabla{f}(\xb)\Vert^{\ast}_{\xb}$ for $x\in\dom{f}$. 
Suppose further that there exists $x\in\dom{f}$ such that $\sigma_{\min}(x) > 0$ and
\begin{equation*}
\lambda(\xb)  <  \frac{2\left[\sigma_{\min}(x)\right]^{\frac{3-\nu}{2}}}{(4-\nu)M_f}. 
\end{equation*}
Then, problem \eqref{eq:gsc_min} has a unique solution $\xb_f^{\star}$ in $\dom{f}$.
\end{theorem}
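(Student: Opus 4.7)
The plan is to establish existence of the minimizer by showing the sublevel set $S := \set{y \in \dom{f} : f(y) \leq f(x)}$ is bounded and cannot accumulate on $\mathrm{bd}(\dom{f})$, then derive uniqueness via strict convexity of $f$ near the minimizer.

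First, I would combine the left-hand inequality of Proposition~\ref{pro:fx_bound1}, applied at the reference point $x$, with the Cauchy--Schwarz bound $\iprods{\nabla f(x),\, y-x} \geq -\lambda(x)\,\norm{y-x}_{x}$ to obtain
\begin{equation*}
f(y) - f(x) \;\geq\; \norm{y-x}_{x}\bigl(\omega_{\nu}(-d_{\nu}(x,y))\,\norm{y-x}_{x} - \lambda(x)\bigr), \qquad \forall\,y\in\dom{f}.
\end{equation*}
Hence $y \in S$ implies $\omega_{\nu}(-d_{\nu}(x,y))\,\norm{y-x}_{x} \leq \lambda(x)$. The central analytic step is then to prove that the left-hand side stays eventually above $L_{\nu} := \tfrac{2\,\sigma_{\min}(x)^{(3-\nu)/2}}{(4-\nu)M_f}$ as $\norm{y-x}_{x}\to\infty$, so that the strict hypothesis $\lambda(x)<L_{\nu}$ confines $S$ to a bounded region.

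For the asymptotics, I would factor $\omega_{\nu}(-d_{\nu})\,\norm{y-x}_{x} = \bigl(\omega_{\nu}(-d_{\nu})\,d_{\nu}\bigr)\cdot\bigl(\norm{y-x}_{x}/d_{\nu}\bigr)$ and bound the two factors separately. From $\norm{y-x}_{x}\geq\sqrt{\sigma_{\min}(x)}\,\norm{y-x}_{2}$ and the definition~\eqref{eq:dxy_def}, one verifies $\norm{y-x}_{x}/d_{\nu} \geq \sigma_{\min}(x)^{(3-\nu)/2}/[(\nu/2-1)M_f]$ for $\nu\in(2,3]$, and $\norm{y-x}_{x}/d_{2}\geq\sqrt{\sigma_{\min}(x)}/M_f$ for $\nu=2$. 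A direct expansion of the piecewise formula~\eqref{eq:omega_def} yields $\omega_{\nu}(-d)\,d\to(\nu-2)/(4-\nu)$ as $d\to\infty$ for $\nu\in(2,3]$, and $\omega_{2}(-d)\,d\to 1$. Since $\R^{p}$ is finite-dimensional the norms $\norm{\cdot}_{x}$ and $\norm{\cdot}_{2}$ are equivalent, so $\norm{y-x}_{x}\to\infty$ forces $d_{\nu}\to\infty$. Combining these shows $\liminf\omega_{\nu}(-d_{\nu})\,\norm{y-x}_{x}\geq L_{\nu}$, which with $\lambda(x)<L_{\nu}$ bounds $S$. Proposition~\ref{pro:Hessian_nondegenerate}(b) then prevents $S$ from accumulating on $\mathrm{bd}(\dom{f})$, so $S$ is a compact subset of $\dom{f}$ and continuity of $f$ produces a minimizer $x_{f}^{\star}\in S$.

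For uniqueness, I would invoke Proposition~\ref{pro:hessian_bounds} together with $\sigma_{\min}(x)>0$, which yields $\nabla^{2} f(\cdot)\succ 0$ throughout a Dikin-type neighborhood of $x_{f}^{\star}$; by convexity, two distinct minimizers would force $f$ to be constant on the segment joining them, contradicting strict positive definiteness of $\nabla^{2} f$ at any interior point of that segment. The main obstacle I anticipate is the case-by-case verification of the asymptotic $\omega_{\nu}(-d)\,d$ across all pieces of~\eqref{eq:omega_def}, in particular the transition at $\nu=2$ where the formula for $d_\nu$ itself changes.
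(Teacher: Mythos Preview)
Your proposal is correct and follows essentially the same route as the paper: the paper treats Theorem~\ref{th:existence_and_unique_gsc_min} as the special case $g\equiv 0$ of Theorem~\ref{th:existence_and_unique}, whose proof (Appendix~A.4) likewise combines the lower bound in Proposition~\ref{pro:fx_bound1} with Cauchy--Schwarz to obtain $\omega_{\nu}(-d_{\nu}(x,y))\norm{y-x}_x \leq \lambda(x)$, then uses $d_{\nu}(x,y)\leq \tfrac{(\nu-2)M_f}{2\sigma_{\min}(x)^{(3-\nu)/2}}\norm{y-x}_x$ and the fact that $s_{\nu}(t):=\omega_{\nu}(-t)t$ increases to $(\nu-2)/(4-\nu)$ to bound the sublevel set. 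The only stylistic difference is that the paper argues via the monotonicity of $s_{\nu}$ to extract an explicit bound $t^{\ast}$ on $d_{\nu}$, whereas you phrase the same step as a $\liminf$ contradiction; your explicit invocation of Proposition~\ref{pro:Hessian_nondegenerate}(b) for boundary non-accumulation is a detail the paper leaves implicit.
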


We say that the unique solution $\xopt_f$ of \eqref{eq:gsc_min} is \textit{strongly regular} if $\nabla^2{f}(\xopt_f) \succ 0$.
The strong regularity of $\xopt_f$ for \eqref{eq:gsc_min} is equivalent to the strong second order optimality condition.
Theorem \ref{th:existence_and_unique_gsc_min} covers \cite[Theorem 4.1.11]{Nesterov2004} for standard self-concordant functions as a special case.

We consider the following Newton-type scheme to solve \eqref{eq:gsc_min}. 
Starting from an arbitrary initial point $\xb^0\in\dom{f}$, we generate a sequence $\set{\xb^k}_{k\geq 0}$ as follows:
\begin{equation}\label{eq:NT_scheme}
\xb^{k+1} := \xb^k + \tau_k\ntdir^k,~~~~~\text{where}~~\ntdir^k := -\nabla^2f(\xb^k)^{-1}\nabla{f}(\xb^k),
\end{equation}
and $\tau_k \in (0, 1]$ is a given step-size. We call $\ntdir^k$ a Newton direction.
\begin{itemize}
\item If $\tau_k = 1$ for all $k\geq 0$, then we call \eqref{eq:NT_scheme} a \textit{full-step} Newton scheme. 
\vspace{1ex}
\item Otherwise, i.e., $\tau_k \in (0, 1)$, we call \eqref{eq:NT_scheme} a \textit{damped-step} Newton scheme.
\end{itemize}
Clearly, computing the Newton direction $\ntdir^k$ requires to solve the following linear system:
\begin{equation}\label{alg_1:cpt}
\nabla^2{f}(\xb^k)\ntdir^k = -\nabla{f}(\xb^k).
\end{equation}
Next, we define a \textit{Newton decrement} $\lambda_k$ and a quantity $\beta_k$, respectively  as 
\begin{equation}\label{eq:NT_decrement}
\lambda_k := \Vert\ntdir^k\Vert_{\xb^k} = \Vert\nabla{f}(\xb^k)\Vert_{\xb^k}^{\ast}~~~\text{and}~~~\beta_k := M_f\Vert\ntdir^k\Vert_2 = M_f\Vert \nabla^2f(\xb^k)^{-1}\nabla{f}(\xb^k)\Vert_2.
\end{equation}
With $\lambda_k$ and $\beta_k$ given by \eqref{eq:NT_decrement}, we also define
\begin{equation}\label{eq:d_k}
\needcheck{
d_k := \begin{cases}
\beta_k &\text{if $\nu = 2$}\vspace{1ex}\\
\left(\frac{\nu}{2} - 1\right)M_f^{\nu-2}\lambda_k^{\nu-2}\beta_k^{3-\nu} &\text{if $\nu \in (2, 3]$}.
\end{cases}}
\end{equation}
Let us first show how to choose a suitable step-size $\tau_k$ in the damped-step Newton scheme and prove its convergence properties in the following theorem whose proof can be found in Appendix~\ref{apdx:th:damped_step_NT}.

\begin{theorem}\label{th:damped_step_NT}
Let $\set{\xb^k}$ be the sequence generated by the damped-step Newton scheme \eqref{eq:NT_scheme} with the following step-size:
\begin{equation}\label{eq:step_size}
\needcheck{
\tau_k := \begin{cases}
\frac{1}{\beta_k}\ln(1 + \beta_k)   &\text{if $\nu = 2$} \vspace{1ex}\\
\frac{1}{d_k}\left[1-\left(1+\frac{4-\nu}{\nu-2}d_k\right)^{-\frac{\nu-2}{4-\nu}}\right]  & \text{if $\nu \in (2, 3]$}, 
\end{cases}}
\end{equation}
where $\lambda_k$, $\beta_k$ are defined by \eqref{eq:NT_decrement}, and $d_k$ is defined by \eqref{eq:d_k}.
Then, $\tau_k\in (0,1]$, $\set{\xb^k}$ in $\dom{f}$, and this step-size guarantees the following descent property
\begin{equation}\label{eq:descent_property}
f(\xb^{k+1}) \leq f(\xb^k) - \Delta_k,
\end{equation}
where \needcheck{$\Delta_k := \lambda_k^2\tau_k - \omega_{\nu}\left( \tau_k d_k\right)\tau^2_k\lambda_k^2 > 0$} with $\omega_{\nu}$  defined by \eqref{eq:omega_def}.

Assume that the unique solution $\xopt_f$ of \eqref{eq:gsc_min} exists.
Then, there exists a neighborhood $\Nc(\xopt_f)$ such that if we initialize the Newton scheme~\eqref{eq:NT_scheme} at $\xb^0\in\Nc(\xopt_f)\cap\dom{f}$, then the whole sequence $\set{\xb^k}$ converges to $\xopt_f$ at a quadratic rate.
\end{theorem}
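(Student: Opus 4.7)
My plan is to derive \eqref{eq:step_size} as the closed-form maximizer of the upper estimate of $f(\xb^{k+1})$ provided by Proposition~\ref{pro:fx_bound1}, verify admissibility $\tau_k\in(0,1]$ and $\xb^{k+1}\in\dom{f}$, and then obtain local quadratic convergence by showing that $1-\tau_k$ vanishes at rate $\norm{\xb^k-\xopt_f}_{\xopt_f}$ near $\xopt_f$. The starting point is a direct substitution into Proposition~\ref{pro:fx_bound1}: from $\ntdir^k=-\nabla^2 f(\xb^k)^{-1}\nabla f(\xb^k)$ and \eqref{eq:NT_decrement} one gets $\iprods{\nabla f(\xb^k),\xb^{k+1}-\xb^k}=-\tau_k\lambda_k^2$ and $\norm{\xb^{k+1}-\xb^k}_{\xb^k}^2=\tau_k^2\lambda_k^2$, while the definition \eqref{eq:dxy_def} combined with $\norm{\ntdir^k}_2=\beta_k/M_f$ and \eqref{eq:d_k} yields $d_\nu(\xb^k,\xb^{k+1})=\tau_k d_k$ in both the $\nu=2$ and $\nu\in(2,3]$ cases. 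Proposition~\ref{pro:fx_bound1} then gives, provided $\tau_k d_k<1$ when $\nu>2$,
$$
f(\xb^{k+1})\le f(\xb^k)-\lambda_k^2\,\Delta(\tau_k),\qquad \Delta(\tau):=\tau-\omega_\nu(\tau d_k)\,\tau^2.
$$

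To show that \eqref{eq:step_size} is the maximizer of $\Delta$, I would first verify the primitive identity $\tfrac{d}{ds}\bigl[\omega_\nu(s)s^2\bigr]=s\,\bar\omega_\nu(s)$ by a one-line differentiation of \eqref{eq:omega_def}. This reduces $\Delta'(\tau)=0$ to $\tau\,\bar\omega_\nu(\tau d_k)=1$. For $\nu=2$, $\bar\omega_2(s)=(e^s-1)/s$ turns the condition into $e^{\tau\beta_k}=1+\beta_k$, giving $\tau_k=\ln(1+\beta_k)/\beta_k$; for $\nu\in(2,3]$ the closed form of $\bar\omega_\nu$ in \eqref{eq:domega_def} produces the expression in \eqref{eq:step_size} after a brief algebraic manipulation (for $\nu=3$ one recovers the Nesterov damped step $\tau_k=1/(1+d_k)$).

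Admissibility and strict descent follow from explicit checks. For $\nu>2$, one has $\tau_k d_k=1-\bigl(1+\tfrac{4-\nu}{\nu-2}d_k\bigr)^{-(\nu-2)/(4-\nu)}\in(0,1)$ automatically, which makes Proposition~\ref{pro:fx_bound1} applicable and keeps $\xb^{k+1}\in\dom{f}$ via Proposition~\ref{pro:xy_bounds}; the inequality $\tau_k\le 1$ reduces to $h(d)\ge 0$ for $d\ge 0$, where $h(d):=d-1+\bigl(1+\tfrac{4-\nu}{\nu-2}d\bigr)^{-(\nu-2)/(4-\nu)}$, which is immediate from $h(0)=0$ and a short verification that $h'(d)\ge 0$; the case $\nu=2$ is just $\ln(1+\beta_k)\le\beta_k$. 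The strict descent $\Delta_k=\lambda_k^2\Delta(\tau_k)>0$ whenever $\lambda_k>0$ follows because $\Delta(0)=0$, $\Delta'(0)=1>0$, and $\Delta$ is strictly concave up to its unique critical point $\tau_k$.

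For local quadratic convergence, choose $\Nc(\xopt_f)$ small enough for Proposition~\ref{pro:hessian_bounds} to give uniform equivalence $\nabla^2 f(\xb)\asymp\nabla^2 f(\xopt_f)$ on $\Nc(\xopt_f)$. Then $\lambda_k$, $\beta_k$, and $d_k$ are all $\mathcal{O}(\norm{\xb^k-\xopt_f}_{\xopt_f})$, so $1-\tau_k=\mathcal{O}(d_k)=\mathcal{O}(\norm{\xb^k-\xopt_f}_{\xopt_f})$. Writing $\nabla f(\xb^k)=G_k(\xb^k-\xopt_f)$ with $G_k:=\int_0^1\nabla^2 f(\xopt_f+t(\xb^k-\xopt_f))\,dt$ (using $\nabla f(\xopt_f)=0$) and inserting into \eqref{eq:NT_scheme} yields
$$
\xb^{k+1}-\xopt_f=(1-\tau_k)(\xb^k-\xopt_f)+\tau_k\,\nabla^2 f(\xb^k)^{-1}\bigl[\nabla^2 f(\xb^k)-G_k\bigr](\xb^k-\xopt_f);
$$
Corollary~\ref{co:hessian_bound2} controls the operator norm of $\nabla^2 f(\xb^k)-G_k$ in the local metric by $\mathcal{O}(\norm{\xb^k-\xopt_f}_{\xopt_f})$, so both summands are $\mathcal{O}(\norm{\xb^k-\xopt_f}_{\xopt_f}^2)$, yielding the quadratic contraction. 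I expect the main obstacles to be algebraic—verifying that the candidate \eqref{eq:step_size} is a global maximizer of $\Delta$ under the case split of \eqref{eq:omega_def}/\eqref{eq:domega_def} and not merely a critical point—and analytic—extracting a uniform contraction constant from Proposition~\ref{pro:hessian_bounds}, whose sandwich degrades as $d_\nu(\xb^k,\xopt_f)\uparrow 1$; restricting $\Nc(\xopt_f)$ so that $d_\nu(\xb^k,\xopt_f)$ stays bounded away from $1$ resolves the latter and makes the quadratic contraction self-sustaining for the whole sequence.
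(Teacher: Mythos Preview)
Your proposal is correct and follows essentially the same route as the paper: both substitute the Newton step into Proposition~\ref{pro:fx_bound1} to obtain $f(\xb^{k+1})\le f(\xb^k)-\lambda_k^2\Delta(\tau)$ with $d_\nu(\xb^k,\xb^{k+1})=\tau d_k$, then maximize $\Delta$ in $\tau$ (your use of the primitive identity $\tfrac{d}{ds}[\omega_\nu(s)s^2]=s\,\bar\omega_\nu(s)$ is a slight streamlining of the paper's case-by-case differentiation), and for the local phase both split $\xb^{k+1}-\xopt_f$ into the $(1-\tau_k)$-contribution and the full-step Newton error, bounding the latter via Corollary~\ref{co:hessian_bound2} and the former by showing $1-\tau_k=\mathcal{O}(d_k)$. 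The paper carries out the quadratic-convergence estimates separately for $\nu=2$, $\nu=3$, and $\nu\in(2,3)$ in the varying local norm $\Vert\cdot\Vert_{\xb^k}$ rather than your fixed $\Vert\cdot\Vert_{\xopt_f}$, but the arguments are equivalent once one has the Hessian comparability of Proposition~\ref{pro:hessian_bounds}.
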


\begin{example}[\textbf{Better step-size for regularized logistic and exponential models}]\label{ex:compare_step_size}
Consider the minimization problem \eqref{eq:gsc_min} with the objective function $f(\cdot) := \phi(\cdot) + \frac{\gamma}{2}\Vert\cdot\Vert_2^2$, where $\phi$ is defined as in \eqref{eq:spc_form} with $\varphi_i(t) = \log(1 + e^{-t})$ being the logistic loss. 
That is
\begin{equation*}
f(\xb) := \frac{1}{n}\sum_{i=1}^n\log(1 + e^{-\ab_i^{\top}\xb}) + \frac{\gamma}{2}\Vert\xb\Vert_2^2.
\end{equation*}
As we shown in Section~\ref{sec:gsc_background} that $f$ is either generalized self-concordant with $\nu = 2$ or generalized self-concordant with $\nu = 3$ but with different constant $M_f$.

Let us define $R_A := \max\set{\Vert\ab_i\Vert_2 \mid 1 \leq i \leq n}$.
Then, if we consider $\nu = 2$, then we have $M_f^{(2)} = R_A$ due to Corollary \ref{co:generalized_linear_func1}, while if we choose $\nu = 3$, then  $M_f^{(3)} = \frac{1}{\sqrt{\gamma}}R_A$ due to Proposition \ref{pro:scvx_lips_gsc}.
By the definition of $f$, we have $\nabla^2{f}(\xb) \succeq \gamma\Id$.
Hence, using this inequality and the definition of $\lambda_k$ and $\beta_k$ from \eqref{eq:NT_decrement}, we can show that
\begin{equation}\label{eq:compare_beta_lambda}
\beta_k = M_f^{(2)}\Vert\nabla^2{f}(\xb^k)^{-1}\nabla{f}(\xb^k)\Vert_2 \leq \tfrac{R_A}{\sqrt{\gamma}}\lambda_k  = M_f^{(3)}\lambda_k.
\end{equation}
For any $\tau > 0$, we have $\frac{\ln(1 + \tau)}{\tau} > \frac{1}{1 + 0.5\tau}$. 
Using this elementary result and \eqref{eq:compare_beta_lambda}, we obtain 
\begin{equation*}
\tau_k^{(2)} = \tfrac{\ln(1 + \beta_k)}{\beta_k} > \tfrac{1}{1 + 0.5\beta_k} \geq \tfrac{1}{1 +  0.5M^{(3)}_f\lambda_k} = \tau^{(3)}_k.
\end{equation*}
This inequality has shown that the step-size $\tau_k$ given by Theorem~\ref{th:damped_step_NT} satisfies $\tau_k^{(2)} > \tau_k^{(3)}$, where $\tau_k^{(\nu)}$ is a given step-size computed by \eqref{eq:step_size} for $\nu = 2$ and $3$, respectively. 
Such a statement confirms that the damped-step Newton method using $\tau_k^{(2)}$ is theoretically better than using $\tau_k^{(3)}$. 
This result will be empirically confirmed by our  experiments in Section~\ref{sec:num_experiments}.
\Eproof
\end{example}

Next, we study the full-step Newton scheme derived from \eqref{eq:NT_scheme} by setting the step-size $\tau_k = 1$ for all $k\geq 0$ as a full-step. 
Let 
\begin{equation*}
\underline{\sigma}_k := \lambda_{\min}\left(\nabla^2f(\xb^k)\right)
\end{equation*}
be the smallest eigenvalue of $\nabla^2{f}(\xb^k)$. 
Since $\nabla^2{f}(\xb^k)\succ 0$, we have $\underline{\sigma}_k > 0$.
The following theorem shows a local quadratic convergence of the full-step Newton scheme \eqref{eq:NT_scheme} for solving \eqref{eq:gsc_min} whose proof can be found in Appendix~\ref{apdx:th:full_step_NT_scheme_converg}.

\needcheck{
\begin{theorem}\label{th:full_step_NT_scheme_converg}
Let $\set{\xb^k}$ be the sequence generated by the full-step Newton scheme \eqref{eq:NT_scheme} by setting the step-size $\tau_k = 1$ for $k\geq 0$.
Let $d_{\nu}^k := d_{\nu}(\xb^k, \xb^{k+1})$ be defined by \eqref{eq:dxy_def} and $\lambda_k$ be defined by \eqref{eq:NT_decrement}.
Then, the following statements hold:
\begin{enumerate}
\item[$\mathrm{(a)}$] If $\nu = 2$ and the starting point $\xb^0$ satisfies $\underline{\sigma}_0^{-1/2}\lambda_0 < \frac{d_2^{\star}}{M_f}$, then both sequences $\set{\underline{\sigma}_k^{-1/2}\lambda_k}$ and $\set{d_2^k}$ decrease and quadratically converge  to zero, where $d_2^{\star}\approx 0.12964$.

\vspace{1ex}
\item[$\mathrm{(b)}$] If $2 < \nu < 3$, and the starting point $\xb^0$ satisfies $\underline{\sigma}_0^{-\frac{3-\nu}{2}}\lambda_0 < \frac{1}{M_f}\min\set{\frac{2d_{\nu}^{\star}}{\nu-2},\frac{1}{2}}$, then both  sequences $\set{\underline{\sigma}_k^{-\frac{3-\nu}{2}}\lambda_k}$ and $\set{d_{\nu}^k}$ decrease and quadratically converge to zero, 
where $d_{\nu}^{\star}$ is the unique solution of the equation $\left(\nu -2\right)R_{\nu}(d_{\nu})= 4(1-d_{\nu})^{\frac{4-\nu}{\nu-2}}$ in $d_{\nu}$ with $R_{\nu}(\cdot)$ given by \eqref{eq:R_alpha}.

\vspace{1ex}
\item[$\mathrm{(c)}$] If $\nu = 3$ and the starting point $\xb^0$ satisfies $\lambda_0 < \frac{1}{2M_f}$, then the sequence $\set{\lambda_k}$  decreases and quadratically converges to zero.
\end{enumerate}
As a consequence, if $\set{d^k_{\nu}}$ locally converges  to zero at a quadratic rate, then $\big\{\Vert\xb^k - \xopt_f\Vert_{\Hb_k}\big\}$ also locally converges to zero at a quadratic rate, where $\Hb_k = \Id$, the identity matrix, if $\nu = 2$; $\Hb_k = \nabla^2{f}(\xb^k)$ if $\nu = 3$; and $\Hb_k = \nabla^2f(\xb^k)^{\frac{\nu}{2}-1}$ if $2 < \nu < 3$.
Hence, $\set{\xb^k}$ locally converges to $\xopt_f$, the unique solution of \eqref{eq:gsc_min}, at a quadratic rate.
\end{theorem}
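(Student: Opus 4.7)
My plan is to follow the classical Newton argument, with the standard self-concordant estimates replaced by their generalized counterparts from Propositions \ref{pro:xy_bounds}--\ref{pro:fx_bound1}. The starting point is the identity
\begin{equation*}
\nabla f(\xb^{k+1}) = \int_0^1\bigl[\nabla^2 f(\xb^k + \tau\ntdir^k) - \nabla^2 f(\xb^k)\bigr]\ntdir^k\,d\tau,
\end{equation*}
which holds because $\nabla^2 f(\xb^k)\ntdir^k = -\nabla f(\xb^k)$. Taking the dual local norm $\|\cdot\|^{\ast}_{\xb^{k+1}}$ of both sides and applying Proposition \ref{pro:hessian_bounds} to compare $\nabla^2 f(\xb^k + \tau\ntdir^k)$ with $\nabla^2 f(\xb^k)$, and then $\nabla^2 f(\xb^{k+1})$ with $\nabla^2 f(\xb^k)$, I expect to obtain a one-step recursion of the form $\lambda_{k+1} \leq \Phi_\nu(d_k)\lambda_k$, where $\Phi_\nu(t)$ vanishes linearly as $t\to 0$. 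Since $d_k$ already depends on $\lambda_k$ through the factor $\lambda_k^{\nu-2}\beta_k^{3-\nu}$, this is enough to produce quadratic contraction of the appropriate scale-invariant monitor once we are inside the basin of attraction.

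I would then split into the three cases, because $d_k$ mixes the local and Euclidean norms differently in each regime. For $\nu=3$, $d_k = \tfrac12 M_f\lambda_k$ is already scale-invariant and the recursion reduces to $M_f\lambda_{k+1} \leq (M_f\lambda_k)^2/(1-M_f\lambda_k)^2$, yielding quadratic decay whenever $\lambda_0 < 1/(2M_f)$. For $\nu=2$, I would control $\beta_k = M_f\|\ntdir^k\|_2$ by $M_f\underline{\sigma}_k^{-1/2}\lambda_k$ and track the auxiliary quantity $\eta_k := \underline{\sigma}_k^{-1/2}\lambda_k$, using the multiplicative Hessian bound $\underline{\sigma}_{k+1}\geq e^{-d_k}\underline{\sigma}_k$ from \eqref{eq:hessian_bound1b} to update the denominator; the threshold $d_2^{\star}\approx 0.12964$ arises as the largest value of $d_k$ for which the resulting recursion remains a genuine quadratic contraction. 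The case $2<\nu<3$ follows the same pattern with $\eta_k := \underline{\sigma}_k^{-(3-\nu)/2}\lambda_k$ and the ratio bound of \eqref{eq:hessian_bound1}; the implicit equation $(\nu-2)R_\nu(d_\nu)=4(1-d_\nu)^{(4-\nu)/(\nu-2)}$ for $d_\nu^{\star}$ is exactly the fixed-point condition simultaneously guaranteeing $d_k<1$ and $\Phi_\nu(d_k)<1$.

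The main technical obstacle is the $2<\nu<3$ case, because $d_k$ genuinely mixes local and Euclidean norms and one must control $\lambda_k$ and $\underline{\sigma}_k$ together. The hypothesis on $\underline{\sigma}_0^{-(3-\nu)/2}\lambda_0$ is precisely what is needed so that $d_k$ stays bounded away from $1$ for every $k$, keeping Proposition \ref{pro:hessian_bounds} applicable and preventing $\underline{\sigma}_k$ from collapsing. Finally, to upgrade the quadratic convergence of $\{d^k_\nu\}$ (equivalently $\{\lambda_k\}$ rescaled by powers of $\underline{\sigma}_k$) to quadratic convergence of $\{\|\xb^k-\xopt_f\|_{\Hb_k}\}$, I would invoke Proposition \ref{pro:gradient_bound1} at $\yb=\xopt_f$: since $\nabla f(\xopt_f)=0$, Cauchy--Schwarz applied to $\iprods{\nabla f(\xb^k)-\nabla f(\xopt_f),\xb^k-\xopt_f}$ yields $\|\xb^k-\xopt_f\|_{\xb^k} \leq \lambda_k/\bar{\omega}_\nu(-d_\nu(\xb^k,\xopt_f))$, and converting this local-norm estimate to $\|\xb^k-\xopt_f\|_{\Hb_k}$ for the case-specific $\Hb_k$ is immediate from the Hessian comparison inequalities \eqref{eq:hessian_bound1} and \eqref{eq:hessian_bound1b}.
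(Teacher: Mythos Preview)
Your proposal is correct and follows essentially the same route as the paper: the integral identity for $\nabla f(\xb^{k+1})$, the Hessian comparison from Proposition~\ref{pro:hessian_bounds} (packaged in the paper through Lemma~\ref{le:H_norm} and the function $R_\nu$) to obtain $\lambda_{k+1}\le \Phi_\nu(d_\nu^k)\lambda_k$, the case split tracking $\underline{\sigma}_k^{-(3-\nu)/2}\lambda_k$ with the eigenvalue update $\underline{\sigma}_{k+1}^{-1}\le \bar{\bar\omega}_\nu(d_\nu^k)\,\underline{\sigma}_k^{-1}$, and finally Proposition~\ref{pro:gradient_bound1} plus Cauchy--Schwarz to pass from $\lambda_k$ to $\|\xb^k-\xopt_f\|_{\Hb_k}$. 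The only cosmetic difference is that the paper first takes $\|\nabla f(\xb^{k+1})\|^{\ast}_{\xb^k}$ and then converts to $\|\cdot\|^{\ast}_{\xb^{k+1}}$, whereas you go straight to $\|\cdot\|^{\ast}_{\xb^{k+1}}$; and the last norm conversion uses the eigenvalue inequality $\underline{\sigma}_k^{(3-\nu)/2}\|v\|_{\Hb_k}\le\|v\|_{\xb^k}$ rather than \eqref{eq:hessian_bound1}--\eqref{eq:hessian_bound1b} directly.
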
}

If we combine the results of Theorem~\ref{th:damped_step_NT} and Theorem~\ref{th:full_step_NT_scheme_converg}, then we can design a two-phase Newton algorithm for solving \eqref{eq:gsc_min} as follows:
\begin{itemize}
\item \textit{Phase 1:} Starting from an arbitrary initial point $\xb^0 \in\dom{f}$, we perform the damped-step Newton scheme \eqref{eq:NT_scheme} until the condition in Theorem~\ref{th:full_step_NT_scheme_converg} is satisfied.
\vspace{1ex}
\item \textit{Phase 2:} Using the output $\xb^j$ of \textit{Phase 1} as an initial point for the full-step Newton scheme  \eqref{eq:NT_scheme} with $\tau_k = 1$, and perform this scheme until it achieves an $\varepsilon$-solution $\xb^k$ to \eqref{eq:gsc_min}.
\end{itemize}
We also note that the damped-step Newton scheme \eqref{eq:NT_scheme} can also achieve a local quadratic convergence as shown in Theorem \ref{th:damped_step_NT}.
Hence, we combine this fact and the above two-phase scheme to derive the Newton algorithm as shown in Algorithm~\ref{alg:Newton_alg} below.

\begin{algorithm}[ht!]\caption{(\textit{Newton algorithm for generalized self-concordant minimization})}\label{alg:Newton_alg}
\begin{normalsize}
\begin{algorithmic}[1]
   \State {\bfseries Inputs:} Choose an arbitrary initial point $\xb^0 \in\dom{f}$ and a desired accuracy $\varepsilon > 0$.
   \vspace{0.75ex}
   \State {\bfseries Output:}  An $\varepsilon$-solution $\xb^k$ of \eqref{eq:gsc_min}.
    \vspace{0.75ex}
   \State {\bfseries Initialization:} Compute $d_{\nu}^{\star}$ according to Theorem \ref{th:full_step_NT_scheme_converg} if needed.
   \vspace{0.75ex}
   \State{\bfseries For} $k = 0, \cdots, k_{\max}$, \textbf{perform:}
   \vspace{0.75ex}
   \State\hspace{0.16cm}\label{step:alg1_step1}~Compute the Newton direction $\ntdir^k$ by solving $\nabla^2{f}(\xb^k)\ntdir^k = -\nabla{f}(\xb^k)$.
   \vspace{0.75ex}
   \State\hspace{0.16cm} Compute $\lambda_k := \Vert\ntdir^k\Vert_{\xb^k}$, and compute $\beta_k := M_f\Vert \ntdir^k\Vert_2$ if $\nu \neq 3$.
   \vspace{0.75ex}
   \State\hspace{0.16cm} If $\lambda_k \leq \varepsilon$, then TERMINATE and return $\xb^k$.
   \vspace{0.75ex}
   \State\hspace{0.16cm} If \textit{Phase 2 is used}, then compute $\underline{\sigma}_k =\lambda_{\min}(\nabla^2 f(\xb^k))$ if $2 \leq \nu < 3$.
   \vspace{0.75ex}
   \State\hspace{0.16cm} If \textit{Phase 2 is used} and $(\lambda_k, \underline{\sigma}_k)$  satisfies Theorem~\ref{th:full_step_NT_scheme_converg}, then set $\tau_k \!:=\! 1$ (\textbf{full-step}).{\!\!}
   \vspace{0.75ex}
   \Statex\hspace{0.16cm} Otherwise, compute the step-size $\tau_k$ by \eqref{eq:step_size} (\textbf{damped-step})
   \vspace{0.75ex}
   \State\hspace{0.16cm} Update $\xb^{k+1}:=\xb^k + \tau_k \ntdir^k$.
   \vspace{0.5ex}
   \State {\bfseries End for}
\end{algorithmic}
\end{normalsize}
\end{algorithm}

\beforepar
\paragraph{\textbf{Per-iteration complexity:}}
The main step of Algorithm~\ref{alg:Newton_alg} is the solution of the symmetric positive definite linear system \eqref{alg_1:cpt}. 
This system can be solved by using either Cholesky factorization or conjugate gradient methods which, in the worst-case, requires $\mathcal{O}(p^3)$ operations.
Computing $\lambda_k$ requires the inner product $\iprods{\ntdir^k, \nabla{f}(\xb^k)}$ which needs $\mathcal{O}(p)$ operations.

Conceptually, the two-phase option of Algorithm~\ref{alg:Newton_alg} requires the smallest eigenvalue of $\nabla^2{f}(\xb^k)$ to terminate Phase 1. 
However, switching from Phase~1 to Phase~2 can be done automatically allowing some tolerance in the step-size $\tau_k$.
Indeed, the step-size $\tau_k$ given by \eqref{eq:step_size} converges to $1$ as $k\to\infty$. 
Hence, when $\tau_k$ is closed to $1$, e.g., $\tau_k \geq 0.9$, we can automatically set it to $1$ and remove the computation of $\lambda_k$ to reduce the computational time.

In the one-phase option, we can always perform only Phase~1 until achieving an $\varepsilon$-optimal solution as shown in Theorem~\ref{th:damped_step_NT}. 
Therefore, the per-iteration complexity of Algorithm~\ref{alg:Newton_alg} is $\mathcal{O}(p^3) + \mathcal{O}(p)$ in the worst-case. 
A careful implementation of conjugate gradient methods with a warm-start can significantly reduce this per-iteration computation complexity.

\begin{remark}[\textbf{Inexact Newton methods}]\label{re:inexact_NT}
We can allow Algorithm~\ref{alg:Newton_alg} to compute the Newton direction $\ntdir^k$ approximately. 
In this case, we approximately solve the symmetric positive definite system \eqref{alg_1:cpt}.
By an appropriate choice of stopping criterion, we can still prove convergence of Algorithm~\ref{alg:Newton_alg} under inexact computation of $\ntdir^k$.
For instance, the following criterion is often used in inexact Newton methods \cite{Deuflhard2006}, but defined via the local dual norm of $f$:
\begin{equation*}
\Vert \nabla^2{f}(\xb^k)\ntdir^k + \nabla{f}(\xb^k)\Vert_{\xb^k}^{\ast} \leq \kappa\Vert\nabla{f}(\xb^k)\Vert_{\xb^k}^{\ast},
\end{equation*}
for a given relaxation parameter $\kappa \in [0, 1)$.
This extension can be found in our forthcoming work.
\end{remark}

\vspace{-0.25ex}
\beforesec
\section{Composite generalized self-concordant minimization}\label{sec:gsc_composite_min}
\aftersec
Let $f\in\widetilde{\Fc}_{M_f,\nu}(\dom{f})$, and $g$ be a proper, closed, and convex function. 
We consider the composite convex minimization problem  \eqref{eq:composite_cvx0} which we recall here for our convenience of references:
\begin{equation}\label{eq:composite_cvx}
F^{\star} := \min_{\xb\in\R^p}\Big\{ F(\xb) := f(\xb) + g(\xb) \Big\}.
\end{equation}
Note that $\dom{F} := \dom{f} \cap\dom{g}$ may be empty. 
To make this problem nontrivial, we assume that $\dom{F}$ is nonempty.
The optimality condition for \eqref{eq:composite_cvx} can be written as follows:
\begin{equation}\label{eq:composite_cvx_opt_cond}
0\in\nabla f(\xb^{\star})+\partial g(\xb^{\star}).
\end{equation}
Under the qualification condition $0 \in\ri{\dom{g} - \dom{f}}$, \eqref{eq:composite_cvx_opt_cond} is necessary and sufficient for $\xb^{\star}$ to be an optimal solution of \eqref{eq:composite_cvx}, where $\ri{\Xc}$ is the relative interior of $\Xc$.

\beforesubsec
\subsection{\bf Existence, uniqueness, and regularity of optimal solutions}
\aftersubsec
Assume that $\nabla^2{f}(\xb)$ is positive definite (i.e., nonsingular) at some point $\xb\in\dom{F}$. 
We prove in the following theorem that  problem~\eqref{eq:composite_cvx} has a unique solution $\xopt$.
The proof can be found in Appendix~\ref{apdx:th:existence_and_unique}. 
This theorem can also be considered as a generalization of \cite[Theorem 4.1.11]{Nesterov2004} and \cite[Lemma 4]{Tran-Dinh2013a} in standard self-concordant settings in \cite{Nesterov2004,Tran-Dinh2013a}.

\begin{theorem}\label{th:existence_and_unique}
Suppose that the function $f$ of \eqref{eq:composite_cvx} is $(M_f, \nu)$-generalized self-concordant with $M_f > 0$ and $\nu \in [2, 3]$. 
Denote by $\sigma_{\min}(x) := \lambda_{\min}(\nabla^2 f(\xb))$ and $\lambda(\xb) := \Vert\nabla{f}(\xb) + \vb\Vert^{\ast}_{\xb}$ for $x\in\dom{F}$ and $\vb\in\partial{g}(\xb)$. 
Suppose further that there exists $x\in\dom{F}$ such that $\sigma_{\min}(x) > 0$ and
\begin{equation*}
\lambda(\xb)  <  \frac{2\left[\sigma_{\min}(x)\right]^{\frac{3-\nu}{2}}}{(4-\nu)M_f}. 
\end{equation*}
Then, problem \eqref{eq:composite_cvx} has a unique solution $\xopt$ in $\dom{F}$.
\end{theorem}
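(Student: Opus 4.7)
The plan is to adapt Nesterov's argument for standard self-concordant functions (\cite[Theorem 4.1.11]{Nesterov2004}) and its composite extension in \cite[Lemma 4]{Tran-Dinh2013a}, replacing the classical self-concordant bounds with the generalized ones of Proposition~\ref{pro:fx_bound1} and Proposition~\ref{pro:hessian_bounds}. The core steps are to establish that the sublevel set $\Omega := \{y\in\dom{F} : F(y) \le F(x)\}$ is compact (giving existence via lower semi-continuity of $F$), and then to use the lower bound on $F$ anchored at the resulting minimizer, together with the positive-definiteness of $\nabla^2 f$ propagated by Proposition~\ref{pro:hessian_bounds}, to conclude uniqueness.

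For existence, I would combine the left inequality of Proposition~\ref{pro:fx_bound1} with the subgradient inequality for $g$ to obtain, for the subgradient $v\in\partial g(x)$ of the hypothesis,
$$F(y)-F(x) \ge \iprods{\nabla f(x)+v,\,y-x} + \omega_\nu\bigl(-d_\nu(x,y)\bigr)\|y-x\|_x^2,\qquad \forall y\in\dom{F}.$$
Applying the local Cauchy--Schwarz inequality to the linear term and using $\sigma_{\min}(x)>0$ together with $\|y-x\|_2 \le \|y-x\|_x/\sqrt{\sigma_{\min}(x)}$ to bound $d_\nu(x,y)$ above by a monotone function $\tilde d_\nu(r)$ of $r:=\|y-x\|_x$, this reduces to a one-variable lower bound $\phi(r) := -\lambda(x)r + \omega_\nu(-\tilde d_\nu(r))r^2$. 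A branch-by-branch asymptotic analysis (using the three closed forms of $\omega_\nu$ in \eqref{eq:omega_def} for $\nu=2$, $\nu\in(2,3)$, $\nu=3$) then shows that the hypothesis $\lambda(x) < \frac{2\sigma_{\min}(x)^{(3-\nu)/2}}{(4-\nu)M_f}$ is precisely what forces $\phi(r)>0$ for all $r$ beyond some $\bar r>0$. Hence $\Omega$ sits inside the local-norm ball $\{y:\|y-x\|_x<\bar r\}$ and is $\R^p$-bounded; Proposition~\ref{pro:Hessian_nondegenerate}(b) makes $F(y)\to+\infty$ as $y$ approaches $\partial\dom{f}$, so $\Omega$ stays strictly inside $\dom{f}$ and is therefore compact. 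Lower semi-continuity of $F$ then delivers a minimizer $\xopt\in\Omega$.

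For uniqueness, the optimality condition \eqref{eq:composite_cvx_opt_cond} at $\xopt$ produces $v^{\star}:=-\nabla f(\xopt)\in\partial g(\xopt)$. Re-anchoring Proposition~\ref{pro:fx_bound1} at $\xopt$ with this $v^{\star}$ gives
$$F(y)-F(\xopt) \ge \omega_\nu\bigl(-d_\nu(\xopt,y)\bigr)\|y-\xopt\|_{\xopt}^2,\qquad\forall y\in\dom{F}.$$
Because $\xopt\in\Omega$ satisfies $\|\xopt-x\|_x<\bar r$, a sharpening of the existence step ensures $d_\nu(x,\xopt)<1$, and then Proposition~\ref{pro:hessian_bounds} propagates $\nabla^2 f(x)\succ 0$ to $\nabla^2 f(\xopt)\succ 0$. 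Since $\omega_\nu(\tau)>0$ for all $\tau\ne 0$ in its domain, the displayed inequality is strict for $y\ne\xopt$, forbidding a second minimizer. The main obstacle is the quantitative second step: one must pin down the sharp threshold $\frac{2\sigma_{\min}(x)^{(3-\nu)/2}}{(4-\nu)M_f}$ through a careful calculus on $\phi$, and simultaneously verify that the corresponding $\bar r$ satisfies $\tilde d_\nu(\bar r)<1$ (so that $\xopt$ lies in the region of Hessian positivity)---this requires case analysis because $\omega_\nu$ is piecewise-defined across $\nu\in[2,3]$.
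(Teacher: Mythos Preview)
Your proposal is correct and follows essentially the same route as the paper: both combine the left inequality of Proposition~\ref{pro:fx_bound1} with the subgradient inequality for $g$, apply Cauchy--Schwarz in the local norm, bound $d_\nu(x,y)$ via $\sigma_{\min}(x)$, and run a case analysis over $\nu\in\{2\}\cup(2,3)\cup\{3\}$ to conclude boundedness of the sublevel set. The only cosmetic difference is that the paper works directly with the monotone function $s_\nu(t):=t\,\omega_\nu(-t)$ of $t=d_\nu(x,y)$ (showing $s_\nu$ is increasing with limit $\tfrac{\nu-2}{4-\nu}$ for $\nu\in(2,3)$, and $1$ for $\nu=2,3$), whereas you parametrize by $r=\|y-x\|_x$ and study $\phi(r)$; these are equivalent after the substitution you describe. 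For uniqueness the paper simply invokes strict convexity of $F$ in one line, while your re-anchoring argument at $\xopt$ via Proposition~\ref{pro:hessian_bounds} is more explicit but not required.
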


Now, we recall a condition such that the solution $\xopt$ of \eqref{eq:composite_cvx} is strongly regular in the following Robinson's sense \cite{Robinson1980}. 
We say that the optimal solution $\xopt$ of \eqref{eq:composite_cvx} is \textit{strongly regular} if there exists a neighborhood $\Uc(\boldsymbol{0})$ of zero such that for any $\delta\in\Uc(\boldsymbol{0})$, the following perturbed problem
\begin{equation*}
\min_{\xb\in\R^p} \set{ \iprods{\nabla{f}(\xopt) - \delta, \xb - \xopt} + \tfrac{1}{2}\iprods{\nabla^2{f}(\xopt)(\xb - \xopt), \xb - \xopt} + g(\xb) }
\end{equation*}
has a unique solution $\xb^{\ast}(\delta)$, and this solution is Lipschitz continuous on $\Uc(\boldsymbol{0})$.

If $\nabla^2{f}(\xopt) \succ 0$, then $\xopt$ is strongly regular. 
While the strong regularity of the solution $\xopt$ requires a weaker condition than $\nabla^2{f}(\xopt) \succ 0$.
For further details of the regularity theory, we refer the reader to  \cite{Robinson1980}.

\beforesubsec
\subsection{\bf Scaled proximal operators}
\aftersubsec
Given a matrix $\Hb\in\Sc^p_{++}$, we define a scaled proximal operator of $g$ in \eqref{eq:composite_cvx} as
\begin{equation}\label{eq:def_gprox}
\prox_{\Hb^{-1} g}(\xb):=\argmin_{\zb}\set{g(\zb) + \tfrac{1}{2}\norm{\zb-\xb}^2_{\Hb}}.
\end{equation}
Using the optimality condition of the minimization problem under \eqref{eq:def_gprox}, we can show that
\begin{equation*}
\yb =  \prox_{\Hb^{-1} g}(\xb)\iff  0 \in \Hb(\yb - \xb) + \partial{g}(\yb) \iff \xb \in \yb + \Hb^{-1}\partial g(\yb) \equiv (\Id + \Hb^{-1}\partial{g})(\yb).
\end{equation*}
Since $g$ is proper, closed, and convex, $\prox_{\Hb^{-1} g}$ is well-defined and single-valued. 
In particular, if we take $\Hb=\Id$, the identity matrix, then $\prox_{\Hb^{-1}g}(\cdot) = \prox_{g}(\cdot)$, the standard proximal operator of $g$. 
If we can efficiently  compute $\prox_{\Hb^{-1}g}(\cdot)$ by a closed form or by polynomial time algorithms, then we say that $g$ is \textit{proximally tractable}. 
There exist several convex functions whose proximal operator is tractable. 
Examples such as $\ell_1$-norm, coordinate-wise separable convex functions, and the indicator of simple convex sets can be found in the literature including \cite{Bauschke2011,friedlander2016efficient,Parikh2013}.

\beforesubsec
\subsection{\bf Proximal Newton methods}
\aftersubsec
The proximal Newton method can be considered as a special case of the variable metric proximal method in the literature \cite{Bonnans1994a}.
This method has previously been studied  by many authors, see, e.g., \cite{Bonnans1994a,Lee2014}. 
However, the convergence guarantee often requires certain assumptions as used in standard Newton-type methods.
In this section, we develop a proximal Newton algorithm to solve the composite convex minimization problem \eqref{eq:composite_cvx} where $f$ is a generalized self-concordant function.
This problem covers \cite{Tran-Dinh2013a,Tran-Dinh2013} as special cases.

Given $\xb^k\in\dom{F}$, we first approximate $f$ at $\xb^k$ by the following convex quadratic surrogate:
\begin{equation*} 
Q_f(\xb;\xb^k):= f(\xb^k)+\iprod{\nabla f(\xb^k),\xb-\xb^k}+\tfrac{1}{2}\iprod{\nabla^2 f(\xb^k)(\xb-\xb^k),\xb-\xb^k}.
\end{equation*}
Next, the main step of the proximal Newton method requires to solve the following subproblem:
\begin{equation}\label{eq:dir_ds}
\zb^k := \mathrm{arg}{\!\!\!\!\!\!}\min_{\xb\in\dom{g}}\Big\{Q_f(\xb;\xb^k)+g(\xb) \Big\} = \prox_{\nabla^2{f}(\xb^k)^{-1}g}\Big(\xb^k - \nabla^2{f}(\xb^k)^{-1}\nabla{f}(\xb^k)\Big).
\end{equation}
The optimality condition for this subproblem is the following linear monotone inclusion:
\begin{equation}\label{eq:opt_cp_sub}
0\in \nabla f(\xb^k)+\nabla^2 f(\xb^k)(\zb^k-\xb^k)+\partial g(\zb^k).
\end{equation}
Here, we note that $\dom{Q_f(\cdot;\xb^k)} = \R^p$. Hence, $\dom{Q_f(\cdot;\xb^k) + g(\cdot)} = \dom{g}$. 
In the setting \eqref{eq:composite_cvx}, $\zb^k$ may not be in $\dom{F}$.
Our next step is to update the next iteration $\xb^{k+1}$ as
\begin{equation}\label{eq:proximal_Newton_scheme}
\xb^{k+1}:=\xb^k+\tau_k\pntdir^k=(1-\tau_k)\xb^k+\tau_k\zb^k,
\end{equation}
where $\pntdir^k := \zb^k - \xb^k$ is the proximal Newton direction, and $\tau_k\in (0,1]$ is a given step size.

Associated with the proximal Newton direction $\pntdir^k$, we define the following proximal Newton decrement and the $\ell_2$-norm quantity of $\pntdir^k$ as
\begin{equation}\label{eq:PNT_decrement}
\lambda_k := \norms{\pntdir^k}_{\xb^k}~~~~\textrm{ and }~~~~\beta_k := M_f\norms{\pntdir^k}_2.
\end{equation}
Our first goal is to show that we can explicitly compute the step-size $\tau_k$ in \eqref{eq:proximal_Newton_scheme} using $\lambda_k$ and $\beta_k$ such that we obtain a descent property for $F$.
This statement is presented in the following theorem whose proof is deferred to Appendix~\ref{apdx:th:comp_decr}.

\needcheck{
\begin{theorem}\label{th:comp_decr}
Let $\set{\xb^k}$ be the sequence generated by the proximal Newton scheme \eqref{eq:proximal_Newton_scheme} starting from $\xb^0\in\dom{F}$. 
If we choose the step-size $\tau_k$ as in \eqref{eq:step_size} of Theorem \ref{th:damped_step_NT}, then $\tau_k \in (0, 1]$, $\set{\xb^k}$ in $\dom{F}$ and
\begin{equation}\label{eq:stp_comp}
F(\xb^{k+1}) \leq F(\xb^k) - \Delta_k,
\end{equation}
where $\Delta_k := \lambda_k^2\tau_k - \omega_{\nu}\left( \tau_k d_k\right)\tau^2_k\lambda_k^2 > 0$ for $\tau_k > 0$ and $d_k$ as defined in Theorem \ref{th:damped_step_NT}.

There exists a neighborhood $\Nc(\xopt)$ of  the unique solution $\xopt$ of \eqref{eq:composite_cvx} such that if we initialize the scheme~\eqref{eq:proximal_Newton_scheme} at $\xb^0\in\Nc(\xopt)\cap\dom{F}$, then $\set{\xb^k}$ quadratically converges to $\xopt$.
\end{theorem}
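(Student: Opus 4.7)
The plan is to handle the theorem in two stages that closely mirror Theorem~\ref{th:damped_step_NT} and Theorem~\ref{th:full_step_NT_scheme_converg}, but with the gradient argument replaced by subgradient monotonicity. First I would establish the descent inequality~\eqref{eq:stp_comp} by combining convexity of $g$ with the right-hand upper bound of Proposition~\ref{pro:fx_bound1}, and then upgrade to local quadratic convergence in the full-step regime using the optimality condition \eqref{eq:opt_cp_sub} and the Hessian comparison of Proposition~\ref{pro:hessian_bounds}.

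\textbf{Descent part.} Since $\xb^{k+1}=(1-\tau_k)\xb^k+\tau_k\zb^k$ with $\tau_k\in[0,1]$, convexity of $g$ yields $g(\xb^{k+1})-g(\xb^k)\leq\tau_k(g(\zb^k)-g(\xb^k))$. From \eqref{eq:opt_cp_sub} there exists $\xi^k\in\partial g(\zb^k)$ with $\nabla f(\xb^k)+\nabla^2 f(\xb^k)\pntdir^k+\xi^k=0$, and the subgradient inequality gives $g(\zb^k)-g(\xb^k)\leq-\iprods{\xi^k,\pntdir^k}=\iprods{\nabla f(\xb^k),\pntdir^k}+\lambda_k^2$. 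For $f$, the identities $\norms{\xb^{k+1}-\xb^k}_{\xb^k}=\tau_k\lambda_k$ and $d_\nu(\xb^k,\xb^{k+1})=\tau_k d_k$ (both immediate from \eqref{eq:PNT_decrement}, \eqref{eq:dxy_def}, \eqref{eq:d_k}), together with the upper bound in \eqref{eq:f_bound1}, give
\begin{equation*}
f(\xb^{k+1})-f(\xb^k)\leq\tau_k\iprods{\nabla f(\xb^k),\pntdir^k}+\omega_\nu(\tau_k d_k)\tau_k^2\lambda_k^2,
\end{equation*}
whenever $\tau_k d_k<1$ (only needed for $\nu>2$). Summing the two bounds cancels the $\tau_k\iprods{\nabla f(\xb^k),\pntdir^k}$ term and produces $F(\xb^{k+1})-F(\xb^k)\leq-\tau_k\lambda_k^2+\omega_\nu(\tau_k d_k)\tau_k^2\lambda_k^2=-\Delta_k(\tau_k)$. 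Choosing $\tau_k$ by \eqref{eq:step_size} solves the same first-order optimality condition for $\Delta_k(\cdot)$ as in Theorem~\ref{th:damped_step_NT}, so the bounds $\tau_k\in(0,1]$ and $\tau_k d_k<1$ carry over verbatim. Thus $\xb^{k+1}\in\dom f$; convexity of $\dom g$ and the fact that both $\xb^k,\zb^k\in\dom g$ give $\xb^{k+1}\in\dom g$, hence $\set{\xb^k}\subset\dom F$ and $\Delta_k>0$ whenever $\lambda_k>0$.

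\textbf{Local quadratic convergence.} As $\lambda_k,d_k\to 0$, the formula \eqref{eq:step_size} forces $\tau_k\to 1$, so on a small enough neighborhood $\Nc(\xopt)$ the step-size is identically $1$ and \eqref{eq:opt_cp_sub} reduces to $\nabla f(\xb^k)+\nabla^2 f(\xb^k)(\xb^{k+1}-\xb^k)+\xi^{k+1}=0$ with $\xi^{k+1}\in\partial g(\xb^{k+1})$. The optimality condition \eqref{eq:composite_cvx_opt_cond} provides $\xi^\ast\in\partial g(\xopt)$ with $\nabla f(\xopt)+\xi^\ast=0$. Subtracting and using $\nabla f(\xb^k)-\nabla f(\xopt)=\int_0^1\nabla^2 f(\xopt+t(\xb^k-\xopt))(\xb^k-\xopt)\,\mathrm{d}t$ rearranges to
\begin{equation*}
\nabla^2 f(\xb^k)(\xb^{k+1}-\xopt)=\int_0^1[\nabla^2 f(\xb^k)-\nabla^2 f(\xopt+t(\xb^k-\xopt))](\xb^k-\xopt)\,\mathrm{d}t+(\xi^\ast-\xi^{k+1}).
\end{equation*}
Taking the inner product with $\xb^{k+1}-\xopt$ and using monotonicity of $\partial g$ (so $\iprods{\xi^{k+1}-\xi^\ast,\xb^{k+1}-\xopt}\geq 0$), then Cauchy--Schwarz in the $\nabla^2 f(\xb^k)$-metric, kills the subgradient term and reduces the problem to bounding an operator norm of $\nabla^2 f(\xb^k)-\nabla^2 f(\xopt+t(\xb^k-\xopt))$ relative to $\nabla^2 f(\xb^k)$. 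The Hessian comparisons \eqref{eq:hessian_bound1}--\eqref{eq:hessian_bound1b} bound this by a quantity that is linear in $d_\nu(\xopt,\xb^k)$, yielding the quadratic contraction $\norms{\xb^{k+1}-\xopt}_{\xb^k}\leq C\norms{\xb^k-\xopt}_{\xb^k}^2$ (up to the $\nu$-dependent weighting that appears in Theorem~\ref{th:full_step_NT_scheme_converg}). Strong regularity needed to initialize the induction is a consequence of Theorem~\ref{th:existence_and_unique} applied on $\Nc(\xopt)$.

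\textbf{Main obstacle.} The descent part is essentially bookkeeping once convexity of $g$ and the subgradient identity are combined. The real difficulty lies in the quadratic-convergence step: one must exploit subdifferential monotonicity to eliminate $\xi^{k+1}-\xi^\ast$ and then translate a Hessian-difference estimate into a contraction in the \emph{local} norm with the $\nu$-dependent exponent matching the three regimes of Theorem~\ref{th:full_step_NT_scheme_converg}. Uniform handling across $\nu\in[2,3]$ requires case analysis of \eqref{eq:hessian_bound1}--\eqref{eq:hessian_bound1b}, plus a careful argument that once $\xb^k\in\Nc(\xopt)$ the prescribed step-size is automatically equal to $1$, closing the induction without a separate switching rule.
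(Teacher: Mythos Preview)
Your descent argument is essentially the paper's argument, modulo a sign slip: the subgradient inequality at $\zb^k$ gives $g(\zb^k)-g(\xb^k)\le\iprods{\xi^k,\pntdir^k}$, not $-\iprods{\xi^k,\pntdir^k}$, and hence $g(\zb^k)-g(\xb^k)\le-\iprods{\nabla f(\xb^k),\pntdir^k}-\lambda_k^2$. With this correction the cancellation you describe goes through and matches the paper exactly.

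The quadratic-convergence part contains a real gap. The damped step-size \eqref{eq:step_size} satisfies $\tau_k<1$ whenever $d_k>0$; it tends to $1$ but is \emph{never} equal to $1$ on any neighborhood of $\xopt$. Consequently $\xb^{k+1}\neq\zb^k$ in general, and there is no $\xi^{k+1}\in\partial g(\xb^{k+1})$ satisfying the inclusion you write. Your monotonicity argument is therefore applied to the wrong point. The fix is to run your argument on $\zb^k$, which \emph{does} satisfy \eqref{eq:opt_cp_sub}: pairing $\nabla^2 f(\xb^k)(\zb^k-\xopt)=\int_0^1[\nabla^2 f(\xb^k)-\nabla^2 f(\xopt+t(\xb^k-\xopt))](\xb^k-\xopt)\,\mathrm{d}t+(\xi^\ast-\xi^k)$ with $\zb^k-\xopt$ and using $\iprods{\xi^k-\xi^\ast,\zb^k-\xopt}\ge0$ yields $\norms{\zb^k-\xopt}_{\xb^k}\le C\,d_\nu(\xopt,\xb^k)\norms{\xb^k-\xopt}_{\xb^k}$. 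You must then combine this with the convex split $\norms{\xb^{k+1}-\xopt}\le(1-\tau_k)\norms{\xb^k-\xopt}+\tau_k\norms{\zb^k-\xopt}$ and show separately that $1-\tau_k=O(d_k)=O(\norms{\xb^k-\xopt})$, which follows from \eqref{eq:step_size} once $\norms{\zb^k-\xb^k}$ is controlled by $\norms{\xb^k-\xopt}$. This two-term bookkeeping, with the $\nu$-dependent case analysis, is precisely what the paper does.

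Where your route differs from the paper is the tool used to bound $\norms{\zb^k-\xopt}$. The paper encodes both optimality conditions as fixed points of the scaled proximal map $\Pa^g_{\xopt}$ and invokes its nonexpansiveness (Lemma~\ref{le:nonexpansiveness}); this produces an \emph{implicit} estimate (because $\zb^k$ appears on both sides through an error term $e_{\xopt}(\xb^k,\zb^k)$) that must be absorbed. Your monotonicity-of-$\partial g$ argument, once redirected to $\zb^k$, gives the same quadratic bound \emph{directly} and avoids both the proximal-operator lemma and the absorption step. So your underlying idea is a legitimate and somewhat more elementary alternative; the paper's nonexpansiveness route buys a cleaner split into the two named residuals $S$ and $e$, which makes the three $\nu$-regimes more mechanical to track, but at the cost of the implicit inequality.
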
}

Next, we prove a local quadratic convergence of the full-step proximal Newton method \eqref{eq:proximal_Newton_scheme} with the unit step-size $\tau_k = 1$ for all $k\geq 0$.
The proof is given in Appendix~\ref{apdx:th:comp_full_pNT_scheme}.

\needcheck{
\begin{theorem}\label{th:comp_full_pNT_scheme}
Suppose that the sequence $\set{\xb^k}$ is generated by \eqref{eq:proximal_Newton_scheme} with full-step, i.e., $\tau_k=1$ for $k\geq 0$. 
Let $d_{\nu}^k:=d_{\nu}(\xb^k,\xb^{k+1})$ be defined by \eqref{eq:dxy_def} and $\lambda_k$ be defined by \eqref{eq:PNT_decrement}. 
Then, the following statements hold:
\begin{enumerate}
\item[$\mathrm{(a)}$] If $\nu = 2$ and the starting point $\xb^0$ satisfies $\underline{\sigma}_0^{-1/2}\lambda_0 < d_2^{\star}/M_f$, then both sequences $\set{\underline{\sigma}_k^{-1/2}\lambda_k}$ and $\set{d_2^k}$ decrease and quadratically converge  to zero, where $d_2^{\star}\approx 0.35482$. 

\vspace{1ex}
\item[$\mathrm{(b)}$] If $2 < \nu < 3$, and the starting point $\xb^0$ satisfies $\underline{\sigma}_0^{-\frac{3-\nu}{2}}\lambda_0 < \frac{1}{M_f}\min\set{\frac{2d_{\nu}^{\star}}{\nu-2},\frac{1}{2}}$, then both sequences $\set{\underline{\sigma}_k^{-\frac{3-\nu}{2}}\lambda_k}$ and $\set{d_{\nu}^k}$ decrease and quadratically converge to zero, where $d_{\nu}^{\star}$ is the unique solution to the equation $\left(\nu -2\right)R_{\nu}(d_{\nu})= 4(1-d_{\nu})^{\frac{4-\nu}{\nu-2}}$ in $d_{\nu}$ with $R_{\nu}(\cdot)$ given in \eqref{eq:R_alpha}.

\vspace{1ex}
\item[$\mathrm{(c)}$] If $\nu = 3$ and the starting point $\xb^0$ satisfies $\lambda_0 < \frac{2d_3^{\star}}{M_f}$, then the sequence $\set{\lambda_k}$ decreases and quadratically converges to zero, where $d_3^{\star}\approx 0.20943$.
\end{enumerate}
As a consequence, if $\set{d^k_{\nu}}$ locally converges  to zero at a quadratic rate, then $\big\{\Vert\xb^k - \xopt\Vert_{\Hb_k}\big\}$ also locally converges to zero at a quadratic rate, where $\Hb_k = \Id$, the identity matrix, if $\nu = 2$; $\Hb_k = \nabla^2{f}(\xb^k)$ if $\nu = 3$; and $\Hb_k = \nabla^2f(\xb^k)^{\frac{\nu}{2}-1}$ if $2 < \nu < 3$.
Hence, $\set{\xb^k}$ locally converges to $\xopt$, the unique solution of \eqref{eq:composite_cvx}, at a quadratic rate.
\end{theorem}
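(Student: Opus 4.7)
My plan is to mimic the path used for Theorem~\ref{th:full_step_NT_scheme_converg} in the non-composite case, with the single new ingredient being the monotonicity of $\partial g$ to handle the subgradient residuals coming from \eqref{eq:opt_cp_sub}. Writing $\zb^k = \xb^{k+1}$ under the full-step rule $\tau_k = 1$, the subproblem optimality condition at iteration $k$ gives a subgradient
$\xi^k := -\nabla f(\xb^k) - \nabla^2 f(\xb^k)\pntdir^k \in \partial g(\xb^{k+1})$,
and the analogous condition at iteration $k+1$ produces $\xi^{k+1} := -\nabla f(\xb^{k+1}) - \nabla^2 f(\xb^{k+1})\pntdir^{k+1} \in \partial g(\xb^{k+1} + \pntdir^{k+1})$. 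Monotonicity of $\partial g$ applied to $(\xi^k, \xi^{k+1})$ at the points $(\xb^{k+1}, \xb^{k+1}+\pntdir^{k+1})$ yields, after cancellation,
\begin{equation*}
\lambda_{k+1}^2 \;=\; \iprods{\nabla^2 f(\xb^{k+1})\pntdir^{k+1},\pntdir^{k+1}} \;\leq\; \iprods{\nabla f(\xb^k) + \nabla^2 f(\xb^k)\pntdir^k - \nabla f(\xb^{k+1}),\, \pntdir^{k+1}}.
\end{equation*}

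The right-hand side is a purely smooth residual that no longer depends on $g$, so the rest of the argument reduces to the smooth case. Using the mean-value identity $\nabla f(\xb^{k+1}) - \nabla f(\xb^k) = \int_0^1 \nabla^2 f(\xb^k + t\pntdir^k)\pntdir^k\,\ud t$, I can rewrite the residual as $\int_0^1 [\nabla^2 f(\xb^k) - \nabla^2 f(\xb^k + t\pntdir^k)]\pntdir^k\,\ud t$. I will then apply Cauchy--Schwarz in the $\nabla^2 f(\xb^{k+1})^{-1}$-norm and bound the Hessian difference via Proposition~\ref{pro:hessian_bounds}, exactly as in the proof of Theorem~\ref{th:full_step_NT_scheme_converg}. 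This will yield
$\lambda_{k+1} \leq R_\nu(d_\nu^k)\,\lambda_k$
with the same function $R_\nu$ that appears in equation \eqref{eq:R_alpha}, where $d_\nu^k = d_\nu(\xb^k,\xb^{k+1})$, together with a dual estimate that controls $\Vert\cdot\Vert^{\ast}_{\xb^{k+1}}$ in terms of $\Vert\cdot\Vert^{\ast}_{\xb^k}$.

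Next, I convert this one-step contraction into a quadratic recursion. Because $d_\nu^k$ is built from both $\lambda_k$ and $\beta_k = M_f\Vert\pntdir^k\Vert_2$, I will bound $\beta_k$ by $\underline{\sigma}_k^{-1/2}\lambda_k$ when $\nu\in[2,3)$ (so that the quantity $\underline{\sigma}_k^{-(3-\nu)/2}\lambda_k$ appears naturally) and use Proposition~\ref{pro:hessian_bounds} to update $\underline{\sigma}_{k+1}$ from $\underline{\sigma}_k$; when $\nu=3$, $\beta_k$ drops out and $d_3^k$ is proportional to $\lambda_k$ alone. Splitting the analysis into the three cases $\nu=2$, $2<\nu<3$, $\nu=3$ and insisting on $d_\nu^k < d_\nu^{\star}$, the one-step bound $\lambda_{k+1} \leq R_\nu(d_\nu^k)\lambda_k$ becomes a quadratic contraction in the scaled quantity $\underline{\sigma}_k^{-(3-\nu)/2}\lambda_k$ (or $\lambda_k$ if $\nu=3$). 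The thresholds $d_\nu^{\star}$ are precisely the largest values for which the coefficient in front of this quadratic contraction stays below $1$, which gives the equation $(\nu-2)R_\nu(d_\nu) = 4(1-d_\nu)^{(4-\nu)/(\nu-2)}$ for $2<\nu<3$ and the explicit constants $d_2^{\star}\approx 0.35482$, $d_3^{\star}\approx 0.20943$ in the two extreme cases.

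Finally, once $\lambda_k$ (appropriately scaled) converges quadratically to zero, I pass to convergence of the iterates themselves. Since $\pntdir^k = \xb^{k+1}-\xb^k$, quadratic decay of $\lambda_k$ gives a summable upper bound on $\Vert\xb^{k+1}-\xb^k\Vert_{\Hb_k}$ with $\Hb_k$ as stated, so $\set{\xb^k}$ is Cauchy in the relevant norm and its limit must satisfy \eqref{eq:composite_cvx_opt_cond} by continuity of $\nabla f$, $\nabla^2 f$ and closedness of $\partial g$; by Theorem~\ref{th:existence_and_unique} this limit is $\xopt$, and the quadratic rate for $\lambda_k$ transfers to $\Vert\xb^k-\xopt\Vert_{\Hb_k}$ via a standard telescoping argument. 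The main technical obstacle I anticipate is the bookkeeping in case~(b): tracking how $\underline{\sigma}_k$ evolves through Proposition~\ref{pro:hessian_bounds} while simultaneously ensuring $d_\nu^k < 1$ (so that the Hessian bound is valid) and $d_\nu^k < d_\nu^{\star}$ (so that the recursion is genuinely contractive), and extracting from this the clean initial condition $\underline{\sigma}_0^{-(3-\nu)/2}\lambda_0 < \tfrac{1}{M_f}\min\{2d_\nu^{\star}/(\nu-2),\tfrac{1}{2}\}$ stated in the theorem.
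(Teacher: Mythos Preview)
Your approach is correct and takes a genuinely different route from the paper. The paper's proof casts both $\xb^{k+1}=\zb^k$ and $\xb^{k+2}=\zb^{k+1}$ as images of the same scaled resolvent $\Pa^g_{\xb^k}=(\nabla^2f(\xb^k)+\partial g)^{-1}$ and invokes its nonexpansiveness (Lemma~\ref{le:nonexpansiveness}) to bound $\tilde\lambda_{k+1}:=\Vert\pntdir^{k+1}\Vert_{\xb^k}$; this produces an \emph{implicit} inequality because the error term $e_{\xb^k}(\xb^{k+1},\zb^{k+1})=[\nabla^2f(\xb^k)-\nabla^2f(\xb^{k+1})]\pntdir^{k+1}$ itself contains $\tilde\lambda_{k+1}$, and one must solve it to obtain $\tilde\lambda_{k+1}\le R_\nu(d_\nu^k)d_\nu^k\lambda_k\big/\big(2-(1-d_\nu^k)^{-2/(\nu-2)}\big)$ before converting to $\lambda_{k+1}$. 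Your monotonicity argument on $\partial g$ eliminates the nonsmooth part in one line and leaves exactly the smooth residual $\int_0^1[\nabla^2f(\xb^k)-\nabla^2f(\xb^k+t\pntdir^k)]\pntdir^k\,\ud t$; after Cauchy--Schwarz in the $\nabla^2f(\xb^{k+1})$-geometry and Lemma~\ref{le:H_norm} you land directly on $\lambda_{k+1}\le (1-d_\nu^k)^{-1/(\nu-2)}R_\nu(d_\nu^k)d_\nu^k\lambda_k$ (resp.\ $e^{d_2^k/2}R_2(d_2^k)d_2^k\lambda_k$ for $\nu=2$), i.e.\ the \emph{same} one-step bound as in the non-composite Theorem~\ref{th:full_step_NT_scheme_converg}, without the extra denominator $2-(1-d_\nu^k)^{-2/(\nu-2)}$. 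The subsequent case splitting and induction are then identical to Appendix~\ref{apdx:th:full_step_NT_scheme_converg}.

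Two small points to tighten. First, the displayed bound you quote, $\lambda_{k+1}\le R_\nu(d_\nu^k)\lambda_k$, is missing the factor $d_\nu^k$ and the norm-transfer factor from $\Vert\cdot\Vert^{\ast}_{\xb^k}$ to $\Vert\cdot\Vert^{\ast}_{\xb^{k+1}}$; you clearly know this since you mention the ``dual estimate'', but write it out. Second, because your one-step inequality is sharper than the paper's, the numerical thresholds you obtain will be those of Theorem~\ref{th:full_step_NT_scheme_converg} rather than the ones printed in Theorem~\ref{th:comp_full_pNT_scheme}; the qualitative statement (local quadratic convergence under an explicit initial-radius condition) is unaffected, and if anything your version improves the admissible region for $\nu=3$. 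What your route buys is simplicity and a tighter recursion; what the paper's route buys is a unified fixed-point viewpoint via $\Pa^g_{\Hb}$ that also drives the damped-step analysis in Theorem~\ref{th:comp_decr}.
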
}

Similar to Algorithm~\ref{alg:Newton_alg}, we can also combine the results of Theorems~\ref{th:comp_decr} and \ref{th:comp_full_pNT_scheme} to design a proximal Newton algorithm for solving \eqref{eq:composite_cvx}.
This algorithm is described in Algorithm~\ref{alg:prox_NT_alg} below.

\begin{algorithm}[ht!]\caption{(\textit{Proximal Newton algorithm for composite generalized self-concordant minimization})}\label{alg:prox_NT_alg}
\begin{normalsize}
\begin{algorithmic}[1]
   \State {\bfseries Inputs:} Choose an arbitrary initial point $\xb^0 \in\dom{F}$ and a desired accuracy $\varepsilon > 0$.
   \vspace{0.75ex}
   \State {\bfseries Output:}  An $\varepsilon$-solution $\xb^k$ of \eqref{eq:composite_cvx}.
   \vspace{0.75ex}
   \State {\bfseries Initialization:} Compute $d_{\nu}^{\star}$ according to Theorem \ref{th:comp_full_pNT_scheme} if needed.
   \vspace{0.75ex}
   \State{\bfseries For} $k = 0, \cdots, k_{\max}$, \textbf{perform:}
   \vspace{0.75ex}
   \State\hspace{0.16cm}\label{step:alg2_step1}~Compute the proximal Newton direction $\pntdir^k$ by solving \eqref{eq:dir_ds}.
   \vspace{0.75ex}
   \State\hspace{0.16cm} Compute $\lambda_k := \Vert\pntdir^k\Vert_{\xb^k}$, and compute $\beta_k := M_f\Vert \pntdir^k\Vert_2$ if $\nu \neq 3$.
   \vspace{0.75ex}
   \State\hspace{0.16cm} If $\lambda_k \leq \varepsilon$, then TERMINATE.
   \vspace{0.75ex}
   \State\hspace{0.16cm} If \textit{Phase 2 is used}, then compute $\underline{\sigma}_k =\lambda_{\min}(\nabla^2 f(\xb^k))$ if $2 \leq \nu < 3$.
   \vspace{0.75ex}
   \State\hspace{0.16cm} If \textit{Phase 2 is used} and $(\lambda_k, \underline{\sigma}_k)$  satisfies  Theorem~\ref{th:comp_full_pNT_scheme}, then set $\tau_k \!:=\! 1$ (\textbf{full-step}).
   \vspace{0.75ex}
   \Statex\hspace{0.16cm} Otherwise, compute the step-size $\tau_k$ by \eqref{eq:step_size} (\textbf{damped-step}).
   \vspace{0.75ex}
   \State\hspace{0.16cm} Update $\xb^{k+1}:=\xb^k + \tau_k \pntdir^k$.
   \vspace{0.75ex}
   \State {\bfseries End for}
\end{algorithmic}
\end{normalsize}
\end{algorithm}

\beforepar
\paragraph{\textbf{Implementation remarks:}}
The main step of Algorithm~\ref{alg:prox_NT_alg} is the computation of the proximal Newton step $\pntdir^k$, or the trial point $\zb^k$ in \eqref{eq:dir_ds}.
This step requires to solve a composite quadratic-convex minimization problem  \eqref{eq:dir_ds} with strongly convex objective function. 
If $g$ is proximally tractable, then we can apply proximal-gradient methods or splitting techniques  \cite{Bauschke2011,Beck2009,Nesterov2007} to solve this problem.
We can also combine accelerated proximal-gradient methods with a restarting strategy \cite{fercoq2016restarting,Giselsson2014,Odonoghue2012} to accelerate the performance of these algorithms.
These methods will be used in our numerical experiments in Section~\ref{sec:num_experiments}.

As noticed in Remark~\ref{re:inexact_NT}, we can also develop an inexact proximal Newton variant for Algorithm~\ref{alg:prox_NT_alg} by approximately solving the subproblem~\eqref{eq:dir_ds}. 
We leave this extension to our forthcoming work.

\beforesec
\section{Quasi-Newton methods for generalized self-concordant minimization}\label{sec:quasi_newton}
\aftersec
This section studies quasi-Newton variants of Algorithm~\ref{alg:Newton_alg} for solving \eqref{eq:gsc_min}.
Extensions to the composite form \eqref{eq:composite_cvx} can be done by combining the result in this section and the approach in \cite{Tran-Dinh2013a}.

A quasi-Newton method for solving \eqref{eq:gsc_min} updates the sequence $\set{\xb^k}$ using 
\begin{equation}\label{eq:quasi_Newton_scheme}
\xb^{k+1} := \xb^k - \tau_k\Bb_k\nabla{f}(\xb^k),~~~\text{where}~~\Bb_k := \Hb_k^{-1}~\text{and}~~\Hb_k \approx \nabla^2{f}(\xb^k),
\end{equation} 
where the step-size $\tau_k\in (0, 1]$ is appropriately chosen, and $\xb^0\in\dom{f}$ is a given starting point.

Matrix $\Hb_k$ is symmetric and positive definite, and it approximates the Hessian matrix $\nabla^2{f}(\xb^k)$ of $f$ at the iteration $\xb^k$ in some sense. 
The most common approximation sense is that $\Hb_k$ satisfies the well-known Dennis-Mor\'{e} condition \cite{Dennis1974}. 
In the context of generalized self-concordant functions, we can modify this condition by imposing:
\begin{equation}\label{eq:DM_cond}
\lim_{k\to\infty} \frac{\Vert(\Hb_k - \nabla^2{f}(\xopt_f))(\xb^k - \xopt_f)\Vert_{\hat{\xb}}^{\ast}}{\Vert\xb^k - \xopt_f\Vert_{\hat{\xb}}}  = 0,~~\text{where $\hat{\xb} = \xopt_f$ or $\hat{\xb} = \xb^k$}.
\end{equation}
Clearly, if we have $\lim_{k\to\infty}\Vert\Hb_k - \nabla^2{f}(\xb^k)\Vert_{\hat{\xb}} = 0$, then, with a simple argument, we can show that \eqref{eq:DM_cond} automatically holds.
In practice, we can update $\Hb_k$ to maintain the following \textit{secant equation}:
\begin{equation}\label{eq:secant_eq}
\Hb_{k+1}\sbb^k = \yb^k, ~~~\text{where}~~\sbb^k := \xb^{k+1} - \xb^k, ~~\text{and}~~~\yb^k := \nabla{f}(\xb^{k+1}) - \nabla{f}(\xb^k).
\end{equation}
There are several candidates to update $\Hb_k$ to maintain this secant equation, see, e.g., \cite{Nocedal2006}. Here, we propose to use a BFGS update as
\begin{equation}\label{eq:bfgs_update}
\Hb_{k+1} := \Hb_k + \frac{\yb^k(\yb^k)^{\top}}{\iprods{\yb^k,\sbb^k}} - \frac{(\Hb_k\sbb^k)(\Hb_k\sbb^k)^{\top}}{(\iprods{\Hb_k\sbb^k, \sbb^k}}.
\end{equation}
In practice, to avoid the inverse $\Bb_k = \Hb^{-1}_k$, we can update this inverse directly \cite{Nocedal2006} in lieu of updating $\Hb_k$ as in \eqref{eq:bfgs_update}.
Note that the BFGS update \eqref{eq:bfgs_update} or its inverse version may not maintain the sparsity or block pattern structures of the sequence $\set{\Hb_k}$ or $\set{\Bb_k}$ even if $\nabla^2{f}$ is sparse. 

The following result shows that the quasi-Newton method \eqref{eq:quasi_Newton_scheme} achieves a superlinear convergence whose proof can be found in Appendix~\ref{apdx:th:quasi_newton_alg}.

\needcheck{
\begin{theorem}\label{th:quasi_newton_alg}
Assume that $\xopt_f \in \dom{f}$ is the unique solution of \eqref{eq:gsc_min} and is strongly regular.
Let $\set{\xb^k}$ be the sequence generated by \eqref{eq:quasi_Newton_scheme}. Then, the following statements hold:
\begin{itemize}
\item[$\mathrm{(a)}$]
Assume, in addition, that the sequence of matrices $\set{\Hb_k}$ satisfies the Dennis-Mor\'{e} condtion~\eqref{eq:DM_cond} with $\hat{\xb} = \xopt_f$.
Then, there exist $\bar{r} > 0$, and $\bar{k} \geq 0$ such that,  for all $k \geq \bar{k}$, we have $\Vert \xb^k - \xopt_f\Vert_{\xopt_f} \leq \bar{r}$ and $\set{\xb^k}$ locally converges to $\xopt_f$ at a superlinear rate.

\vspace{1ex}
\item[$\mathrm{(b)}$]
Suppose that $\Hb_0$ is chosen such that $\Hb_0\in\Sc^p_{++}$. 
Then, $\iprods{\yb^k, \zb^k} > 0$ for all $k\geq 0$, and hence, the sequence $\set{\Hb_k}$ generated by \eqref{eq:bfgs_update} is symmetric positive definite, and satisfies the secant equation~\eqref{eq:secant_eq}. 
Moreover, if the sequence $\set{\xb^k}$ generated by \eqref{eq:quasi_Newton_scheme} satisfies $\sum_{k=0}^{\infty}\Vert\xb^k - \xopt_f\Vert_{\xopt_f} < +\infty$, then $\set{\xb^k}$ locally converges to the unique solution $\xopt_f$ of \eqref{eq:gsc_min} at a superlinear rate.
\end{itemize}
\end{theorem}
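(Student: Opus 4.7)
The proof naturally splits according to the two items of the theorem. In both parts I would work in the local norm $\|\cdot\|_{\xopt_f}$ induced by $\nabla^2 f(\xopt_f)\succ 0$, which is well-defined by strong regularity of $\xopt_f$. A preliminary observation, which I would use throughout, is that Proposition~\ref{pro:hessian_bounds} together with continuity of $\nabla^2 f$ implies that there exists a neighborhood $\Nc(\xopt_f)$ and constants $0<\underline c<\overline c$ such that $\underline c\,\nabla^2 f(\xopt_f)\preceq \nabla^2 f(x)\preceq \overline c\,\nabla^2 f(\xopt_f)$ for every $x\in\Nc(\xopt_f)$. Hence the local norms $\|\cdot\|_x$ and $\|\cdot\|_{\xopt_f}$ are equivalent, and the Dennis--Mor\'{e} condition \eqref{eq:DM_cond} is insensitive (up to constants) to the choice $\hat\xb=\xopt_f$ versus $\hat\xb=\xb^k$.

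\textbf{Part (a).} Starting from $\xb^{k+1}=\xb^k-\Hb_k^{-1}\nabla f(\xb^k)$ (full step, which I will justify below), I would add and subtract $\xopt_f$ and use $\nabla f(\xopt_f)=0$ to write
\begin{equation*}
\xb^{k+1}-\xopt_f \;=\; \Hb_k^{-1}\Big[(\Hb_k-\nabla^2 f(\xopt_f))(\xb^k-\xopt_f) \;+\; \big(\nabla^2 f(\xopt_f)(\xb^k-\xopt_f)-\nabla f(\xb^k)+\nabla f(\xopt_f)\big)\Big].
\end{equation*}
Take local norm. Using the mean-value form $\nabla f(\xb^k)-\nabla f(\xopt_f)=\int_0^1\nabla^2 f(\xopt_f+t(\xb^k-\xopt_f))(\xb^k-\xopt_f)\,dt$ and Corollary~\ref{co:hessian_bound2} applied on the segment joining $\xopt_f$ and $\xb^k$, the second bracketed term is bounded by $\big(\overline\kappa_\nu(d_\nu(\xopt_f,\xb^k))-1\big)\|\xb^k-\xopt_f\|_{\xopt_f}$, which is $o(\|\xb^k-\xopt_f\|_{\xopt_f})$ because $d_\nu(\xopt_f,\xb^k)\to 0$. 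The first bracketed term is $o(\|\xb^k-\xopt_f\|_{\xopt_f})$ by the modified Dennis--Mor\'{e} condition \eqref{eq:DM_cond}. Combining these with the uniform spectral equivalence above gives $\|\xb^{k+1}-\xopt_f\|_{\xopt_f}=o(\|\xb^k-\xopt_f\|_{\xopt_f})$. To start the argument rigorously one first picks $\bar r$ small enough that the ball of $\|\cdot\|_{\xopt_f}$-radius $\bar r$ sits inside $\Nc(\xopt_f)$, then uses \eqref{eq:DM_cond} to find $\bar k$ after which the $\tau_k=1$ step is admissible and keeps $\xb^{k+1}$ in the same ball (a standard contraction argument).

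\textbf{Part (b).} The first claim---that the BFGS update preserves symmetric positive definiteness provided $\iprods{\yb^k,\sbb^k}>0$ and that \eqref{eq:bfgs_update} satisfies the secant equation---is the classical BFGS lemma and follows from a direct computation on $\Hb_{k+1}\sbb^k$ and a rank-two perturbation argument. The key analytic input is $\iprods{\yb^k,\sbb^k}>0$: this is immediate from Proposition~\ref{pro:gradient_bound1}, since $\iprods{\yb^k,\sbb^k}=\iprods{\nabla f(\xb^{k+1})-\nabla f(\xb^k),\xb^{k+1}-\xb^k}\geq \bar{\omega}_\nu(-d_\nu(\xb^k,\xb^{k+1}))\|\sbb^k\|_{\xb^k}^2>0$ whenever $\xb^{k+1}\neq\xb^k$, which holds by positive definiteness of $\Hb_k$ and $\nabla f(\xb^k)\neq 0$ (otherwise we are already at $\xopt_f$). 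By induction, $\Hb_k\in\Sc^p_{++}$ for all $k$.

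\textbf{Superlinear convergence of BFGS.} For the second claim of (b), I would follow the Byrd--Nocedal program: under the summability hypothesis $\sum_k\|\xb^k-\xopt_f\|_{\xopt_f}<\infty$, show that the BFGS matrices $\set{\Hb_k}$ satisfy \eqref{eq:DM_cond}, and then invoke part~(a). The classical proof relies on Lipschitz continuity of $\nabla^2 f$ near $\xopt_f$; here I would replace that ingredient by the Hessian bound of Proposition~\ref{pro:hessian_bounds}, which provides the estimate $\|\nabla^2 f(\xb^k)-\nabla^2 f(\xopt_f)\|_{\xopt_f}=O(d_\nu(\xopt_f,\xb^k))=O(\|\xb^k-\xopt_f\|_{\xopt_f}^{\nu-2}\|\xb^k-\xopt_f\|_2^{3-\nu})$ within the local-norm-equivalence neighborhood. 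Working in the preconditioned coordinates $\tilde{\ub}=\nabla^2 f(\xopt_f)^{1/2}\ub$ and using the weighted trace/log-det potential of Byrd--Nocedal, one shows that the average cosine $\cos\theta_k$ between the BFGS direction and the ideal Newton direction tends to one, and the relative Rayleigh quotient tends to one, which is precisely \eqref{eq:DM_cond}. Applying part~(a) then yields the claimed superlinear rate.

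\textbf{Main obstacle.} The technically delicate step is the Byrd--Nocedal-style verification of the Dennis--Mor\'{e} property for BFGS in part~(b): one must carefully track constants so that the Hessian-variation estimate from Proposition~\ref{pro:hessian_bounds}, which is a local-norm bound rather than a global Lipschitz bound, slots correctly into the weighted potential argument. The summability hypothesis is what makes this substitution work, since it converts the per-step Hessian-variation bound into a convergent series. Once this is established, everything else is either a direct application of the structural results developed earlier in the paper or a standard BFGS computation.
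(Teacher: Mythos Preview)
Your decomposition in part (a) has a genuine gap. You write
\[
\xb^{k+1}-\xopt_f \;=\; \Hb_k^{-1}\Big[(\Hb_k-\nabla^2 f(\xopt_f))(\xb^k-\xopt_f) \;+\; \big(\nabla^2 f(\xopt_f)(\xb^k-\xopt_f)-\nabla f(\xb^k)+\nabla f(\xopt_f)\big)\Big],
\]
and then take the $\|\cdot\|_{\xopt_f}$-norm. But to pass from the bracket to $\|\xb^{k+1}-\xopt_f\|_{\xopt_f}$ you need a uniform bound on $\Hb_k^{-1}$ in this weighted norm, and nothing in the hypotheses of part~(a) gives you that: the Dennis--Mor\'{e} condition \eqref{eq:DM_cond} only controls $(\Hb_k-\nabla^2 f(\xopt_f))$ along the single direction $\xb^k-\xopt_f$, and your ``uniform spectral equivalence'' is for $\nabla^2 f(x)$, not for $\Hb_k$. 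So the step ``Combining these with the uniform spectral equivalence above gives $\|\xb^{k+1}-\xopt_f\|_{\xopt_f}=o(\|\xb^k-\xopt_f\|_{\xopt_f})$'' does not go through as stated.

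The paper avoids this by premultiplying by $\nabla^2 f(\xopt_f)^{-1}$ instead of $\Hb_k^{-1}$. Using $\Hb_k(\xb^{k+1}-\xb^k)+\nabla f(\xb^k)=0$, one writes
\[
\xb^{k+1}-\xopt_f = \nabla^2 f(\xopt_f)^{-1}\Big[\nabla^2 f(\xopt_f)(\xb^k-\xopt_f)-\nabla f(\xb^k)+\nabla f(\xopt_f) \;+\; (\nabla^2 f(\xopt_f)-\Hb_k)(\xb^{k+1}-\xb^k)\Big].
\]
Now the prefactor is the fixed matrix $\nabla^2 f(\xopt_f)^{-1}$, bounded by strong regularity, and the $\Hb_k$-dependent term acts on $\sbb^k=\xb^{k+1}-\xb^k$. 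The first piece is bounded exactly as you describe (via Lemma~\ref{le:H_norm}, equivalently your $\overline\kappa_\nu-1$ estimate). For the second piece, \eqref{eq:DM_cond} gives $\|(\Hb_k-\nabla^2 f(\xopt_f))\sbb^k\|_{\xopt_f}^{\ast}\leq \gamma_k\|\sbb^k\|_{\xopt_f}\leq \gamma_k(\|\xb^{k+1}-\xopt_f\|_{\xopt_f}+\|\xb^k-\xopt_f\|_{\xopt_f})$ with $\gamma_k\to 0$. This yields an inequality in which $\|\xb^{k+1}-\xopt_f\|_{\xopt_f}$ appears on both sides with coefficient $\gamma_k<1$ on the right, and solving it gives the superlinear contraction without ever touching $\Hb_k^{-1}$.

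Your part (b) is essentially the paper's approach: establish $\iprods{\yb^k,\sbb^k}>0$ from Proposition~\ref{pro:gradient_bound1}, run the standard BFGS positive-definiteness induction, and then invoke the Byrd--Nocedal bounded-deterioration argument (which the paper outsources to \cite[Theorem~11]{Tran-Dinh2013a}) to verify \eqref{eq:DM_cond} under the summability hypothesis, reducing to part~(a). Your identification of the ``main obstacle'' is accurate. Just make sure that once you have \eqref{eq:DM_cond}, you feed it into the corrected part~(a) argument rather than your original one.
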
}

Note that the condition $\sum_{k=0}^{\infty}\Vert\xb^k - \xopt_f\Vert_{\xopt_f} < +\infty$ in Theorem~\ref{th:quasi_newton_alg}(b) can be guaranteed if $\Vert\xb^{k+1} - \xopt_f\Vert_{\xopt_f} \leq \rho\Vert\xb^k -\xopt_f\Vert_{\xopt_f}$ for some $\rho\in (0, 1)$ and $k\geq \bar{k} \geq 0$.
Hence, if $\set{\xb^k}$ locally converges to $\xopt_f$ at a linear rate, then it also locally converges to $\xopt_f$ at a superlinear rate.

\beforesec
\section{Numerical experiments}\label{sec:num_experiments}
\aftersec
We provide five examples to verify our theoretical results and compare our methods  with existing methods in the leterature.
Our algorithms are implemented in Matlab 2014b running on a MacBook Pro. Retina, 2.7 GHz Intel Core i5 with 16Gb 1867 MHz DDR3 memory.

\beforesubsec
\subsection{\bf Comparison with \cite{zhang2015disco} on regularized logistic regression}\label{subsec:num_ex_logistic}
\aftersubsec
In this example, we empirically show that our theory provides a better step-size for logistic regression compared to \cite{zhang2015disco} as theoretically shown in Example~\ref{ex:compare_step_size}. 
In addition, our step-size can be used to guarantee a global convergence of Newton method without linesearch.
It can also be used as a lower bound for backtracking or forward linesearch to enhance the performance of Algorithm~\ref{alg:Newton_alg}.

To illustrate these aspects, we consider the following regularized logistic regression problem:
\begin{equation}\label{eq:logistic_reg_exam}
f^{\star} := \min_{\xb\in\R^p}\Big\{ f(\xb) := \frac{1}{n}\sum_{i=1}^n\ell( \yb_i(\ab_i^{\top}\xb + \mu)) + \frac{\gamma}{2}\Vert\xb\Vert_2^2 \Big\},
\end{equation}
where $\ell(s) = \log(1 + e^{-s})$ is the logistic loss, $\mu$ is a given intercept, $\yb_i \in \set{-1,1}$ and $\ab_i\in \R^p$ are  given as input data for $i=1,\cdots, n$, and $\gamma > 0$ is a given regularization parameter.

As shown previously in Proposition~\ref{pro:generalized_linear_func_with_regularizer}, $f$ can be cast into an $(M^{(3)}_f, 3)$-generalized self-concordant function with $M^{(3)}_f = \frac{1}{\sqrt{\gamma}}\max\set{\Vert\ab_i\Vert_2 \mid 1\leq i\leq n}$.
On the other hand, $f$ can also be considered as an $(M_f^{(2)}, 2)$-generalized self-concordant with $M_f^{(2)} := \max\set{\Vert\ab_i\Vert_2 \mid 1\leq i\leq n}$.

We implement Algorithm~\ref{alg:Newton_alg} using two different step-sizes $\tau_k^{(2)} = \frac{\ln(1 + \beta_k)}{\beta_k}$ and $\tau^{(3)}_k := \frac{1}{1 + 0.5M^{(3)}_f\lambda_k}$ as suggested by Theorem~\ref{th:damped_step_NT} for $\nu = 2$ and $\nu = 3$, respectively.
We terminate Algorithm \ref{alg:Newton_alg} if $\Vert\nabla{f}(\xb^k)\Vert_2 \leq 10^{-8}\max\set{1, \Vert\nabla{f}(\xb^0)\Vert_2}$, where $\xb^0 = \boldsymbol{0}$ is an initial point.
To solve the linear system \eqref{alg_1:cpt}, we apply a conjugate gradient method to avoid computing the inverse $\nabla^2{f}(x^k)^{-1}$ of the Hessian matrix $\nabla^2{f}(x^k)$ in large-scale problems.
We also compare our algorithms with the fast gradient method in \cite{Nesterov2004} using the optimal step-size for strongly convex functions which has an optimal linear convergence rate.

We test all algorithms on a binary classification dataset downloaded from \cite{CC01a} at \url{https://www.csie.ntu.edu.tw/~cjlin/libsvm/}.
As suggested in \cite{zhang2015disco}, we normalize the data such that each row $\ab_i$ has $\norms{\ab_i}_2 = 1$ for $i=1,\cdots, n$. 
The  parameter is set to $\gamma := 10^{-5}$ as in \cite{zhang2015disco}.

The convergence behavior of Algorithm~\ref{alg:Newton_alg} for $\nu = 2$ and $\nu = 3$ is plotted in Figure~\ref{fig:logistic_convg} for the \texttt{news20} problem.
\begin{figure}[ht!]
\begin{center}
\includegraphics[width = 1\textwidth]{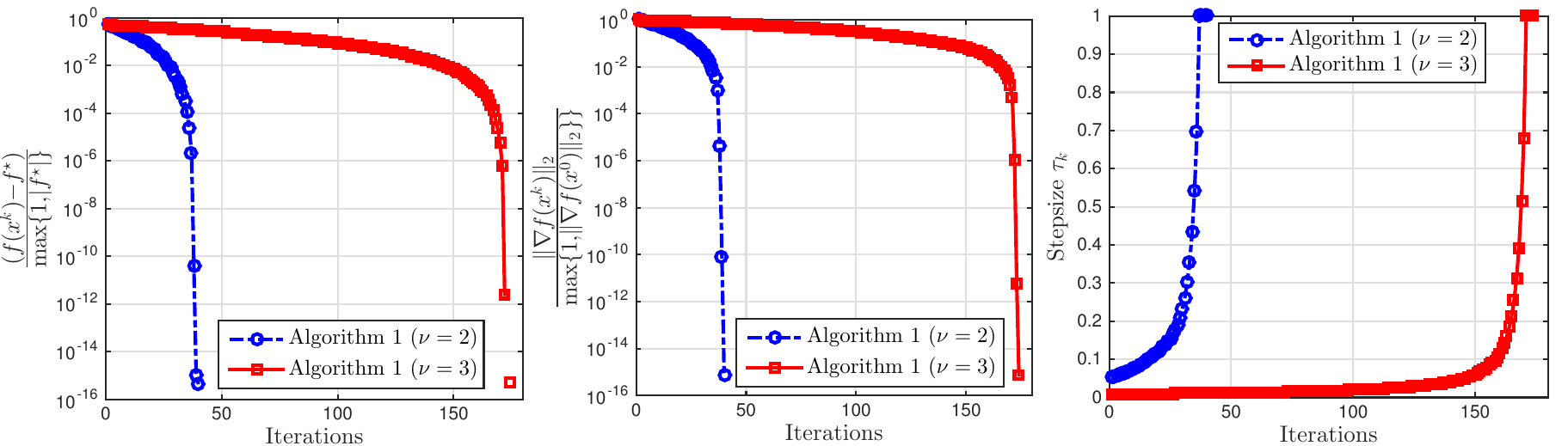}
\vspace{-3ex}
\caption{The convergence of Algorithm~\ref{alg:Newton_alg} for \texttt{news20.binary} (Left: Relative objective residuals, Middle: Relative norms of gradient, and Right: Stepsizes).}\label{fig:logistic_convg}
\end{center}
\vspace{-5ex}
\end{figure}
As we can see from this figure that Algorithm~\ref{alg:Newton_alg} with $\nu = 2$ outperforms the case $\nu = 3$. 
The right-most plot reveals the relative objective residual $\frac{f(\xb^k) - f^{\star}}{\max\set{1, \abs{f^{\star}}}}$, the middle one shows the relative gradient norm $\frac{\Vert\nabla{f}(\xb^k)\Vert_2}{\max\set{1, \Vert\nabla{f}(\xb^0)\Vert_2} }$, and the left-most figure displays the step-size $\tau_k^{(2)}$ and $\tau_k^{(3)}$.
Note that the step-size $\tau_k^{(3)}$ of Algorithm~\ref{alg:Newton_alg} depends on the regularization parameter $\gamma$.
If $\gamma$ is small, then $\tau_k^{(3)}$ is also small.
In contrast, the step-size $\tau_k^{(2)}$ of Algorithm~\ref{alg:Newton_alg} is independent of $\gamma$.

Our second test is performed  on  six problems with different sizes. 
Table~\ref{tbl:logis_ncp} shows the performance and results of the $3$ algorithms: Algorithm~\ref{alg:Newton_alg} with $\nu = 2$, Algorithm~\ref{alg:Newton_alg} with $\nu = 3$, and the fast-gradient method in \cite{Nesterov2004}. 
Here, $n$ is the number of data points, $p$ is the number of variables, \texttt{iter} is the number of iterations, \texttt{error} is the training error measured by $\frac{1}{2n}\sum_{i=1}^n(1-\mathrm{sign}(y_i(a_i^{\top}x + \mu)))$, and $f(x^k)$ is the objective value achieved by these three algorithms.  
\begin{table}[H]
\vspace{-3ex}
\newcommand{\cell}[1]{{\!\!\!}#1{\!\!}}
\newcommand{\cellf}[1]{{\!\!\!}#1{\!\!\!}}
\newcommand{\cellbf}[1]{{\!\!}{#1}{\!\!\!}}
\rowcolors{2}{white}{black!15!white}
\begin{center}
\caption{The performance and results of the three algorithms for solving the logistic regression problem~\eqref{eq:logistic_reg_exam}.}\label{tbl:logis_ncp}
\begin{scriptsize}
\begin{tabular}{ lrr| rrrr | rrrr | rrrrr }
\toprule
\multicolumn{3}{c|}{Problem} & \multicolumn{4}{c|}{ \cell{Algorithm~\ref{alg:Newton_alg}~($\nu = 2$)} } & \multicolumn{4}{c|}{\cell{Algorithm~\ref{alg:Newton_alg}~($\nu = 3$)}} & \multicolumn{4}{c}{Fast gradient method \cite{Nesterov2004}}  \\  \midrule
\multicolumn{1}{c}{\cell{Name}} & \multicolumn{1}{c}{\cell{$p$}} & \multicolumn{1}{c|}{\cell{$n$}} & \cell{iter} & \cell{\!time[s]\!}  & \cell{$f(x^k)$} & \cell{error} &  \cell{iter} & \cell{\!time[s]\!}  & \cell{$f(x^k)$} & \cell{error} &  \cell{iter} &  \cell{\!time[s]\!} &  \cell{$f(x^k)$} & \cell{error} \\ \midrule
\cell{a4a} & \cell{122} & \cell{4781} & \cellbf{22} & \cell{0.57} & \cell{3.250e-01} & \cell{0.150} & \cell{177} & \cell{4.99} & \cell{3.250e-01} & \cell{0.150} & \cell{1396} & \cell{2.13} & \cell{3.250e-01} & \cell{0.150} \\ \midrule
\cell{w4a} & \cell{300} & \cell{6760} & \cellbf{27} & \cell{1.14} & \cell{5.297e-02} & \cell{0.013} & \cell{246} & \cell{8.41} & \cell{5.297e-02} & \cell{0.013} & \cell{863} & \cell{1.71} & \cell{5.297e-02} & \cell{0.013} \\ \midrule
\cell{covtype} & \cell{54} & \cell{581012} & \cellbf{23} & \cell{17.22} & \cell{7.034e-04} & \cell{0.488} & \cell{272} & \cell{235.40} & \cell{7.034e-04} & \cell{0.488} & \cell{1896} & \cell{318.32} & \cell{7.034e-04} & \cell{0.488} \\ \midrule
\cell{rcv1} & \cell{47236} & \cell{20242} & \cellbf{39} & \cell{12.45} & \cell{1.085e-01} & \cell{0.009} & \cell{218} & \cell{60.80} & \cell{1.085e-01} & \cell{0.009} & \cell{366} & \cell{9.69} & \cell{1.085e-01} & \cell{0.009} \\ \midrule
\cell{gisette} & \cell{5000} & \cell{6000} & \cellbf{40} & \cell{109.23} & \cell{1.090e-01} & \cell{0.008} & \cell{220} & \cell{507.03} & \cell{1.090e-01} & \cell{0.008} & \cell{2180} & \cell{1183.67} & \cell{1.090e-01} & \cell{0.008} \\ \midrule
\cell{real-sim} & \cell{20958} & \cell{72201} & \cellbf{39} & \cell{22.69} & \cell{1.287e-01} & \cell{0.016} & \cell{218} & \cell{124.37} & \cell{1.287e-01} & \cell{0.016} & \cell{271} & \cell{24.74} & \cell{1.287e-01} & \cell{0.016} \\  \midrule
\cell{news20} & \cell{1355191} & \cell{19954} & \cellbf{42} & \cell{86.47} & \cell{1.602e-01} & \cell{0.005} & \cell{197} & \cell{420.87} & \cell{1.602e-01} & \cell{0.005} & \cell{623} & \cell{153.22} & \cell{1.602e-01} & \cell{0.005} \\\bottomrule 
\end{tabular}
\end{scriptsize}
\end{center}
\vspace{-6ex}
\end{table}

We observe that our step-size $\tau_k^{(2)}$ using $\nu = 2$ works much better than $\tau^{(3)}_k$ using $\nu = 3$ as in \cite{zhang2015disco}. 
This confirms the theoretical analysis in Example~\ref{ex:compare_step_size}.
This step-size can be useful for parallel and distributed implementation, where evaluating the objective values often requires high computational effort due to communication and data transferring.
Note that the computation of the step-size $\tau^{(2)}_k$ in Algorithm~\ref{alg:Newton_alg} only needs $\mathcal{O}(p)$ operations, and do not require to pass over all data points.
Algorithm~\ref{alg:Newton_alg} with $\nu = 2$ also works better than the fast gradient method \cite{Nesterov2004} in this experiment, especially for the case $n\gg 1$. 
Note that the fast gradient method uses the optimal step-size and has a linear convergence rate in this case.

Finally, we show that our step-size $\tau_k^{(2)}$ can be used as a lower bound to enhance a backtracking linesearch procedure in Newton methods.
The Armijo linesearch condition is given as 
\begin{equation}\label{eq:ls_cond}
f(\xb^k + \tau_k\ntdir^k) \leq f(\xb^k) - c_1\tau_k\nabla{f}(\xb^k)^{\top}\ntdir^k,
\end{equation}
where $c_1 \in (0, 1)$ is a given constant.
Here, we use $c_1 = 10^{-6}$ which is sufficiently small.
\begin{itemize}
\item In our backtracking linesearch variant, we search for the best step-size $\tau \in [\tau_k^{(2)}, 1]$. 
This variant requires to compute $\tau_k^{(2)}$ which needs $\mathcal{O}(p)$ operations.
\vspace{1ex}
\item In the standard backtracking linesearch routine, we search for the best step-size $\tau \in (0, 1]$. 
\end{itemize}
Both strategies use a bisection section rule as $\tau \leftarrow \tau/2$ starting from $\tau \leftarrow 1$.
The results on $3$ problems are reported in Table~\ref{tbl:logis_ncp2}.

\begin{table}[ht!]
\newcommand{\cell}[1]{{\!\!\!}#1{\!\!}}
\newcommand{\cellf}[1]{{\!\!\!}#1{\!\!\!}}
\newcommand{\cellbf}[1]{{\!\!}{#1}{\!\!\!}}
\rowcolors{2}{white}{black!15!white}
\begin{center}
\caption{The performance and results of the two linesearch variants of Algorithm~\ref{alg:Newton_alg} for solving~\eqref{eq:logistic_reg_exam}.}\label{tbl:logis_ncp2}
\vspace{-1ex}
\begin{scriptsize}
\begin{tabular}{ lrr| rrrrrr | rrrrrr  }
\toprule
\multicolumn{3}{c|}{Problem} & \multicolumn{6}{c|}{ \cell{Algorithm~\ref{alg:Newton_alg}~(Standard linesearch)} } & \multicolumn{6}{c}{\cell{Algorithm~\ref{alg:Newton_alg}~(Linesearch with $\tau_k^{(2)}$)}} \\  \midrule
 \multicolumn{1}{c}{\cell{Name}} &  \multicolumn{1}{c}{\cell{$p$}} &  \multicolumn{1}{c|}{\cell{$n$}} & \cell{iter} & \cell{nfval} & \cell{\!time[s]\!}  & \cell{$\frac{\Vert\nabla{f}(\xb^k)\Vert_2}{\Vert\nabla{f}(\xb^0)\Vert_2}$} &  \cell{$f(x^k)$} & \cell{error} &  \cell{iter} & \cell{nfval} & \cell{\!time[s]\!} & \cell{$\frac{\Vert\nabla{f}(\xb^k)\Vert_2}{\Vert\nabla{f}(\xb^0)\Vert_2}$}   & \cell{$f(x^k)$} & \cell{error} \\ \midrule
\cell{covtype} & \cell{54} & \cell{581012} & \cell{25} & \cell{68} & \cell{14.99} & \cell{5.8190e-09} & \cell{7.034e-04} & \cell{0.488} & \cell{14} & \cellbf{31} & \cell{9.89} & \cell{1.3963e-11} & \cell{7.034e-04} & \cell{0.488} \\ \midrule
\cell{rcv1} & \cell{47236} & \cell{20242} & \cell{9} & \cell{21} & \cell{1.85} & \cell{1.3336e-11} & \cell{1.085e-01} & \cell{0.009} & \cell{9} & \cellbf{19} & \cell{1.88} & \cell{1.3336e-11} & \cell{1.085e-01} & \cell{0.009} \\ \midrule
\cell{gisette} & \cell{5000} & \cell{6000} & \cell{8} & \cell{22} & \cell{18.28} & \cell{1.2088e-09} & \cell{1.090e-01} & \cell{0.008} & \cell{8} & \cellbf{17} & \cell{19.68} & \cell{1.2088e-09} & \cell{1.090e-01} & \cell{0.008} \\ 
\bottomrule 
\end{tabular}
\end{scriptsize}
\end{center}
\vspace{-4ex}
\end{table}

As shown in Table~\ref{tbl:logis_ncp2}, using the step-size $\tau_k^{(2)}$ as a lower bound for backtracking linesearch also reduces the number of function evaluations in these three problems.
Note that the number of function evaluations depends on the starting point $\xb^0$ as well as the factor $c_1$ in \eqref{eq:ls_cond}. 
If we set $c_1$ too small, then the decrease on $f$ can be small. Otherwise, if we set $c_1$ too high, then our decrement $c_1\tau_k\nabla{f}(\xb^k)^{\top}\ntdir^k$ may never be achieved, and the linesearch condition fails to hold.
If we change the starting point $\xb^0$, the number of function evaluations can significantly be increased.

\beforesubsec
\subsection{\bf The case $\nu = 2$: Matrix balancing}\label{subsec:num_matrix_balancing}
\aftersubsec
We consider the following convex optimization problem originated from matrix balancing \cite{cohen2017matrix}:
\begin{equation}\label{eq:matrix_balancing}
f^{\star} := \min_{x\in\R^p}\Big\{ f(x):=\sum_{1\leq i,j\leq p}a_{ij}e^{x_i-x_j} \Big\},
\end{equation}
where $A = (a_{ij})_{p\times p}$ is a nonnegative square matrix in $\R^{p\times p}$.
Although \eqref{eq:matrix_balancing} is a unconstrained smooth convex problem, its objective function $f$ is not strongly convex and does not have Lipschitz gradient.
Existing gradient-type methods do not have a theoretical convergence guarantee as well as a rule to compute  step-sizes.
However, \eqref{eq:matrix_balancing} is an important problem in scientific computing.

By Proposition~\ref{pro:sum_rule} and Corollary \ref{co:generalized_linear_func1}, $f$ is generalized self-concordant with $M=\sqrt{2}$ and $\nu=2$. 
We implement Algorithm~\ref{alg:Newton_alg} and the most recent method proposed in \cite{cohen2017matrix} (called Boxed-constrained Newton method (BCNM)) to solve \eqref{eq:matrix_balancing}.
Note that \cite{cohen2017matrix} is not directly applicable to  \eqref{eq:matrix_balancing}, but it solves a regularization of this problem.
Since $\nabla^2f(x)$ is not positive definite, we use a projected conjugate gradient gradient (CG) method to solve the linear system in Algorithm~\ref{alg:Newton_alg}.
We use an accelerated projected gradient method (FISTA) \cite{Beck2009} to solve the subproblem for the method in \cite{cohen2017matrix}. 
We terminate these subsolvers using either a tolerance $10^{-9}$ or a maximum of $200$ iterations.
For the outer loop, we terminate Algorithm~\ref{alg:Newton_alg} and BCNM using the same stopping criterion: $\delta{f_k'} :=\norms{\nabla f(x^k)}_2/\max\set{1,\norms{\nabla f(x^0)}_2} \leq 10^{-8}$.
We choose $x^0 := \boldsymbol{0}^p$ as an initial point.

We test both algorithms on several synthetic and real datasets.
The synthetic data is generated as in \cite{parlett1982methods} with different structures. 
The basic matrix $H = (H_{ij})_{p\times p}$ is a $p\times p$ upper Hessenberg matrix defined as $H_{ij}=0$ if $j< i-1$, and $H_{ij} = 1$ otherwise.
$H_1$ differs from $H$ only in that $H_{11}$ is replaced by $p^2$; $H_2$ differs from $H$ only in that $H_{12}$ is replaced by $p^2$; and $H_3 = H + (p^2-1)\Id_p$. 
We use these matrices for $A$ in \eqref{eq:matrix_balancing}.
We take $p = 1000$, $5000$, $10000$, and $15000$. 
We name each problem instance by ``\texttt{Hdy}'', where \texttt{H} stands for Hessenberg, and $\texttt{y} = 10^{-3}p$. 

The real data is downloaded from \href{https://math.nist.gov/MatrixMarket/searchtool.html}{https://math.nist.gov/MatrixMarket/searchtool.html} with different structures from different application fields, suggested by \cite{chen2000balancing}. 
Since we require the matrix $A$ to be nonnegative, we take $A_0 := \max\set{0, A}$ (entry-wise). 
For the real data, if $A$ is highly ill-conditioned, then we add uniform noise $\mathcal{U}[0, \sigma]$ to $A$, where $\sigma = 10^{-5}\max\set{ A_{ij} \mid 1 \leq i, j\leq p}$.

The final results of both algorithms are reported in Table~\ref{tbl:data_real}, where $p$ is the size of matrix $A$; 
\texttt{iter/siter} is the maximum number of Newton-type iterations / CG or FISTA iterations; 
\texttt{time[s]} is the computational time in second;
$\delta{f_k'}$ is the relative gradient norm defined above; $t_{\mathrm{rat}}$ is the ratio of the computational time between Algorithm~\ref{alg:Newton_alg} and BCNM;
and $\delta{x^k}$ is the relative difference between $x^k$ given by Algorithm~\ref{alg:Newton_alg} and BCNM.

\begin{table}[!ht]
\newcommand{\cell}[1]{{\!}#1{\!\!}}
\newcommand{\cells}[1]{{\!\!\!\!}#1{\!\!\!}}
\newcommand{\cella}[1]{{\!\!\!\!\!}#1{\!\!\!\!}}
\newcommand{\cellbf}[1]{{\!\!}{\color{blue}#1}{\!\!\!}}
\rowcolors{2}{white}{black!15!white}
\begin{center}
\caption{Summary of the results of Algorithm~\ref{alg:Newton_alg} and BCNM on $10$ synthetic and $30$ real problem instances}\label{tbl:data_real}
\begin{scriptsize}
\begin{tabular}{lr | rrrr | rrrr | rr}
\toprule
\multicolumn{2}{c|}{ Datasets } & \multicolumn{4}{c|}{ Algorithm~\ref{alg:Newton_alg} } & \multicolumn{4}{c|}{BCNM} & \multicolumn{2}{c}{ Comparison}\\ \midrule
\cell{Name} & \cell{p} & \cell{iter/siter} & \cell{time[s]} & $f(x^k)$ & \cell{$\delta{f_k'}$} & \cell{iter/siter} & \cell{time[s]} & \cell{$f(x^k)$} & \cell{$\delta{f_k'}$} & \cell{t$_{\textrm{rat}}$} & \cell{$\delta{x^k}$}\\ \midrule
\multicolumn{12}{c}{ Synthetic datasets } \\ \midrule
\cell{\texttt{H1d1}} & \cell{1000} & \cell{8/77} & \cell{0.32} & \cell{5.07e+05} & \cell{3.52e-09} & \cell{8/1028} & \cell{1.55} & \cell{5.07e+05} & \cell{1.82e-10} & \cell{4.88} & \cell{4.0e-07}\\ 
\cell{\texttt{H1d5}} & \cell{5000} & \cell{7/66} & \cell{2.54} & \cell{1.45e+07} & \cell{2.50e-10} & \cell{7/648} & \cell{24.99} & \cell{1.45e+07} & \cell{1.73e-10} & \cell{9.84} & \cell{3.8e-08}\\ 
\cell{\texttt{H1d10}} & \cell{10000} & \cell{7/64} & \cell{8.74} & \cell{6.24e+07} & \cell{8.62e-14} & \cell{6/461} & \cell{61.61} & \cell{6.24e+07} & \cell{4.82e-09} & \cell{7.05} & \cell{7.6e-07}\\ 
\cell{\texttt{H1d15}} & \cell{15000} & \cell{7/63} & \cell{18.63} & \cell{1.48e+08} & \cell{3.55e-14} & \cell{6/395} & \cell{120.41} & \cell{1.48e+08} & \cell{3.66e-10} & \cell{6.47} & \cell{2.1e-08}\\ 
\cell{\texttt{H2d5}} & \cell{5000} & \cell{7/62} & \cell{2.53} & \cell{1.45e+07} & \cell{7.34e-10} & \cell{7/640} & \cell{20.36} & \cell{1.45e+07} & \cell{1.88e-10} & \cell{8.04} & \cell{1.1e-07}\\ 
\cell{\texttt{H2d10}} & \cell{10000} & \cell{7/64} & \cell{9.16} & \cell{6.24e+07} & \cell{2.07e-13} & \cell{6/467} & \cell{61.44} & \cell{6.24e+07} & \cell{4.75e-09} & \cell{6.71} & \cell{7.6e-07}\\ 
\cell{\texttt{H2d15}} & \cell{15000} & \cell{7/63} & \cell{19.66} & \cell{1.48e+08} & \cell{3.18e-14} & \cell{6/395} & \cell{119.16} & \cell{1.48e+08} & \cell{3.52e-10} & \cell{6.06} & \cell{1.9e-08}\\ 
\cell{\texttt{H3d5}} & \cell{5000} & \cell{4/32} & \cell{1.34} & \cell{1.25e+11} & \cell{1.22e-11} & \cell{3/15} & \cell{2.28} & \cell{1.25e+11} & \cell{2.47e-11} & \cell{1.70} & \cell{6.7e-11}\\ 
\cell{\texttt{H3d10}} & \cell{10000} & \cell{4/32} & \cell{4.52} & \cell{1.00e+12} & \cell{1.79e-11} & \cell{3/14} & \cell{8.21} & \cell{1.00e+12} & \cell{2.29e-11} & \cell{1.82} & \cell{2.6e-11}\\ 
\cell{\texttt{H3d15}} & \cell{15000} & \cell{4/28} & \cell{8.72} & \cell{3.38e+12} & \cell{1.15e-11} & \cell{3/12} & \cell{18.06} & \cell{3.38e+12} & \cell{2.59e-10} & \cell{2.07} & \cell{4.9e-10}\\ 
\midrule
\multicolumn{12}{c}{ Real datasets } \\ \midrule
\cell{\texttt{bcs}} & \cell{10974} & \cell{4/362} & \cell{43.95} & \cell{2.28e+12} & \cell{2.39e-12} & \cell{9/438} & \cell{87.89} & \cell{2.28e+12} & \cell{9.83e-09} & \cell{2.00} & \cell{2.1e-08}\\ 
\cell{\texttt{bcs}} & \cell{11948} & \cell{4/204} & \cell{31.23} & \cell{9.30e+12} & \cell{1.85e-12} & \cell{14/305} & \cell{91.19} & \cell{9.30e+12} & \cell{8.76e-09} & \cell{2.92} & \cell{4.8e-08}\\ 
\cell{\texttt{bcs}} & \cell{15439} & \cell{4/36} & \cell{11.89} & \cell{1.53e+16} & \cell{1.21e-12} & \cell{3/16} & \cell{19.13} & \cell{1.53e+16} & \cell{1.13e-10} & \cell{1.61} & \cell{4.4e-11}\\ 
\cell{\texttt{bcsm}} & \cell{15439} & \cell{4/28} & \cell{9.86} & \cell{2.18e+11} & \cell{1.98e-12} & \cell{3/12} & \cell{18.06} & \cell{2.18e+11} & \cell{2.52e-10} & \cell{1.83} & \cell{3.3e-10}\\ 
\cell{\texttt{bwm}} & \cell{2000} & \cell{4/800} & \cell{4.06} & \cell{9.13e+07} & \cell{2.62e-11} & \cell{500/1680} & \cell{72.15} & \cell{9.13e+07} & \cell{1.05e-08} & \cell{17.77} & \cell{7.3e-09}\\ 
\cell{\texttt{e40r01}} & \cell{17281} & \cell{5/178} & \cell{59.65} & \cell{9.86e+04} & \cell{3.49e-12} & \cell{4/230} & \cell{92.36} & \cell{9.86e+04} & \cell{1.20e-09} & \cell{1.55} & \cell{4.6e-08}\\ 
\cell{\texttt{e40r05}} & \cell{17281} & \cell{6/279} & \cell{92.71} & \cell{1.02e+05} & \cell{5.09e-13} & \cell{5/476} & \cell{170.58} & \cell{1.02e+05} & \cell{7.07e-10} & \cell{1.84} & \cell{3.0e-08}\\ 
\cell{\texttt{e40r20}} & \cell{17281} & \cell{7/489} & \cell{160.63} & \cell{1.48e+05} & \cell{7.86e-14} & \cell{6/751} & \cell{278.32} & \cell{1.48e+05} & \cell{1.14e-09} & \cell{1.73} & \cell{1.6e-09}\\ 
\cell{\texttt{e40r30}} & \cell{17281} & \cell{7/492} & \cell{159.09} & \cell{1.90e+05} & \cell{6.21e-14} & \cell{6/759} & \cell{260.82} & \cell{1.90e+05} & \cell{1.11e-09} & \cell{1.64} & \cell{2.0e-09}\\ 
\cell{\texttt{e40r40}} & \cell{17281} & \cell{7/486} & \cell{152.54} & \cell{2.36e+05} & \cell{6.09e-14} & \cell{6/726} & \cell{247.59} & \cell{2.36e+05} & \cell{3.15e-09} & \cell{1.62} & \cell{3.8e-09}\\ 
\cell{\texttt{fid011}} & \cell{16614} & \cell{4/434} & \cell{122.21} & \cell{4.55e+11} & \cell{7.23e-12} & \cell{21/465} & \cell{268.17} & \cell{4.55e+11} & \cell{9.56e-09} & \cell{2.19} & \cell{3.6e-09}\\ 
\cell{\texttt{fid019}} & \cell{12005} & \cell{4/241} & \cell{37.62} & \cell{1.69e+10} & \cell{2.06e-12} & \cell{13/306} & \cell{84.94} & \cell{1.69e+10} & \cell{9.18e-09} & \cell{2.26} & \cell{5.3e-08}\\ 
\cell{\texttt{fid035}} & \cell{19716} & \cell{4/261} & \cell{116.65} & \cell{2.78e+10} & \cell{5.24e-12} & \cell{4/295} & \cell{164.79} & \cell{2.78e+10} & \cell{3.67e-09} & \cell{1.41} & \cell{1.1e-08}\\ 
\cell{\texttt{fidm09}} & \cell{4683} & \cell{4/685} & \cell{16.14} & \cell{1.65e+05} & \cell{2.60e-12} & \cell{93/829} & \cell{67.09} & \cell{1.65e+05} & \cell{9.85e-09} & \cell{4.16} & \cell{2.5e-08}\\ 
\cell{\texttt{fidm11}} & \cell{22294} & \cell{3/222} & \cell{118.68} & \cell{4.63e+03} & \cell{2.93e-09} & \cell{3/299} & \cell{178.42} & \cell{4.63e+03} & \cell{9.16e-10} & \cell{1.50} & \cell{1.3e-07}\\ 
\cell{\texttt{fidm13}} & \cell{3549} & \cell{4/667} & \cell{9.17} & \cell{8.73e+02} & \cell{9.86e-14} & \cell{5/653} & \cell{9.49} & \cell{8.73e+02} & \cell{1.68e-09} & \cell{1.03} & \cell{2.7e-08}\\ 
\cell{\texttt{fidm15}} & \cell{9287} & \cell{3/231} & \cell{21.43} & \cell{2.23e+03} & \cell{7.48e-09} & \cell{3/321} & \cell{32.61} & \cell{2.23e+03} & \cell{2.03e-09} & \cell{1.52} & \cell{6.7e-07}\\ 
\cell{\texttt{fidm29}} & \cell{13668} & \cell{4/451} & \cell{82.61} & \cell{1.07e+04} & \cell{1.51e-12} & \cell{12/452} & \cell{135.98} & \cell{1.07e+04} & \cell{9.67e-09} & \cell{1.65} & \cell{1.8e-08}\\ 
\cell{\texttt{fidm33}} & \cell{2353} & \cell{4/397} & \cell{2.62} & \cell{9.70e+03} & \cell{1.31e-12} & \cell{5/585} & \cell{3.99} & \cell{9.70e+03} & \cell{9.88e-09} & \cell{1.53} & \cell{2.4e-08}\\ 
\cell{\texttt{fidm37}} & \cell{9152} & \cell{4/483} & \cell{44.73} & \cell{1.61e+10} & \cell{1.23e-11} & \cell{70/614} & \cell{212.39} & \cell{1.61e+10} & \cell{9.84e-09} & \cell{4.75} & \cell{2.3e-08}\\ 
\cell{\texttt{gre}} & \cell{1107} & \cell{6/595} & \cell{1.23} & \cell{1.07e+03} & \cell{4.27e-10} & \cell{6/927} & \cell{1.93} & \cell{1.07e+03} & \cell{4.72e-09} & \cell{1.57} & \cell{5.6e-08}\\ 
\cell{\texttt{lnsp}} & \cell{3937} & \cell{8/402} & \cell{7.43} & \cell{2.56e+12} & \cell{4.03e-14} & \cell{7/669} & \cell{13.60} & \cell{2.56e+12} & \cell{3.10e-10} & \cell{1.83} & \cell{1.5e-08}\\ 
\cell{\texttt{mah}} & \cell{1258} & \cell{8/77} & \cell{0.45} & \cell{4.57e+05} & \cell{1.97e-11} & \cell{8/1001} & \cell{3.00} & \cell{4.57e+05} & \cell{7.25e-11} & \cell{6.63} & \cell{4.7e-09}\\ 
\cell{\texttt{mem}} & \cell{17758} & \cell{4/32} & \cell{14.51} & \cell{4.57e+02} & \cell{1.53e-13} & \cell{3/15} & \cell{26.57} & \cell{4.57e+02} & \cell{1.19e-11} & \cell{1.83} & \cell{4.8e-11}\\ 
\cell{\texttt{mhd}} & \cell{3200} & \cell{4/165} & \cell{2.22} & \cell{5.09e+01} & \cell{2.39e-14} & \cell{4/437} & \cell{6.26} & \cell{5.09e+01} & \cell{1.94e-09} & \cell{2.82} & \cell{1.7e-07}\\ 
\cell{\texttt{mhd}} & \cell{4800} & \cell{4/136} & \cell{3.97} & \cell{5.30e+01} & \cell{4.79e-14} & \cell{3/423} & \cell{11.88} & \cell{5.30e+01} & \cell{3.30e-09} & \cell{2.99} & \cell{1.3e-07}\\ 
\cell{\texttt{olm}} & \cell{2000} & \cell{8/640} & \cell{3.27} & \cell{2.94e+07} & \cell{2.05e-15} & \cell{7/846} & \cell{4.80} & \cell{2.94e+07} & \cell{1.30e-10} & \cell{1.47} & \cell{2.7e-09}\\ 
\cell{\texttt{olm}} & \cell{5000} & \cell{7/426} & \cell{11.42} & \cell{5.41e+08} & \cell{9.14e-11} & \cell{6/651} & \cell{20.75} & \cell{5.41e+08} & \cell{4.85e-10} & \cell{1.82} & \cell{3.5e-09}\\ 
\cell{\texttt{ora678}} & \cell{2529} & \cell{9/898} & \cell{6.95} & \cell{3.16e+02} & \cell{9.95e-11} & \cell{8/1512} & \cell{11.92} & \cell{3.16e+02} & \cell{8.06e-09} & \cell{1.71} & \cell{1.1e-06}\\ 
\cell{\texttt{pde}} & \cell{2961} & \cell{6/197} & \cell{2.56} & \cell{1.05e+04} & \cell{5.65e-13} & \cell{5/311} & \cell{4.17} & \cell{1.05e+04} & \cell{6.14e-10} & \cell{1.63} & \cell{8.4e-09}\\ 
\bottomrule
\end{tabular}
\end{scriptsize}
\vspace{-5ex}
\end{center}
\end{table}

As we can see from our experiment, both methods give almost the same result in terms of the objective values $f(x^k)$ and approximate solutions $x^k$.
Given the same stopping criteria and solution quality, Algorithm~\ref{alg:Newton_alg} outperforms BCNM in all datasets in terms of average computational time which is specified by $t_{\textrm{rat}} = \frac{\text{time}_{\mathrm{BCNM}}}{\text{time}_{\mathrm{Alg.~\ref{alg:Newton_alg}}}}$ . 
In particular, for many asymmetric and/or ill-conditioned datasets (e.g., \texttt{H2d5}, or \texttt{bwm}), Algorithm~\ref{alg:Newton_alg} is approximately from $8$ to $17$ times faster than BCNM.

\beforesubsec
\subsection{\bf The case $\nu \in (2, 3)$: Distance-weighted discrimination regression.}\label{subsec:num_ex_dwd}
\aftersubsec
In this example, we test the performance of Algorithm~\ref{alg:Newton_alg} on the distance-weighted discrimination (DWD) problem introduced in \cite{marron2007distance}.
In order to directly use Algorithm~\ref{alg:Newton_alg}, we slightly modify the setting in \cite{marron2007distance} to obtain the following form:
\begin{equation}\label{eq:dwd_prob}
{\!\!\!\!\!}f^{\star} := {\!\!\!\!\!}\min_{\xb = [\wb, \xi, \mu]^{\top}\in\R^p}\left\{f(\xb) := \frac{1}{n}\sum_{i=1}^n \frac{1}{(\ab_i^{\top}\wb + \mu y_i + \xi_i)^q} + \cb^{\top}\xi + \frac{1}{2}\left(\gamma_1 \Vert \wb\Vert_2^2 + \gamma_2\mu^2 + \gamma_3\Vert\xi\Vert_2^2\right) \right\},{\!\!\!}
\end{equation}
where $q > 0$, $\ab_i,  y_i$ ($i=1,\cdots, n)$ and $\cb$ are given, and $\gamma_s > 0$ ($s  = 1,2,3$) are  three regularization parameters for $\wb$, $\mu$ and $\xi$, respectively.
Here, the variable $\xb$ consists of the support vector $\wb$, the intercept $\mu$, and the slack  variable $\xi$ as used in \cite{marron2007distance}.
Here, we penalize these variables by using  least squares terms instead of the $\ell_1$-penalty term as in \cite{marron2007distance}.
Note that  the setting \eqref{eq:dwd_prob} is not just limited to the DWD application above, but can also be used to formulate other practical models such as time optimal path planning problems in robotics \cite{Verscheure2009} if we choose an appropriate parameter $q$.

Since $\varphi(t) := \frac{1}{t^q}$ is $(M_{\varphi}, \nu)$-generalized self-concordant with $M_{\varphi} := \frac{q+2}{\sqrt[(q+2)]{q(q+1)}}n^{\frac{1}{q+2}}$ and  $\nu := \tfrac{2(q+3)}{q+2} \in (2, 3)$, using Proposition~\ref{pro:sum_rule}, we can show that $f$ is $(M_f,  \tfrac{2(q+3)}{q+2})$-generalized self-concordant with $M_f := \frac{q+2}{\sqrt[(q+2)]{q(q+1)}}n^{\frac{1}{q+2}}\max\set{\norm{(\ab_i^{\top}, y_i, \eb_i^{\top})^{\top}}_2^{q/(q+2)} \mid 1\leq i\leq n}$ (here, $\eb_i$ is the $i$-th unit vector).
Problem \eqref{eq:dwd_prob} can be transformed into a second-order cone program \cite{Grant2006}, and can be solved by interior-point methods.
For instance, if we choose $q = 1$, then, by introducing intermediate variables $s_i$ and $r_i$, we can transform \eqref{eq:dwd_prob} into a second-order cone program using the fact that $\frac{1}{r_i} \leq s_i$ is equivalent to $\sqrt{(r_i-s_i)^2 + 2^2} \leq (r_i + s_i)$.

We implement Algorithm~\ref{alg:Newton_alg} to solve \eqref{eq:dwd_prob} and compare it with the interior-point method implemented in  commercial software: Mosek. 
We experienced that Mosek is much faster than other interior-point solvers such as SDPT3 \cite{Toh2010} or SDPA \cite{Yamashita2003} in this test.
For instance, Mosek is from $52$ to $125$ times faster than SDPT3 in this example. Hence, we only present the results of Mosek.

We also incorporate Algorithm~\ref{alg:Newton_alg} with a backtracking linesearch using our step-size $\tau_k$ (LS with $\tau_k$) as a lower bound.
Note that since $f$ does not have a Lipschitz gradient map, we cannot apply gradient-type  methods to solve~\eqref{eq:dwd_prob} due to the lack of a theoretical guarantee.

Since we cannot run Mosek on big data sets, we rather test our algorithms and this interior-point solvers on $6$ small and medium size problems using data from \cite{CC01a} (\url{https://www.csie.ntu.edu.tw/~cjlin/libsvm/}). 
We choose the regularization parameters as $\gamma_1 = \gamma_2 = 10^{-5}$ and $\gamma_3 = 10^{-7}$.
Note that if the data set has the size of $(n, p)$, then number of variables in \eqref{eq:dwd_prob} becomes $p+n+1$.
Hence, we use a built-in Matlab conjugate gradient solver to compute the Newton direction $\ntdir^k$.
The initial point $\xb^0$ is chosen as $\wb^0 := \boldsymbol{0}$, $\mu^0 := 0$ and $\xi^0 := \boldsymbol{1}$. 
In our algorithms, we use $\Vert\nabla{f}(\xb^k)\Vert_2 \leq 10^{-8}\max\set{1, \Vert\nabla{f}(\xb^0)\Vert_2}$ as a stopping criterion.

Note that, by the choice of $\gamma_i$ for $i=1,2,3$ as $\gamma_{\min} := \min\set{\gamma_1,\gamma_2, \gamma_3} = 10^{-7} > 0$, the objective function of \eqref{eq:dwd_prob} is strongly convex. 
By Proposition \ref{pro:scvx_lips_gsc}(a), we can cast this function into an ($\hat{M}_f, \hat{\nu})$-generalized self-concordant with $\hat{\nu} = 3$ and $\hat{M}_f :=  \gamma_{\min}^{\frac{-q}{2(q+2)}}M_f$, where $M_f$ is given above.
We also implement Algorithm \ref{alg:Newton_alg} using $\hat{\nu} = 3$ to solve \eqref{eq:dwd_prob}.

\begin{table}[ht!]
\vspace{-4ex}
\newcommand{\cell}[1]{{\!\!\!}#1{\!\!}}
\newcommand{\cellf}[1]{{\!\!\!}#1{\!\!}}
\newcommand{\cellbf}[1]{{\!\!}{#1}{\!\!\!}}
\rowcolors{2}{white}{black!15!white}
\begin{center}
\caption{The performance and results of the four methods for solving the DWD problem~\eqref{eq:dwd_prob}.}\label{tbl:dwd_ncp1}
\begin{tabular}{ lrr | rrr | rrr | rrr | rr  }
\toprule
\multicolumn{3}{c|}{Problem} & \multicolumn{3}{c|}{ \cell{Algorithm~\ref{alg:Newton_alg} } } & \multicolumn{3}{c|}{\cell{Algorithm~\ref{alg:Newton_alg}~(LS with $\tau_k$)}} & \multicolumn{3}{c|}{\cell{Algorithm~\ref{alg:Newton_alg} ($\nu=3$)}} & \multicolumn{2}{c}{\cell{Mosek}} \\  \midrule
\multicolumn{1}{c}{\cell{Name}} & \multicolumn{1}{c}{\cell{$n$}} & \multicolumn{1}{c|}{\cell{$p$}} & \cell{iter} & \cell{\!time[s]\!}  & \cell{$\frac{\Vert\nabla{f}(\xb^k)\Vert_2}{\Vert\nabla{f}(\xb^0)\Vert_2}$} & \cell{iter} & \cell{\!time[s]\!}  & \cell{$\frac{\Vert\nabla{f}(\xb^k)\Vert_2}{\Vert\nabla{f}(\xb^0)\Vert_2}$} & \cell{\!iter\!} & \cell{\!time[s]\!}  & \cell{$\frac{\Vert\nabla{f}(\xb^k)\Vert_2}{\Vert\nabla{f}(\xb^0)\Vert_2}$} & \cell{\!time[s]\!}  & \cell{$\frac{\Vert\nabla{f}(\xb^k)\Vert_2}{\Vert\nabla{f}(\xb^0)\Vert_2}$} \\ \midrule
\multicolumn{14}{c}{$q = 1$} \\ \midrule
\cell{a1a} &\cell{1605} & \cell{119} & \cell{170} & \cell{1.35} & \cell{9.038e-12} & \cell{13} & \cellbf{0.12} & \cell{4.196e-13} & \cell{574} & \cell{5.77} & \cell{7.031e-14} & \cell{0.49} & \cell{1.806e-08} \\
\cell{a2a} &\cell{2265} & \cell{119} & \cell{192} & \cell{2.71} & \cell{1.661e-13} & \cell{12} & \cellbf{0.15} & \cell{8.549e-09} & \cell{633} & \cell{7.67} & \cell{8.903e-09} & \cell{0.50} & \cell{2.858e-08} \\
\cell{a4a} &\cell{4781} & \cell{122} & \cell{247} & \cell{5.60} & \cell{1.180e-13} & \cell{12} & \cellbf{0.27} & \cell{5.380e-10} & \cell{790} & \cell{21.06} & \cell{3.171e-13} & \cell{0.94} & \cell{1.740e-08} \\
\cell{leu} &\cell{38} & \cell{7129} & \cell{54} & \cell{2.71} & \cell{2.214e-10} & \cell{15} &      \cellbf{0.58} & \cell{3.995e-13} & \cell{193} & \cell{10.64} & \cell{5.275e-12} & \cell{0.72} & \cell{2.828e-07} \\
\cell{w1a} &\cell{2270} & \cell{300} & \cell{169} & \cell{2.88} & \cell{9.752e-09} & \cell{13} & \cellbf{0.17} & \cell{4.968e-09} &\cell{676} & \cell{10.44} & \cell{8.678e-09} & \cell{0.50} & \cell{1.561e-08} \\
\cell{w2a} &\cell{3184} & \cell{300} & \cell{193} & \cell{3.32} & \cell{4.532e-13} & \cell{13} & \cellbf{0.27} & \cell{1.428e-09} &\cell{751} & \cell{15.02} & \cell{7.662e-14} & \cell{0.61} & \cell{1.793e-08} \\
\midrule
\multicolumn{14}{c}{$q = 2$} \\ \midrule
\cell{a1a} &\cell{1605} & \cell{119} & \cell{166} & \cell{2.28} & \cell{6.345e-12} & \cell{14} & \cellbf{0.15} & \cell{5.185e-13} & \cell{1372} & \cell{13.62} & \cell{3.299e-09} & \cell{0.48} & \cell{1.617e-09} \\
\cell{a2a} &\cell{2265} & \cell{119} & \cell{186} & \cell{2.63} & \cell{3.028e-12} & \cell{13} & \cellbf{0.22} & \cell{5.015e-09} & \cell{1484} & \cell{16.65} & \cell{5.325e-09} & \cell{0.56} & \cell{3.070e-09} \\
\cell{a4a} &\cell{4781} & \cell{122} & \cell{235} & \cell{5.03} & \cell{8.676e-13} & \cell{13} & \cellbf{0.31} & \cell{4.347e-10} & \cell{1764} & \cell{53.92} & \cell{2.662e-09} & \cell{1.25} & \cell{4.039e-09} \\
\cell{leu} &\cell{38} & \cell{7129} & \cell{57} & \cell{3.08} & \cell{1.631e-10} & \cell{16} & \cellbf{0.63} & \cell{2.754e-12} & \cell{574} & \cell{39.20} & \cell{2.076e-12} & \cell{0.73} & \cell{6.436e-08} \\
\cell{w1a} &\cell{2270} & \cell{300} & \cell{146} & \cell{2.15} & \cell{1.311e-12} & \cell{14} & \cellbf{0.22} & \cell{4.057e-09} & \cell{1533} & \cell{27.26} & \cell{1.110e-09} & \cell{0.59} & \cell{1.295e-09} \\
\cell{w2a} &\cell{3184} & \cell{300} & \cell{165} & \cell{3.43} & \cell{3.397e-09} & \cell{14} & \cellbf{0.29} & \cell{1.187e-09} & \cell{1661} & \cell{30.63} & \cell{8.004e-09} & \cell{0.71} & \cell{1.653e-09} \\
\bottomrule 
\end{tabular}
\end{center}
\vspace{-5ex}
\end{table}

The results and performance of the four algorithms are reported in Table~\ref{tbl:dwd_ncp1} for two cases: $q=1$ and $q=2$.
We can see that Algorithm~\ref{alg:Newton_alg} with $\nu = 2$ outperforms the case $\hat{\nu} = 3$ in terms of iterations.
The case $\nu = 2$ is approximately from $3$ to $13$ times faster than the case $\hat{\nu} = 3$.
This is not surprising since $\hat{M}_f$ depends on $\gamma_{\min}$, and it is large since $\gamma_{\min}$ is small. 
Hence, the stepsize $\tau_k^{(3)}$ computed by using $\hat{M}_f$ is smaller than $\tau_k^{(2)}$ computed from $M_f$ as we have seen in the first example.
Mosek works really well in this example and it is slightly better than Algorithm~\ref{alg:Newton_alg} with $\nu = 2$.
If we combine Algorithm~\ref{alg:Newton_alg} with a backtracking linesearch, then this variant outperforms Mosek.
All the algorithms achieve a very high accuracy in terms of the relative norm of the gradient $\frac{\Vert\nabla{f}(\xb^k)\Vert_2}{\Vert\nabla{f}(\xb^0)\Vert_2}$ which is up to $10^{-8}$.
We emphasize that our methods are highly parallelizable and their performance can be improved by exploiting this structure as studied in \cite{zhang2015disco} for the logistic case.

\beforesubsec
\subsection{\bf The case $\nu = 3$: Portfolio optimization with logarithmic utility functions.}\label{subsec:num_ex_portfolio}
\aftersubsec
In this example, we aim at verifying Algorithm~\ref{alg:prox_NT_alg} for solving the composite generalized self-concordant minimization problem~\eqref{eq:composite_cvx} with $\nu = 3$.
We illustrate this algorithm on the following portfolio optimization problem with logarithmic utility functions \cite{ryu2014stochastic} (scaled by a factor of $\frac{1}{n}$):
\begin{equation}\label{eq:portfolio_exam} 
f^{\star} = \min_{\xb\in\R^p}\set{ f(\xb) := -\sum_{i=1}^n\log(\wb_i^{\top}\xb) \mid \xb\geq 0, ~~\boldsymbol{1}^{\top}\xb  = 1 },
\end{equation}
where $\wb_i\in\R^p_+$ for $i=1,\cdots, n$ are given vectors presenting the returns at the $i$-th period of the assets considered in the portfolio data.
More precisely, as indicated in \cite{borodin2004can}, $\wb_i$ measures the return as the ratio $\wb_{ij} = v_{i,j}/v_{i-1,j}$ between the closing prices $v_{i,j}$ and $v_{i-1,j}$ of the stocks on the current day $i$ and on the previous day $i-1$, respectively; $\boldsymbol{1}\in\R^p$ is a vector of all ones. 
The aim is to find an optimal strategy to assign the proportion of the assets in order to maximize the expected return among all portfolios.

Note that problem \eqref{eq:portfolio_exam} can be cast into an online optimization model \cite{hazan2006efficient}.
The authors in \cite{hazan2006efficient} proposed an online Newton method to solve this problem. 
In this case, the regret of such an online algorithm showing the difference between the objective function of the online counterpart and the objective function of \eqref{eq:portfolio_exam} converges to zero at a rate of $\frac{1}{\sqrt{n}}$ as $n\to\infty$.
If $n$ is relatively small (e.g., $n=1000$), then the online Newton method does not provide a good approximation to \eqref{eq:portfolio_exam}.

Let $\Delta :=\set{\xb\in\R^p \mid  \xb\geq 0, ~~\boldsymbol{1}^{\top}\xb  = 1}$ be the standard simplex, and $g(\xb) := \delta_{\Delta}(\xb)$ be the indicator function of $\Delta$. 
Then, we can formulate \eqref{eq:portfolio_exam} into \eqref{eq:composite_cvx}. 
The function $f$ defined in \eqref{eq:portfolio_exam} is $(M_f,\nu)$-generalized self-concordant with $\nu = 3$ and $M_f=2$.

We implement Algorithm~\ref{alg:prox_NT_alg} using an accelerated projected gradient method \cite{Beck2009,Nesterov2004} to compute the proximal Newton direction.
We also implement the Frank-Wolfe algorithm and its linesearch variant in \cite{Frank1956,Jaggi2013}, and a projected gradient method using Barzilai and Borwein's step-size to solve \eqref{eq:portfolio_exam}.
We name these algorithms by \texttt{FW}, \texttt{FW-LS}, and \texttt{PG-BB}, respectively. 

We emphasize that both \texttt{PG-BB} and \texttt{FW}-LS do not have a theoretical guarantee when solving \eqref{eq:portfolio_exam}. \texttt{FW} has a theoretical guarantee as recently proved in \cite{odor2016frank}, but the complexity bound is rather pessimistic.
We terminate all the algorithms using $\Vert\xb^{k+1}-\xb^k\Vert_2 \leq \varepsilon\max\set{1,\Vert\xb^k\Vert_2}$, where $\varepsilon = 10^{-8}$ in  Algorithm~\ref{alg:prox_NT_alg}, $\varepsilon = 10^{-6}$ in \texttt{PG-BB}, and $\varepsilon = 10^{-4}$ in \texttt{FW} and \texttt{FW-LS}. 
We choose different accuracies for these methods due to the limitation of first-order methods for attaining high accuracy solutions in the last three algorithms.

We test these algorithms on two categories of dataset: synthetic and real stock data.
For the synthetic data, we generate matrix $\Wb$ with given price ratios as described above in Matlab. 
More precisely, we generate $\Wb := \mathrm{ones}(n, p) + \Nc(0, 0.1)$ which allows the closing prices to vary about $10\%$ between two consecutive periods. 
We test with three instances, where $(n, p) = (1000,800)$, $(1000, 1000)$, and $(1000,1200)$, respectively. 
We name these three datasets by \textrm{PortfSyn1}, \textrm{PortfSyn2}, and \textrm{PortfSyn3}, respectively.
For the real data, we download a US stock dataset using an excel tool~\url{http://www.excelclout.com/historical-stock-prices-in-excel/}. 
This tool gives us the closing prices of the US stock market in a given period of time. 
We generate three datasets with different sizes using different numbers of stocks from 2005 to 2016 as described in \cite{borodin2004can}.
We pre-processed the data by removing stocks that are empty or lacking information in the time period we specified.
We name these three datasets by \textrm{Stock1}, \textrm{Stocks2}, and \textrm{Stocks3}, respectively.

The results and the performance of the four algorithms are given in Table~\ref{tbl:portf_cp1}. 
Here, \texttt{iter} gives the number of iterations, \texttt{time} is the computational time in second, \texttt{error} measures the relative difference between the approximate solution $\xb^k$ given by the algorithms and the interior-point solution provided by CVX \cite{Grant2006} with the high precision configuration (up to $1.8\times 10^{-12})$: $\norm{\xb^k -\xb^{\ast}_{\mathrm{cvx}}}/\max\set{1,\norm{\xb^{\ast}_{\mathrm{cvx}}}}$.

\begin{table}[ht!]
\vspace{-3ex}
\newcommand{\cell}[1]{{\!\!}#1{\!\!}}
\newcommand{\cellf}[1]{{\!\!\!}#1{\!\!}}
\newcommand{\cellbf}[1]{{\!\!}{#1}{\!\!\!}}
\rowcolors{2}{white}{black!15!white}
\begin{center}
\caption{The performance and results of the four algorithms for solving the portfolio optimization problem~\eqref{eq:portfolio_exam}.}\label{tbl:portf_cp1}
\vspace{-1ex}
\begin{tabular}{ lrr | rrr | rrr | rrr | rrr  }
\toprule
\multicolumn{3}{c|}{Problem} & \multicolumn{3}{|c|}{ \cell{Algorithm~\ref{alg:prox_NT_alg} } } & \multicolumn{3}{|c|}{\cell{\texttt{PG-BB}}} & \multicolumn{3}{c|}{\cell{\texttt{FW}}} & \multicolumn{3}{c}{\cell{\texttt{FW-LS}}} \\  \midrule
\multicolumn{1}{c}{\cell{Name}} & \multicolumn{1}{c}{\cell{$n$}} & \multicolumn{1}{c|}{\cell{$p$}} & \cell{iter} & \cell{\!time[s]\!}  & \cell{error} & \cell{iter} & \cell{\!time[s]\!}  & \cell{error} & \cell{iter} & \cell{\!time[s]\!}  & \cell{error} & \cell{iter} & \cell{\!time[s]\!}  & \cell{error} \\ \midrule
\multicolumn{13}{c}{Synthetic Data} \\ \midrule
\cell{PortfSyn1} & \cell{1000} & \cell{800} & \cellbf{6} & \cell{5.68} & \cell{2.4e-04} & \cell{645} & \cellbf{3.98} & \cell{2.3e-04} & \cell{15530} & \cell{96.47} & \cell{2.3e-04} & \cell{6509} & \cell{47.88} & \cell{2.3e-04} \\ 
\cell{PortfSyn2} & \cell{1000} & \cell{1000} & \cellbf{6} & \cell{6.96} & \cell{6.8e-05} & \cell{1207} & \cell{11.54} & \cell{7.5e-05} & \cell{17201} & \cell{166.89} & \cell{1.7e-04} & \cell{6664} & \cell{70.15} & \cell{1.4e-04} \\ 
\cell{PortfSyn3} & \cell{1000} & \cell{1200} & \cellbf{7} & \cell{12.91} & \cell{3.2e-04} & \cell{959} & \cell{9.55} & \cell{3.0e-04} & \cell{16391} & \cell{159.28} & \cell{3.3e-04} & \cell{5750} & \cell{64.36} & \cell{3.2e-04} \\ 
\midrule
\multicolumn{13}{c}{Real Data} \\ \midrule
\cell{Stocks1} & \cell{473} & \cell{500} & \cellbf{8} & \cell{1.22} & \cell{7.1e-06} & \cell{736} & \cell{1.22} & \cell{1.9e-06} & \cell{16274} & \cell{24.93} & \cell{7.0e-05} & \cell{2721} & \cell{5.28} & \cell{4.1e-04} \\ 
\cell{Stocks2} & \cell{625} & \cell{723} & \cellbf{8} & \cell{3.71} & \cell{2.7e-05} & \cell{1544} & \cell{4.37} & \cell{8.0e-06} & \cell{11956} & \cell{34.35} & \cell{3.1e-04} & \cell{2347} & \cell{9.33} & \cell{5.2e-04} \\ 
\cell{Stocks3} & \cell{625} & \cell{889} & \cellbf{10} & \cell{6.83} & \cell{5.6e-05} & \cell{1074} & \cell{6.54} & \cell{5.4e-06} & \cell{13027} & \cell{52.89} & \cell{1.7e-04} & \cell{2096} & \cell{8.46} & \cell{7.4e-04} \\ 
\bottomrule 
\end{tabular}
\end{center}
\vspace{-3ex}
\end{table}

From Table ~\ref{tbl:portf_cp1} we can see that Algorithm \ref{alg:prox_NT_alg} has a comparable performance to the first-order methods: \texttt{FW-LS} and \texttt{PG-BB}. 
While our method has a rigorous convergence guarantee, these first-order methods remains lacking a theoretical guarantee. 
Note that Algorithm \ref{alg:prox_NT_alg} and \texttt{PG-BB} are faster than the \texttt{FW} method and its linesearch variant although the optimal solution $\xopt$ of this problem is very sparse.
We also note that \texttt{PG-BB} gives a smaller error to the CVX solution. 
This CVX solution is not the ground-truth $\xopt$ but gives a high approximation to $\xopt$.
In fact, the CVX solution is dense. 
Hence, it is not clear if \texttt{PG-BB} produces a better solution than other methods.

\vspace{-0.5ex}
\beforesubsec
\subsection{\bf Proximal Quasi-Newton method for sparse multinomial logistic regression.}\label{subsec:num_ex_mullogistic}
\aftersubsec
\vspace{-0.25ex}
We apply our proximal Newton and proximal quasi-Newton methods to solve the following sparse multinomial logistic problem studied in various papers including \cite{Krishnapuram2005}:
\begin{equation}\label{eq:mn_logistic_fx}
{\!\!\!\!}F^{\star} \!:=\! \min_{\xb} \Big\{ F(\xb) \!:=\! \Big[\frac{1}{n}\sum_{j=1}^n\Big(\log\big( \sum_{i=1}^{m}e^{\iprods{\wb^{(j)}, \xb^{(i)}}}\! \big) - \sum_{i=1}^{m}\yb_i^{(j)}\iprods{\wb^{(j)}, \xb^{(i)}}\Big)\Big]_{f(\xb)} {\!\!\!\!} + \Big[\gamma\Vert\vec{\xb}\Vert_1\Big]_{g(\xb)} {\!}\Big\},{\!\!\!}
\vspace{-0.5ex}
\end{equation}
where $\xb$ can be considered as a matrix variable of size $m\times p$ formed from $\xb^{(1)}, \cdots, \xb^{(m)}$, $\vec{\cdot}$ is the vectorization operator, and $\gamma > 0$ is a regularization parameter. 
Both $\yb_i^{(j)} \in \set{0, 1}$ and $\wb^{(j)}$ are given as input data for $i=1,\cdots, m$ and $j=1,\cdots, n$. 

The function $f$ defined in \eqref{eq:mn_logistic_fx} has a closed form Hessian matrix. 
However, forming the full Hessian matrix $\nabla^2{f}(\xb)$ requires an intensive computation in large-scale problems when $n \gg 1$. 
Hence, we apply our proximal-quasi-Newton methods in this case. 
As shown in \cite[Lemma 4]{TranDinh2014d}, the function $f$ is $(M_f, \nu)$-generalized self-concordant with $\nu = 2$ and $M_f := \frac{\sqrt{6}}{n}\max\set{\Vert\wb^{(j)}\Vert_2 \mid 1 \leq j \leq n}$.

We implement our proximal quasi-Newton methods  to solve \eqref{eq:mn_logistic_fx} and compare them with the accelerated first-order methods implemented in a well-established software package called TFOCS \cite{Becker2011a}.
We use three different variants of TFOCS: TFOCS with N07 (using Nesterov's 2007 method with two proximal operations per iteration), TFOCS with N83 (using Nesterov's 1983 method with one proximal operation per iteration), and TFOCS with AT (using Auslender and Teboulle's accelerated method). 

We test on a collection of $26$ multi-class datasets downloaded from \url{https://www.csie.ntu.edu.tw/~cjlin/libsvm/}. 
We set the parameter $\gamma$ in \eqref{eq:mn_logistic_fx} at $\gamma := \frac{0.5}{\sqrt{N}}$ after performing a fine tuning. 
We terminate all the algorithms if $\Vert\xb^{k+1} - \xb^k\Vert \leq 10^{-6}\max\set{1, \Vert\xb^k\Vert}$.

We first plot the convergence behavior in terms of iterations of three proximal Newton-type algorithms we proposed in this paper in Figure~\ref{fig:mnlogistic_profiles} (left) for the \texttt{dna} problem with $3$ classes, $2000$ data points, and $180$ features.
\begin{figure}[ht!]
\vspace{-4ex}
\centerline{\includegraphics[width=1\textwidth]{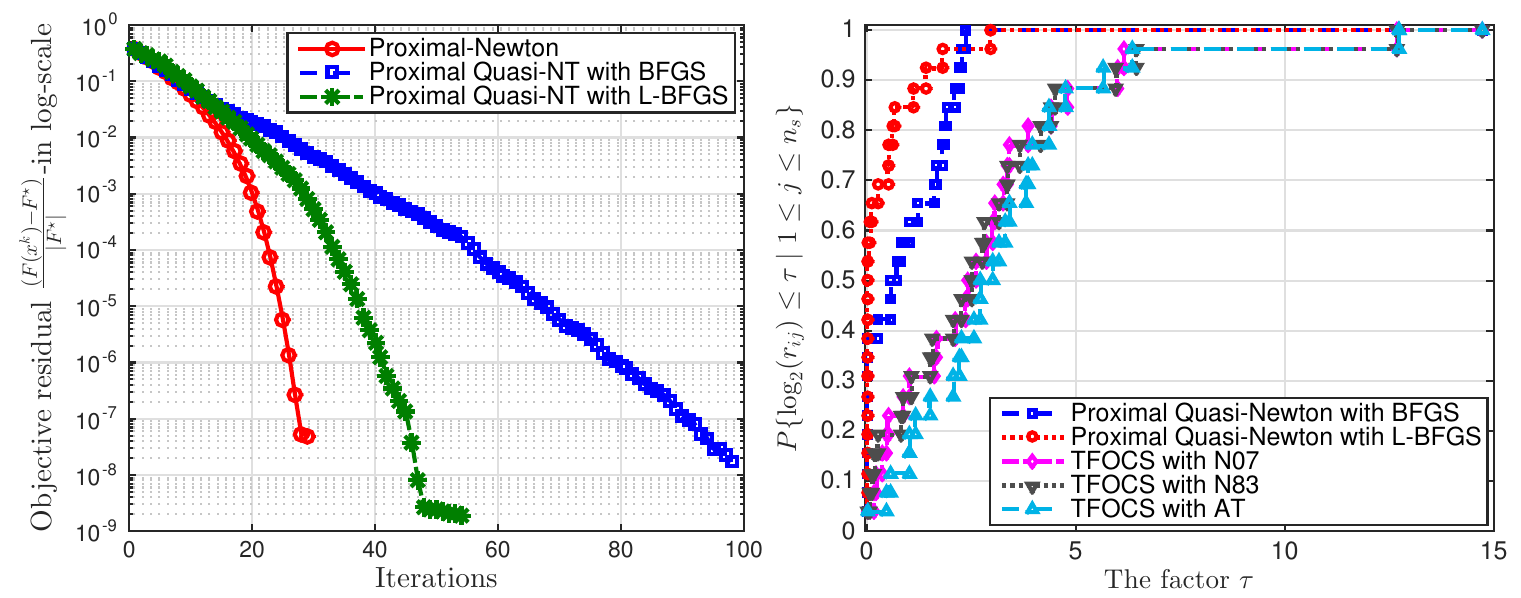}}
\vspace{-1ex}
\caption{Left: Convergence behavior of three methods, ~Right: Performance profile in time [second] of $5$ methods.}\label{fig:mnlogistic_profiles}
\vspace{-4ex}
\end{figure} 

As we can see from this figure, the proximal Newton method takes fewer iterations than the other two methods.
However, each iteration of this method is more expensive than the proximal-quasi-Newton methods due to the evaluation of the Hessian matrix. 
In our experiment, the quasi-Newton method with L-BFGS outperforms the one with BFGS.

Next, we build a performance profile in time [second] to compare five different algorithms: two proximal quasi-Newton methods proposed in this paper (BFGS and L-BFGS), and three variants of the accelerated first-order methods implemented in TFOCS.

The performance profile was studied in \cite{Dolan2002} which can be considered as a standard way to compare different optimization algorithms.
A performance profile is built based on a set $\mathcal{S}$ of $n_s$ algorithms (solvers) and a collection $\mathcal{P}$ of $n_p$ problems. We  build a profile based on computational time. 
We denote by $T_{ij} := \textit{computational time required to solve problem $i$ by solver $j$}$.
We compare the performance of solver $j$ on problem $i$ with the best performance of any algorithm on this problem; that is we compute the performance ratio
$r_{ij} := \frac{T_{ij}}{\min\{T_{ik} \mid k \in \mathcal{S}\}}$.
Now, let $\tilde{\rho}_j(\tilde{\tau}) := \frac{1}{n_p}\mathrm{size}\left\{i\in\mathcal{P} \mid r_{ij}\leq \tilde{\tau}\right\}$ for
$\tilde{\tau} \in\R_{+}$. The function $\tilde{\rho}_j:\R\to [0,1]$ is the probability for solver $j$ that a performance ratio is within a
factor $\tilde{\tau}$ of the best possible ratio. We use the term ``performance profile'' for the distribution function $\tilde{\rho}_j$ of a performance
metric.
In the following numerical examples, we plotted the performance profiles in $\log_2$-scale, i.e. $\rho_j(\tau) :=
\frac{1}{n_p}\mathrm{size}\left\{i\in\mathcal{P} \mid \log_2(r_{i,j})\leq \tau := \log_2\tilde{\tau}\right\}$.

Figure~\ref{fig:mnlogistic_profiles} (right) shows the performance profile of five algorithms on a collection of $26$ problems indicated above.
The proximal quasi-Newton method with L-BFGS achieves $13/26$ ($50\%$) with the best performance, while the BFGS obtains $10/26$ ($38\%$) with the best performance.
In terms of computational time, both proximal quasi-Newton methods outperform the optimal proximal gradient methods in this experiment.
It is also clear that our proximal quasi-Newton-type methods achieve a higher accuracy solution in this experiment compared to the accelerated proximal gradient-type methods implemented in TFOCS.

\beforesec
\section{Conclusion}\label{sec:concl}
\aftersec
We have generalized the self-concordance notion in \cite{Nesterov1994} to a more general class of smooth and convex functions.
Such a function class covers  several well-known examples, including logistic, exponential, reciprocal, and standard self-concordant functions, just to name a few.
We have developed a unified theory  with several basic properties to reveal the smoothness structure of this functional class.
We have provided several key bounds on local norms,  Hessian mapping, gradient mapping, and function value of this functional class.
Then, we have illustrated our theory by applying it to solve a class of smooth convex minimization problems and its composite setting.
We believe that our theory provides an appropriate approach to exploit the curvature of these problems and allows us to compute an explicit step-size in Newton-type methods that have a global convergence guarantee even for non-Lipschitz gradient/Hessian functions.
While our theory is still valid for the case $\nu > 3$, we have not found yet a representative application in a high-dimensional space. 
Therefore, we limit our consideration to Newton and proximal Newton methods for $\nu \in [2, 3]$, but our key bounds in Subsection~\ref{subsec:key_bounds} remain valid for different ranges of $\nu$ with $\nu > 0$.

Our future research is to focus on several aspects. 
Firstly, we can exploit this theory to develop more practical inexact and quasi-Newton-type methods that can easily capture practical applications in large-scale settings.
Secondly, we will combine our approach and stochastic, randomized, and coordinate descent methods to develop new variants of algorithms that can scale better in high-dimensional space.
Thirdly, by exploiting both generalized self-concordant, Lipschitz gradient, and strong convexity, one can also develop first-order methods to solve convex optimization problems.
Finally, we plan to generalize our theory to primal-dual settings and monotone operators to apply to other classes of convex problems such as convex-concave saddle points, constrained convex optimization, and monotone equations and inclusions.

\vspace{1ex}
\noindent\textbf{Acknowledgments:}
This work is partially supported by the NSF-grant No. DMS-1619884, USA.

\appendix
\beforesec
\normalsize
\section{Appendix: The proof of technical results}
\aftersec
This appendix provides the full proofs of technical results presented in this paper. 
We  prove some technical results used in the paper, and missing proofs in the main text. 
We also provide a full convergence analysis of the Newton-type methods presented in the main text.

\beforesubsec
\subsection{\bf The proof of Proposition \ref{pro:conjugate}: Fenchel's conjugate}\label{apdx:pro:conjugate}
\aftersubsec
Let us consider the set $\Xc := \set{x\in\R^p \mid f(u) - \iprods{x, u}~\text{is bounded from below on}~\dom{f}}$. 
We first show that $\dom{f^{\ast}} = \Xc$.

By the definition of $\dom{f^{\ast}}$, we have $\dom{f^{\ast}} = \set{ x\in\R^p \mid f^{\ast}(x) < +\infty}$. 
Take any $x\in\dom{f^{\ast}}$, one has $f^{\ast}(x) = \max_{u\in\dom{f}}\set{ \iprods{x, u} - f(u)} <+\infty$. 
Hence, $f(u) - \iprods{x,u}\geq -f^{\ast}(x) > -\infty$ for all $u\in\dom{f}$ which implies $x\in\Xc$.

Conversely, assume that $x\in\Xc$. By the definition of $\Xc$, $f(u)-\iprods{x,u}$ is bounded from below for all $u\in\dom{f}$. 
That is, there exists $M \in [0, +\infty)$, such that $f(u) - \iprods{x, u} \geq -M$ for all $u\in\dom{f}$.
By the definition of the conjugate, $f^{\ast}(x) = \max_{u\in\dom{f}}\set{ \iprods{x, u} - f(u)} \leq M < +\infty$. 
Hence, $x\in\dom{f^{\ast}}$.

For any $x\in\dom{f^{\ast}}$, the optimality condition of $\max_{u}\set{\iprods{x, u} - f(u)}$ is $x = \nabla{f}(u)$.
Let us denote by $x(u) = \nabla{f}(u)$.
Then, we have $f^{\ast}(x(u)) = \iprods{x(u), u} - f(u)$.
Taking derivative of $f^{\ast}$ with respect to $x$ on both sides, and using $x(u)=\nabla f(u)$, we have
\begin{equation*}
\nabla_x f^{\ast}(x(u)) = u + u'_xx(u) - u'_x\nabla f(u) = u.
\end{equation*}
We further take the second-order derivative of the above equation with respect to $u$ to get
\begin{equation*}
\nabla^2f^{\ast}(x(u))x_u'(u) = \Id.
\end{equation*}
Using the two relations above and the fact that $x_u'(u) = \nabla^2{f}(u)$, we can derive
\begin{align}
\iprods{\nabla f^{\ast}(x(u)),x_u'(u)v} &= \iprods{u,x_u'(u)v} = \iprods{\nabla^2f(u)v, u} \label{eq:conjugate_re1}\\
\iprods{\nabla^2{f^{\ast}}(x(u))x_u'(u)v, x_u'(u)w} &= \iprods{v, x_u'(u)w} = \iprods{\nabla^2f(u)v, w}, \label{eq:conjugate_re2}
\end{align}
where $u\in\dom{f}$, and $v, w\in \R^p$. 
Using \eqref{eq:conjugate_re1} and \eqref{eq:conjugate_re2}, we can compute the third-order derivative of $f^{\ast}$ with respect to $x(u)$ as
\begin{equation}\label{eq:conjuaget_rel1}
{\!\!\!}\begin{array}{rl}
\langle \nabla^3 f^{*}(x(u))[x_u'(u)w]x_u'(u)v, & x_u'(u)v\rangle = \iprods{ \left(\iprods{\nabla^2{f}^{*}(x(u))x_u'(u)v, x_u'(u)v}\right)'_{u}, w}   \vspace{1ex}\\
&- 2\iprods{\nabla^2 f^{*}(x(u))x_u'(u)v, (x_u'(u)v)'_uw} \vspace{1ex}\\
& \overset{\tiny\eqref{eq:conjugate_re1}}{=} \iprods{(\iprods{x_u'(u)v,v})'_u,w} -2\iprods{\nabla^2f^{*}(x(u))x_u'(u)v, (x_u'(u)v)'_uw} \vspace{1ex}\\
&\overset{\tiny\eqref{eq:conjugate_re2}}{=}  \iprods{\nabla^3 f(u)[w]v,v} -2\iprods{(x_u'(u)v)_u'w,v} \vspace{1ex}\\
& = -\iprods{\nabla^3 f(u)[w]v,v}.
\end{array}{\!\!\!}
\end{equation}
Denote $\xi := x_u'(u)w$ and $\eta := x_u'(u)v$.
Note that since $x_u'(u) = \nabla^2{f}(u)$, we have $\xi = \nabla^2{f}(u)w$, $\eta = \nabla^2{f}(u)v$, and $w = \nabla^2{f}(u)^{-1}\xi$.
Using these relations and $\nabla^2f^{\ast}(x(u))x_u'(u) = \Id$, we can derive
\begin{equation*}
\begin{array}{ll}
\vert \langle \nabla^3{f^{\ast}}(x(u))[\xi]\eta, \eta\rangle \vert &\overset{\tiny\eqref{eq:conjuaget_rel1}}{=} \vert \iprods{\nabla^3{f}(u)[w]v, v}  \overset{\tiny\eqref{eq:gsc_def}}{\leq} M_f\norm{v}_u^2\norm{w}_u^{\nu - 2}\norm{w}_2^{3-\nu} \vspace{1ex}\\
&= M_f\iprods{\nabla^2f(u)v, v}\iprods{\nabla^2{f}(u)w, w}^{\frac{\nu-2}{2}}\norms{w}^{3-\nu}_2 \vspace{1ex}\\
&= M_f\iprods{\eta, \nabla^2f^{\ast}(x(u))x'(u)v} \iprods{\xi, \nabla^2f^{\ast}(x(u))x'(u)w}^{\frac{\nu-2}{2}}\norms{\nabla^2{f}(u)^{-1}\xi}^{3-\nu} \vspace{1ex}\\
&= M_f\iprods{\nabla^2f^{\ast}(x(u))\eta, \eta}\iprods{\nabla^2f^{\ast}(x(u))\xi, \xi}^{\frac{\nu-2}{2}}\iprods{\nabla^2f^{\ast}(x(u))\xi, \nabla^2f^{\ast}(x(u))\xi}^{3-\nu}.
\end{array}
\end{equation*}
For any $H\in\Sc^p_{++}$, we have $\iprods{H\xi, \xi} \leq \norm{H\xi}_2\norm{\xi}_2$.
For any $\nu \geq 3$, this inequality leads to
\begin{equation*}
\iprods{H\xi, \xi}^{\frac{\nu-2}{2}}\norm{H\xi}^{3-\nu} \leq \iprods{H\xi,\xi}^{\frac{4-\nu}{2}}\norm{\xi}_2^{\nu-3}.
\end{equation*}
Using this inequality with $H = \nabla^2f^{\ast}(x(u))$ into the last expression, we obtain
\begin{equation*}
\begin{array}{ll}
\vert \iprods{\nabla^3{f^{\ast}}(x(u))[\xi]\eta, \eta} \vert &\leq M_f\iprods{\nabla^2f^{\ast}(x(u))\eta, \eta}\iprods{\nabla^2f^{\ast}(x(u))\xi, \xi}^{\frac{4 - \nu}{2}}\norm{\xi}_2^{\nu-3}\vspace{1ex}\\
&= M_f\norms{\eta}_{x(u)}^2\norm{\xi}_{x(u)}^{4-\nu}\norms{\xi}_2^{\nu - 3}.
\end{array}
\end{equation*}
By Definition~\ref{de:gsc_def}, we need $\nu - 3 = 3 - \nu_{\ast}$ and $4-\nu = \nu_{\ast} - 2$ which hold if $\nu_{\ast} = 6 - \nu$.
Under the choice of $\nu_{\ast}$, the above inequality shows that $f^{\ast}$ is $(M_{f^{\ast}}, \nu_{\ast})$-generalized self-concordant with $M_{f^{\ast}} = M_f$ and  $\nu_{\ast} = 6 - \nu$. 
However, to guarantee $\nu - 3 \geq 0$ and $6 - \nu > 0$, we require $3 \leq \nu < 6$.

Finally, we prove the case of univariate functions, i.e., $p = 1$.
Indeed, we have
\begin{equation}\label{eq:temp_2}
x(u)=f'(u),~~ (f^{\ast})'(x(u))=u,~~\text{and}~~(f^{\ast})''(x(u))x'(u)=1.
\end{equation}
Here, $f'$ is the derivative of $f$ with respect to $u$.
Taking the derivative of the last equation on both sides with respect to $u$, we obtain
\begin{equation*}
(f^{\ast})'''(x(u))(x'(u))^2+(f^{\ast})''(x(u))x''(u)=0.
\end{equation*}
Solving this equation for $(f^{*})'''(x(u))$ and then using \eqref{eq:temp_2} and $x''(u) = f'''(u)$, we get
\begin{equation*}
\begin{array}{rl}
\abs{(f^{\ast})'''(x(u))} =& \abs{\frac{(f^{\ast})''(x(u))x''(u)}{(x'(u))^2}} = \abs{((f^{\ast})''(x(u)))^3f'''(u)}\vspace{1ex}\\
\leq &  M_f\abs{((f^{\ast})''(x(u)))^3(f''(u))^{\frac{\nu}{2}}} =  M_f((f^{\ast})''(x(u)))^{\frac{6-\nu}{2}}.
\end{array}
\end{equation*}
This inequality shows that $f^{\ast}$ is generalized self-concordant with $\nu_{\ast} = 6 - \nu$ for any $\nu \in (0, 6)$.
\Eproof

\beforesubsec
\subsection{\bf The proof of Corollary~\ref{co:hessian_bound2}: Bound on the mean of Hessian operator}\label{apdx:co:hessian_bound2}
\aftersubsec
Let $\yb_{\tau} := \xb + \tau(\yb - \xb)$. Then $d_{\nu}(\xb, \yb_{\tau}) = \tau d_{\nu}(\xb, \yb)$.
By \eqref{eq:hessian_bound1}, we have $\nabla^2{f}(\xb + \tau(\yb - \xb)) \preceq \left(1 - \tau d_{\nu}(\xb,\yb)\right)^{\frac{-2}{\nu-2}}\nabla^2{f}(\xb)$ and $\nabla^2{f}(\xb + \tau(\yb - \xb)) \succeq  \left(1 - \tau d_{\nu}(\xb,\yb)\right)^{\frac{2}{\nu-2}}\nabla^2{f}(\xb)$ .
Hence, we have 
\begin{align*}
\underline{I}_{\nu}(\xb,\yb)\nabla^2{f}(\xb) \preceq \int_0^1\nabla^2{f}(\xb + \tau(\yb - \xb))d\tau \preceq \overline{I}_{\nu}(\xb, \yb)\nabla^2{f}(\xb),
\end{align*}
where $\underline{I}_{\nu}(\xb, \yb) := \int_0^1\left(1 - \tau d_{\nu}(\xb,\yb)\right)^{\frac{2}{\nu-2}}d\tau$ and $\overline{I}_{\nu}(\xb, \yb) := \int_0^1\left(1 - \tau d_{\nu}(\xb,\yb)\right)^{\frac{-2}{\nu-2}}d\tau$ are the two integrals in the above inequality.
Computing these integrals explicitly, we can show that
\begin{itemize}
\item If $\nu = 4$, then  $\underline{I}_{\nu}(\xb,\yb) = \frac{1 - (1 - d_4(\xb,\yb))^2}{2d_4(\xb,\yb)}$ and $\overline{I}_{\nu}(\xb, \yb) = \frac{-\ln(1 - d_4(\xb,\yb))}{d_4(\xb,\yb)}$.

\vspace{1ex}
\item If $\nu \neq 4$, then we can easily compute $\underline{I}_{\nu}(\xb, \yb) = \frac{(\nu-2)}{\nu d_{\nu}(\xb,\yb)}\left(1 - \left(1 - d_{\nu}(\xb,\yb)\right)^{\frac{\nu}{\nu-2}}\right)$, and  $\overline{I}_{\nu}(\xb, \yb) =  \frac{(\nu-2)}{(\nu-4)d_{\nu}(\xb,\yb)}\left(1 - \left(1 - d_{\nu}(\xb,\yb)\right)^{\frac{\nu-4}{\nu-2}}\right)$.
\end{itemize}
Hence, we obtain \eqref{eq:hessian_bound2}.

Finally, we prove for the case $\nu = 2$.
Indeed, by \eqref{eq:hessian_bound1b}, we have $e^{-d_2(\xb,\yb_{\tau})}\nabla^2f(\xb) \preceq \nabla^2f(\yb_{\tau}) \preceq e^{d_2(\xb,\yb_{\tau})}\nabla^2f(\xb)$.
Since $d_2(\xb, \yb_{\tau}) = \tau d_2(\xb, \yb)$, the last estimate leads to
\begin{equation*}
\left(\int_0^1e^{-d_2(\xb,\yb)\tau}d\tau\right)\nabla^2f(\xb) \preceq \int_0^1\nabla^2f(\yb_{\tau})d\tau \preceq \left(\int_0^1e^{d_2(\xb,\yb)\tau}d\tau\right)\nabla^2f(\xb),
\end{equation*}
which is exactly \eqref{eq:hessian_bound2}.
\Eproof

\beforesubsec
\subsection{\bf Techical lemmas}\label{apdx:subsec:tech_lemma}
\aftersubsec
The following lemmas will be used in our analysis. 
Lemma \ref{le:elem_inequalities} is elementary, but we provide its proof for completeness.

\begin{lemma}\label{le:elem_inequalities}
\begin{itemize}
\item[$\mathrm{(a)}$] For a fixed $r \geq 1$ and $\bar{t} \in (0, 1)$, consider a function $\psi_r(t) := \frac{1 - (1-t)^r - rt(1-t)^r}{rt^2(1-t)^r}$  on $t\in (0, 1)$.
Then, $\psi$ is positive and increasing on $(0, \bar{t}]$ and
\begin{equation*}
\lim_{t\to 0^{+}}\psi_r(t) = \tfrac{r+1}{2},~~\lim_{t\to 1^{-}}\psi_r(t) = +\infty,~~\text{and} ~~~\sup_{0 \leq t \leq \bar{t}}\abs{\psi_r(t)} \leq \bar{C}_r(\bar{t}) < +\infty,
\end{equation*}
where $\bar{C}_r(\bar{t}) := \frac{1 - (1-\bar{t})^r - r\bar{t}(1-\bar{t})^r}{r\bar{t}^2(1-\bar{t})^r} \in (0, +\infty)$.

\vspace{1ex}
\item[$\mathrm{(b)}$] For $t > 0$, we also have $\frac{e^{t} - 1 - t}{t} \leq \left(\frac{3}{2} + \frac{t}{3}\right)te^t$.

\end{itemize}
\end{lemma}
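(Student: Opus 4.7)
For part (a), my strategy is to expand $\psi_r$ as a power series in $t$ and read off its qualitative properties from the signs of the coefficients. Starting from the generalized binomial series
\[
\frac{1}{(1-t)^r} \;=\; \sum_{k=0}^{\infty} a_k t^k, \qquad a_k := \tfrac{r(r+1)\cdots(r+k-1)}{k!},
\]
valid on $(-1,1)$ for $r \geq 1$, I would first rewrite
\[
\psi_r(t) \;=\; \frac{1}{rt^2(1-t)^r} - \frac{1+rt}{rt^2} \;=\; \frac{1}{rt^2}\sum_{k=2}^{\infty} a_k t^k \;=\; \sum_{j=0}^{\infty}\frac{a_{j+2}}{r}\,t^j \;=\; \sum_{j=0}^{\infty}\frac{(r+1)(r+2)\cdots(r+j+1)}{(j+2)!}\,t^j.
\]
Since every coefficient is strictly positive for $r\geq 1$, this single formula yields simultaneously (i) $\psi_r(t) > 0$ on $(0,1)$, (ii) strict monotonicity on $(0,1)$ (term-by-term differentiation gives $\psi_r'(t) > 0$), and (iii) the limit $\psi_r(t) \to \tfrac{r+1}{2}$ as $t \to 0^+$. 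The limit as $t\to 1^-$ follows directly from $\psi_r(t) = \frac{1}{rt^2(1-t)^r}-\frac{1+rt}{rt^2}$: the first term diverges while the second remains bounded. Finally, the sup bound $\sup_{0\leq t\leq \bar t}|\psi_r(t)| \leq \bar C_r(\bar t) = \psi_r(\bar t)$ is immediate from monotonicity plus the boundary-value identity obtained by plugging $t=\bar t$ into the closed form defining $\psi_r$.

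\textbf{Plan for (b).} I would prove the sharper inequality $\frac{e^t-1-t}{t}\leq \frac{t}{2}e^t$ for $t>0$ and then weaken it to the stated bound. Expand both sides in power series:
\[
\frac{e^t-1-t}{t} \;=\; \sum_{j=1}^{\infty} \frac{t^{j}}{(j+1)!}, \qquad \frac{t}{2}e^t \;=\; \sum_{j=1}^{\infty}\frac{t^{j}}{2\,(j-1)!}.
\]
A term-by-term comparison reduces to $\tfrac{1}{(j+1)!} \leq \tfrac{1}{2(j-1)!}$, i.e.\ $j(j+1)\geq 2$, which holds for all $j\geq 1$ (with equality at $j=1$). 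Hence $\frac{e^t-1-t}{t}\leq \frac{t}{2}e^t$. Since $\frac{t}{2}\leq \bigl(\tfrac{3}{2}+\tfrac{t}{3}\bigr)t$ for all $t\geq 0$ (equivalent to $t+\tfrac{t^2}{3}\geq 0$), the stated inequality follows.

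\textbf{Main obstacle.} The only nontrivial step is establishing monotonicity in (a); a direct derivative computation via the quotient rule produces an unwieldy expression whose sign is not obvious, so the power-series representation is essential. The key observation making the series approach work is that the numerator $1 - (1-t)^r - rt(1-t)^r$ vanishes to order $t^2$ at the origin, exactly matching the $t^2$ in the denominator and leaving a clean series with manifestly positive coefficients. The remaining parts of both (a) and (b) are then short consequences.
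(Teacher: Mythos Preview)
Your proof is correct, and for part~(a) it takes a genuinely different route from the paper. The paper computes the limit at $0$ via L'H\^{o}pital's rule, then differentiates $\psi_r$ directly via the quotient rule to obtain
\[
\psi_r'(t) \;=\; \frac{(1-t)^{r+1}(rt+2) + (r+2)t - 2}{rt^3(1-t)^{r+1}},
\]
and shows the numerator $m_r(t)$ is nonnegative by checking $m_r(0)=m_r'(0)=0$ and $m_r''(t)=r(r+1)(r+2)t(1-t)^{r-1}\geq 0$. Your power-series expansion $\psi_r(t)=\sum_{j\geq 0}\frac{(r+1)\cdots(r+j+1)}{(j+2)!}\,t^j$ sidesteps this entirely: positivity, monotonicity, and the limit at $0$ all drop out at once from the positive coefficients, and the limit at $1$ and the sup bound follow as you indicate. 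Your approach is cleaner and more unified; the paper's approach is slightly more elementary in that it avoids invoking term-by-term differentiation of a power series, but it requires the ``right'' manipulation of the numerator to see the sign. For part~(b), the paper omits the proof as elementary; your series comparison in fact establishes the sharper bound $\frac{e^t-1-t}{t}\leq \frac{t}{2}e^t$, which is a nice bonus.
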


\begin{proof}
The statement $\mathrm{(b)}$ is rather elementary, we only prove $\mathrm{(a)}$.
Since $r \geq 1$, $\lim_{t\to 0^{+}}(1 - (1-t)^r - rt(1-t)^r) = \lim_{t\to 0^{+}}rt^2(1-t)^r = 0$ and $rt^2(1-t)^r > 0$ for $t\in (0, 1)$, applying L'H$\hat{\mathrm{o}}$spital's rule, we have
\begin{equation*}
\lim_{t\to0^+}\psi_r(t) = \frac{\lim_{t\to0^+}r(r+1)t(1-t)^{r-1}}{\lim_{t\to0^+}rt(2-(2+r)t)(1-t)^{r-1}}=\frac{\lim_{t\to0^+}(r+1)}{\lim_{t\to0^+}(2-(2+r)t)}=\frac{r+1}{2}.
\end{equation*}
The limit $\lim_{t\to 1^{-}}\psi_r(t) = +\infty$ is obvious.

Next, it is easily to compute $\psi'_r(t) = \frac{(1-t)^{r+1}(rt+2)+(r+2)t-2}{rt^3(1-t)^{r+1}}$.
Let $m_r(t) := (1-t)^{r+1}(rt+2)+(r+2)t-2$ be the numerator of $\psi'_r(t)$.

We have $m_r'(t) = r+2 - (1-t)^r(r^2t+2rt+r+2)$, and $m_r''(t) = r(r+1)(r+2)t(1-t)^{r-1}$.
Clearly, since $r \geq 1$, $m_r''(t) \geq 0$ for $t \in [0, 1]$.
This implies that $m_r'$ is nondecreasing on $[0, 1]$. 
Hence, $m_r'(t) \geq m_r'(0)  = 0$ for all $t \in [0, 1]$.
Consequently, $m_r$ is nondecreasing on $[0, 1]$. 
Therefore, $m_r(t) \geq m_r(0) = 0$ for all $t\in [0, 1]$.
Using the formula of $\psi'_r$, we can see that $\psi'_r(t) \geq 0$ for all $t \in (0, 1)$.
This implies that $\psi_r$ is nondecreasing on $(0, 1)$. 
Moreover, $\lim_{t\to0^+}\psi_r(t) = \frac{r+1}{2} > 0$. 
Hence, $\psi_r(t) > 0$ for all $t\in (0, 1)$.
This implies that $\psi_r$ is bounded on $(0, \bar{t}] \subset (0, 1)$ by $\psi_r(\bar{t})$.
\Eproof
\end{proof}

Similar to Corollary~\ref{co:hessian_bound2}, we can prove the following lemma on the bound of the Hessian difference. 

\begin{lemma}\label{le:H_norm}
Given $\xb, \yb\in\dom{f}$, the matrix $\Hb(\xb,\yb)$ defined by
\begin{equation}\label{eq:H_matrix}
\Hb(\xb,\yb) := \nabla^2f(\xb)^{-1/2}\left[\int_0^1\big(\nabla^2{f}(\xb + \tau(\yb-\xb)) - \nabla^2f(\xb)\big)d\tau\right]\nabla^2f(\xb)^{-1/2},
\end{equation}
satisfies 
\begin{equation}\label{eq:H_matrix_norm}
\Vert \Hb(\xb,\yb) \Vert \leq R_{\nu}\left(d_{\nu}(\xb, \yb)\right)d_{\nu}(\xb, \yb),
\end{equation}
where $R_{\nu}(t)$ is defined as follows for $t \in [0, 1)$:
\begin{equation}\label{eq:R_alpha}
R_{\nu}(t) := \begin{cases}
\left(\frac{3}{2} + \frac{t}{3}\right)e^t &\text{if $\nu = 2$}\vspace{1ex}\\
\frac{1 - (1-t)^{\frac{4-\nu}{\nu-2}} - \left(\frac{4-\nu}{\nu-2}\right)t(1-t)^{\frac{4-\nu}{\nu-2}}}{\left(\frac{4-\nu}{\nu-2}\right)t^2(1-t)^{\frac{4-\nu}{\nu-2}}} &\text{if $2 < \nu \leq 3$}.
\end{cases}
\end{equation}
Moreover, for a fixed $\bar{t} \in (0, 1)$, we have $\displaystyle\sup_{0 \leq t \leq \bar{t}}\abs{R_{\nu}(t)} \leq \bar{M}_{\nu}(\bar{t})$, where 
\begin{equation*}
\bar{M}_{\nu}(\bar{t}) := \max\set{\frac{1 - (1 \!-\! \bar{t})^{\frac{4 \!-\! \nu}{\nu \!-\! 2}} - \left(\frac{4\!-\!\nu}{\nu \!-\! 2}\right)\bar{t}(1 \!-\! \bar{t})^{\frac{4 \!-\! \nu}{\nu \!-\! 2}}}{\left(\frac{4 \!-\! \nu}{\nu \!-\! 2}\right)\bar{t}^2(1 \!-\! \bar{t})^{\frac{4 \!-\! \nu}{\nu-2}}}, \left(\frac{3}{2} + \frac{\bar{t}}{2}\right)e^{\bar{t}} } \in (0, +\infty).
\end{equation*}
\end{lemma}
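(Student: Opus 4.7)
The plan is to reduce the norm bound to the two-sided sandwich on the mean Hessian from Corollary \ref{co:hessian_bound2} and then to verify by direct algebra that the resulting scalar bound is exactly (or at most) $R_\nu(d)\,d$. First, I would rewrite
\begin{equation*}
\Hb(\xb,\yb) \;=\; \nabla^2 f(\xb)^{-1/2}\!\!\left[\int_0^1\!\nabla^2 f(\xb+\tau(\yb-\xb))\,d\tau\right]\!\nabla^2 f(\xb)^{-1/2} - \Id,
\end{equation*}
and congruence-transform the inequality \eqref{eq:hessian_bound2} by $\nabla^2 f(\xb)^{-1/2}$ to obtain the sandwich $(\underline{\kappa}_\nu(d)-1)\Id \preceq \Hb(\xb,\yb) \preceq (\overline{\kappa}_\nu(d)-1)\Id$, where $d := d_\nu(\xb,\yb)$. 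Consequently, $\|\Hb(\xb,\yb)\| \leq \max\{1-\underline{\kappa}_\nu(d),\,\overline{\kappa}_\nu(d)-1\}$.

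Next, I would show that the upper endpoint dominates. Going back to the integral representations derived in the proof of Corollary \ref{co:hessian_bound2}, namely $\underline{\kappa}_\nu(d)=\int_0^1 h(\tau)\,d\tau$ and $\overline{\kappa}_\nu(d)=\int_0^1 h(\tau)^{-1}\,d\tau$ with $h(\tau)=(1-\tau d)^{2/(\nu-2)}$ when $\nu>2$ and $h(\tau)=e^{-\tau d}$ when $\nu=2$, the elementary AM--GM bound $a+1/a\geq 2$ applied pointwise in $\tau$ yields $\underline{\kappa}_\nu(d)+\overline{\kappa}_\nu(d) \geq 2$, equivalently $1-\underline{\kappa}_\nu(d) \leq \overline{\kappa}_\nu(d)-1$. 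Hence $\|\Hb(\xb,\yb)\| \leq \overline{\kappa}_\nu(d)-1$, and everything reduces to bounding this single quantity by $R_\nu(d)\,d$.

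For $\nu\in(2,3]$, I would set $r:=\tfrac{4-\nu}{\nu-2}\geq 1$, so $\tfrac{\nu-2}{\nu-4}=-1/r$, and compute
\begin{equation*}
\overline{\kappa}_\nu(d)-1 \;=\; \frac{(1-d)^{-r}-1-rd}{rd} \;=\; \frac{1-(1-d)^{r}-rd(1-d)^{r}}{rd\,(1-d)^{r}} \;=\; R_\nu(d)\cdot d,
\end{equation*}
so the bound holds with equality. For $\nu=2$, one has $\overline{\kappa}_2(d)-1=\tfrac{e^d-1-d}{d}$, and Lemma \ref{le:elem_inequalities}(b) gives $\tfrac{e^d-1-d}{d}\leq\bigl(\tfrac{3}{2}+\tfrac{d}{3}\bigr)d\,e^d = R_2(d)\cdot d$, completing \eqref{eq:H_matrix_norm}.

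Finally, the uniform bound on $[0,\bar{t}]$ follows from Lemma \ref{le:elem_inequalities}(a): for $\nu\in(2,3]$, $R_\nu$ is precisely $\psi_r$ with $r=\tfrac{4-\nu}{\nu-2}\geq 1$, which is positive and increasing on $(0,1)$, so its supremum on $[0,\bar{t}]$ equals $R_\nu(\bar{t})$; for $\nu=2$, $t\mapsto \bigl(\tfrac{3}{2}+\tfrac{t}{3}\bigr)e^{t}$ is visibly increasing and attains its maximum at $\bar{t}$. Taking the max with the $\nu=2$ expression evaluated at $\bar{t}$ yields the stated $\bar{M}_\nu(\bar{t})$. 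The main (and only) non-routine obstacle is correctly bookkeeping the exponent transformations around $r=\tfrac{4-\nu}{\nu-2}$ to confirm the algebraic identity $\overline{\kappa}_\nu(d)-1=R_\nu(d)\,d$ for $\nu\in(2,3]$; after this, the rest is an immediate consequence of Corollary \ref{co:hessian_bound2}, AM--GM, and Lemma \ref{le:elem_inequalities}.
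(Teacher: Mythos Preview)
Your proposal is correct and follows essentially the same route as the paper: reduce to the sandwich from Corollary~\ref{co:hessian_bound2}, show the upper endpoint dominates so that $\|\Hb(\xb,\yb)\|\le \overline{\kappa}_\nu(d)-1$, and then identify $\overline{\kappa}_\nu(d)-1$ with $R_\nu(d)\,d$ via Lemma~\ref{le:elem_inequalities}. The only difference is cosmetic: your pointwise AM--GM argument $h(\tau)+1/h(\tau)\ge 2$ under the integral is a clean shortcut, whereas the paper, for $\nu\in(2,3]$, verifies $\underline{\kappa}_\nu(d)+\overline{\kappa}_\nu(d)\ge 2$ by a direct calculus computation on the closed-form expressions.
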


\begin{proof}
By Corollary \ref{co:hessian_bound2}, if we define $\Gb(\xb,\yb) := \int_0^1 \left[\nabla^2{f}(\xb + \tau(\yb-\xb)) - \nabla^2{f}(\xb)\right]d\tau$, then
\begin{equation}\label{eq:G_matrix}
\left[\underline{\kappa}_{\nu}(d_{\nu}(\xb,\yb)) - 1\right]\nabla^2{f}(\xb) \preceq \Gb(\xb,\yb) \preceq \left[\overline{\kappa}_{\nu}(d_{\nu}(\xb,\yb)) - 1\right]\nabla^2{f}(\xb).
\end{equation}
Since $\Hb(\xb,\yb)  = \nabla^2f(\xb)^{-1/2}\Gb(\xb,\yb)\nabla^2f(\xb)^{-1/2}$, the last inequality implies
\begin{equation*} 
\Vert \Hb(\xb,\yb)  \Vert \leq \max\big\{1 - \underline{\kappa}_{\nu}(d_{\nu}(\xb,\yb)), \overline{\kappa}_{\nu}(d_{\nu}(\xb,\yb)) - 1\big\}.
\end{equation*}
Let $C_{\max}(t) := \max\big\{1 - \underline{\kappa}_{\nu}(t), \overline{\kappa}_{\nu}(t) - 1 \big\}$ be for $t \in [0, 1)$.
We consider three cases.

\vspace{1ex}
\indent{(a)}~For $\nu = 2$, since $e^{-t} + e^{t} \geq 2$, we have $\frac{1-e^{-t}}{t}+  \frac{e^{t}-1}{t} \geq 2$ which implies $C_{\max}(t) = \overline{\kappa}_{\nu}(t) - 1 = \frac{e^{t}-1-t}{t}$.
Hence, by Lemma \ref{le:elem_inequalities}, we have $C_{\max}(t) \leq \left(\frac{3}{2} + \frac{t}{3}\right)te^t$ which leads to $R_{\nu}(t) := \left(\frac{3}{2} + \frac{t}{3}\right)e^t$.

\vspace{1ex}
\indent{(b)}~For $\nu \in (2, 3]$, we have 
\begin{equation*}
\begin{array}{ll}
C_{\max}(t) &= \max\set{1 \!-\! \frac{(\nu \!-\! 2)}{\nu t}\left[1  \!-\! (1  \!-\!  t)^{\frac{\nu}{\nu \!-\! 2}}\right], \frac{(\nu \!-\! 2)}{(4 \!-\! \nu) t}\Big[\frac{1}{(1  \!-\!  t)^{\frac{4 \!-\! \nu}{\nu-2}}} \!-\! 1\Big] - 1} = \frac{(\nu - 2)}{(4 - \nu) t}\Big[\frac{1}{(1  -  t)^{\frac{4-\nu}{\nu-2}}} - 1\Big] - 1.
\end{array}
\end{equation*}
Indeed, we show that $\frac{(\nu - 2)}{(4 -\nu) t}\Big[\frac{1}{(1  -  t)^{\frac{4-\nu}{\nu-2}}} - 1\Big] + \frac{(\nu - 2)}{\nu t}\left[1 - (1 - t)^{\frac{\nu}{\nu -  2}}\right] \geq 2$.
Let $u := \frac{4-\nu}{\nu-2} > 0$ and $v := \frac{\nu}{\nu-2} > 0$. The last inequality is equivalent to $\frac{1}{u}\left[\frac{1}{(1 - t)^u}-1\right] + \frac{1}{v}\left[1 - (1-t)^v\right] \geq 2t$, 
which can be reformulated as  $\frac{1}{v} - \frac{1}{u} + \frac{1}{u(1-t)^u} - \frac{(1-t)^v}{v} - 2t \geq 0$. 
Consider $s(t) := \frac{1}{v} - \frac{1}{u} + \frac{1}{u(1-t)^u} - \frac{(1-t)^v}{v} - 2t$. 
It is clear that $s'(t) = \frac{1}{(1-t)^{u+1}} + (1-t)^{v-1} - 2 = (1-t)^{-\frac{2}{\nu-2}} + (1-t)^{\frac{2}{\nu-2}} - 2 \geq 0$ for all $t\in [0, 1)$.
We obtain $s(t) \geq s(0) = 0$. 
Hence, $C_{\max}(t) = \frac{(\nu - 2)}{(4 - \nu) t}\Big[\frac{1}{(1  -  t)^{\frac{4-\nu}{\nu-2}}} - 1\Big] - 1$.

Let us define $r := \frac{4-\nu}{\nu -2} = \frac{2}{\nu-2} - 1$. Then, it is clear that $\nu = 2 + \frac{2}{1+r}$, and $\nu \in (2, 3]$ is equivalent to $r \geq 1$. 
Now, using Lemma \ref{le:elem_inequalities} with $r = \frac{2}{\nu-2} - 1 \geq 1$, we obtain $R_{\nu}(t) := \frac{1 - (1-t)^{\frac{4-\nu}{\nu-2}} - \left(\frac{4-\nu}{\nu-2}\right)t(1-t)^{\frac{4-\nu}{\nu-2}}}{\left(\frac{4-\nu}{\nu-2}\right)t^2(1-t)^{\frac{4-\nu}{\nu-2}}}$.
Put (a) and (b) together, we obtain \eqref{eq:H_matrix_norm} with $R_{\nu}$ defined by \eqref{eq:R_alpha}. 
The boundedness of $R_{\nu}$ follows from Lemma \ref{le:elem_inequalities}.
\Eproof
\end{proof}

\beforesubsec
\subsection{\bf The proof of Theorem~\ref{th:existence_and_unique}: Solution existence and uniqueness}\label{apdx:th:existence_and_unique}
\aftersubsec
Consider a sublevel set $\mathcal{L}_F(\xb):=\set{\yb\in\dom{F} \mid F(\yb)\leq F(\xb)}$ of $F$ in \eqref{eq:composite_cvx}. 
For any $\yb\in\mathcal{L}_F(\xb)$ and $\vb\in\partial g(\xb)$, by \eqref{eq:f_bound1} and the convexity of $g$, we have
\begin{equation*}
F(\xb)\geq F(\yb)\geq F(\xb)+\iprod{\nabla f(\xb)+\vb,\yb - \xb}+\omega_{\nu}\left(-d_{\nu}(\xb, \yb)\right)\norm{\yb-\xb}_{\xb}^2.
\end{equation*}
By the Cauchy-Schwarz inequality, we have
\begin{equation}\label{eq_bddd}
\omega_{\nu}\left(-d_{\nu}(\xb, \yb)\right)\norm{\yb-\xb}_{\xb}\leq \norm{\nabla f(\xb)+\vb}_{\xb}^{*}.
\end{equation}
Now, using the assumption $\nabla^2{f}(\xb)\succ 0$ for some $\xb\in\dom{F}$, we have $\sigma_{\min}(x) := \lambda_{\min}(\nabla^2{f}(\xb)) > 0$, the smallest eigenvalue of $\nabla^2{f}(x)$.
\begin{enumerate}
\item[$\mathrm{(a)}$] If $\nu = 2$, then $d_2(\xb,\yb)=M_f\norm{\yb-\xb}_2\leq \frac{M_f}{\sqrt{\sigma_{\min}(x)}}\norm{\yb-\xb}_{\xb}$. This estimate together with \eqref{eq_bddd} imply
\begin{equation}\label{eq:exist_nu2}
\omega_2\left(-d_2(\xb, \yb)\right)d_2(\xb,\yb)\leq \frac{M_f}{\sqrt{\sigma_{\min}(x)}}\norm{\nabla f(\xb)+\vb}_{\xb}^{*} = \frac{M_f}{\sqrt{\sigma_{\min}(x)}}\lambda(x).
\end{equation}
We consider the function $s_2(t) := \omega_2(-t)t = 1 - \frac{1-e^{-t}}{t}$.
Clearly, $s_2'(t) = \frac{e^t - t - 1}{t^2e^t} > 0$ for all $t \in \R_{+}$.
Hence, $s_2(t)$ is increasing on $\R_{+}$. 
However, $s_2(t) < 1$ and $\lim\limits_{t\to+\infty}s_2(t) = 1$.
Therefore, if $\frac{M_f}{\sqrt{\sigma_{\min}(x)}}\lambda(x) < 1$, then the equation $s_2(t) - \frac{M_f}{\sqrt{\sigma_{\min}(x)}}\lambda(x) = 0$ has a unique solution $t^{\ast} \in (0, +\infty)$.
In this case, for $0 \leq d_2(\xb, \yb) \leq t^{\ast}$, \eqref{eq:exist_nu2} holds. 
This condition leads to $M_f\norm{y-x}_2 \leq t^{\ast} <+\infty$ which implies that the sublevel set $\mathcal{L}_F(\xb)$ is bounded.
Consequently, solution $x^{\star}$ of \eqref{eq:composite_cvx} exists.

\vspace{1ex}
\item[$\mathrm{(b)}$] If $2 < \nu < 3$, then
\begin{equation*}
d_{\nu}(\xb,\yb)\leq \left(\frac{\nu}{2}-1\right)\frac{M_f}{\sigma_{\min}(x)^{\frac{3-\nu}{2}}}\norm{\yb-\xb}_{\xb}.
\end{equation*}
This inequality together with \eqref{eq_bddd} imply
\begin{equation*}
\omega_{\nu}\left(-d_{\nu}(\xb, \yb)\right)d_{\nu}(\xb,\yb)\leq \left(\frac{\nu}{2}-1\right)\frac{M_f}{\sigma_{\min}(x)^{\frac{3-\nu}{2}}}\norm{\nabla f(\xb)+\vb}_{\xb}^{*} = \left(\frac{\nu}{2}-1\right)\frac{M_f}{\sigma_{\min}(x)^{\frac{3-\nu}{2}}}\lambda(x).
\end{equation*}
We consider $s_{\nu}(t) := \omega_{\nu}(-t)t$.
After a few elementary calculations, we can easily check that $s_{\nu}$ is increasing on $\R_{+}$ and $s_{\nu}(t) < \frac{\nu-2}{4-\nu}$ for all $t > 0$, and $\lim\limits_{t\to+\infty}s_{\nu}(t) = \frac{\nu-2}{4-\nu}$.
Hence, if $\left(\frac{\nu}{2}-1\right)\frac{M_f}{\sigma_{\min}(x)^{\frac{3-\nu}{2}}}\lambda(x) < \frac{\nu-2}{4-\nu}$, then, similar to Case (a), we can show that solution $x^{\star}$ of \eqref{eq:composite_cvx} exists.
This condition implies that $\lambda(x) < \frac{2\sigma_{\min}(x)^{\frac{3-\nu}{2}}}{(4-\nu)M_f}$.

\vspace{1ex}
\item[$\mathrm{(c)}$] If $\nu = 3$, then $d_3(\xb,\yb) = \frac{M_f}{2}\norm{\yb-\xb}_{\xb}$. Combining this estimate and \eqref{eq_bddd} we get
\begin{equation*}
\omega_3\left(-d_3(\xb, \yb)\right)d_3(\xb,\yb)\leq \frac{M_f}{2}\norm{\nabla f(\xb)+\vb}_{\xb}^{*}.
\end{equation*}
With the same proof as in \cite[Theorem 4.1.11]{Nesterov2004}, if $\frac{M_f}{2}\norm{\nabla f(\xb)+\vb}_{\xb}^{*} < 1$ which is equivalent to $\lambda(x) < \frac{2}{M_f}$, then solution $x^{\star}$ of \eqref{eq:composite_cvx} exists.
\end{enumerate}
Note that the condition on $\lambda(x)$ in three cases (a), (b), and (c) can be unified.
The uniqueness of the solution $x^{\star}$ in these three cases follows from the strict convexity of $F$.
\Eproof

\beforesubsec
\subsection{\bf The proof of Theorem~\ref{th:damped_step_NT}: Convergence of the damped-step Newton method}\label{apdx:th:damped_step_NT}
\aftersubsec
The proof of this theorem is divided into two parts: computing the step-size, and proving the local quadratic convergence.

\beforepar
\paragraph{\textbf{Computing the step-size $\tau_k$:}}
From Proposition \ref{pro:fx_bound1}, for any $\xb^k,\xb^{k+1}\in\dom{f}$, if $d_{\nu}(\xb^k,\xb^{k+1}) < 1$, then we have 
\begin{equation*} 
f(\xb^{k+1}) \leq f(\xb^k) + \iprods{\nabla{f}(\xb^k), \xb^{k+1}-\xb^k} + \omega_{\nu}\left(d_{\nu}(\xb^k, \xb^{k+1})\right)\norm{\xb^{k+1} - \xb^k}_{\xb^k}^2.
\end{equation*}
Now, using \eqref{eq:NT_scheme}, we have $\iprods{\nabla{f}(\xb^k), \xb^{k+1}-\xb^k} = -\tau_k\left(\Vert\nabla{f}(\xb^k)\Vert_{\xb^k}^{\ast}\right)^2 = -\tau_k\lambda_k^2$.
On the other hand, we have 
\begin{equation*} 
\begin{array}{ll}
&\Vert\xb^{k+1} - \xb^k\Vert_{\xb^k}^2 \overset{\tiny\eqref{eq:NT_scheme}}{=}  \tau_k^2\iprods{\nabla^2f(\xb^k)^{-1}\nabla{f}(\xb^k),\nabla{f}(\xb^k)} \overset{\tiny\eqref{eq:NT_decrement}}{=} \tau_k^2\lambda_k^2, \vspace{1ex}\\
&\Vert\xb^{k+1} - \xb^k\Vert_2^2 \overset{\tiny\eqref{eq:NT_scheme}}{=}  \tau_k^2\iprods{\nabla^2f(\xb^k)^{-1}\nabla{f}(\xb^k), \nabla^2f(\xb^k)^{-1}\nabla{f}(\xb^k)}  \overset{\tiny\eqref{eq:NT_decrement}}{=}  \frac{\tau_k^2\beta_k^2}{M_f^2}.\\
\end{array}
\end{equation*}
Using the definition of $d_{\nu}(\cdot)$ in \eqref{eq:dxy_def}, the two last equalities,  and \eqref{eq:d_k}, we can easily show that $d_{\nu}(\xb^k, \xb^{k+1}) = \tau_kd_k$.
Substituting these relations into the first estimate, we obtain 
\begin{equation*} 
f(\xb^{k+1}) \leq f(\xb^k)  - \left(\tau_k\lambda_k^2 - \omega_{\nu}\left( \tau_kd_k\right)\tau_k^2\lambda_k^2\right).
\end{equation*}
We consider the following cases:

\indent{(a)}~If $\nu = 2$, then, by \eqref{eq:omega_def}, we have $\eta_k(\tau) := \lambda_k^2\tau - \left(\frac{\lambda_k}{d_k}\right)^2\left(e^{\tau d_k} - \tau d_k - 1\right)$ with $d_k = \beta_k$. This function attains the maximum at $\tau_k  := \frac{\ln(1 + d_k)}{d_k}  = \frac{\ln(1 + \beta_k)}{\beta_k} \in (0, 1)$ with 
\begin{equation*}
\eta_k(\tau_k) = \left(\frac{\lambda_k}{d_k}\right)^2\Big[ (1 \!+\! d_k)\ln(1 \!+\! d_k) \!-\! d_k\Big] = \left(\frac{\lambda_k}{\beta_k}\right)^2\Big[ (1 \!+\! \beta_k)\ln(1 \!+\! \beta_k) \!-\! \beta_k\Big].
\end{equation*}
It is easy to check from the right-most term of the last expression that $\Delta_k := \eta_k(\tau_k) > 0$ for $\tau_k  > 0$.

\indent{(b)}~If $\nu = 3$, then, by \eqref{eq:omega_def}, we have $\eta_k(\tau) := \lambda_k^2\tau  +  \left(\frac{\lambda_k}{d_k}\right)^2\left[\tau d_k + \ln(1 - \tau d_k)\right]$ with $d_k = 0.5M_f\lambda_k$. 
We can show that $\eta_k(\tau)$ achieves the maximum at $\tau_k = \frac{1}{1 + d_k} = \frac{1}{1 + 0.5M_f\lambda_k}\in (0,1)$ with
\begin{equation*}
\eta_k(\tau_k) = \frac{\lambda_k^2}{1 + 0.5M_f\lambda_k}+\left(\frac{2}{M_f}\right)^2\left[\frac{0.5M_f\lambda_k}{1 + 0.5M_f\lambda_k}+\ln\left(1-\frac{0.5M_f\lambda_k}{1 + 0.5M_f\lambda_k}\right)\right].
\end{equation*}
We can also easily check that the last term $\Delta_k := \eta_k(\tau_k)$ of this expression is positive for $\lambda_k > 0$.

\indent{(c)}~If $2 < \nu < 3$, then we have $d_k=M_f^{\nu-2}\left(\frac{\nu}{2} - 1\right) \lambda_k^{\nu-2}\beta_k^{3-\nu}$. 
By \eqref{eq:omega_def}, we have
\begin{eqnarray*}
\eta_k(\tau) &=& \left(\lambda_k^2+\frac{\lambda_k^2}{d_k}\frac{\nu-2}{4-\nu}\right)\tau-\left(\frac{\lambda_k}{d_k}\right)^2\frac{(\nu-2)^2}{2(4-\nu)(3-\nu)}\left((1 - \tau d_k)^{\frac{2(3-\nu)}{2-\nu}} - 1\right).
\end{eqnarray*}
Our aim is to find $\tau^{\ast} \in (0, 1]$ by solving $\max_{\tau \in [0, 1]}\eta_k(\tau)$. This problem always has a global solution.
First, we compute the first- and the second-order derivatives of $\eta_k$ as follows:
\begin{equation*}
\eta_k'(\tau) = \lambda_k^2\left[1 - \frac{1}{d_k}\frac{\nu-2}{\nu - 4}\left(1-(1-\tau d_k)^{\frac{\nu-4}{\nu-2}}\right)\right]\textrm{ and }\eta_k''(\tau)=-\lambda_k^2(1-\tau d_k)^{\frac{-2}{\nu-2}}.
\end{equation*}
Let us set $\eta_k'(\tau_k) = 0$. Then, we get
\begin{equation*} 
\tau_k = \frac{1}{d_k}\left[1-\left(1+\frac{4-\nu}{\nu-2}d_k\right)^{-\frac{\nu-2}{4-\nu}}\right] \in (0,1)~~~~\textrm{(by the Bernoulli inequality)},
\end{equation*}
with
\begin{equation*}
\eta_k(\tau_k)=\frac{\lambda_k^2}{d_k}\left[1-\frac{4-\nu}{2(3-\nu)}\left(1+\frac{4-\nu}{\nu-2}d_k\right)^{2-\nu}\right]+\left(\frac{\lambda_k}{d_k}\right)^2 \frac{\nu-2}{2(3-\nu)}\left[1-\left(1+\frac{4-\nu}{\nu-2}d_k\right)^{2-\nu}\right].
\end{equation*}
In addition, we can check that $\eta_k''(\tau_k) < 0$. Hence, the value of $\tau_k$ above achieves the maximum of $\eta_k(\cdot)$. 
Then, we have $\Delta_k := \eta_k(\tau_k) > \eta_k(0)=0$.

\beforepar
\paragraph{\textbf{The proof of local quadratic convergence:}}
Let $\xopt_f$ be the optimal solution of \eqref{eq:gsc_min}. 
We have 
\begin{equation*}
\begin{array}{ll}
\Vert\xb^{k+1} - \xopt_f\Vert_{\xb^k} &= \Vert \xb^k - \tau_k\nabla^2{f}(\xb^k)^{-1}\nabla{f}(\xb^k) - \xopt_f\Vert_{\xb^k} \vspace{1ex}\\
&= (1-\tau_k)\Vert\xb^k - \xopt_f\Vert_{\xb^k} + \tau_k\Vert \xb^k - \xopt_f - \nabla^2{f}(\xb^k)^{-1}\nabla{f}(\xb^k)\Vert_{\xb^k}.
\end{array}
\end{equation*}
Hence, we can write
\begin{equation}\label{eq:damped_step_local1}
\Vert\xb^{k\!+\!1} {\!\!}- \xopt_f\Vert_{\xb^k} \!=\! (1 \!-\! \tau_k)\Vert\xb^k - \xopt_f\Vert_{\xb^k} \!+\! \tau_k\Vert \nabla^2{f}(\xb^k)^{-1}{\!\!}\left[ \nabla{f}(\xopt_f) - \nabla{f}(\xb^k) \!-\! \nabla^2{f}(\xb^k)(\xopt_f \!-\! \xb^k)\right] \Vert_{\xb^k}.
\end{equation}
Let us define $T_k := \Big\Vert \nabla^2{f}(\xb^k)^{-1}{\!\!}\left[ \nabla{f}(\xopt_f) - \nabla{f}(\xb^k) \!-\! \nabla^2{f}(\xb^k)(\xopt_f \!-\! \xb^k)\right]\Big\Vert_{\xb^k}$ and consider three cases as follows:

$\mathrm{(a)}$~ For $\nu = 2$, using Corollary~\ref{co:hessian_bound2}, we have $\left(\frac{1-e^{-\bar{\beta}_k}}{\bar{\beta}_k}\right)\nabla^2{f}(\xb^k) \preceq \int_0^1\nabla^2{f}(\xb^k + t(\xopt_f -\xb^k))dt \preceq \left(\frac{e^{\bar{\beta}_k}-1}{\bar{\beta}_k}\right)\nabla^2{f}(\xb^k)$, where $\bar{\beta}_k := M_f\Vert\xb^k - \xopt_f\Vert_2$. 
Using the above inequality, we can show that
\begin{equation*}
T_k \leq \max\set{  1 - \frac{1-e^{- \bar{\beta}_k}}{\bar{\beta}_k}, \frac{e^{\bar{\beta}_k}-1}{\bar{\beta}_k}-1}\Vert\xb^k - \xopt_f\Vert_{\xb^k} = \left(\frac{e^{\bar{\beta}_k} - 1 - \bar{\beta}_k}{\bar{\beta}_k^2}\right)\bar{\beta}_k\Vert\xb^k - \xopt_f\Vert_{\xb^k}.
\end{equation*}
Let $\underline{\sigma}_k := \lambda_{\min}(\nabla^2{f}(\xb^k))$. 
We first derive 
\begin{equation*}
\begin{array}{ll}
\Vert\nabla^2{f}(\xb^k)^{-1}\nabla{f}(\xb^k)\Vert_2 &= \Vert\nabla^2{f}(\xb^k)^{-1}(\nabla{f}(\xb^k) - \nabla{f}(\xopt_f))\Vert_2 \vspace{1ex}\\
&= \Vert \int_0^1\nabla^2{f}(\xb^k)^{-1}\nabla^2{f}(\xb^k + t(\xopt_f - \xb^k))(\xb^k - \xopt_f) dt\Vert_2  \vspace{1ex}\\
&= \Vert \nabla^2{f}(\xb^k)^{-1/2}\Kb(\xb^k,\xopt_f)\nabla^2{f}(\xb^k)^{1/2}(\xb^k - \xopt_f)\Vert_2   \vspace{1ex}\\
&\leq \frac{1}{\sqrt{\underline{\sigma}_k}}\Vert  \Kb(\xb^k,\xopt_f)\Vert \Vert\xb^k - \xopt_f\Vert_{\xb^k}.
\end{array}
\end{equation*}
where $\Kb(\xb^k,\xopt_f) :=\int_0^1 \nabla^2{f}(\xb^k)^{-1/2}\nabla^2{f}(\xb^k + t(\xopt_f - \xb^k) \nabla^2{f}(\xb^k)^{-1/2}dt$.
Using Corollary~\ref{co:hessian_bound2} and noting that $\bar{\beta}_k := M_f\Vert\xb^k - \xopt_f\Vert_2$, we can estimate  $\Vert  \Kb(\xb^k,\xopt_f)\Vert \leq \frac{e^{\bar{\beta}_k} - 1}{\bar{\beta}_k}$.
Using the two last estimates, and the definition of $\beta_k$, we can derive
\begin{equation*}
\begin{array}{ll}
\beta_k &= M_f\Vert\nabla^2{f}(\xb^k)^{-1}\nabla{f}(\xb^k)\Vert_2 \leq \frac{M_fe^{\bar{\beta}_k - 1}}{\bar{\beta}_k\sqrt{\underline{\sigma}_k}}\Vert\xb^k - \xopt_f\Vert_{\xb^k} \leq M_fe\frac{\Vert\xb^k - \xopt_f\Vert_{\xb^k}}{\sqrt{\underline{\sigma}_k}},
\end{array}
\end{equation*}
provided that $\bar{\beta}_k \leq 1$.
Since, the step-size $\tau_k = \frac{1}{\beta_k}\ln(1+\beta_k)$, we have $1 - \tau_k \leq \frac{\beta_k}{2} \leq \frac{M_fe\Vert\xb^k - \xopt_f\Vert_{\xb^k}}{2\sqrt{\underline{\sigma}_k}}$.
On the other hand, $\frac{e^{\bar{\beta}_k}-1 - \bar{\beta}_k}{\bar{\beta}_k^2} \leq \frac{e}{2}$ for all $0\leq \bar{\beta}_k \leq 1$.
Substituting $T_k$ into \eqref{eq:damped_step_local1} and using these relations, we have
\begin{equation*}
\Vert\xb^{k\!+\!1} - \xopt_f\Vert_{\xb^k} \leq \tfrac{e}{2}\bar{\beta}_k\Vert\xb^k - \xopt_f\Vert_{\xb^k} + \tfrac{M_fe}{2}\tfrac{\Vert\xb^k - \xopt_f\Vert_{\xb^k}^2}{\sqrt{\underline{\sigma}_k}},
\end{equation*}
provided that $\bar{\beta}_k \leq 1$. 
On the other hand, by Proposition~\ref{pro:hessian_bounds}, we have $\Vert\xb^{k\!+\!1} - \xopt_f\Vert_{\xb^{k+1}} \leq e^{\frac{\bar{\beta}_{k+1} + \bar{\beta}_k}{2}}\Vert\xb^{k\!+\!1} - \xopt_f\Vert_{\xb^k}$ and $\underline{\sigma}_{k+1}^{-1} \leq e^{\bar{\beta}_k + \bar{\beta}_{k+1}}\underline{\sigma}_k^{-1}$. In addition, $\bar{\beta}_k \leq \frac{M_f}{\sqrt{\underline{\sigma}_k}}\Vert\xb^{k} - \xopt_f\Vert_{\xb^k}$
Combining the above inequalities, we finally get
\begin{equation*}
\frac{\Vert\xb^{k\!+\!1} - \xopt_f\Vert_{\xb^{k+1}}}{\sqrt{\underline{\sigma}_{k+1}}} \leq M_fe^{1+\bar{\beta}_{k+1} + \bar{\beta}_k}\left( \frac{\Vert\xb^{k} - \xopt_f\Vert_{\xb^k}}{\sqrt{\underline{\sigma}_k}} \right)^2.
\end{equation*}
Under the fact that $\beta_k\leq 1$, and $\beta_{k+1} \leq 1$, this estimate shows that $\set{ \frac{\Vert\xb^{k} - \xopt_f\Vert_{\xb^k}}{\sqrt{\underline{\sigma}_k}}} $ quadratically converges to zero.
Since $\Vert\xb^k - \xopt_f\Vert_2 \leq \frac{\Vert\xb^{k} - \xopt_f\Vert_{\xb^k}}{\sqrt{\underline{\sigma}_k}}$, we can also conclude that $\set{\Vert\xb^k - \xopt_f\Vert_2}$ quadratically converges to zero.

$\mathrm{(b)}$~ For $\nu = 3$, we can follow \cite{Nesterov2004}. However, for completeness, we give a short proof here.
Using Corollary~\ref{co:hessian_bound2}, we have $\left(1 - r_k + \frac{r_k^2}{3}\right)\nabla^2{f}(\xb^k) \preceq \int_0^1\nabla^2{f}(\xb^k + t(\xopt_f -\xb^k))dt \preceq \frac{1}{1-r_k}\nabla^2{f}(\xb^k)$, where $r_k := 0.5M_f\Vert\xb^k - \xopt_f\Vert_{\xb^k} < 1$. 
Using the above inequality, we can show that
\begin{equation*}
T_k \leq \max\set{  r_k - \frac{r_k^2}{3}, \frac{r_k}{1 - r_k}}\Vert\xb^k - \xopt_f\Vert_{\xb^k} = \frac{0.5M_f\Vert\xb^k - \xopt_f\Vert_{\xb^k}^2}{1 - 0.5M_f\Vert\xb^k - \xopt_f\Vert_{\xb^k}}.
\end{equation*}
Substituting $T_k$ into \eqref{eq:damped_step_local1} and using $\tau_k = \frac{1}{1 + 0.5M_f\lambda_k}$, we have
\begin{equation*}
\Vert\xb^{k\!+\!1} - \xopt_f\Vert_{\xb^k} \leq \frac{0.5M_f\lambda_k}{1+0.5M_f\lambda_k}\norms{\xb^k - \xopt_f}_{\xb^k} + \frac{1}{1 + 0.5M_f\lambda_k}\left(\frac{0.5M_f\norms{\xb^k - \xopt_f}_{\xb^k}^2}{1 - 0.5M_f\norms{\xb^k - \xopt_f}_{\xb^k}}\right).
\end{equation*}
Next, we need to upper bound $\lambda_k$. Since $\nabla{f}(\xopt_f) = 0$. 
Using Corollary~\ref{co:hessian_bound2}, we can bound $\lambda_k$ as
\begin{equation*}
\begin{array}{ll}
\lambda_k &= \Vert\nabla{f}(\xb^k)\Vert_{\xb^k}^{\ast} = \Vert \nabla^2{f}(\xb^k)^{-1/2}(\nabla{f}(\xb^k) - \nabla{f}(\xopt_f))\Vert_2 \vspace{1ex}\\
&= \Vert \int_0^1\nabla^2{f}(\xb^k)^{-1/2}\nabla^2f(\xb^k + t(\xopt_f - \xb^k))(\xopt_f - \xb^k)dt\Vert_2 \vspace{1ex}\\
&\leq \Vert \xb^k - \xopt_f\Vert_{\xb^k}\Vert \int_0^1\nabla^2{f}(\xb^k)^{-1/2}\nabla^2f(\xb^k + t(\xopt_f - \xb^k))\nabla^2{f}(\xb^k)^{-1/2}dt\Vert_2 \vspace{1ex}\\
&\overset{\tiny\text{Corollary~\ref{co:hessian_bound2}}}{\leq} \frac{\Vert \xb^k - \xopt_f\Vert_{\xb^k} }{1 - 0.5M_f \Vert \xb^k - \xopt_f\Vert_{\xb^k} } \leq 2\norms{\xb^k - \xopt_f}_{\xb^k},
\end{array}
\end{equation*}
provided that $M_f\norms{\xb^k - \xopt_f}_{\xb^k} < 1$.
Overestimating the above inequality using this bound, we get
\begin{equation*}
\begin{array}{ll}
\Vert\xb^{k\!+\!1} - \xopt_f\Vert_{\xb^k} & \leq 0.5M_f\lambda_k\norms{\xb^k-\xb_f^{\star}}_{\xb^k} + \frac{0.5M_f\norms{\xb^k-\xb_f^{\star}}_{\xb^k}^2}{1-0.5M_f\norms{\xb^k-\xb_f^{\star}}_{\xb^k}}\vspace{1ex}\\
& \leq M_f\norms{\xb^k-\xb_f^{\star}}_{\xb^k}^2+M_f\norms{\xb^k-\xb_f^{\star}}_{\xb^k}^2=2M_f\norms{\xb^k-\xb_f^{\star}}_{\xb^k}^2,
\end{array}
\end{equation*}
provided that $M_f\norms{\xb^k-\xb_f^{\star}}_{\xb^k} < 1$.
On the other hand, we can also estimate $\Vert\xb^{k\!+\!1} - \xopt_f\Vert_{\xb^{k+1}} \leq \frac{\Vert\xb^{k\!+\!1} - \xopt_f\Vert_{\xb^{k}}}{1 - 0.5M_f\left(\Vert\xb^{k\!+\!1} - \xopt_f\Vert_{\xb^{k}} + \Vert \xb^k - \xopt_f\Vert_{\xb^k}\right)}$.
Combining the last two inequalities, we get
\begin{equation*}
\Vert\xb^{k\!+\!1} - \xopt_f\Vert_{\xb^{k+1}} \leq \frac{2M_f\Vert\xb^k - \xopt_f\Vert_{\xb^k}^2}{ 1 - 2M_f\Vert\xb^k - \xopt_f\Vert_{\xb^k}^2 - 0.5M_f\Vert\xb^k - \xopt_f\Vert_{\xb^k}}
\end{equation*}
The right-hand side function $\psi(t) = \frac{2M_f}{1 - 2M_ft^2 - 0.5M_ft} \leq 4M_f$ on $t \in \left[0, \frac{1}{2M_f} \right]$.
Hence, if $\Vert \xb^k - \xopt_f\Vert_{\xb^k} \leq  \frac{1}{2M_f}$, then $\Vert\xb^{k\!+\!1} - \xopt_f\Vert_{\xb^{k+1}} \leq 4M_f\Vert \xb^k - \xopt_f\Vert_{\xb^k}^2$. 
This shows that if $\xb^0\in\dom{f}$ is chosen such that $\Vert\xb^0 - \xopt_f\Vert_{\xb^0} \leq  \frac{1}{4M_f}$, then $\set{\Vert \xb^k - \xopt_f\Vert_{\xb^k}}$  quadratically converges to zero.

$\mathrm{(c)}$~ For $\nu \in (2, 3)$, with the same argument as in the proof of Theorem~\ref{th:full_step_NT_scheme_converg}, we can show that
\begin{equation*}
\Vert\xb^{k\!+\!1} - \xopt_f\Vert_{\xb^k} \leq R_{\nu}(d^k_{\nu})d_{\nu}^k\Vert\xb^k - \xopt_f\Vert_{\xb^k},
\end{equation*}
where $R_{\nu}$ is defined by \eqref{eq:R_alpha} and $d_{\nu}^k := M_f^{\nu-2}\left(\frac{\nu}{2} - 1\right)\Vert\xb^k-\xopt_f\Vert_2^{3-\nu}\Vert\xb^k - \xopt_f\Vert_{\xb^k}^{\nu-2}$.
Using again the argument as in the proof of Theorem~\ref{th:full_step_NT_scheme_converg}, we have
\begin{equation*}
\frac{\Vert\xb^{k\!+\!1} - \xopt_f\Vert_{\xb^{k+1}}}{\underline{\sigma}_{k+1}^{\frac{3-\nu}{2}}} \leq C_{\nu}(d^k_{\nu},\Vert\xb^k - \xopt_f\Vert_{\xb^k})\left(\frac{\Vert\xb^k - \xopt_f\Vert_{\xb^k}}{ \underline{\sigma}_k^{\frac{3-\nu}{2}} }\right)^2.
\end{equation*}
Here, $C_{\nu}(\cdot,\cdot)$ is a given function deriving from $R_{\nu}$.
Under the condition that $d^k_{\nu}$ and $\Vert\xb^k - \xopt_f\Vert_{\xb^k}$ are sufficiently small, we can show that $C_{\nu}(d^k_{\nu},\Vert\xb^k - \xopt_f\Vert_{\xb^k}) \leq \bar{C}_{\nu}$.
Hence, the last inequality shows that $\Big\{ \frac{\Vert\xb^{k} - \xopt_f\Vert_{\xb^k}}{\underline{\sigma}_k^{\frac{3-\nu}{2}} } \Big\}$ quadratically converges to zero. 
Since $\underline{\sigma}_k^{\frac{3-\nu}{2}}\Vert\xb^k -\xopt_f\Vert_{\Hb_k} \leq \Vert\xb^k - \xopt_f\Vert_{\xb^k}$, where $\Hb_k := \nabla^2{f}(\xb^k)^{\frac{\nu-2}{2}}$, we have $\Vert\xb^k -\xopt_f\Vert_{\Hb_k} \leq \frac{\Vert\xb^{k} - \xopt_f\Vert_{\xb^k}}{\underline{\sigma}_k^{\frac{3-\nu}{2}} }$. 
Hence, we can conclude that $\set{\Vert\xb^k -\xopt_f\Vert_{\Hb_k}}$ also locally converges to zero at a quadratic rate.
\Eproof

\beforesubsec
\subsection{\bf The proof of Theorem~\ref{th:full_step_NT_scheme_converg}: The convergence of the full-step Newton method}\label{apdx:th:full_step_NT_scheme_converg}
\aftersubsec
We divide this proof into two parts: the quadratic convergence of $\Big\{\frac{\lambda_k}{\underline{\sigma}_k^{\frac{3-\nu}{2}}}\Big\}$, and the quadratic convergence of $\big\{\Vert\xb^k - \xopt_f\Vert_{\Hb_k}\big\}$.

\beforepar
\paragraph{\textbf{The quadratic convergence of $\Big\{\frac{\lambda_k}{\underline{\sigma}_k^{\frac{3-\nu}{2}}}\Big\}$}:}
Since the full-step Newton scheme updates $\xb^{k+1} := \xb^k - \nabla^2f(\xb^k)^{-1}\nabla{f}(\xb^k)$, if we denote by $\ntdir^k = \xb^{k+1} -\xb^k = - \nabla^2f(\xb^k)^{-1}\nabla{f}(\xb^k)$, then the last expression leads to $\nabla{f}(\xb^k) + \nabla^2f(\xb^k)\ntdir^k = 0$.
In addition, $\Vert\ntdir^k\Vert_{\xb^k} = \Vert\nabla{f}(\xb^k)\Vert_{\xb^k}^{\ast} = \lambda_k$.
Using the definition of $d_{\nu}(\cdot,\cdot)$ in \eqref{eq:dxy_def}, we denote $d^k_{\nu} := d_{\nu}(\xb^k, \xb^{k+1})$.

First, by $\nabla{f}(\xb^k) + \nabla^2f(\xb^k)\ntdir^k = 0$ and the mean-value theorem, we can show that
\begin{equation*}\label{eq:th32_est1}
 \nabla{f}(\xb^{k+1}) =  \nabla{f}(\xb^{k+1}) - \nabla{f}(\xb^k) - \nabla^2f(\xb^k)\ntdir^k = \int_0^1\left[\nabla^2{f}(\xb^k + t\ntdir^k) - \nabla^2{f}(\xb^k)\right]\ntdir^kdt.
\end{equation*}
Let us define $\Gb_k := \int_0^1\left[\nabla^2{f}(\xb^k + t\ntdir^k) - \nabla^2{f}(\xb^k)\right]dt$ and $\Hb_k := \nabla^2{f}(\xb^k)^{-1/2}\Gb_k\nabla^2{f}(\xb^k)^{-1/2}$.
Then, the above estimate implies $ \nabla{f}(\xb^{k+1}) = \Gb_k\ntdir^k$. 
Hence, we can show that
\begin{align*} 
\left[\Vert\nabla{f}(\xb^{k+1})\Vert_{\xb^k}^{\ast}\right]^2 &= \iprods{\nabla^2{f}(\xb^k)^{-1}\Gb_k\ntdir^k, \Gb_k\ntdir^k}  = \iprods{\Hb_k\nabla^2{f}(\xb^k)^{1/2}\ntdir^k, \Hb_k\nabla^2{f}(\xb^k)^{1/2}\ntdir^k}\nonumber\\
&\leq \Vert \Hb_k\Vert^2\Vert \ntdir^k \Vert_{\xb^k}^2 = \Vert \Hb_k\Vert^2\lambda_k^2.
\end{align*}
By Lemma \ref{le:H_norm}, we can estimate 
\begin{align*} 
 \Vert \Hb_k\Vert &\leq R_{\nu}( d_{\nu}^k )d_{\nu}^k,
\end{align*}
where $R_{\nu}$ is defined by \eqref{eq:R_alpha}.
Combining the two last inequalities and using Proposition \ref{pro:hessian_bounds}, we consider the following cases:

(a)~If $\nu = 2$, then we have $\lambda_{k+1}^2 \leq e^{d_2^k}\left[\norm{\nabla{f}(\xb^{k+1})}_{\xb^k}^{\ast}\right]^2$ which implies $\lambda_{k+1} \leq e^{\frac{d_2^k}{2}}R_2(d_2^k)d_2^k\lambda_k$.
Note that $\lambda_k \geq \frac{\sqrt{\underline{\sigma}_k}d_2^k}{M_f}$ and $\frac{1}{\underline{\sigma}_{k+1}}\leq \frac{e^{d_2^k}}{\underline{\sigma}_k}$. 
Based on the above inequality, we have
\begin{equation*}
\frac{\lambda_{k+1}}{\sqrt{\underline{\sigma}_{k+1}}}\leq M_f R_2(d_2^k)e^{d_2^k}\left(\frac{\lambda_k}{\sqrt{\underline{\sigma}_k}}\right)^2.
\end{equation*}
By a numerical calculation, we can easily check that if $d_2^k < d_2^{\star}\approx \needcheck{0.12964}$, then
\begin{equation*}
\frac{\lambda_{k+1}}{\sqrt{\underline{\sigma}_{k+1}}}\leq 2M_f\left(\frac{\lambda_k}{\sqrt{\underline{\sigma}_k}}\right)^2.
\end{equation*}
Consequently, if $\frac{\lambda_0}{\sqrt{\underline{\sigma}_0}} < \frac{1}{M_f}\min\set{d_2^{\star},0.5} = \frac{d_2^{\star}}{M_f}$, then we can prove
\begin{equation*}
d_2^{k+1} \leq d_2^{k}\textrm{ and }\frac{\lambda_{k+1}}{\sqrt{\underline{\sigma}_{k+1}}} \leq \frac{\lambda_k}{\sqrt{\underline{\sigma}_k}},
\end{equation*}
by induction.  Under the condition $\frac{\lambda_0}{\sqrt{\underline{\sigma}_0}} <  \frac{d_2^{\star}}{M_f}$, the above inequality shows that the ratio $\set{\frac{\lambda_k}{\sqrt{\underline{\sigma}_k}}}$ converges to zero at a quadratic rate.

Now, if $\nu > 2$, then we consider different cases. 
Note that
\begin{equation*}
\lambda_{k+1}^2 \leq (1-d_{\nu}^k)^{\frac{-2}{\nu-2}}\left[\norm{\nabla{f}(\xb^{k+1})}_{\xb^k}^{\ast}\right]^2,
\end{equation*}
which follows that
\begin{equation}\label{noncomp_3plus}
\lambda_{k+1}\leq (1-d_{\nu}^k)^{\frac{-1}{\nu-2}}R_{\nu}(d_{\nu}^k)d_{\nu}^k\lambda_k.
\end{equation}
Note that $d_{\nu}^k=\left(\frac{\nu}{2}-1\right)M_f\norm{\db^k}_2^{3-\nu}\lambda_k^{\nu-2}$ and $\underline{\sigma}_{k+1}^{-1}\leq (1-d_{\nu}^k)^{\frac{-2}{\nu-2}}\underline{\sigma}_k^{-1}$.
Based on these relations and \eqref{noncomp_3plus} we can argue as follows:
 
$\mathrm{(b)}$~If $2 < \nu < 3$, then $\lambda_k \geq \norm{\db^k}_2\sqrt{\underline{\sigma}_k}$ which follows that $d_{\nu}^k\leq \left(\frac{\nu}{2}-1\right)M_f\underline{\sigma}_k^{-\frac{3-\nu}{2}}\lambda_k$. Hence, 
\begin{equation*}
\frac{\lambda_{k+1}}{\underline{\sigma}_{k+1}^{\frac{3-\nu}{2}}}\leq (1-d_{\nu}^k)^{-\frac{4-\nu}{\nu-2}}R_{\nu}(d_{\nu}^k)\left(\frac{\nu}{2}-1\right)M_f\left(\frac{\lambda_k}{\underline{\sigma}_k^{\frac{3-\nu}{2}}}\right)^2.
\end{equation*}
If $d_{\nu}^k < d_{\nu}^{\star}$, where $d_{\nu}^{\star}$ is the unique solution to the equation
\begin{equation*} 
\left(\frac{\nu}{2}-1\right)\frac{R_{\nu}(d_{\nu}^k)}{(1-d_{\nu}^k)^{\frac{4-\nu}{\nu-2}}}= 2,
\end{equation*}
then $\underline{\sigma}_{k+1}^{-\frac{3-\nu}{2}}\lambda_{k+1}\leq 2M_f\left(\underline{\sigma}_k^{-\frac{3-\nu}{2}}\lambda_k \right)^2$.
Note that it is straightforward to check that this equation always admits a positive solution. 
Hence, if we choose $\xb^0\in\dom{f}$ such that $\underline{\sigma}_0^{-\frac{3-\nu}{2}}\lambda_0 < \frac{1}{M_f}\min\set{\frac{2d_{\nu}^{\star}}{\nu-2},\frac{1}{2}}$, then we can prove the following two inequalities together by induction:
\begin{equation*}
d_{\nu}^k \leq d_{\nu}^{k+1}\textrm{ and }\underline{\sigma}_{k+1}^{-\frac{3-\nu}{2}}\lambda_{k+1} \leq \underline{\sigma}_k^{-\frac{3-\nu}{2}}\lambda_k.
\end{equation*}
In addition, the above inequality also shows that $\set{\underline{\sigma}_k^{-\frac{3-\nu}{2}}\lambda_k}$ quadratically converges to zero.

 $\mathrm{(c)}$~If $\nu = 3$, then $d_3^k= \frac{M_f}{2}\lambda_k$, and
\begin{equation*}
\lambda_{k+1}\leq (1-d_3^k)^{-1}R_3(d_3^k)d_3^k\lambda_k=M_f\frac{R_3(d_3^k)}{2(1-d_3^k)}\lambda_k^2.
\end{equation*}
Directly checking the right-hand side of the above estimate, one can show that if $d_3^k < d_3^{\star}=0.5$, then $\lambda_{k+1}\leq 2M_f\lambda_k^2$. 
Hence, if $\lambda_0 < \frac{1}{M_f}\min\set{2d_3^{\star},0.5} = \frac{1}{2M_f}$, then we can prove the following two inequalities together by induction:
\begin{equation*}
d_3^{k+1} \leq d_3^k\textrm{ and }\lambda_{k+1} \leq \lambda_k.
\end{equation*}
Moreover, the first inequality above also shows that $\set{\lambda_k}$ converges to zero at a quadratic rate.

\beforepar
\paragraph{\textbf{The quadratic convergence of $\big\{\Vert\xb^k - \xopt_f\Vert_{\Hb_k}\big\}$:}}
First, using Proposition~\ref{pro:gradient_bound1} with $\xb := \xb^k$ and $\yb = \xopt_f$, and noting that $\nabla{f}(\xopt_f) = 0$, we have 
\begin{equation*} 
\bar{\omega}_{\nu}(-d_{\nu}(\xb^k, \xopt_f))\Vert\xb^k - \xopt_f\Vert_{\xb^k}^2 \leq \iprods{\nabla{f}(\xb^k), \xb^k - \xopt_f} \leq \Vert\nabla{f}(\xb^k)\Vert_{\xb^k}^{\ast}\Vert \xb^k - \xopt_f\Vert_{\xb^k},
\end{equation*}
where the last inequality follows from the Cauchy-Schwarz inequality.
Hence, we obtain
\begin{equation}\label{eq:grad_x}
\bar{\omega}_{\nu}(-d_{\nu}(\xb^k, \xopt_f))\Vert\xb^k - \xopt_f\Vert_{\xb^k} \leq \Vert\nabla{f}(\xb^k)\Vert_{\xb^k}^{\ast} = \lambda_k.
\end{equation}
We consider three cases:

(1)~When $\nu = 2$, we have $\bar{\omega}_{\nu}(\tau) = \frac{e^\tau-1}{\tau}$. 
Hence, $\bar{\omega}_{\nu}(-d_{\nu}(\xb^k, \xopt_f)) = \frac{1 - e^{-d_{\nu}(\xb^k, \xopt_f)}}{d_{\nu}(\xb^k, \xopt_f)} \geq 1 - \frac{d_{\nu}(\xb^k, \xopt_f)}{2} \geq \frac{1}{2}$ whenever $d_{\nu}(\xb^k, \xopt_f) \leq 1$.
Using this inequality in \eqref{eq:grad_x}, we have $\Vert\xb^k - \xopt_f\Vert_{\xb^k} \leq 2\Vert\nabla{f}(\xb^k)\Vert_{\xb^k}^{\ast} = 2\lambda_k$ provided that $d_{\nu}(\xb^k, \xopt_f) \leq 1$.
One the other hand, by the definition of $\underline{\sigma}_k$, we have $\sqrt{\underline{\sigma}_k}\Vert \xb^k - \xopt_f\Vert_2 \leq \Vert\xb^k - \xopt_f\Vert_{\xb^k}$. 
Combining the two last inequalities, we obtain $\Vert \xb^k - \xopt_f\Vert_2 \leq \frac{2\lambda_k}{\sqrt{\underline{\sigma}_k}}$ provided that $d_{\nu}(\xb^k, \xopt_f) \leq 1$.
Since $\set{\frac{\lambda_k}{\sqrt{\underline{\sigma}_k}}}$ locally converges to zero at a quadratic rate, the last relation also shows that $\big\{\Vert \xb^k - \xopt_f\Vert_2\big\}$ also locally converges to zero at a quadratic rate.

(2)~For $\nu = 3$, we have $\bar{\omega}_{\nu}(-d_{\nu}(\xb^k, \xopt_f)) = \frac{1}{1 + d_{\nu}(\xb^k, \xopt_f)}$ and $d_{\nu}(\xb^k, \xopt_f) = \frac{M_f}{2}\Vert \xb^k - \xopt_f\Vert_{\xb^k}$.
Hence, from \eqref{eq:grad_x}, we obtain $\frac{\Vert\xb^k - \xopt_f\Vert_{\xb^k} }{1 + 0.5M_f\Vert\xb^k - \xopt_f\Vert_{\xb^k} } \leq \lambda_k$. This implies $\Vert\xb^k - \xopt_f\Vert_{\xb^k}  \leq \frac{\lambda_k}{1 - 0.5M_f\lambda_k}$ as long as $0.5M_f\lambda_k < 1$.
Clearly, since $\lambda_k$ locally converges to zero at a quadratic rate, $\Vert\xb^k - \xopt_f\Vert_{\xb^k}$ also locally converges to zero at a quadratic rate.

(3)~For $2 < \nu < 3$, we have $\bar{\omega}_{\nu}(-d_{\nu}(\xb^k, \xopt_f)) = \left(\frac{\nu-2}{\nu-4}\right)\frac{\left(1 + d_{\nu}(\xb^k, \xopt_f) \right)^{\frac{\nu-4}{\nu-2}} - 1}{d_{\nu}(\xb^k, \xopt_f)} \geq 1 - \frac{1}{\nu-2}d_{\nu}(\xb^k, \xopt_f) \geq \frac{1}{2}$ provided that $d_{\nu}(\xb^k, \xopt_f) < \frac{\nu}{2}-1$.
Similar to the case $\nu = 2$, we have $\underline{\sigma}_k^{\frac{3-\nu}{2}}\Vert\xb^k -\xopt_f\Vert_{\Hb_k} \leq \Vert\xb^k - \xopt_f\Vert_{\xb^k} \leq 2\lambda_k$, where $\Hb_k := \nabla^2{f}(\xb^k)^{\frac{\nu-2}{2}}$.
Hence, $\Vert\xb^k -\xopt_f\Vert_{\Hb_k} \leq \frac{2\lambda_k}{\underline{\sigma}_k^{\frac{3-\nu}{2}}}$. 
Since $\big\{\frac{\lambda_k}{\underline{\sigma}_k^{\frac{3-\nu}{2}}}\big\}$ locally converges to zero at a quadratic rate, $\big\{\Vert\xb^k -\xopt_f\Vert_{\Hb_k} \big\}$ also locally converges to zero at a quadratic rate.
\Eproof

\beforesubsec
\subsection{\bf The proof of Theorem~\ref{th:comp_decr}: Convergence of the damped-step PN method}\label{apdx:th:comp_decr}
\aftersubsec
Given $\Hb\in\Sc^p_{++}$ and a proper, closed, and convex function $g : \R^p\to\Rext$, we define
\begin{equation*}
\Pa_{\Hb}^g(\ub):=(\Hb+\partial g)^{-1}(\ub) = \argmin_{\xb}\set{g(\xb) + \tfrac{1}{2}\iprod{\Hb\xb,\xb}-\iprod{\ub,\xb}}.
\end{equation*}
If $\Hb = \nabla^2{f}(\xb)$ is the Hessian mapping of a strictly convex function $f$, then we can also write $\Pa_{\nabla^2 f(\xb)}(\ub)$ shortly as $\Pa_{\xb}(\ub)$ for our notational convenience.
The following lemma will be used in the sequel whose proof can be found in~\cite{Tran-Dinh2013a}.

\begin{lemma}\label{le:nonexpansiveness}
Let $g : \R^p\to\Rext$ be a proper, closed, and convex function, and $\Hb\in\Sc^p_{++}$. 
Then, the mapping $\Pa_{\Hb}^g$ defined above is non-expansive with respect to the weighted norm defined by $\Hb$, i.e., for any $\ub,\vb\in\mathbb{R}^p$, we have
\begin{equation}
\norm{\Pa^g_{\Hb}(\ub)-\Pa^g_{\Hb}(\vb)}_{\Hb} \leq \norm{\ub-\vb}^{*}_{\Hb}.
\end{equation}
\end{lemma}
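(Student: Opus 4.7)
The plan is to reduce the claim to the monotonicity of $\partial g$ combined with a weighted Cauchy--Schwarz inequality. Let me set $\xb := \Pa^g_{\Hb}(\ub)$ and $\yb := \Pa^g_{\Hb}(\vb)$. By the very definition of $\Pa^g_{\Hb} = (\Hb + \partial g)^{-1}$, these points satisfy the inclusions $\ub - \Hb\xb \in \partial g(\xb)$ and $\vb - \Hb\yb \in \partial g(\yb)$. Since $g$ is proper, closed, and convex, $\partial g$ is a monotone operator, so applying monotonicity to these two subgradients yields
\begin{equation*}
\iprods{(\ub - \Hb\xb) - (\vb - \Hb\yb),\, \xb - \yb} \geq 0,
\end{equation*}
which rearranges to $\iprods{\Hb(\xb - \yb), \xb - \yb} \leq \iprods{\ub - \vb, \xb - \yb}$, i.e. $\norm{\xb - \yb}_{\Hb}^2 \leq \iprods{\ub - \vb, \xb - \yb}$.

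Next I would bound the right-hand side by the product of weighted norms. Writing $\iprods{\ub - \vb, \xb - \yb} = \iprods{\Hb^{-1/2}(\ub - \vb),\, \Hb^{1/2}(\xb - \yb)}$ and applying the standard Cauchy--Schwarz inequality in $\R^p$ yields
\begin{equation*}
\iprods{\ub - \vb, \xb - \yb} \leq \iprod{\Hb^{-1}(\ub - \vb), \ub - \vb}^{1/2}\, \iprod{\Hb(\xb - \yb), \xb - \yb}^{1/2} = \norm{\ub - \vb}_{\Hb}^{\ast}\, \norm{\xb - \yb}_{\Hb},
\end{equation*}
using the definitions of $\norm{\cdot}_{\Hb}$ and $\norm{\cdot}_{\Hb}^{\ast}$ from the notation section. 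Combining the two displayed inequalities gives $\norm{\xb-\yb}_{\Hb}^2 \leq \norm{\ub-\vb}_{\Hb}^{\ast}\,\norm{\xb-\yb}_{\Hb}$, and dividing by $\norm{\xb-\yb}_{\Hb}$ (trivially handling the case $\xb = \yb$) delivers the claimed non-expansiveness.

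There is no real obstacle here: the result is a routine extension of Moreau's classical $1$-Lipschitz property of $\prox_g$ to the metric induced by a positive definite $\Hb$. The only point requiring a bit of care is getting the duality between $\norm{\cdot}_{\Hb}$ and $\norm{\cdot}_{\Hb}^{\ast}$ right in the Cauchy--Schwarz step, which is why I prefer to insert the factorization $\Hb^{1/2}\cdot\Hb^{-1/2}$ explicitly rather than invoke a general ``weighted Cauchy--Schwarz'' black box.
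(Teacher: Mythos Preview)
Your argument is correct and is exactly the standard monotonicity-plus-weighted-Cauchy--Schwarz proof one expects here. The paper itself does not spell out a proof of this lemma but simply cites \cite{Tran-Dinh2013a}; your write-up is precisely the short argument that reference contains, so there is nothing to compare.
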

Let us define 
\begin{equation}\label{eq:Sx_e_x}
S_{\xb}(\ub):=\nabla^2 f(\xb)\ub-\nabla f(\ub)~~~\text{and}~~~e_{\xb}(\ub,\vb):=[\nabla^2 f(\xb)-\nabla^2 f(\ub)](\vb-\ub),
\end{equation}
for any vectors $\xb,\ub \in\dom{f}$ and $\vb\in\mathbb{R}^p$. 
We now prove Theorem~\ref{th:comp_decr} in the main text.

\begin{proof}[The proof of Theorem~\ref{th:comp_decr}]
\vspace{-2ex}
\paragraph{\textbf{Computing the step-size $\tau_k$:}}
Since $\zb^k$ satisfies the optimality condition~\eqref{eq:opt_cp_sub}, we have
\begin{equation*}
-\nabla f(\xb^k) - \nabla^2 f(\xb^k)\pntdir^k \in \partial{g}(\zb^k).
\end{equation*}
Using  Proposition \ref{pro:fx_bound1} we obtain
\begin{equation*} 
f(\xb^{k+1}) \leq f(\xb^k) + \tau_k\iprod{\nabla f(\xb^k),\pntdir^k} + \omega_{\nu}(\tau_kd_k)\tau_k^2\lambda_k^2.
\end{equation*}
Since $\xb^{k+1}=(1-\tau_k)\xb^k+\tau_k\zb^k$, using this relation and the convexity of $g$, we have
\begin{equation*} 
g(\xb^{k+1})\leq g(\xb^k)-\tau_k\iprod{\nabla f(\xb^k)+\nabla^2 f(\xb^k)\pntdir^k, \pntdir^k}.
\end{equation*}
Summing up the last two inequalities, we obtain the following estimate
\begin{equation*}
F(\xb^{k+1}) \leq F(\xb^k) - \eta_k(\tau_k).
\end{equation*}
With the same argument as in the proof of Theorem \ref{th:damped_step_NT}, we  obtain the conclusion of Theorem~\ref{th:comp_decr}.

\beforepar
\paragraph{\textbf{The proof of local quadratic convergence:}}
We consider the distance between $\xb^{k+1}$ and $\xb^{\star}$ measured by $\Vert \xb^{k+1}-\xb^{\star}\Vert_{\xb^{\star}}$. 
By the definition of $\xb^{k+1}$, we have
\begin{equation}\label{eq:xb_next_bound2}
\Vert\xb^{k+1} - \xb^{\star}\Vert_{\xb^{\star}}\leq (1-\tau_k)\Vert \xb^k-\xb^{\star}\Vert_{\xb^{\star}}+\tau_k\Vert\zb^k-\xb^{\star}\Vert_{\xb^{\star}}.
\end{equation}
Using the new notations in \eqref{eq:Sx_e_x}, it follows from the optimality condition \eqref{eq:composite_cvx_opt_cond} and \eqref{eq:opt_cp_sub} that $\zb^k = \Pa^g_{\xb^{\star}}(S_{\xb^{\star}}(\xb^k)+e_{\xb^{\star}}(\xb^k,\zb^k))$ and $\xb^{\star}=\Pa^g_{\xb^{\star}}(S_{\xb^{\star}}(\xb^{\star}))$. 
By Lemma \ref{le:nonexpansiveness} and the triangle inequality, we can show that
\begin{equation}\label{eq:bd_diffz_nm}
\Vert\zb^k-\xb^{\star}\Vert_{\xb^{\star}}\leq \Vert S_{\xb^{\star}}(\xb^k) - S_{\xb^{\star}}(\xb^{\star})\Vert^{*}_{\xb^{\star}} + \Vert e_{\xb^{\star}}(\xb^k,\zb^k)\Vert^{*}_{\xb^{\star}}.
\end{equation}
By following the same argument as in \cite{Tran-Dinh2013a}, if we apply Lemma~\ref{le:H_norm}, then we can derive
\begin{equation}\label{eq:bd_first}
\Vert S_{\xb^{\star}}(\xb^k) - S_{\xb^{\star}}(\xb^{\star}) \Vert ^{*}_{\xb^{\star}} \leq R_{\nu}(d_{\nu}(\xb^{\star},\xb^k))d_{\nu}(\xb^{\star},\xb^k)\Vert \xb^k-\xb^{\star}\Vert_{\xb^{\star}},
\end{equation}
where $R_{\nu}(\cdot)$ is defined by \eqref{eq:R_alpha}.

Next, using the same argument as the proof of  \eqref{eq:iny_e} in Theorem~\ref{th:comp_full_pNT_scheme} below, we can bound the second term $\Vert  e_{\xb^{\star}}(\xb^k,\zb^k) \Vert ^{*}_{\xb^{\star}}$ of \eqref{eq:bd_diffz_nm} as
\begin{equation*}
\Vert  e_{\xb^{\star}}(\xb^k,\zb^k) \Vert _{\xb^{\star}}^{*} \leq \begin{cases}
\big[(1-d_{\nu}(\xb^{\star},\xb^k))^{\frac{-2}{\nu-2}}-1 \big] \Vert  \zb^k - \xb^k \Vert _{\xb^{\star}}, ~~~&\text{if $\nu > 2$} \vspace{1.5ex}\\
\big(e^{d_{\nu}(\xb^{\star},\xb^k)} - 1 \big) \Vert \zb^k - \xb^k \Vert _{\xb^{\star}} &\text{if $\nu = 2$}.
\end{cases}
\end{equation*}
Combining this inequality, \eqref{eq:bd_diffz_nm} \eqref{eq:bd_first}, and the triangle inequality
\begin{equation*}
\Vert \zb^k - \xb^k \Vert_{\xb^{\star}} \leq \Vert \zb^k - \xb^{\star} \Vert_{\xb^{\star}} + \Vert \xb^k - \xb^{\star} \Vert_{\xb^{\star}},
\end{equation*}
we obtain
\begin{equation}\label{eq:bound_on_zk}
\Vert \zb^k - \xb^k \Vert_{\xb^{\star}} \leq \hat{R}_{\nu}( d_{\nu}(\xb^{\star},\xb^k)) \Vert  \xb^k-\xb^{\star} \Vert_{\xb^{\star}}
\end{equation}
and
\begin{equation}\label{eq:bound_on_zk_xstar}
\Vert \zb^k - \xb^{\star} \Vert_{\xb^{\star}} \leq \tilde{R}_{\nu}( d_{\nu}(\xb^{\star},\xb^k))d_{\nu}(\xb^{\star},\xb^k) \Vert  \xb^k-\xb^{\star} \Vert_{\xb^{\star}},
\end{equation}
where $\hat{R}_{\nu}$ and $\tilde{R}_{\nu}$ are defined as
\begin{equation*}
\hat{R}_{\nu}( t ) := \begin{cases}
\frac{tR_{\nu}(t)+1}{2-(1-t)^{\frac{-2}{\nu-2}}}, ~~~&\text{if $\nu > 2$} \vspace{1.5ex}\\
\frac{tR_{\nu}(t)+1}{2-e^{t}} &\text{if $\nu = 2$}
\end{cases}\quad\textrm{and}\quad \tilde{R}_{\nu}( t ) := \begin{cases}
\frac{tR_{\nu}(t)+(1-t)^{\frac{-2}{\nu-2}}-1}{t\left(2-(1-t)^{\frac{-2}{\nu-2}}\right)}, ~~~&\text{if $\nu > 2$} \vspace{1.5ex}\\
\frac{tR_{\nu}(t)+1}{t(2-e^{t})} &\text{if $\nu = 2$}
\end{cases} 
\end{equation*}
respectively.

By using Lemma \ref{le:H_norm} and after some simple calculations, one can show that there exists a constant $c_{\nu}\in (0,+\infty)$ such that if $t\in [0,\bar{d}_{\nu}]$, then both $\hat{R}_{\nu}(t)$ and $\tilde{R}_{\nu}(t) \in [0,c_{\nu}]$ (when $t\to 0+$, we consider the limit), where $\bar{d}_2 := 0.6$ and $\bar{d}_{\nu}:= 1-0.6^{\frac{\nu-2}{2}}$ for $\nu > 2$ respectively. Using this bound, \eqref{eq:xb_next_bound2} \eqref{eq:bound_on_zk_xstar} and the fact that $\tau_k \leq 1$, we can bound
\begin{equation}\label{eq:bound_xb_next2}
\Vert \xb^{k+1}-\xb^{\star} \Vert_{\xb^{\star}} \leq \left[ (1 - \tau_k) + c_{\nu} d_{\nu}(\xb^{\star},\xb^k)\right]\Vert \xb^k-\xb^{\star}\Vert_{\xb^{\star}}.
\end{equation}
Let $\underline{\sigma}^{\star} := \sigma_{\min}(\nabla^2 f(\xb^{\star}))$ be the smallest eigenvalue of $\nabla^2 f(\xb^{\star})$.
We consider the following cases:

(a)~If $\nu=2$, for $0 \leq d_{\nu}(\xopt, \xb^k) \leq \bar{d}_{\nu}$, we can bound $1-\tau_k$ as
\begin{equation*}
\begin{array}{ll}
1-\tau_k & =  1-\frac{\ln(1+\beta_k)}{\beta_k} \leq \frac{\beta_k}{2} =  \frac{M_f}{2}\Vert \zb^k - \xb^k\Vert_2 \leq \frac{M_f}{2}\frac{ \Vert \zb^k - \xb^k\Vert_{\xb^{\star}}}{\sqrt{\underline{\sigma}^{\star}}}  \overset{\tiny\eqref{eq:bound_on_zk}}{\leq}  \frac{c_{\nu} M_f}{2\sqrt{\underline{\sigma}^{\star}}} \Vert \xb^k-\xb^{\star}\Vert_{\xb^{\star}}.
\end{array}
\end{equation*}
On the other hand, we have $d_{\nu}(\xb^{\star},\xb^k)=M_f\Vert\xb^k - \xb^{\star} \Vert_2 \leq \frac{M_f}{\sqrt{\underline{\sigma}^{\star}}}\Vert \xb^k-\xb^{\star}\Vert_{\xb^{\star}}$. 
Using these estimates into \eqref{eq:bound_xb_next2}, we get
\begin{equation*}
\Vert \xb^{k+1} - \xb^{\star} \Vert_{\xb^{\star}} \leq \left( \frac{c_{\nu}M_f}{2\sqrt{\underline{\sigma}^{\star}}}\norm{\xb^k-\xb^{\star}}_{\xb^{\star}} + \frac{c_{\nu}M_f}{\sqrt{\underline{\sigma}^{\star}}}\Vert \xb^k-\xb^{\star}\Vert_{\xb^{\star}}\right)\Vert\xb^k-\xopt\Vert_{\xopt} =  \frac{3c_{\nu}M_f}{2\sqrt{\underline{\sigma}^{\star}}}\Vert\xb^k-\xopt\Vert_{\xopt}^2.
\end{equation*}
Let $c^{\star}_{\nu} := \frac{3c_{\nu}M_f}{2\sqrt{\underline{\sigma}^{\star}}}$. 
The last estimate shows that if $\Vert\xb^0 - \xopt\Vert_{\xopt} \leq \min\set{ \frac{ \bar{d}_{\nu}\sqrt{\underline{\sigma}^{\star}}}{M_f}, \frac{1}{c^{\star}_{\nu}}}$, then $\set{\Vert\xb^k - \xopt\Vert_{\xopt}}$ quadratically converges to zero.

(b)~If $2 < \nu \leq 3$, then we first show that
\begin{equation}\label{23dv}
d_{\nu}(\xb^{\star},\xb^k)=\left(\tfrac{\nu}{2}-1\right)M_f\Vert \xb^k - \xopt\Vert_2^{3-\nu}\Vert \xb^k - \xb^{\star}\Vert_{\xb^{\star}}^{\nu-2} \leq \left(\tfrac{\nu}{2}-1\right)\frac{M_f}{\left(\underline{\sigma}^{\star}\right)^{\frac{3-\nu}{2}}}\Vert \xb^k-\xb^{\star}\Vert_{\xb^{\star}}.
\end{equation}
Hence, if $\Vert\xb^k-\xb^{\star}\Vert_{\xb^{\star}}\leq m_{\nu}\bar{d}_{\nu}$, where $m_{\nu} := \tfrac{2}{\nu-2}\frac{\left(\underline{\sigma}^{\star}\right)^{\frac{3-\nu}{2}}}{M_f}$, then $d_{\nu}(\xb^{\star},\xb^k)\leq \bar{d}_{\nu}$.
Next, using the definition of $d_k$ in \eqref{eq:d_k}, we can bound it as
\begin{equation}\label{eq:2-3_tau2}
\begin{array}{ll}
d_k & =  M_f\left(\frac{\nu}{2}-1\right)\Vert \zb^k-\xb^k \Vert_{\xb^k}^{\nu-2}\Vert\zb^k - \xb^k\Vert_2^{3-\nu} \overset{\eqref{eq:hessian_bound1}}{\leq} M_f\left(\frac{\nu}{2}-1\right)\left[\frac{ \Vert \zb^k - \xb^k\Vert_{\xb^{\star}}}{(1-d_{\nu}(\xb^{\star},\xb^k))^{\frac{1}{\nu-2}}}\right]^{\nu-2}\frac{\Vert \zb^k - \xb^k\Vert_{\xb^{\star}}^{3-\nu}}{(\underline{\sigma}^{\star})^{\frac{3-\nu}{2}}} \vspace{1ex}\\ \nonumber
& \leq  \frac{M_f}{(1-\bar{d}_{\nu})(\underline{\sigma}^{\star})^{\frac{3-\nu}{2}}}\left(\frac{\nu}{2}-1\right)\Vert \zb^k - \xb^k\Vert_{\xb^{\star}}  \overset{\tiny\eqref{eq:bound_on_zk}}{\leq}  \frac{(\nu-2)M_f}{2(1-\bar{d}_{\nu})(\underline{\sigma}^{\star})^{\frac{3-\nu}{2}}}c_{\nu}\Vert \xb^k - \xb^{\star} \Vert_{\xb^{\star}}.
\end{array}
\end{equation}
Using this estimate, we can bound $1-\tau_k$ as follows:
\begin{equation}\label{eq:2-3_tau1}
\begin{array}{ll}
1-\tau_k & = 1-\frac{1}{d_k}+\frac{1}{d_k}\left(1-\frac{\frac{4-\nu}{\nu-2}d_k}{1+\frac{4-\nu}{\nu-2}d_k}\right)^{\frac{\nu-2}{4-\nu}} \overset{\tiny\text{Bernoulli's inequality}}{\leq}  1 - \frac{1}{d_k}+\frac{1}{d_k}\left(1-\frac{\nu-2}{4-\nu}\frac{\frac{4-\nu}{\nu-2}d_k}{1+\frac{4-\nu}{\nu-2}d_k}\right)  \vspace{1ex}\\ \nonumber
& = \frac{\frac{4-\nu}{\nu-2}d_k}{1+\frac{4-\nu}{\nu-2}d_k} \leq \frac{4-\nu}{\nu-2}d_k \leq \frac{(4 -\nu)M_f}{2(1-\bar{d}_{\nu})(\underline{\sigma}^{\star})^{\frac{3-\nu}{2}}}c_{\nu}\Vert \xb^k - \xb^{\star} \Vert_{\xb^{\star}} = n_{\nu}\Vert\xb^k - \xopt\Vert_{\xopt},
\end{array}
\end{equation}
where $n_{\nu} := \frac{(4 -\nu)M_f}{2(1-\bar{d}_{\nu})(\underline{\sigma}^{\star})^{\frac{3-\nu}{2}}}c_{\nu} > 0$.
Substituting this estimate and \eqref{23dv} into \eqref{eq:bound_xb_next2}, we get 
\begin{equation*}
\Vert\xb^{k+1} - \xopt\Vert_{\xopt} \leq \left(n_{\nu} + \frac{c_{\nu}}{m_{\nu}}\right)\Vert\xb^k - \xopt\Vert_{\xopt}^2 = c^{\ast}_{\nu}\Vert\xb^k - \xopt\Vert_{\xopt}^2.
\end{equation*}
Hence, if $\Vert\xb^0 - \xopt\Vert_{\xopt} \leq \min\set{ m_{\nu}\bar{d}_{\nu}, \frac{1}{c^{\star}_{\nu}}}$, then the last estimate shows that the sequence $\set{\Vert\xb^k - \xopt\Vert_{\xopt}}$ quadratically converges to zero.

In summary, there exists a neighborhood $\Nc(\xopt)$ of $\xopt$, such that if $\xb^0\in\Nc(\xopt)\cap\dom{F}$, then the whole sequence $\set{\Vert\xb^k-\xb^{\star}\Vert_{\xb^{\star}}}$ quadratically converges  to zero. 
\Eproof
\end{proof}

\beforesubsec
\subsection{\bf The proof of Theorem~\ref{th:comp_full_pNT_scheme}: Locally quadratic convergence of the PN method}\label{apdx:th:comp_full_pNT_scheme}
\aftersubsec
Since $\zb^k$ is the optimal solution to \eqref{eq:dir_ds} which satisfies \eqref{eq:opt_cp_sub}, we have $\nabla^2 f(\xb^k)\xb^k-\nabla f(\xb^k)\in (\nabla^2 f(\xb^k) + \partial g)(\zb^k)$. 
Using this optimality condition, we get
\begin{equation*}
\begin{array}{lll}
\xb^{k+1} &=\zb^k &= \Pa^g_{\xb^k}(S_{\xb^k}(\xb^k)+e_{\xb^k}(\xb^k,\zb^k))~~~~~\textrm{ and }\vspace{1.5ex}\\
\xb^{k+2} &=\zb^{k+1} &= \Pa^g_{\xb^k}(S_{\xb^k}(\xb^{k+1})+e_{\xb^k}(\xb^{k+1},\zb^{k+1})).
\end{array}
\end{equation*}
Let us define $\tilde{\lambda}_{k+1}:=\Vert\pntdir^{k+1}\Vert_{\xb^k}$. 
Then, by Lemma \eqref{le:nonexpansiveness} and the triangular inequality, we have
\begin{equation}\label{eq:iny_se}
\begin{array}{lll}
\tilde{\lambda}_{k+1} & \leq & \norm{S_{\xb^k}(\xb^{k+1})-S_{\xb^k}(\xb^k)}_{\xb^k}^{*}+\norm{e_{\xb^k}(\xb^{k+1},\zb^{k+1})-e_{\xb^k}(\xb^k,\zb^k)}_{\xb^k}^{*} \vspace{1.25ex}\\
& = &  \norm{S_{\xb^k}(\xb^{k+1})-S_{\xb^k}(\xb^k)}_{\xb^k}^{*}+\norm{e_{\xb^k}(\xb^{k+1},\zb^{k+1})}_{\xb^k}^{*}.
\end{array}
\end{equation}
Let us first bound the term $\norm{S_{\xb^k}(\xb^{k+1})-S_{\xb^k}(\xb^k)}_{\xb^k}^{*}$ as follows:
\begin{equation}\label{eq:iny_bdS}
\norm{S_{\xb^k}(\xb^{k+1})-S_{\xb^k}(\xb^k)}_{\xb^k}^{*}\leq R_{\nu}(d_{\nu}^k)d_{\nu}^k\lambda_k,
\end{equation}
where $R_{\nu}(t)$ is defined as \eqref{eq:R_alpha}.
Indeed, from the mean-value  theorem, we have
\begin{equation*}
\norm{S_{\xb^k}(\xb^{k+1})-S_{\xb^k}(\xb^k)}_{\xb^k}^{*} = \norm{\int_0^1 [\nabla^2 f(\xb^k+t\pntdir^k)-\nabla^2 f(\xb^k)]\pntdir^k\ud t}_{\xb^k} \leq \norm{\Hb(\xb^k,\xb^{k+1})}\lambda_k,
\end{equation*}
where $\Hb$ is defined as \eqref{eq:H_matrix}. 
Combining the above inequality and \eqref{eq:R_alpha} in Lemma \ref{le:H_norm}, we get \eqref{eq:iny_bdS}.

Next we bound the term $\norm{e_{\xb^k}(\xb^{k+1},\zb^{k+1})}_{\xb^k}^{*}$ as follows:
\begin{equation}\label{eq:iny_e}
\norm{e_{\xb^k}(\xb^{k+1}, \zb^{k+1})}_{\xb^k} \leq \begin{cases}
\big[(1-d_{\nu}^k)^{\frac{-2}{\nu-2}}-1 \big]\tilde{\lambda}_{k+1}, ~~~&\text{if $\nu > 2$} \vspace{1ex}\\
(e^{d_{\nu}^k}-1)\tilde{\lambda}_{k+1} &\text{if $\nu = 2$}.
\end{cases}
\end{equation}
Note that
\begin{equation*}
\norm{e_{\xb^k}(\xb^{k+1},\zb^{k+1})}_{\xb^k}^{*} = \norm{[\nabla^2 f(\xb^k)-\nabla^2 f(\xb^{k+1})](\zb^{k+1}-\xb^{k+1})}_{\xb^k}^{*} \leq \Vert\widetilde{\Hb}(\xb^k,\xb^{k+1})\Vert\tilde{\lambda}_{k+1},
\end{equation*}
where
\begin{eqnarray*}
\widetilde{\Hb}(\xb,\yb) & := & \nabla^2 f(\xb)^{-1/2}\left(\nabla^2 f(\xb)-\nabla^2 f(\yb)\right) \nabla^2 f(\xb)^{-1/2} \vspace{1ex}\\
& = & \Id  - \nabla^2{f}(\xb)^{-1/2}\nabla^2{f}(\yb) \nabla^2{f}(\xb)^{-1/2}.
\end{eqnarray*}
By Proposition \ref{pro:hessian_bounds}, we have
\begin{equation*} 
\Vert\widetilde{\Hb}(\xb,\yb)\Vert \leq \begin{cases}
~~~\max\set{1-(1-d_{\nu}(\xb,\yb))^{\frac{2}{\nu-2}},(1-d_{\nu}(\xb,\yb))^{\frac{-2}{\nu-2}}-1}, ~~~&\text{if $\nu > 2$} \vspace{1ex}\\
~~~\max\set{1-e^{-d_{\nu}(\xb,\yb)},e^{d_{\nu}(\xb,\yb)}-1} &\text{if $\nu = 2$}.
\end{cases}
\end{equation*}
This inequality can be simplified as
\begin{equation}\label{eq:iny_hessdif}
\Vert\widetilde{\Hb}(\xb,\yb)\Vert \leq \begin{cases}
~~~(1-d_{\nu}(\xb,\yb))^{\frac{-2}{\nu-2}}-1, ~~~&\text{if $\nu > 2$} \vspace{1ex}\\
~~~e^{d_{\nu}(\xb,\yb)}-1 &\text{if $\nu = 2$}.
\end{cases}
\end{equation}
Hence, the inequality \eqref{eq:iny_e} holds.

Now, we combine \eqref{eq:iny_se}, \eqref{eq:iny_bdS}, and \eqref{eq:iny_e}, if $\nu = 2$, and assuming that $d_2^k < \ln 2$, then we get
\begin{equation*}
\tilde{\lambda}_{k+1}\leq \frac{R_2(d_2^k)d_2^k}{2-e^{d_2^k}}\lambda_k.
\end{equation*}
By Proposition \ref{pro:hessian_bounds}, we have $\lambda_{k+1}^2\leq e^{d_{\nu}^k}\tilde{\lambda}_{k+1}^2$. 
Combining this estimate and the last inequality, we get
\begin{equation}\label{iny_lamb}
\lambda_{k+1}\leq \frac{R_2(d_2^k)d_2^ke^{\frac{d_2^k}{2}}}{2-e^{d_2^k}}\lambda_k.
\end{equation}
Note that $\lambda_k \geq \frac{\sqrt{\underline{\sigma}_k}d_2^k}{M_f}$ and $\underline{\sigma}_{k+1}^{-1}\leq e^{d_2^k}\underline{\sigma}_k^{-1}$. 
It follows from \eqref{iny_lamb} that
\begin{equation*}
\frac{\lambda_{k+1}}{\sqrt{\underline{\sigma}_{k+1}}}\leq M_f\frac{R_2(d_2^k)e^{d_2^k}}{2-e^{d_2^k}}\left(\frac{\lambda_k}{\sqrt{\underline{\sigma}_k}}\right)^2.
\end{equation*}
By a numerical calculation, we can check that  if $d_2^k \leq d_2^{\star}\approx \needcheck{0.35482}$, then
\begin{equation*}
\frac{\lambda_{k+1}}{\sqrt{\underline{\sigma}_{k+1}}}\leq 2M_f\left(\frac{\lambda_k}{\sqrt{\underline{\sigma}_k}}\right)^2.
\end{equation*}
Hence, if we choose $\xb^0\in\dom{F}$ such that $\frac{\lambda_0}{\sqrt{\underline{\sigma}_0}} \leq  \frac{1}{M_f}\min\set{d_2^{\star},0.5} = \frac{d_2^{\star}}{M_f}$, then we can prove the following two inequalities together by induction:
\begin{equation*}
d_2^{k+1} \leq d_2^{k}~~~~\textrm{and}~~~~\frac{\lambda_{k+1}}{\sqrt{\underline{\sigma}_{k+1}}} \leq \frac{\lambda_k}{\sqrt{\underline{\sigma}_k}}.
\end{equation*}
These inequalities show the nonincreasing monotonicity of $\set{d_2^k}$ and $\set{\lambda_k}$. 
The above inequality also shows the local quadratic convergence of the sequence $\set{\frac{\lambda_k}{\sqrt{\underline{\sigma}_k}}}$.

Now, if $\nu > 2$ and assume that $d_{\nu}^k < 1- \left({\frac{1}{2}}\right)^{\frac{\nu-2}{2}}$, then
\begin{equation*}
\tilde{\lambda}_{k+1}\leq \frac{R_{\nu}(d_{\nu}^k)d_{\nu}^k}{2-(1-d_{\nu}^k)^{\frac{-2}{\nu-2}}}\lambda_k.
\end{equation*}
By Proposition \ref{pro:hessian_bounds}, we have $\lambda_{k+1}^2 \leq (1-d_{\nu}^k)^{\frac{-2}{\nu-2}}\tilde{\lambda}_{k+1}^2$. Hence, combining these  inequalities, we get
\begin{equation}\label{eq:iny_lamcom}
\lambda_{k+1}\leq \frac{R_{\nu}(d_{\nu}^k)d_{\nu}^k(1-d_{\nu}^k)^{\frac{-1}{\nu-2}}}{2-(1-d_{\nu}^k)^{\frac{-2}{\nu-2}}}\lambda_k.
\end{equation}
Note that $d_{\nu}^k=\left(\frac{\nu}{2}-1\right)M_f\norm{\pb^k}_2^{3-\nu}\lambda_k^{\nu-2}$, $\underline{\sigma}_{k+1}^{-1}\leq (1-d_{\nu}^k)^{\frac{-2}{\nu-2}}\underline{\sigma}_k^{-1}$ and $\sigma_{k+1}^{-1}\leq (1-d_{\nu}^k)^{\frac{-2}{\nu-2}}\sigma_k^{-1}$. 
Using these relations and \eqref{eq:iny_lamcom}, we consider two cases:

\indent{(a)}~If $\nu = 3$, then $d_3^k = \frac{M_f}{2}\lambda_k$, and
\begin{equation*}
\lambda_{k+1}\leq \frac{R_3(d_3^k)(1-d_3^k)^{-1}}{2-(1-d_3^k)^{-2}}d_3^k\lambda_k=M_f\frac{R_3(d_3^k)(1-d_3^k)^{-1}}{2\left(2-(1-d_3^k)^{-2}\right)}\lambda_k^2.
\end{equation*}
By a simple numerical calculation, we can show that if $d_3^k \leq d_3^{\star}\approx 0.20943$, then $\lambda_{k+1}\leq 2M_f\lambda_k^2$. 
Hence, if $\lambda_0 < \frac{1}{M_f}\min\set{2d_3^{\star},0.5} = \frac{2}{M_f}d_3^{\star}$, then we can prove the following two inequalities together by induction
\begin{equation*}
d_3^{k+1} \leq d_3^k\textrm{ and }\lambda_{k+1} \leq \lambda_k.
\end{equation*} 
These inequalities show the non-increasing monotonicity of $\set{d_2^k}$ and $\set{\lambda_k}$. 
The above inequality also shows the quadratic convergence of the sequence $\set{\lambda_k}$.

\indent{(b)}~If $2 < \nu < 3$, then $\lambda_k \geq \Vert\pb^k\Vert_2\sqrt{\underline{\sigma}_k}$ which implies that $d_{\nu}^k\leq \left(\frac{\nu}{2}-1\right)M_f\underline{\sigma}_k^{-\frac{3-\nu}{2}}\lambda_k$. 
Hence, we have
\begin{equation*}
\frac{\lambda_{k+1}}{\underline{\sigma}_{k+1}^{\frac{3-\nu}{2}}}\leq \frac{R_{\nu}(d_{\nu}^k)(1-d_{\nu}^k)^{-\frac{4-\nu}{\nu-2}}}{2-(1-d_{\nu}^k)^{\frac{-2}{\nu-2}}}\left(\frac{\nu}{2}-1\right)M_f\left(\frac{\lambda_k}{\underline{\sigma}_k^{\frac{3-\nu}{2}}}\right)^2.
\end{equation*}
If $d_{\nu}^k < d_{\nu}^{\star}$, then $\underline{\sigma}_{k+1}^{-\frac{3-\nu}{2}}\lambda_{k+1}\leq 2M_f\left(\underline{\sigma}_k^{-\frac{3-\nu}{2}}\lambda_k \right)^2$, where $d_{\nu}^{\star}$ is the unique solution to the equation
\begin{equation*} 
\frac{R_{\nu}(d_{\nu}^k)(1-d_{\nu}^k)^{-\frac{4-\nu}{\nu-2}}}{2-(1-d_{\nu}^k)^{\frac{-2}{\nu-2}}}\left(\frac{\nu}{2}-1\right) = 2.
\end{equation*}
Note that it is straightforward to check that this equation always admits a positive solution. 
Therefore, if $\underline{\sigma}_0^{-\frac{3-\nu}{2}}\lambda_0 \leq  \frac{1}{M_f}\min\set{\frac{2d_{\nu}^{\star}}{\nu-2},\frac{1}{2}}$, then we can prove the following two inequalities together by induction:
\begin{equation*}
d_{\nu}^k \leq d_{\nu}^{k+1}\textrm{ and }\underline{\sigma}_{k+1}^{-\frac{3-\nu}{2}}\lambda_{k+1} \leq \underline{\sigma}_k^{-\frac{3-\nu}{2}}\lambda_k.
\end{equation*}
These inequalities show the non-increasing monotonicity of $\set{d_2^k}$ and $\set{\lambda_k}$. 
The above inequality also shows the quadratic convergence of the sequence $\Big\{\frac{\lambda_k}{\underline{\sigma}_k^{\frac{3-\nu}{2}}}\Big\}$.

Finally, to prove the local quadratic convergence of $\set{\xb^k}$ to $\xopt$, we use the same argument as in the proof of Theorem~\ref{th:full_step_NT_scheme_converg} and Theorem~\ref{th:comp_decr}, where we omit the details here.
\Eproof

\beforesubsec
\subsection{\bf The proof of Theorem~\ref{th:quasi_newton_alg}: Convergence of the quasi-Newton method}\label{apdx:th:quasi_newton_alg}
\aftersubsec
The full-step quasi-Newton method for solving \eqref{eq:gsc_min} can be written as $\xb^{k+1} = \xb^k - \Bb_k\nabla{f}(\xb^k)$. 
This is equivalent to $\Hb_k(\xb^{k+1} - \xb^k) + \nabla{f}(\xb^k) = 0$.
Using this relation and $\nabla{f}(\xopt_f) = 0$, we can write 
\begin{equation}\label{eq:qsnt_est1}
\xb^{k\!+\!1} {\!\!}-{\!} \xopt_f  \!=\!  \nabla^2{f}(\xopt_f)^{-1}\left[\nabla^2{f}(\xopt_f)(\xb^k \!-\! \xopt_f) \!+\!  \left(\nabla^2{f}(\xopt_f) \!-\! \Hb_k\right)(\xb^{k\!+\!1} \!-\! \xb^k) \!-\! \nabla{f}(\xb^k) \!+\! \nabla{f}(\xopt_f)\right].
\end{equation}
We first consider $T_k := \Vert \nabla^2{f}(\xopt_f)^{-1}\left[\nabla{f}(\xb^k) - \nabla{f}(\xopt_f) - \nabla^2{f}(\xopt_f)(\xb^k - \xopt_f) \right]\Vert_{\xopt_f}$.
Similar to the proof of Theorem~\ref{th:full_step_NT_scheme_converg}, we can show that 
\begin{equation}\label{eq:qsnt_est2}
T_k =  \Big\Vert\int_0^1\nabla^2{f}(\xopt_f)^{-1}{\!}\left[\nabla^2{f}(\xopt_f \!+\! t(\xb^k \!-\! \xopt_f)) \!-\! \nabla^2{f}(\xopt_f)\right](\xb^k \!-\! \xopt_f)\Big\Vert_{\xopt_f} \!\leq\! R_{\nu}( d_{\nu}^k )d_{\nu}^k\Vert\xb^k \!-\! \xopt_f\Vert_{\xopt_f}
\end{equation}
where $R_{\nu}$ is defined by \eqref{eq:R_alpha} and $d_{\nu}^k := M_f^{\nu-2}\left(\frac{\nu}{2} - 1\right)\Vert\xb^k-\xopt_f\Vert_2^{3-\nu}\Vert\xb^k - \xopt_f\Vert_{\xopt_f}^{\nu-2}$.
Moreover, we note that 
\begin{equation*}
S_k := \Vert \nabla^2{f}(\xopt_f)^{-1}\left(\Hb_k - \nabla^2{f}(\xopt_f)\right)(\xb^{k\!+\!1} \!-\! \xb^k)\Vert_{\xopt_f} = \Vert \left(\Hb_k - \nabla^2{f}(\xopt)\right)(\xb^{k\!+\!1} \!-\! \xb^k)\Vert^{\ast}_{\xopt_f} 
\end{equation*}
Combining this estimate, \eqref{eq:qsnt_est1}, and \eqref{eq:qsnt_est2}, we can derive
\begin{equation}\label{eq:qsnt_est3}
\Vert\xb^{k\!+\!1} - \xopt_f\Vert_{\xopt_f} \leq R_{\nu}( d_{\nu}^k )d_{\nu}^k\Vert\xb^k-\xopt_f\Vert_{\xopt_f} + \Vert \left(\Hb_k - \nabla^2{f}(\xopt_f)\right)(\xb^{k\!+\!1} \!-\! \xb^k)\Vert^{\ast}_{\xopt_f}.
\end{equation}
First, we prove statement (a). 
Indeed, from the Dennis-Mor\'{e} condition \eqref{eq:DM_cond}, we have 
\begin{equation*}
\Vert \left(\Hb_k - \nabla^2{f}(\xopt_f)\right)(\xb^{k\!+\!1} \!-\! \xb^k)\Vert^{\ast}_{\xopt_f} \leq \gamma_k\Vert\xb^{k+1} -\xb_k\Vert_{\xopt_f} \leq \gamma_k\left(\Vert\xb^{k+1} - \xopt_f\Vert_{\xopt_f} + \Vert \xb^k - \xopt_f\Vert_{\xopt_f}\right),
\end{equation*}
where $\lim_{k\to\infty}\gamma_k = 0$.
Substituting this estimate into \eqref{eq:qsnt_est3}, and noting that $\Vert\xb^k - \xopt_f\Vert_2 \leq \frac{1}{\underline{\sigma}^{\star}}\Vert\xb^k - \xopt_f\Vert_{\xopt_f}$, where $\underline{\sigma}^{\star} := \lambda_{\min}(\nabla^2{f}(\xopt_f)) > 0$, we can show that
\begin{equation}\label{eq:qsnt_est4}
\Vert\xb^{k\!+\!1} - \xopt_f\Vert_{\xopt_f} \leq \frac{1}{1-\gamma_k}\left(R_{\nu}^{\star}\Vert\xb^k - \xopt_f\Vert_{\xopt_f}^2 + \gamma_k\Vert\xb^k - \xopt_f\Vert_{\xopt_f}\right),
\end{equation}
provided that $\Vert\xb^k - \xopt_f\Vert_{\xopt_f} \leq \bar{r}$ and $R_{\nu}^{\star} := \max\set{ R_{\nu}(d_{\nu}^k) \mid \Vert\xb^k - \xopt_f\Vert_{\xopt_f} \leq \bar{r}} < +\infty$.
Here, $\bar{r} > 0$ is a given value such that $R_{\nu}^{\star}$ is finite.
The estimate \eqref{eq:qsnt_est4} shows that if $\bar{r}$ is sufficiently small, $\set{\Vert\xb^k - \xopt_f\Vert_{\xopt_f}}$ superlinearly converges to zero.
Finally, the statement (b) is proved similarly by combining statement (a) and \cite[Theorem 11]{Tran-Dinh2013a}.
\Eproof

\bibliographystyle{plain}


\end{document}